\date{}
\title{The fine spectral expansion of the Rankin--Selberg period}
\author{Paul Boisseau}
\date{\today}
\begin{document}
\maketitle
\begin{abstract}
    We state and prove the spectral expansion of the theta series attached to the Rankin--Selberg spherical variety $(\GL_{n+1} \times \GL_n)/\GL_n$. This is a key result towards the fine spectral expansion of the Jacquet--Rallis trace formula. Our expansion is written in terms of regularized Rankin--Selberg periods for non-tempered automorphic representations, which we show compute special values of $L$-functions. The proof relies on shifts of contours of integration à la Langlands. We also establish two technical but crucial results on bounds and singularities for discrete Eisenstein series of $\GL_n$ in the positive Weyl chamber.
\end{abstract}

\setcounter{tocdepth}{1}
\tableofcontents

\section{Introduction}
\label{sec:intro}
\subsection{Motivations}

\subsubsection{Spectral decomposition of the Rankin--Selberg spherical variety}

Let $n \geq 1$ be an integer and let $F$ be a number field with ring of adeles $\bA$. For any algebraic group $\LAG$ over $F$, set $[\LAG]:=\LAG(F) \backslash \LAG(\bA)$. Set $G=\GL_n \times \GL_{n+1}$ and $H=\GL_n$, both considered as algebraic groups over $F$. Embed $H$ as a subgroup of $G$ using the map $h \mapsto \left(h,\begin{pmatrix}
    h & \\
    & 1
\end{pmatrix}\right)$, and consider the affine $G$-spherical \emph{Rankin--Selberg} variety $X=G/H$. For any Schwartz function $\Phi$ on $X(\bA)$ we may form the theta series
\begin{equation*}
    \Theta^X_\Phi(g)=\sum_{x \in X(F)} \Phi(g^{-1}.x), \quad g \in G(\bA).
\end{equation*}
The series $\Theta^X_\Phi$ form a $G(\bA)$-invariant subspace of the space of smooth functions on $[G]$. Our goal is to describe this representation in terms of the automorphic spectrum of $G$. 

We may reformulate the problem as follows. Because $X(\bA)=G(\bA) \backslash H(\bA)$, any Schwartz function on $X(\bA)$ can be obtained from a Schwartz function $f$ on $G(\bA)$ via the integral
\begin{equation*}
    \Phi(x)=\int_{H(\bA)} f(gh) dh,
\end{equation*}
where $dh$ is a Haar-measure on $H(\bA)$ and $g$ is any representative of $x$ in $G(\bA)$. We can consider the automorphic kernel
\begin{equation}
    \label{eq:kernel_function}
    K_f(x,y)=\sum_{\gamma \in G(F)} f(x^{-1} \gamma y), \quad x, y \in G(\bA),
\end{equation}
and form the Rankin--Selberg automorphic kernel
\begin{equation*}
    J^H(g,f)=\int_{[H]} K_f(g,h) dh, \quad g \in G(\bA),
\end{equation*}
where $[H]$ is equipped with the quotient measure. Because $X(F)=G(F) \backslash H(F)$, we have $\Theta^X_\Phi(g)=J^H(g,f)$ and we now need to write the spectral decomposition of the distribution $f \mapsto J^H(g,f)$.

\subsubsection{The Jacquet--Rallis relative trace formula}

The motivation to study the distribution $J^H(g,f)$ stems from the Jacquet--Rallis relative trace formula introduced in \cite{JR}. More precisely, let $E$ be a quadratic extension of $F$, and write $G_E$ and $H_E$ for the restriction of scalars $\Res_{E/F}(\GL_{n} \times \GL_{n+1})$ and $\Res_{E/F} GL_{n}$ respectively. We have the distribution $J^{H_E}(g,f)$. Inside $G_E$ lies the subgroup $G'=\GL_n \times \GL_{n+1}$. The integral along $[G']$ is the \emph{Flicker--Rallis} studied in \cite{Fli}. We can write any $g' \in G'(\bA)$ as $g'=(g'_n,g'_{n+1})$. Let $\eta$ be the quadratic character associated to $E/F$ by class field theory. For $f$ a Schwartz function on $G_E(\bA)$, the Jacquet--Rallis relative trace formula for general linear groups introduced in \cite{Zyd20} is the regularized version of the integral
\begin{equation}
    \label{eq:JR_RTF}
    \int_{[G']} J^{H_E}(g',f) \eta(\det(g'_n))^{n+1}\eta(\det(g'_{n+1}))^{n}dg'.
\end{equation}
It plays a significant role in recent works on the global Gan--Gross--Prasad and Ichino--Ikeda conjectures for unitary groups, in particular \cite{Zhang2}, \cite{BPLZZ}, \cite{BPCZ} and \cite{BPC}. To any cuspidal datum $\chi$ of $G_E$ we can attach a distribution $J^{H_E}_\chi(g,f)$ obtained by integrating the $K_\chi$ part of the kernel. A key result obtained in \cite{BPCZ} is the spectral decomposition of $J^{H_E}_\chi(g,f)$ for certain \emph{regular} cuspidal data $\chi$. Combined with an analog result for the Flicker--Rallis period, the authors of \cite{BPCZ} are able to derive the regular part of the spectral expansion of the Jacquet--Rallis trace formula. This turns out to be sufficient to prove the conjecture of \cite{GGP} for cuspidal generic representations of unitary groups. However, to tackle the non-tempered version of the Gan--Gross--Prasad conjecture introduced in \cite{GGP2} it is necessary to understand all residual and Eisenstein contributions to the trace formula.

In this regard, our paper provides the fine spectral expansion of the distribution $J^{H_E}_\chi(g,f)$ with no restriction on cuspidal data. This is an important step towards the fine spectral expansion of the trace formula itself. In \cite{Ch}, Chaudouard proved the corresponding result for the Flicker--Rallis distributions $J^{G'}(g,f)$ obtained by considering the symmetric space $G_E(\bA) / G'(\bA)$. It now remains to combine the two expansions to derive that of \eqref{eq:JR_RTF}, which we will do in a future work.

\subsection{The fine spectral expansion of the Rankin--Selberg period}

\subsubsection{Preliminary notations}

Before stating the main result of this paper, we fix some notations. For every place $v$ of $F$, $F_v$ is the localization of $F$ at $v$. We say that a parabolic subgroup $P$ of $G$ is semi-standard if it contains the torus of diagonal matrices, and standard if it contains the Borel subgroup of upper triangular matrices. In this last case, it admits a standard Levi decomposition $P=M_P N_P$, where $N_P$ is the unipotent radical of $P$. Let $\Pi_\disc(M_P)$ be the set of discrete irreducible automorphic representations of $M_P$, with trivial central character on the central subgroup $A_P^\infty$ (see \S\ref{subsubsec:automorphic_quotients}). If $\pi \in \Pi_\disc(M_P)$, we can write its decomposition into local components $\pi=\otimes'_v \pi_v$. We also have the space $\cA_{P,\pi}(G)$ of automorphic forms on $A_P^\infty M_P(F) N_P(\bA) \backslash G(\bA)$ induced from $\pi$ (see \S\ref{subsubsec:discrete_automorphic_rep}). Let $\fa_{P,\cc}^*$ be the complex vector space of unramified characters of $M_P(\bA)$, and let $i \fa_{P}^*$ be its real subspace of unitary characters. For $\lambda \in \fa_{P,\cc}^*$, we have a map $\varphi \mapsto \varphi_\lambda$ that identifies $\cA_{P,\pi}(G)$ with the induction $\cA_{P,\pi,\lambda}(G)$ of $\pi \otimes \lambda$. A Schwartz function $f \in \cS(G(\bA))$ acts on $\cA_{P,\pi,\lambda}(G)$ and thus on $\cA_{P,\pi}(G)$ by transporting the structure, and we denote this action by $I_P(f,\lambda)$. We finally write $E(\varphi,\lambda)$ for the Eisenstein series induced from $\varphi_\lambda$ (see \S\ref{subsubsec:Eisenstein}).

\subsubsection{Relevant inducing data}
\label{subsubsec:relevant_intro}

Let $k \geq 1$. By \cite{MW89}, any discrete automorphic representation $\pi \in \Pi_\disc(\GL_k)$ is obtained by taking residues of Eisenstein series of automorphic forms in an induction $\cA_{P,\sigma^{\boxtimes d}}(\GL_k)$, where $P$ is a standard parabolic subgroup with $M_P=\GL_r^d$, $\sigma$ is a cuspidal automorphic representation of $\GL_r$ and $r$ and $d$ are integers with $rd=k$. We then write $\pi=\Speh(\sigma,d)$. By \cite{Langlands}, inductions of discrete automorphic representations exhaust the spectral decomposition of $L^2([\GL_k])$. We will refer to these representations as being of \emph{Arthur type}.

By analogy with the local notion of derivatives introduced in \cite{BZ}, we define the automorphic derivative of $\pi=\Speh(\sigma,d)$ to be
\begin{equation*}
    \pi^-=\Speh(\sigma,d-1) \in \Pi_\disc(\GL_{r(d-1)}).
\end{equation*}
Note that if $\pi$ is cuspidal, i.e. if $d=1$, then $\pi^-$ is the trivial representation of the trivial group. We also write $\pi^\vee$ for the dual of $\pi$, and we easily check that $\pi^{-,\vee}=\pi^{\vee,-}$.

Our spectral expansion of $J^H$ will be indexed by a set $\Pi_H$ of \emph{relevant inducing data}. This is the set of triples $(I,P,\pi)$ satisfying the following desiderata.
\begin{itemize}
    \item $I$ is a tuple of non-negative integers $(n_+,n_1,n_2,n_-)$ such that $n_2^-:=n-n_+-n_1-n_-$ and $n_1^-:=n+1-n_+-n_2-n_-$ are non-negative. We then let $P_I$ be the standard parabolic subgroup of $G$ with standard Levi subgroup 
     \begin{equation}
        \label{eq:P_decompo}
        M_I:=\left( \GL_{n_+} \times \GL_{n_1} \times \GL_{n_2^-} \times \GL_{n_-} \right) \times \left( \GL_{n_+} \times \GL_{n_1^-} \times \GL_{n_2} \times \GL_{n_-} \right).
    \end{equation} 
    \item $P$ is a standard parabolic subgroup of $G$ included in $P_I$.
    \item $\pi$ is a discrete automorphic representation of $M_P$ which, with respect to \eqref{eq:P_decompo}, decomposes as
    \begin{equation}
        \label{eq:pi_decompo_intro}
        \left( \boxtimes_{i=1}^{m_+} \pi_{+,i} \boxtimes_{i=1}^{m_1} \pi_{1,i} \boxtimes_{i=1}^{m_2} \pi_{2,i}^{-,\vee} \boxtimes_{i=1}^{m_-} \pi_{-,i} \right) \boxtimes  \left( \boxtimes_{i=1}^{m_+} \pi_{+,i}^\vee \boxtimes_{i=1}^{m_1} \pi_{1,i}^{-,\vee} \boxtimes_{i=1}^{m_2} \pi_{2,i} \boxtimes_{i=1}^{m_-} \pi_{-,i}^\vee \right),
    \end{equation}
    where $m_+, m_1, m_2$ and $m_-$ are non-negative integers, and all the $\pi_{\hdots}$ are discrete automorphic representations of some general linear groups. 
\end{itemize} 

Let $(I,P,\pi) \in \Pi_H$. We associate to this triple three additional pieces of data. 
\begin{itemize}
    \item Let $\fa_{\pi,\cc}^* \subset \fa_{P,\cc}^*$ be the subspace of unramified characters $\lambda$ of $M_P(\bA)$ such that $(I,P,\pi \otimes \lambda)$ belongs to $\Pi_H$ if we lift the requirement that the restriction of the central character of $\pi \otimes \lambda$ to $A_P^\infty$ is trivial. We refer to \eqref{eq:a_pi_defi} for an explicit description of $\fa_{\pi,\cc}^*$ in coordinates. We denote by $i \fa_{\pi}^*$ its subset of unitary characters.
    \item With respect to the basis coming from the lattice of algebraic characters in $\fa_{P_I,\cc}^*$ (see \S\ref{subsubsec:reductive_gp}), let $\underline{\rho}_\pi$ be the element in $\fa_{P_I,\cc}^* \subset \fa_{P,\cc}^*$ with coordinates
    \begin{equation*}
        \underline{\rho}_\pi=((1/4,0,0,-1/4),(1/4,0,0,-1/4)).
    \end{equation*}
    Note that $\underline{\rho}_\pi$ does not belong to $\fa_{\pi,\cc}^*$ unless $n_+=n_-=0$, in which case it is zero.
    \item Let $W(\pi)$ be the subset of the Weyl group of $G$ defined in \S\ref{sec:functional_equation}. It has the property that, for any $w \in W(\pi)$, if we write $w.P$ for the unique standard parabolic subgroup of $G$ with standard Levi subgroup $w M_P w^{-1}$, and $w.\pi$ for the discrete automorphic representation of $w M_P w^{-1}$ obtained by conjugation, we have $(I,w.P,w.\pi) \in \Pi_H$.
\end{itemize}
Note that $i \fa_\pi^*$, $\underline{\rho}_\pi$ and $W(\pi)$ really depend on $(I,P,\pi)$, but we only highlight the relation to $\pi$ to ease notations.

\begin{ex}
    \label{ex:cuspi}
    Take $I=(0,n,n+1,0)$. Then $P_I$ is $G$. It follows that triples $(I,P,\pi) \in \Pi_H$ are in bijection with couples $(P,\pi)$ where $P$ is a standard parabolic subgroup of $G$ and $\pi$ is a cuspidal automorphic representation of $M_P$. Indeed, with the notation of \eqref{eq:pi_decompo_intro}, $\pi_{2,i}^{-,\vee}$ and $\pi_{1,j}^{-,\vee}$ for $1 \leq i \leq m_2$ and $1 \leq j \leq m_1$ are representations of the trivial group. Moreover, $\fa_{\pi,\cc}^*$ is $\fa_{P,\cc}^*$, $\underline{\rho}_\pi=0$ and $W(\pi)$ is the subset $W(P)$ of Weyl elements permutating the blocks of $M_P$.
\end{ex}

\begin{ex}
    \label{ex:resi}
    Take $I=(0,0,n+1,0)$. Then $P_I$ is again $G$. However, if $(I,P,\pi) \in \Pi_H$, then $\pi$ is of the form $\left( \boxtimes_{i=1}^{m_2} \pi_{2,i}^{-,\vee} \right) \boxtimes \left( \boxtimes_{i=1}^{m_2} \pi_{2,i} \right)$. This is only possible if $m_2=1$ and $\pi_{2,1}=\Speh(\chi,n+1)$ for $\chi$ an automorphic character of $\GL_1$, so that $\pi$ is the character $\chi^\vee \circ \det \boxtimes \chi \circ \det$ of $G$. By a remark of \cite[Section~9]{GGP2}, all residual representations of $G$ which admit a $H(\bA)$-invariant linear form are of this shape. In this case, $P=G$, $\fa_{\pi,\cc}^*$ is the diagonal subspace $(\lambda,-\lambda)$ in $\fa_{G,\cc}^*$, $\underline{\rho}_\pi=0$ and $W(\pi)$ is trivial.
\end{ex}

\begin{ex}
        \label{ex:scalar}
    We now give a non-Arthur example coming from $\Pi_H$. Take $I=(n,0,1,0)$ so that $P_I$ has standard Levi $M_I=\GL_n \times (\GL_n \times \GL_1)$. Then for $(I,P,\pi)$ to belong to $\Pi_H$, $P$ has to be a standard parabolic subgroup of $G$ with $M_P$ of the form $M_+ \times (M_+ \times \GL_1)$, and $\pi$ to be a discrete automorphic representation of $M_P$ which decomposes as $\pi_+ \boxtimes (\pi_+^\vee \boxtimes \chi)$ accordingly, where $\pi_+ \in \Pi_\disc(M_+)$ and $\chi$ is an automorphic character of $\GL_1$. Relatively to $P \subset P_I$, $\fa_{\pi,\cc}^*$ is the subspace $(\lambda,(-\lambda,\mu))$ where $\lambda \in \fa_{M_+,\cc}^*$ and $\mu \in \fa_{\GL_1,\cc}^*$, $\underline{\rho}_\pi=(1/4,(1/4,0))$. Finally, if we set $P_+=P \cap H$, then $W(\pi)=W(P_+) \subset H \subset G$. 
\end{ex}

\subsubsection{Regularized periods}

We now give some motivation for the definition of $\Pi_H$. For $\varphi$ an automorphic form on $G$, consider the (a priori non-convergent) \emph{Rankin--Selberg} period
\begin{equation*}
    \cP_H(\varphi)=\int_{[H]} \varphi(h) dh.
\end{equation*}
If $\varphi$ belongs to a cuspidal automorphic representation $\pi$ of $G$ (so that the integral converges), a celebrated theorem of \cite{JPSS83} states that $\cP_H$ vanishes if an only if the central value of the Rankin--Selberg $L$-function $L(1/2,\pi)$ does. In \cite{GGP2}, Gan, Gross and Prasad conjectured that the restriction of (a suitable regularization of) $\cP_H$ to an arbitrary automorphic representation $\pi$ of $G$ of Arthur type should vanish unless the Arthur parameter of $\pi$ was \emph{relevant} and a special value of a certain quotient of $L$-functions was non-zero. If we specify the definition of $\Pi_H$ to the case $n_+=n_-=0$, then these relevant representations singled out by \cite{GGP2} are exactly the inductions $\cA_{P,\pi,\lambda}(G)$ with $\lambda \in i \fa_{\pi}^*$. They are those expected to appear in the spectral expansion of $J^H$. However, it turns out that they are not enough to fully describe it and that additional representations are needed. They are those of the form $\cA_{P,\pi,\lambda}(G)$ for $(I,P,\pi) \in \Pi_H$ and $\lambda \in i\fa_{\pi}^*-\underline{\rho}_\pi$. These representations are not of Arthur type as soon as $\underline{\rho}_\pi \neq 0$.

Our first result is the definition of a regularization of $\cP_H$ on these inductions $\cA_{P,\pi,\lambda}(G)$. By \cite{MW89}, there exist $P_\pi$ a standard parabolic subgroup of $G$, $\sigma_\pi$ a cuspidal automorphic representation of $M_{P_\pi}$ and $\nu_\pi \in \fa_{P_\pi}^*$ such that $\cA_{P,\pi}(G)$ is spanned by residues of Eisenstein series induced from $\cA_{P_\pi,\sigma_\pi}(G)$ at $-\nu_\pi$. We denote this map by $E^{P,*}(\cdot,0)$. For $\phi \in \cA_{P_\pi,\sigma_\pi}(G)$, we can form the Eisenstein series $E(\phi,\lambda)$ and further take its Rankin--Selberg Zeta integral $Z(E(\phi,\lambda))$. It is a meromorphic function in $\lambda \in \fa_{P_\pi,\cc}^*$ by \cite{IY}. If we identify $\fa_{\pi,\cc}^*-\underline{\rho}_\pi-\nu_\pi$ as an affine subspace of $\fa_{P_\pi,\cc}^*$, we see that it is contained in a finite union of singularities of $Z(E(\phi,\lambda))$ which are all affine hyperplanes. By taking iterated residues, we obtain a meromorphic function $\Res \; Z(E(\phi,\lambda))$ on $\fa_{\pi,\cc}^*-\underline{\rho}_\pi-\nu_\pi$. A priori, this construction depends on the order of the residues taken. In the following theorem, by "for $\lambda \in \fa_{\pi,\cc}^*-\underline{\rho}_\pi$" in general position we mean that it lies outside of a countable union of affine hyperplanes.

\begin{theorem}
    \label{thm:GGP_global_intro}
    Let $(I,P,\pi) \in \Pi_H$. The following assertions hold.
    \begin{itemize}
        \item For $\lambda \in \fa_{\pi,\cc}^*-\underline{\rho}_\pi$ in general position, the residue $\mathrm{Res} \; Z(E(\phi,\lambda-\nu_\pi))$ is independent of the order and factors through $E^{P,*}(\cdot,\lambda-\nu_\pi) : \cA_{P_\pi,\sigma_\pi,\lambda-\nu_\pi}(G)\twoheadrightarrow \cA_{P,\pi,\lambda}(G)$. We denote by $\cP_\pi(\cdot,\lambda)$ the resulting $H(\bA)$-invariant linear form on $\cA_{P,\pi,\lambda}(G)$.
        \item For $\varphi \in \cA_{P,\pi}(G)$, the map $\lambda \in \fa_{\pi,\cc}^*-\underline{\rho}_\pi \mapsto \cP_\pi(\varphi,\lambda)$ is meromorphic, and for $\lambda$ in general position the map $\varphi \in \cA_{P,\pi,\lambda}(G) \mapsto \cP_{\pi}(\varphi,\lambda)$ is continuous.
        \item For $\varphi=E^{P,*}(\phi,\lambda-\nu_\pi) \in \cA_{P,\pi,\lambda}(G)$ with $\phi=\otimes \phi_v$ and $\lambda$ in general position, there exists a finite set of places $\tS$ such that
        \begin{equation*}
            \cP_{\pi}(\varphi,\lambda)=\cL(\pi,\lambda)  \prod_{v \in \tS} Z_{\sigma_\pi,v}^\natural(\phi_v,\lambda-\nu_\pi),
        \end{equation*}
        where $\cL(\pi,\lambda)$ is the quotient of Rankin--Selberg $L$-functions described in \S\ref{subsubsec:proof_main_theorem}, and the linear forms $Z_{\sigma_\pi,v}^\natural(\phi_v,\lambda-\nu_\pi)$ are residues of local Zeta integrals built on the inductions $I_{P_\pi}^G (\sigma_{\pi,v} \otimes (\lambda-\nu_\pi))$ (see \S\ref{subsec:RS_and_Zeta}).
    \end{itemize}
\end{theorem}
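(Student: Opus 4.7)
The plan is to deduce all three assertions from the meromorphic continuation and Euler product of the Rankin--Selberg zeta integral $Z$ on cuspidal Eisenstein series established in \cite{IY}, combined with the description by \cite{MW89} of discrete automorphic representations of $\GL_k$ as iterated residues of cuspidal Eisenstein series. Fix $\phi = \otimes_v \phi_v \in \cA_{P_\pi,\sigma_\pi}(G)$ a pure tensor. By \cite{IY}, $\mu \mapsto Z(E(\phi,\mu))$ is meromorphic on $\fa_{P_\pi,\cc}^*$ with only affine hyperplane singularities, and for a finite set of places $\tS$ (outside which $\phi_v$ is unramified and $\sigma_\pi$ behaves generically) one has an Euler factorization
\begin{equation*}
    Z(E(\phi,\mu)) = \cL^{\tS}(\sigma_\pi,\mu) \prod_{v \in \tS} Z_{\sigma_\pi,v}(\phi_v,\mu),
\end{equation*}
where $\cL^{\tS}$ is a quotient of partial Rankin--Selberg $L$-functions between the cuspidal components of $\sigma_\pi$. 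On the other hand, Moeglin--Waldspurger exhibit a finite family of hyperplanes $\{H_\alpha\}$ in $\fa_{P_\pi,\cc}^*$ whose intersection, translated by $\nu_\pi$, is exactly $\fa_{\pi,\cc}^* - \underline{\rho}_\pi$, and such that $E^{P,*}(\phi,\cdot)$ is the iterated residue of $E(\phi,\cdot)$ along these hyperplanes.

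For the first two bullets I would proceed simultaneously by induction on the codimension of partial intersections. At each step one of the $H_\alpha$ contributes a simple pole to a single factor of $\cL^{\tS}$, so the single-variable residue can be computed explicitly at a generic point; the other singularities of $Z$, coming either from ramified local zeta integrals or from other $L$-factors, are transverse to $\fa_{\pi,\cc}^* - \underline{\rho}_\pi - \nu_\pi$ and hence do not contribute at $\lambda$ in general position. Because the $H_\alpha$ are linearly independent and the residues involved are simple at a generic point of the final intersection, the iterated residue is independent of the order taken. Factorization through $E^{P,*}(\cdot,\lambda-\nu_\pi)$ is then automatic: any $\phi$ in the kernel of this residue map has regular Eisenstein series along the chosen hyperplanes, so the residue of $Z(E(\phi,\mu))$ vanishes by continuity of $Z$. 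The $H(\bA)$-invariance of $\cP_\pi(\cdot,\lambda)$ is inherited from that of $Z$; meromorphicity in $\lambda$ and continuity in $\varphi$ descend from the corresponding properties of $Z(E(\cdot,\cdot))$ at a generic point of the ambient $\fa_{P_\pi,\cc}^*$.

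For the third bullet I would substitute the Euler factorization into the iterated residue. The partial $L$-factor $\cL^{\tS}(\sigma_\pi,\mu)$ has an explicit pattern of poles and zeros along the $H_\alpha$ prescribed by the description of $\pi$ as $\boxtimes_i \Speh(\sigma_i,d_i)$, and the iterated residue collapses this into the quotient $\cL(\pi,\lambda)$ of \S\ref{subsubsec:proof_main_theorem}. At each place $v \in \tS$, the iterated residue of $Z_{\sigma_\pi,v}(\phi_v,\mu)$ along the $H_\alpha$ is by definition $Z_{\sigma_\pi,v}^\natural(\phi_v,\lambda-\nu_\pi)$ from \S\ref{subsec:RS_and_Zeta}, yielding the claimed factorization.

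The main obstacle is the combinatorial and $L$-function-theoretic bookkeeping behind Step 3: identifying precisely which Rankin--Selberg $L$-factors in $\cL^{\tS}$ acquire poles or zeros along each Moeglin--Waldspurger hyperplane, and verifying that these combine into the quotient $\cL(\pi,\lambda)$ with the expected shifts dictated by $\underline{\rho}_\pi$. This rests on a careful case-by-case analysis of the four blocks in \eqref{eq:pi_decompo_intro} and the interaction between the duality $\pi \mapsto \pi^{-,\vee}$ and the location of poles of Rankin--Selberg $L$-functions, together with the control on non-$L$-function singularities of $Z$ provided by \cite{IY}.
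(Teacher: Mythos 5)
Your broad strategy—using Ichino--Yamana for meromorphicity and the Euler product, Moeglin--Waldspurger for the residue structure, and inducting on codimension—matches the paper's starting point. But the crucial step, the factorization through $E^{P,*}(\cdot,\lambda-\nu_\pi)$, is where your argument has a genuine gap, and it is not salvageable without substantial extra machinery.

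You write that the factorization is "automatic" because any $\phi$ in the kernel has regular Eisenstein series along the chosen hyperplanes, "so the residue of $Z(E(\phi,\mu))$ vanishes by continuity of $Z$." This conflates two different families of hyperplanes. The iterated residue of $Z(E(\phi,\cdot))$ is taken along a mixed set: some hyperplanes (the $\Lambda$-type in the paper's notation) are poles of the numerator Rankin--Selberg $L$-factor $L(\mu+\frac12,\sigma_{\pi,n}\times\sigma_{\pi,n+1})$ and have nothing to do with poles of the Eisenstein series; others (the $\Lambda'$-type) become, after restriction to the partial intersection $\cH$, the Moeglin--Waldspurger residual hyperplanes. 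Continuity of $Z$ lets you commute $Z$ with residues of the second type, but after you have already taken residues along the first type you are no longer applying $Z$ to anything—you are working with a new meromorphic function on $\cH$ whose relationship to the analytic behavior of $E(\phi,\cdot)$ near the residual hyperplanes is not obvious. Proving that this function still vanishes when $E^{P,*}(\phi,\cdot)=0$ is exactly the content of the paper's Proposition~\ref{prop:order_residues} combined with \cite[Corollary~3.3]{BoiZ}: the $\Lambda$-residues of $Z$ are identified with Zydor-style regularized truncated periods $\cP^Q(\phi,\lambda,w)$, the subsequent $\Lambda'$-residues move these to $\cP^Q(M^*(w_\pi^*,\lambda)\phi,\dots)$, and the operator $M^*(w_\pi^*,\lambda)$ is shown to realize the quotient map $\cA_{P_\pi,\sigma_\pi,\lambda-\nu_\pi}(G)\twoheadrightarrow\cA_{P,\pi,\lambda}(G)$. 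None of this is a soft consequence of continuity; it is the heart of the proof.

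Your proposal also does not use, and would have difficulty replicating, the parabolic descent step (Proposition~\ref{prop:parabolic_descent}) that reduces the general case to the already-established $n_+=n_-=0$ case of \cite[Theorem~5.2]{BoiZ}. Without that reduction, even the bookkeeping you flag as "the main obstacle" in Step 3 becomes considerably harder, since the blocks indexed by $m_+$ and $m_-$ require an additional set of affine linear forms $(\Lambda(\pm,i,j),\Lambda'(\pm,i,j))$ and an identification of the corresponding residues as diagonal Arthur periods. The order-independence claim, on the other hand, is essentially correct as you state it and matches \cite[Lemma~5.1]{BoiZ}; but it should be made precise by observing that the affine linear forms are in general position and direct only simple singularities at a generic point of the final intersection.
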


\begin{ex}
    \label{ex:periods}
    If $I=(0,n,n+1,0)$ and $(I,P,\pi) \in \Pi_H$ as in Example~\ref{ex:cuspi} (so that $\pi$ is cuspidal), then $\cP_\pi(\varphi,\lambda)=Z(E(\varphi,\lambda))$. If now $I=(0,0,n+1,0)$ as in Example~\ref{ex:resi}, so that $P=G$ and $\pi$ is a character of $G$, then $\cP_{\pi}(\varphi,\lambda)$ is simply evaluation at $1$ (up to constant). Note that for any $(\lambda,-\lambda) \in \fa_{\pi,\cc}^*$ the character $\pi_{(\lambda,-\lambda)}$ remains $H(\bA)$-invariant. Finally, take $I=(n,0,1,0)$ as in Example~\ref{ex:scalar}. We use the notation from there. The representation $\cA_{P_+,\pi_+}(\GL_n)$ is equipped with the Petersson inner product $\langle \cdot,\cdot \rangle_n$. If $\varphi=\varphi_n \otimes \varphi_{n+1} \in \cA_{P,\pi}(G)$, then $\varphi_n \in \cA_{P_+,\pi_+}(\GL_n)$ and $g_n \in \GL_n(\bA) \mapsto \varphi_{n+1}(g_{n})$ belongs to $\cA_{P_+,\pi_+^\vee}(\GL_n) \otimes \Val{\det}^{1/2}$. It follows that for any $\lambda \in \fa_{\pi,\cc}^*$, the linear form 
    \begin{equation*}
        \varphi=\varphi_n \otimes \varphi_{n+1} \in \cA_{P,\pi,\lambda-\underline{\rho}_\pi}(G) \mapsto \langle \varphi_n \otimes \overline{\varphi}_{n+1} \rangle_n
    \end{equation*}
    is non-zero and $H(\bA)$-invariant. This is our map $\cP_{\pi,\lambda}$ (up to constant).
\end{ex}

In \cite{BoiZ}, we built these periods in the Arthur case, i.e. when $n_+=n_-=0$, and proved the corresponding version of Theorem~\ref{thm:GGP_global_intro}. The procedure we use in the current paper for the case of general $(I,P,\pi)$ is the same, and the main idea is to realize $Z(E(\phi,\lambda))$ as a regularization of a truncated period using \cite{IY} and \cite{Zydor}. The approach of \cite{BoiZ} generalizes quite easily to prove the first and third points of Theorem~\ref{thm:GGP_global_intro}. In \cite[Theorem~1.2]{BoiZ}, we additionally studied the vanishing of the period $\cP_\pi$. We showed that the local linear forms $Z_{\sigma_\pi,v}^\natural$ are always non-zero for $\lambda \in i \fa_{\pi}^*$, and therefore that $\cP_\pi$ vanishes if an only if $\cL(\pi,\lambda)$ does. This proved the non-tempered Gan--Gross-Prasad conjecture from \cite{GGP2}. In this text, we will not deal directly with the local factors $Z^\natural_{\sigma_\pi,v}$, and in particular we will not settle the question of their non-vanishing. We leave this question to a future work. Finally, we emphasize that an alternative description of $\cP_\pi$ using parabolic descent is given in Proposition~\ref{prop:parabolic_descent}.

We give two conceptual explanations for the appearance of $\Pi_H$ in the spectral decomposition of $J^H$. First, in \cite{BZSV} Ben-Zvi, Sakellaridis and Venkatesh have attached to the spherical variety $X=G/H$ a hyperspherical hamiltonian variety $M$ (in this case, the contangent bundle $T^* X$) and a $\hat{G}(\cc)=G(\cc)$-dual variety $M^\vee$ which here is $T^* \mathrm{St}_n \otimes \mathrm{St}_{n+1}$ the cotangent bundle of the tensor product of the standard representations. According to \cite[Conjecture~14.3.5]{BZSV}, if $\Pi$ is an automorphic representation of $G$ with Arthur parameter $\Psi$, the (regularized) period $\cP_H$ should be zero on $\Pi$ as soon as the set of fixed points for the action of the hypothetical Langlands dual group of $F$ via $\Psi$ on a Slodowy slice $M^\vee_\mathrm{slice}$ of $M^\vee$ (which depends on $\Psi$) is empty. Using the reformulation of \cite[Section~4]{GGP2}, one can check that if $(I,P,\pi) \in \Pi_H$ and $\lambda \in \fa_{\pi,\cc}^*-\underline{\rho}_\pi$, then the action of the Arthur parameter of the induced representation $\cA_{P,\pi,\lambda}(G)$ on $M^\vee_\mathrm{slice}$ indeed admits fixed points. The second reason comes from local theory. It has recently been checked in \cite{Pat} that, over $p$-adic fields, the unitary $H$-distinguished representations of $G$ are exactly the local counterparts of the $\cA_{P,\pi,\lambda}(G)$, that is inductions of Speh representations that satisfy the same combinatorial condition as \S\ref{subsubsec:relevant_intro}. This is consistent with \cite{BZSV}, and provides further evidence that the linear forms $Z_{\sigma_\pi,v}^\natural$ should be non-zero.

\subsubsection{The fine spectral expansion}

We can finally describe the spectral expansion of $J^H$. Let $(I,P,\pi) \in \Pi_H$. We write $\cB_{P,\pi}$ for the Hilbert basis of $\cA_{P,\pi}(G)$ for the Petersson innner product defined in \S\ref{subsec:bases}. For a Schwartz function $f \in \cS(G(\bA))$, $g \in G(\bA)$ and $\lambda \in \fa_{\pi,\cc}^*$ in general position, we can consider the relative character 
\begin{equation}
    \label{eq:relative_character_intro}
     J_{(I,P,\pi)}^H(g,f,\lambda)=\sum_{\varphi \in \cB_{P,\pi}} E(g,I_P(f,\lambda+\underline{\rho}_\pi)\varphi,\lambda+\underline{\rho}_\pi) \overline{\cP_{\pi}(\varphi,-\overline{\lambda}-\underline{\rho}_\pi)}.
\end{equation}
The regularized period $\cP_{\pi}(\varphi,-\lambda-\underline{\rho}_\pi)$ may have poles in the region $i \fa_\pi^*$, but they are compensated by zeros of the Eisenstein series (which are regular in this region). Their product therefore defines an holomorphic function on $i \fa_\pi^*$ which is moreover of rapid decay. The sum in \eqref{eq:relative_character_intro} is absolutely convergent and the map $f \mapsto J_{(I,P,\pi)}^H(g,f,\lambda)$ is continuous. All these properties are proved in Lemma~\ref{lem:bound_relative_character}. Finally, we have functional equations (Corollary~\ref{cor:independence_choice_couple})
\begin{equation*}
    J^H_{(I,w.P,w.\pi)}(g,f,w \lambda)=J^H_{(I,P,\pi)}(g,f,\lambda), \quad w \in W(\pi).
\end{equation*}
We now equip $i \fa_\pi^*$ with the Haar measure described in \S\ref{subsubsec:i_a_pi_measure}, which depends on that of $H(\bA)$. We write the main result of this paper.

\begin{theorem}
    \label{thm:spectral_expansion_intro}
    Let $f \in \cS(G(\bA))$ be a Schwartz function, let $g \in G(\bA)$. Then we have
    \begin{equation}
        \label{eq:J_expansion}
        J^H(g,f)=\sum_{(I,P,\pi) \in \Pi_H} \frac{1}{\Val{W(\pi)}} \int_{i \fa_\pi^*} J_{(I,P,\pi)}^H(g,f,\lambda) d \lambda,
    \end{equation}
    where this double integral is absolutely convergent.
\end{theorem}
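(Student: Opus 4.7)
The plan is to start from the Langlands spectral expansion of the automorphic kernel,
\begin{equation*}
    K_f(g,h) = \sum_{(P_0,\sigma)} \frac{1}{|W(\sigma)|} \int_{i\fa_{P_0}^*} \sum_{\phi \in \cB_{P_0,\sigma}} E(g, I_{P_0}(f,\mu)\phi,\mu)\, \overline{E(h,\phi,\mu)} \, d\mu,
\end{equation*}
indexed by cuspidal data $(P_0,\sigma)$, and to integrate it termwise against $[H]$ to compute $J^H(g,f)$. The immediate obstacle is that the Eisenstein series $E(h,\phi,\mu)$ is not of rapid decay on $[H]$, so the naive inner integral $\int_{[H]} \overline{E(h,\phi,\mu)}\,dh$ diverges. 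I would bypass this exactly as in \cite{BoiZ}, replacing it with its regularized version, which by \cite{IY} coincides (for $\mu$ in a cone of absolute convergence) with the Rankin--Selberg zeta integral $Z(E(\phi,\cdot))$, a meromorphic function of $\mu$ on all of $\fa_{P_0,\cc}^*$. I would then shift the $\mu$-contour from this cone back to $i\fa_{P_0}^*$, à la Langlands, picking up residues at every polar hyperplane of $\mu \mapsto Z(E(\phi,\mu))$ crossed along the way.

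By the analytic structure of $Z$ worked out in \cite{IY} and \cite{BoiZ}, these polar hyperplanes form a finite family indexed, through the Mœglin--Waldspurger parametrization of $\Pi_\disc$, by precisely the combinatorial patterns encoded in the tuples $(I,P,\pi) \in \Pi_H$: each hyperplane corresponds to a residual representation built from $\sigma$ by Speh-type constructions, and its affine position relative to $i\fa_{P_0}^*$ is controlled by the shift $\underline{\rho}_\pi$. For each crossed hyperplane, the iterated residue of $Z(E(\phi,\mu))$ factors through the surjection $E^{P,*}(\cdot,\lambda-\nu_\pi) : \cA_{P_0,\sigma,\lambda-\nu_\pi}(G) \twoheadrightarrow \cA_{P,\pi,\lambda}(G)$ and coincides, by the very definition of $\cP_\pi$ in Theorem~\ref{thm:GGP_global_intro}, with the linear form $\cP_\pi(\cdot,\lambda)$. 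Rewriting the corresponding slice of the kernel sum against an orthonormal basis of $\cA_{P,\pi}(G)$ via this surjection and performing the change of variable $\mu = \lambda + \underline{\rho}_\pi$ yields exactly the relative character \eqref{eq:relative_character_intro}, integrated over $\lambda \in i\fa_\pi^*$. The reorganization of the multiplicity $1/|W(\sigma)|$ into $1/|W(\pi)|$ follows from the fact that different cuspidal data $(P_0,\sigma)$ contributing to the same hyperplane are exchanged by elements of $W(\pi)$, via the functional equations of Corollary~\ref{cor:independence_choice_couple}.

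The hardest part is the analytic justification. Shifting the contour, exchanging the $[H]$-integral with the spectral integral, and proving absolute convergence of the final double integral in \eqref{eq:J_expansion} all require uniform polynomial estimates in vertical strips for Eisenstein series, for their iterated residues along arbitrarily nested singular hyperplanes, and for the spectral kernel itself. These are precisely the \emph{bounds and singularities for discrete Eisenstein series in the positive Weyl chamber} announced in the abstract. Combined with the Schwartz decay of $f$ and standard estimates on the moderate growth of automorphic forms on Siegel sets, they guarantee that each intermediate residual contour integral converges absolutely, that the truncation/tail of the sum over $\Pi_H$ and of the basis sum over $\cB_{P,\pi}$ is controlled, and therefore that the expansion \eqref{eq:J_expansion} holds with absolute convergence of the double integral.
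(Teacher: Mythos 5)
Your plan starts from the Langlands spectral expansion of the full automorphic kernel $K_f(g,h)$ and attempts to integrate it termwise against $[H]$, regularizing via $Z(E(\phi,\cdot))$ and then shifting contours. This is precisely the route the paper identifies as impracticable: as discussed in \S1.3, the term-by-term exchange of $\int_{[H]}$ with the $\mu$-integral forces you to move pieces of the Langlands expansion off the unitary axis, which ``basically amounts to reversing the proof of \cite{Langlands}'' and generates the same intricate cascade of residual compensations that occur in Langlands' original argument. Your proposal is also internally inconsistent on this point: you begin with the contour at $i\fa_{P_0}^*$ (where $Z(E(\phi,\mu))$ already has poles nearby and the orthonormal-basis sum is not absolutely integrable against the period), yet you also describe shifting ``from this cone back to $i\fa_{P_0}^*$'' --- there is no formula with the contour in a cone to start from without first redoing Langlands' pseudo-Eisenstein analysis.

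The key ingredient you are missing is the Rankin--Selberg unfolding that constitutes Step 1 of the paper (Proposition~\ref{prop:zeta_unfold}): for $F(\cdot)=K_f(g,\cdot)$,
\begin{equation*}
\int_{[H]} F(h)\,dh \;=\; \sum_{r=0}^{n} \int_{K_H}\bigl( \langle \cdot,\cdot \rangle_r \otimes Z_{n-r}(\cdot,0) \bigr)\bigl(R(k)F_{P_r}\bigr)\,dk,
\end{equation*}
where $F_{P_r}$ are constant terms along the Rankin--Selberg parabolics $P_r$. This replaces the divergent $[H]$-period of the kernel by a finite sum of absolutely convergent expressions on Levi subgroups, and it is at this point --- not at the level of $G$ itself --- that Langlands' decomposition and the convergent zeta integral are applied, putting the resulting contour at $i\fa_\pi^*+\underline{\rho}_\pi - s\underline{z}_r$ with $\Re(s)\gg 0$ as in \eqref{eq:to_shift}. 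The subsequent contour shifts are then manageable precisely because the unfolding isolates the $r>0$ residual contributions and limits how far the contour must travel; this is the ``miracle'' noted after \eqref{eq:J_expansion} that avoids Langlands-type cancellations. Without the unfolding, you do not obtain the tuples with $n_+,n_->0$ in $\Pi_H$ (cf.\ Examples~\ref{ex:scalar} and~\ref{ex:periods}) and you have no way of organizing the residues. Finally, you collapse the paper's Steps 2 and 3 --- residues from discrete Eisenstein series (Theorem~\ref{thm:analytic_Eisenstein}, with nontrivial pole compensations) versus residues from the regularized period $\cP_\pi$ itself (Proposition~\ref{prop:order_residues}) --- into a single step; these are genuinely different singularities requiring different machinery, and the order in which they are crossed matters for the bookkeeping that lands on $\Pi_H$.
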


The maps $\lambda \mapsto  J_{(I,P,\pi)}^H(g,f,\lambda)$ are meromorphic and therefore the contour in the integral \eqref{eq:J_expansion} may be shifted. In particular, one can replace $\underline{\rho}_\pi$ by any element in $\fa_{P_I}^*$ of the form $((t,0,0,-s),(1/2-t,0,0,-1/2+s))$, $0<t,s<1/2$. Moving the contour further away might result in poles of Eisenstein series.

\subsection{About Theorem~\ref{thm:spectral_expansion_intro}}

Although the statement of Theorem~\ref{thm:spectral_expansion_intro} seems analogous at first glance to fine spectral expansions for other relative trace formulae, in particular those of \cite{Lap06} and \cite{Ch}, we emphasize that our case presents an additional difficulty. Namely, the strategy in \cite{Lap06} and \cite{Ch} is to start from the spectral expansion of the kernel function $K_f$ from \cite{Art78}, which follows from the spectral expansion of $L^2([G])$ of \cite{Langlands}, and then to make it commute with the considered automorphic period. This involves serious analytic obstacles which are overcomed by using truncation in the spirit of \cite{JLR}. A key feature of these proofs is that the contours of integration never leave a neighborhood of the unitary axis $i \fa_P^*$. In our case, this approach proves to be impracticable as to make the different truncated integrals converge one has to shift parts of the spectral expansion of $K_f$ off the unitary axis. This basically amounts to reversing the proof of \cite{Langlands} and, as one can expect, produces very intricate residual contributions. This specificity of the Rankin--Selberg period is reflected in the expansion \eqref{eq:J_expansion} of Theorem~\ref{thm:spectral_expansion_intro} by the appearance of the $\underline{\rho}_\pi$ shifts. In contrast, the formulae in \cite{Lap06} and \cite{Ch} only involve unitary terms. 

Our strategy to prove Theorem~\ref{thm:spectral_expansion_intro} does not start from the decomposition of $L^2([G])$. Instead, we reproduce the argument of \cite{Langlands} in the case of our Rankin--Selberg period, which means that we proceed by shifting contours of from integrals along unramified characters with very positive real parts back to the unitary axis. However, we emphasize that we do use \cite{Langlands} later in the proof as a black box to simplify some computations of residual contributions. The miracle in our proof is that, after applying this trick and choosing the contours carefully, all the residues we gain along the way actually contribute to the expansion of Theorem~\ref{eq:J_expansion}. This is in sharp contrast with \cite{Langlands} where intricate compensations occur (see e.g. \cite[Section~4]{Labesse}). 

We now present the main steps of the proof of Theorem~\ref{thm:spectral_expansion_intro} and the required technical inputs. We will also explicitly write our argument for the simple $\GL_1 \times \GL_2$ example in Section~\ref{sec:example}.

\subsubsection{Step 1: coarse unfolding of the Rankin--Selberg integral}
\label{subsubsec:coarse_unfolding}

The first step does not involve shift of contours and is the subject of \S\ref{subsec:unfolding}. For fixed $f \in \cS(G(\bA))$ and $g \in G(\bA)$, set $F(g')=K_f(g,g')$ for $g' \in [G]$. Then $F$ is a Schwartz function on $[G]$ (see \S\ref{subsubsec:spaces_functions}). For every integer $0 \leq r \leq n$, let $P_r$ be the standard parabolic subgroup of $G$ with standard Levi factor $M_r:=(\GL_r \times \GL_{n-r}) \times (\GL_r \times \GL_{n+1-r})$. Let $F_{P_r}$ be the constant term of $F$ along $P_r$. Let $K_H$ be the standard maximal compact subgroup of $H(\bA)$ and write $R$ for its action by right translations on $F$. By applying a classical Rankin--Selberg unfolding argument, we arrive at an expression of the form 
\begin{equation}
    \label{eq:RS_unfolding}
    \int_{[H]} F(h) dh=\sum_{r=0}^{n} \int_{K_H}\left( \langle \cdot,\cdot \rangle_r \otimes Z_{n-r}(\cdot,0) \right)(R(k)F_{P_r}) dk.
\end{equation}
Here we mean that we regard $F_{P_r}$ as a function on $[M_r]$ to which we apply the inner-product obtained by integrating on the diagonal $[\GL_r]$ and the Rankin--Selberg Zeta integral relative to $\GL_{n-r} \subset \GL_{n-r} \times \GL_{n-r+1}$, evaluated at zero. We refer to Proposition~\ref{prop:zeta_unfold} for a precise statement.

Let $0 \leq r \leq n$. We now compute the spectral expansion of the linear form $ \langle \cdot,\cdot \rangle_r \otimes Z_{n-r}(\cdot,0) $ which we view as defined on a certain space of functions on the Levi $[M_r]$. For $\langle \cdot,\cdot \rangle_r$, we can directly use the spectral expansion of \cite{Langlands}. For the Zeta integral, it is not too difficult to write that of $Z_{n-r}(\cdot,s)$ for $\Re(s)$ large enough. This boils down to the fact that this map is continuous for functions of fixed moderate growth (see Lemma~\ref{lem:zeta_convergence}). Moreover, $Z_{n-r}$ will kill all non-generic (hence here non-cuspidal) contributions. Thanks to the adjunction between constant terms and Eisenstein series and the description of the periods $\cP_\pi$ by parabolic descent (Proposition~\ref{prop:parabolic_descent}), we can induce our expansion back to $G$. The final result in the language of Theorem~\ref{thm:spectral_expansion_intro}, is that for any $\Re(s)$ large enough
\begin{equation}
    \label{eq:to_shift}
    \int_{[H]} F(h) dh=\sum_{r=0}^n \sum_{(I_r,P,\pi) \in \Pi_H} \frac{1}{\Val{W(\pi)}}\int_{i \fa_\pi^* +\underline{\rho}_\pi - s \underline{z}_r} \sum_{\varphi \in \cB_{P,\pi}}E(g,I_P(f,\lambda)\varphi,\lambda) \overline{\cP_{\pi}(\varphi,-\overline{\lambda})}  d\lambda,
\end{equation}
where $I_r=(r,n-r,n+1-r,0)$ and $\underline{z}_r \in \fa_{P_r}^*$ is the element with coordinates $((0,1),(0,1))$. Note that this imposes that the representations $\pi_{1,i}$ and $\pi_{2,j}$ in \eqref{eq:pi_decompo_intro} are cuspidal. The appearance of $\underline{\rho}_\pi$ boils down to a computation of modular characters. 

\subsubsection{Step 2: residues of Eisenstein series}
\label{subsubsec:step2}

To end the proof of Theorem~\ref{thm:spectral_expansion_intro}, we want to shift the contour of integration in \eqref{eq:to_shift} to the regions $i \fa_\pi^* +\underline{\rho}_\pi$. This has to be done in two main steps, the first being to go to $i \fa_\pi^* +\underline{\rho}_\pi - 1/2 \underline{z}_r$. This is done in \S\ref{subsec:additional}.

To begin with, we need to ensure that our integrand is meromorphic and of rapid decay in vertical strips in our region of integration. This requires a majorization of the Eisenstein series $E(g,I_P(f,\lambda)\varphi,\lambda)$ for $\Re(\lambda)$ in a neighborhood of the positive Weyl chamber. This intermediate result is the content of Theorem~\ref{thm:bound_Eisenstein}. It is an extension of \cite[Theorem~3.9.2.1]{Ch} which derived such a bound for $\lambda$ in a neighborhood of the imaginary axis $i \fa_P^*$, following the strategy of \cite{Lap2}. The proof relies on deep results including bounds towards the Ramanujan conjecture from \cite{LRS} and zero-free regions for automorphic $L$ functions from \cite{Bru06} and \cite{Lap2} which allow us to control Eisenstein series slightly to the left of the imaginary axis. 

We now have to understand the singularities of meromorphic functions
\begin{equation*}
    \lambda \mapsto E(g,I_P(f,\lambda)\varphi,\lambda) \overline{\cP_{\pi}(\varphi,-\overline{\lambda})},
\end{equation*}
When moving the contour to $i \fa_\pi^* +\underline{\rho}_\pi - 1/2 \underline{z}_r$, we can show that all the poles we encounter come from the Eisenstein series $E(g,I_P(f,\lambda)\varphi,\lambda)$. We determine their possible singularities for $\Re(\lambda)$ in a neighborhood of the positive Weyl chamber in Theorem~\ref{thm:analytic_Eisenstein}. We emphasize that the situation for discrete Eisenstein series is far more complicated than for those induced from cuspidal automorphic forms as intricate compensations of poles can occur in their constant terms. We refer to \cite{Hegde} where this phenomenon was studied in details for unramified forms on split reductive groups. For $\GL_n$, it turns out that Theorem~\ref{thm:analytic_Eisenstein} was already contained (up to some mild reformulation) in \cite{MW89}. The answer is that, in the language of \cite{BZ}, singularities arise when segments of $\Speh(\sigma,d)$ and $\Speh(\sigma',d')$ are linked (see \S\ref{subsubsec:BZ_segments}). This useful fact seems to be ignored by later references dealing with the subject (e.g. \cite{HM} or \cite{GS}).

In any case, we now know which singularities we cross during our shift of contours. The next step is to describe the representations spanned by the corresponding residues of $E(g,I_P(f,\lambda)\varphi,\lambda)$. In general, this is a hard question (see e.g. \cite{HM} and \cite{GS}), but for the singularities we consider it is straightforward. More precisely, they arise from links between segments associated to $\Speh(\sigma,d-1)$ and $\sigma$, for some cuspidal representation $\sigma$. The resulting residues is a twist of $\Speh(\sigma,d)$ by \cite{MW89} (see Lemma~\ref{lem:compute_residues}).

The outcome of the second step is an expansion of the form 
\begin{equation}
    \label{eq:to_shift_2}
    \int_{[H]} F(h) dh=\sum_{r=0}^n \sum_{(I,P,\pi) \in \Pi_{H,r}} \frac{1}{\Val{W(\pi)}} \int_{i \fa_\pi^* +\underline{\rho}_\pi - 1/2 \underline{z}_r} \sum_{\varphi \in \cB_{P,\pi}}E(g,I_P(f,\lambda)\varphi,\lambda) \overline{\cP_{\pi}(\varphi,-\overline{\lambda})}  d\lambda,
\end{equation}
where $\Pi_{H,r}$ is a certain subset of $\Pi_H$. We refer to Proposition~\ref{prop:I_pi_exp} for a precise statement.

\subsubsection{Step 3: residues of regularized periods}
\label{subsubsec:step3}
We finally move the contour of integration in \eqref{eq:to_shift_2} to $i \fa_\pi^* +\underline{\rho}_\pi$. This is the content of \S\ref{subsec:this_is_the_end}. The singularities that we now cross are those of $\overline{\cP_{\pi}(\varphi,-\overline{\lambda})}$. By Theorem~\ref{thm:GGP_global_intro}, these regularized periods are built by taking residues of Rankin--Selberg Zeta integrals $Z(E(\phi,\lambda))$ of Eisenstein series induced from cuspidal automorphic representations. Using this description, we see that residues of $\cP_\pi(\varphi,\lambda)$ are also iterated residues of $Z(E(\phi,\lambda))$, and therefore can be written as some $\cP_{\pi'}$ for another triple $(I',P',\pi')$. Therefore, the bookeeping needed to keep track of the residual contributions in this third step is less involved than in the second one. These additional terms are always twisted by a non-zero $\underline{\rho}_\pi$. Once this last wave of shifts of contours is finished, Theorem~\ref{thm:spectral_expansion_intro} is proved.

\subsection{Outline of the paper}

The paper is organized as follows. Section \ref{sec:example} presents our argument in the simple example $G=\GL_1 \times \GL_2$. In Section~\ref{sec:preliminaries}, we fix some notations and prove some preliminary results pertaining to automorphic forms and spaces of functions on automorphic quotients. In Section~\ref{chap:poles}, we recall the main results on the classification of discrete automorphic forms on $\GL_n$ from \cite{MW89}. In particular, we study the poles of global intertwining operators $M(w,\lambda)$ and discrete Eisenstein series $E(\varphi,\lambda)$ for $\Re(\lambda)$ in a neighborhood of the positive Weyl chamber (Proposition~\ref{prop:M_regular} and Theorem~\ref{thm:analytic_Eisenstein}). We finally extend the results of \cite{Ch} to bound Eisenstein series in this region (Theorem~\ref{thm:bound_Eisenstein}). We then proceed in Section~\ref{chap:IYZ_periods} to recall the framework of \cite{BoiZ} on regularized Rankin--Selberg periods, following \cite{IY} and \cite{Zydor}. In particular, we describe them using parabolic descent in Proposition~\ref{prop:parabolic_descent}, and use this to find their singularities in Proposition~\ref{prop:reg_P} and bound them in Proposition~\ref{prop:individual_bound_P}. We finally compute their residues in Proposition~\ref{prop:order_residues}. In Section~\ref{chap:RS_non_tempered}, we extend the construction of the regularized linear forms $\cP_\pi$ of \cite{BoiZ} to the general case of triples $(I,P,\pi) \in \Pi_H$. We prove Theorem~\ref{thm:GGP_global_intro} in \S\ref{subsubsec:proof_main_theorem}. We also present in \S\ref{subsec:increasing} a different set of \emph{increasing} inducing data $\Pi_H^\uparrow$ which appears naturally when computing the spectral expansion. Finally, in Section~\ref{sec:pseudo_spectral} we prove Theorem~\ref{thm:spectral_expansion_intro}. The argument is divided in the three steps of \S\ref{subsec:unfolding}, \S\ref{subsec:additional} and \S\ref{subsec:this_is_the_end} presented above. In \S\ref{subsec:end_proof} we group together all the contributions to end the proof of Theorem~\ref{thm:spectral_expansion_intro}.

\subsection{Acknowledgement}

The author thanks Rapha\"el Beuzart-Plessis for helpful discussions and comments. He is also grateful to Wee Teck Gan, Erez Lapid and Tasho Kaletha for suggestions on an earlier version of this text.

This work was partly funded by the European Union ERC Consolidator Grant, RELANTRA, project number 101044930. Views and opinions expressed are however those of the author only and do not necessarily reflect those of the European Union or the European Research Council. Neither the European Union nor the granting authority can be held responsible for them. The author was also supported by the Max Planck Institute for Mathematics in Bonn, that he thanks for its hospitality
and financial support.

\section{The \texorpdfstring{$\GL_1 \times \GL_2$}{GL1 x GL2}-case}
\label{sec:example}

We now explain our shifts of contours of integration for the simple example $G=\GL_1 \times \GL_2$ and $H=\GL_1$. In that case, the result is certainly not new (see e.g. \cite{Jac86} for a closely related computation). We refer to the main text for some technical analytic bounds that we use. We also ignore the question of normalizing the measures, for which we refer to \S\ref{subsubsec:i_a_pi_measure}.

We will use the notation from Section~\ref{sec:intro}. We take $f \in \cS(G(\bA))$, $g \in G(\bA)$ and set $F(g')=K_f(g,g')$ for $g' \in [G]$. The unfolding expansion for the Rankin--Selberg integral from \eqref{eq:RS_unfolding} reads
\begin{equation}
    \label{eq:coarse_zeta_unfold}
    \int_{[H]} F(h) dh=Z_2(F,0)+\int_{[\GL_1]} F_{B}(h)dh,
\end{equation}
where $B$ is the Borel subgroup of upper triangular matrices in $G$. Here $Z_2(F,0)$ is the Zeta integral from \cite{JPSS83}. It is the integral of a Whittaker coefficient of $F$ along $H(\bA)$ (see \eqref{eq:Z_r}). Let $B_2$ be the Borel subgroup of $\GL_2$, so that $B=\GL_1 \times B_2$. We now write the expansion of each term separately.

\subsection{The main contribution}

We start with $Z_2(F,0)$. For $s \in \cc$, note that $F_{-s}:=F(h) \Val{h}^{-s}$ is still of rapid decay, and that $Z_2(F,0)=Z_2(F_{-s},s)$. We now apply the spectral expansion of \cite{Langlands} to $F_{-s}$. It reads
\begin{align*}
    F_{-s}&=\sum_{\chi_1} \sum_{\pi} \sum_{\varphi \in \chi_1 \boxtimes \pi} \int_{i \fa_G^*} \langle F,\varphi_{\lambda-\overline{s}} \rangle_G \varphi_{\lambda} d \lambda + \frac{1}{2}\sum_{\chi_1} \sum_{\chi_2} \sum_{\varphi \in \chi_1 \boxtimes \cA_{B_2,\chi_2}(\GL_2)} \int_{i \fa_{B}^*} \langle F,E(\varphi,\lambda-\overline{s}) \rangle_G E(\varphi,\lambda) d \lambda \\
    &+ \sum_{\chi_1,\chi_1'}  \int_{i \fa_{G}^*} \langle F,(\chi_1 \boxtimes \chi_1')_{\lambda-\overline{s}} \rangle_G (\chi_1 \boxtimes \chi_1')_{\lambda} d \lambda.
\end{align*}
The notations we use are as follows. $\chi_1$ and $\chi_1'$ range along automorphic characters of $\GL_1$ (and we write again $\chi_1'$ for $\chi_1' \circ \det$), and $\chi_2$ along automorphic characters of $(\GL_1)^2$. These characters are trivial on $A_{\GL_1}^\infty$ and $A_{(\GL_1)^2}^\infty$ respectively. $\pi$ ranges along cuspidal representations of $\GL_2$ (with trivial central character on $A_{\GL_2}^\infty$). $\varphi$ ranges along orthonormal bases for the Petersson inner products. $s$ is the element in $\fa_{G,\cc}^*$ corresponding to the character $(g_1,g_2)\mapsto \Val{g_1}^{s/2}\Val{\det g_2}^{s/2}$. Finally $\langle \cdot,\cdot \rangle_G$ is the inner product given by integrating along $[G]$. By Corollary~\ref{cor:Langlands_spectral_extended}, this expression is absolutely convergent in some space of function with large fixed growth $\cT_N([G])$ independent of $s$, and by \cite[Lemma~7.1.1.1]{BPCZ} $Z_2(\cdot,s)$ defines a continuous linear form on this space for $\Re(s)$ large enough. As characters of $\GL_2$ are not generic, we arrive at
\begin{equation}
    \label{eq:zeta_expansion_intro}
    Z_2(F,0)=\sum_{\chi_1, \pi, \varphi} \int_{i \fa_G^*+s} \langle F,\varphi_{-\overline{\lambda}} \rangle_G Z_2(\varphi_\lambda) d \lambda + \frac{1}{2}\sum_{\chi_1, \chi_2, \varphi} \int_{i \fa_{B}^*+s} \langle F,E(\varphi,-\overline{\lambda}) \rangle_G Z_2(E(\varphi,\lambda)) d\lambda,
\end{equation}
where we write $Z_2(\varphi_\lambda)$ and $Z_2(E(\varphi,\lambda))$ for $Z_2(\varphi_\lambda,0)$ and $Z_2(E(\varphi,\lambda),0)$, both Zeta integral being absolutely convergent for our $\lambda$. This ends the manipulations of step 1 from \S\ref{subsubsec:coarse_unfolding} for this contribution. We now shift the contour in each integral.

\subsubsection{Cuspidal contributions}

We start with the first term in \eqref{eq:zeta_expansion_intro}. We fix $\chi_1,\pi$ and $\varphi$. There is no Eisenstein series here, so that we may skip step 2 an go directly to step 3 from \S\ref{subsubsec:step3}. The only poles come from $Z_2(\varphi_\lambda)$. If we write $\lambda=(\lambda_1,\lambda_2) \in \fa_{G,\cc}^*$, we know that for factorizable $\varphi$ (which we can arrange all our elements in the orthonormal bases to be), we have a finite set of places $\tS$ of $F$ such that 
\begin{equation*}
    Z_2(\varphi_\lambda)=L(1/2+\lambda_1+\lambda_2,\chi_1 \times \pi) \times \prod_{v \in \tS} Z^\natural_{v}(\varphi_{v,\lambda}).
\end{equation*}
The global $L$-functions and normalized local Zeta integrals are regular by \cite{JPSS83} as we only look at the central direction. In that case, we can shift the contour to $i \fa_G^*$ with no issue. In the language of \S\ref{subsubsec:relevant_intro}, this corresponds to the case $(I,G,\chi \boxtimes \pi) \in \Pi_H$ with $I=(0,1,2,0)$ from Example~\ref{ex:cuspi}.

\subsubsection{Continuous contributions}

We now deal with the second contribution in \eqref{eq:zeta_expansion_intro}, and fix $\chi_1$, $\chi_2$ and $\varphi$. We further write $\chi_2=\chi_2^1 \boxtimes \chi_2^2$ and $\lambda=(\lambda_1,(\lambda_2^1,\lambda_2^2))$. The factorization is
\begin{equation}
    \label{eq:Z_2_intro}
     Z_2(E(\varphi,\lambda))=\frac{L(1/2+\lambda_1+\lambda_2^1,\chi_1 \times \chi_2^1)L(1/2+\lambda_1+\lambda_2^2,\chi_1 \times \chi_2^2)}{L(1+\lambda_2^1-\lambda_2^2,\chi_2^1 \times \chi_2^{2,\vee})} \prod_{v \in \tS} \times Z^\natural_{v}(\varphi_{v,\lambda}).
\end{equation}

If $\chi_1 \neq \chi_2^{1,\vee}$ and $\chi_1 \neq \chi_2^{2,\vee}$, we shift the contour in the central direction of $s$. The Eisenstein series and the Zeta integral remain regular and neither step 2 nor 3 are needed. We obtain an integral on $i \fa_B^*$. This corresponds to the contribution $(I,B,\chi \boxtimes (\chi_1^1 \boxtimes \chi_2^2)) \in \Pi_H$ with $I=(0,1,2,0)$ from Example~\ref{ex:cuspi}.

We now assume that $\chi_1=\chi_2^{1,\vee}$ but $\chi_1 \neq \chi_2^{2,\vee}$. As before, the Eisenstein series remains regular. However, we now have a simple pole at $\lambda_1+\lambda_2^1=1/2$ coming from \eqref{eq:Z_2_intro}. Set $I=(0,0,1,1)$ and let $\chi$ be the character $\chi_1 \boxtimes (\chi_2^2 \boxtimes \chi_2^1)$ of $\GL_1 \times (\GL_1)^2$. Then $(I,P,\chi) \in \Pi_H$. Let $w$ be the non-trivial element in the Weyl group of $\GL_2$. Then if we apply $w$, the hyperplane $\lambda_1+\lambda_2^{1}=1/2$ becomes $\fa_{\chi,\cc}^*-\underline{\rho}_\chi$. For $\phi \in \cA_{B,\chi}(G)$, by Proposition~\ref{prop:parabolic_descent} (see also Example~\ref{ex:periods}) we have the simple description
\begin{equation}
    \label{eq:cP_pi_intro}
    \cP_{\chi}(\phi,\mu)=\int_{K_H} \phi_\mu(wk) dk=\phi(w), \quad \mu \in \fa_{\chi,\cc}^*-\underline{\rho}_\chi.
\end{equation}
Note that for $\mu$ in this subspace, this indeeds define a $H(\bA)$-invariant linear form on $\cA_{B,\chi,\mu}(G)$. We claim that we have the equality 
\begin{equation}
    \label{eq:residue_intro}
    \underset{\lambda_1+\lambda_2^1=1/2}{\Res} \; Z_2(E(\varphi,\lambda))=\cP_\chi(M(w,\lambda)\varphi,w\lambda).
\end{equation}
This result is proved in Proposition~\ref{prop:order_residues}.
\begin{rem}
    If we specify \eqref{eq:residue_intro} to the special case where all the characters and $\varphi$ are assumed to be unramified and if $F=\qq$ (so that there is no local factor in \eqref{eq:Z_2_intro} by \cite{CS} and \cite{Sta}), we are simply saying that the residue of the quotient of $L$-functions in \eqref{eq:Z_2_intro} is equal, up to a volume term, to the global factor of $M(w,\lambda)$.
\end{rem}
By the functional equation of Eisenstein series and a change of variable, we arrive at 
\begin{equation}
    \label{eq:intermediate_integral_intro}
    \sum_{\varphi} \int_{i \fa_{B}^*+s} \langle F,E(\varphi,-\overline{\lambda}) \rangle_G Z_2(E(\varphi,\lambda)) d\lambda=\sum_{\varphi \in \cA_{B,\chi}(G)} \int_{i \fa_{\chi}^*-\underline{\rho}_\chi+e} \langle F,E(\varphi,-\overline{\lambda}) \rangle_G \cP_{\chi}(\varphi,\lambda) d\lambda,
\end{equation}
where $\fa_\chi^*$ is the subspace $\{(\lambda_1,(\lambda_2^1,-\lambda_1))\}$, $\underline{\rho}_\chi=(-1/4,(0,-1/4))$ and $e=(0,(1/4,0))$. We now want to shift in the $\lambda_2^1$ variable to $i \fa_{\chi}^*-\underline{\rho}_\chi$ (i.e. to $\Re(\lambda_2^1)=0$). By the descrition of \eqref{eq:cP_pi_intro}, $\cP_\chi$ is clearly regular, and our contour will keep $-\overline{\lambda}$ in the positive Weyl chamber without crossing the pole of the Eisenstein series (which would occur for $\Re(\lambda_2^1)=3/4$). Therefore, we can indeed do this shift of contour. We therefore get two contributions attached to $(I,B,\chi)$ with $I=(0,1,2,0)$ and $(0,0,1,1)$. The case $\chi_1=\chi_2^{2,\vee}$ but $\chi_1  \neq \chi_2^{1,\vee}$ is the same using the functional equation.

We now move to the most difficult case where $\chi_1=\chi_2^{1,\vee}=\chi_2^{2,\vee}$. We a priori have a pole of order two in the numerator of \eqref{eq:cP_pi_intro} when the hyperplanes $\lambda_1+\lambda_{2}^1=1/2$ and $\lambda_1+\lambda_2^2=1/2$ cross. If we assume a weak version of the generalized Riemann hypothesis, we get that $E(\varphi,-\overline{\lambda})$ is regular and that $L(1+\lambda_2^1-\lambda_2^2,\chi_2^1 \times \chi_2^{2,\vee})$ has no zero for $\Re(\lambda_2^1) - \Re(\lambda_2^2) \geq -\varepsilon$ for a small $\varepsilon$. We could take advantage of this as follows. First, we move the contour in \eqref{eq:intermediate_integral_intro} to the region $i\fa_{B}^*+(1/4,(1/4+\varepsilon,1/4+\varepsilon))$. We then shift the contour in the $\lambda_2^1$ variable to the region $i\fa_{B}^*+(1/4,(1/4-\varepsilon,1/4+\varepsilon))$, thus getting the residue along $\lambda_1+\lambda_2^1=1/2$. But we are now exactly in the situation of \eqref{eq:intermediate_integral_intro} and can proceed from there, the key point is that the hyperplane $\lambda_1+\lambda_2^2=1/2$ is no longer singular for our residue. Finally, we shift the main contribution in the $\lambda_2^2$ variable to go to $i\fa_{B}^*+(1/4,(1/4-\varepsilon,1/4-\varepsilon))$ catching the additional residue along $\lambda_1+\lambda_2^1=1/2$. We may now conclude as in the $\chi_1 \neq \chi_2^{2,\vee}$.

If we don't assume some variant of the generalized Riemann hypothesis, we can still use the zero free regions from \cite{Bru06}. Because we know that the integrand in \eqref{eq:intermediate_integral_intro} is of rapid decay by Theorem~\ref{thm:analytic_Eisenstein} and Proposition~\ref{prop:individual_bound_P_pi}, we may cut its tail to focus on the region $\Val{\Im(\lambda_2^1)}, \Val{\Im(\lambda_2^2)} \leq T$ for $T$ large. We can now assume that the product of rectangles in the variables $\lambda_2^1$ and $\lambda_2^2$ centered in $(1/4,(1/4,1/4))$ with real length $2\varepsilon$ and imaginary height $T$ is contained in a zero free region, and therefore do the same manipulations as before. This is the method we use in the core of the text (see Lemma~\ref{lem:residue_n} and Lemma~\ref{lem:residue_n+1}). It is inspired by \cite{Lap06}.

\subsubsection{Final result}

Putting everything together, we arrive at 
\begin{equation}
    \label{eq:Z_2_final}
    Z_2(F,0)=\sum_{\substack{(I,P,\pi) \in \Pi_H \\ I=(0,1,2,0) \\ \text{or } I=(0,1,0,1)}} \int_{i \fa_\pi^*} \frac{1}{\Val{W(\pi)}}J_{(I,P,\pi)}^H(g,f,\lambda) d\lambda,
\end{equation}
where $\Val{W(\pi)}$ is $2$ if $I=(0,1,2,0)$, and $1$ otherwise.

\subsection{The constant term contribution}

We now deal with the second term in \eqref{eq:coarse_zeta_unfold} which is the integral of $F_{B}$ along $[\GL_1]$. Let $T=\GL_1 \times (\GL_1)^2$ be the maximal torus of diagonal matrices. The function $F_{B}$ is not of rapid decay on $[T]$, but if we fix any $N>0$ then there exists $s \in \cc$ with $\Re(s)$ large enough so that 
\begin{equation*}
    (t_1,(t_2^1,t_2^2)) \in [T] \mapsto F_{B}(t_1,(t_2^1,t_2^2)) \Val{\det g_2^2}^s
\end{equation*}
belongs to $\cT_{-N}([T])$, i.e. decreases at least as fast as $\norm{\cdot}^{-N}_T$ (see Lemma~\ref{lem:constant_growth}). Using the same trick as before, we can write the spectral expansion of $F_{B}$ as a function on $[T]$ using \cite{Langlands}, and further upgrade it to a spectral expansion depending on $F$ by the adjunction between constant terms and Eisenstein series. By repackaging things as in \eqref{eq:zeta_expansion_intro} and taking into account the modular characters, we arrive at
\begin{equation}
    \label{eq:constant_term_period}
    \int_{[\GL_1]} F_{B}(h)dh=\sum_{\substack{(I,P,\chi) \in \Pi_H \\ I=(1,0,1,0)}}  \int_{ i \fa_{\chi}^*-\underline{\rho}_\chi+s e_2^1} \sum_{\varphi \in \cA_{P,\chi}(G)} \langle F,E(\varphi,-\overline{\lambda}) \rangle_G \cP_{\chi}(\varphi,\lambda) d\lambda,
\end{equation}
where $\underline{\rho}_\chi=(1/4,(1/4,0))$, $e_2^1=(0,(0,1))$, the condition $I=(1,0,1,0)$ implies that $\chi$ is a character of $[T]$ of the form $\chi_1 \boxtimes (\chi_1^\vee \boxtimes \chi_2)$, $\fa_{\pi}^*=(\lambda_1,(-\lambda_1,\lambda_2))$, and finally
\begin{equation}
    \label{eq:regu_period_intro}
    \cP_{\chi}(\varphi,\lambda)=\int_{K_H} \varphi_\lambda(k) dk=\varphi(1), \quad \lambda \in \fa_{\chi,\cc}^*-\underline{\rho}_\chi.
\end{equation}
We now shift in the variable $\lambda_{2}^1$ to bring the region of integration from $i \fa_{\pi}^*-\underline{\rho}_\pi+s e_2^1$ to $i \fa_{\pi}^*-\underline{\rho}_\pi$. Given \eqref{eq:regu_period_intro}, the only possible singularity comes from $E(\varphi,-\overline{\lambda})$ and occurs at $\lambda_{1}+\lambda_{2}=1$ if $\chi_2=\chi_1^\vee$. If we write $\eta=\chi_1 \boxtimes (\chi_1^\vee \circ \det)$, then the residue spans the character $\eta_{-\overline{\lambda}}$ (where we project $-\overline{\lambda}$ to $\fa_{G,\cc}^*$). By Lemma~\ref{lem:compute_residues}, we have the adjunction for $\lambda$ in the singular hyperplane
\begin{equation}
    \label{eq:adjunction}
    \sum_{\varphi \in \cA_{P,\chi}(G)} \langle F,E^*(\varphi,-\overline{\lambda}) \rangle_G \cP_{\chi}(\varphi,\lambda)=\langle F,\eta_{-\overline{\lambda}} \rangle_G \cP_{\eta}(1,\lambda),
\end{equation}
where $E^*$ is the residue of the Eisenstein series and $\cP_{\eta}(1,\lambda)$ is simply constant equal to $1$. In particular, this expression is now holomorphic. This case corresponds to $((0,0,2,0),G,\eta) \in \Pi_H$ from Example~\ref{ex:resi}, so that $\fa_{\eta}=(\lambda_1,(-\lambda_1,-\lambda_1))$ and $\underline{\rho}_{\eta}=0$. We can shift the contour in the integral of the residue to $i \fa_{\eta}^*-\underline{\rho}_{\eta}$.

If we add up the main contribution of \eqref{eq:constant_term_period} with \eqref{eq:adjunction}, we obtain the formula
\begin{equation}
    \label{eq:constant_final}
    \int_{[\GL_1]} F_{B}(h)dh=\sum_{\substack{(I,B,\pi) \in \Pi_H \\ I=(1,0,1,0) \\ \text{or } I=(0,0,0,2)}} \int_{i \fa_\pi^*}\frac{1}{\Val{W(\pi)}}J_{(I,B,\pi)}^H(g,f,\lambda) d\lambda,
\end{equation}
where $\Val{W(\pi)}$ is always $1$ here. Theorem~\ref{thm:spectral_expansion_intro} now follows from putting \eqref{eq:Z_2_final} and \eqref{eq:constant_final} together.

\section{Preliminaries on automorphic forms}
\label{sec:preliminaries}
\subsection{General notation}

Let $F$ be a field of characteristic zero. All algebraic groups are defined over $F$.

\subsubsection{Reductive groups, parabolic subgroups, characters}
\label{subsubsec:reductive_gp}
Let $G$ be a connected reductive group. Let $Z_G$ be the center of $G$. Let $N_G$ be the unipotent radical of $G$ and let $X^*(G)$ be the group of $F$-algebraic characters of $G$. Set $\fa^*_G=X^*(G) \otimes_{\zz} \rr$ and $\fa_G=\Hom_\zz(X^*(G),\rr)$. Let
\begin{equation}
\label{eq:canonical_pairing}
    \langle \cdot,\cdot \rangle : \fa_G^* \times \fa_G \to \rr
\end{equation}
be the canonical pairing.

Let $P_0$ be a minimal parabolic subgroup of $G$. Let $M_0$ be a Levi factor of $P_0$. We say that a parabolic subgroup of $G$ is standard (resp. semi-standard) if it contains $P_0$ (resp. if it contains $M_0$). If $P$ is a semi-standard parabolic subgroup of $G$, we will denote by $N_P$ its unipotent radical and by $M_P$ its unique Levi factor containing $M_0$, which is said to be semi-standard. We have a decomposition $P=M_P N_P$. We denote by $\cP(M_P)$ the set of semi-standard parabolic subgroups of $G$ with semi-standard Levi $M_P$.

Let $A_G$ be the maximal central $F$-split torus of $G$. If $P$ is a semi-standard parabolic subgroup of $G$, set $A_P=A_{M_P}$. We set $\fa_0^*=\fa_{P_0}^*$, $\fa_0=\fa_{P_0}$ and $A_0=A_{P_0}$.

Let $P \subset Q$ be semi-standard parabolic subgroups of $G$. The restriction maps $X^*(Q) \to X^*(P)$ and $X^*(A_P) \to X^*(A_Q)$ induce dual decompositions $\fa_P=\fa_P^Q \oplus \fa_Q$ and $\fa_P^*=\fa_P^{Q,*} \oplus \fa_Q^*$. In particular, we have projections $\fa_0 \to \fa_P^Q$ and $\fa_0^* \to \fa_P^{Q,*}$ denoted by $X \mapsto X_P^Q$ which only depend on the Levi factors $M_P$ and $M_Q$. If $Q=G$, we omit the exponent $G$ in the previous notation.

Set $\fa_{P,\cc}^Q=\fa_{P}^Q \otimes_\rr \cc$ and $\fa_{P,\cc}^{Q,*}=\fa_{P}^{Q,*} \otimes_\rr \cc$. We still denote by $\langle \cdot, \cdot \rangle$ the pairing obtained by extension of scalars. We have decompositions $ \fa_{P,\cc}^{Q}=\fa_{P}^{Q} \oplus i \fa_{P}^{Q}, \quad \fa_{P,\cc}^{Q,*}=\fa_{P}^{Q,*} \oplus i \fa_{P}^{Q,*}$, where $i^2=-1$. We denote by $\Re$ and $\Im$ the real and imaginary parts associated to these decompositions, and by $\overline{\lambda}$ the complex conjugate of any $\lambda \in \fa_{P,\cc}^{Q,*}$.

\subsubsection{Roots, coroots, weights} \label{subsubsec:roots} Let $P$ be a standard parabolic subgroup of $G$. Let $\Delta_0^P \subset \fa_0^{P,*}$ (resp. $\Sigma_0^P \subset \fa_0^{P,*}$) be the set of simple roots (resp. of roots) of $A_0$ in $M_P \cap P_0$. If $P=G$, we write $\Delta_0$ and $\Sigma_0$. Let $\Delta_P$ (resp. $\Sigma_P$) be the image of $\Delta_0 \setminus \Delta_0^P$ (resp. $\Sigma_0 \setminus \Sigma_0^P$) by the projection $\fa_0^* \to \fa_P^*$. More generally, for $P \subset Q$ let $\Delta_P^Q$ (resp. $\Sigma_P^Q$) be the projection of $\Delta_0^Q \setminus \Delta_0^P$ in $\fa_P^{Q,*}$ (resp. $\Sigma_0^Q \setminus \Sigma_0^P$). Let $\Delta_P^{Q,\vee} \subset \fa_P^Q$ be the set of simple coroots. If $\alpha \in \Delta_{P}^Q$, we denote by $\alpha^\vee$ the associated coroot. By duality, let $\hat{\Delta}_P^Q$ be the set of simple weights. Set 
\begin{equation*}
    \fa_{P}^{Q,*,+}=\left\{ \lambda \in \fa_P^* \; | \; \langle \lambda, \alpha^\vee \rangle > 0, \; \forall \alpha \in \Delta_{P}^Q \right\}.
\end{equation*}
If $Q=G$, we drop the exponent. We denote by $\overline{\fa_{P}^{Q,*,+}}$ the closure of these open subsets in $\fa_P^Q$ and $\fa_P^{Q,*}$ respectively. If $\lambda \in \fa_P^*\setminus \{0\}$, we write $\lambda > 0$ if $\lambda$ is a nonnegative linear combination of the simple roots $\Delta_P$.

We say that a functional $\Lambda$ on $\fa_{P,\cc}^*$ is an affine linear form if it is of the form $\Lambda(\lambda)=\langle \lambda, \gamma^\vee \rangle - a$ for $\gamma^\vee \in \fa_P$ and $a \in \cc$. We call its set of zeros an affine hyperplane. If $\gamma^\vee$ is a coroot, then it is an affine root hyperplane. By "$\lambda \in \fa_{P,\cc}^*$ in general position", we mean that $\lambda$ lies outside of a countable union of affine hyperplanes.

\subsubsection{Weyl group} Let $W$ be the Weyl group of $(G,A_0)$, which is by definition the quotient of the normalizer $N_{G(F)}(A_0(F))$ by the centralizer $Z_{G(F)}(A_0(F))$. It acts on $\fa_0$ and by duality on $\fa_0^*$. If $w \in W$, we write again $w$ for a representative in $G(F)$.

Let $P=M_P N_P$ and $Q=M_Q N_Q$ be two standard parabolic subgroups of $G$. Let ${}_Q W_P$ be the set of $w \in W$ such that $M_P \cap w^{-1} P_0 w=M_P \cap P_0$ and $M_Q \cap w P_0 w^{-1} = M_Q \cap P_0$.

Let $w \in {}_Q W_P$. Set $P_w=(M_P \cap w^{-1} Q w)N_P$. By \cite[Lemme~V.4.6.]{Renard}, $P_w$ is a standard parabolic subgroup of $G$ included in $P$, with standard Levi factor $M_P \cap w^{-1} M_Q w$. In the same way, $Q_w=(M_Q \cap w P w^{-1})N_Q$ is standard parabolic subgroup of $G$ included in $Q$, with standard Levi factor $M_Q \cap w M_P w^{-1}$. Note that $w \Sigma_{P_w}^P \subset \Sigma_{Q_w}$ and $w^{-1} \Sigma_{Q_w}^Q \subset \Sigma_{P_w}$. Set
\begin{align*}
    W(P;Q)&=\{ w \in {}_Q W_P \; | \;  P_w=P \}=\{w \in {}_Q W_P \; | \; M_P \subset w^{-1} M_Q w \}, \\
    W(P,Q)&=\{w \in {}_Q W_P \; | \; M_P = w^{-1} M_Q w \}.
\end{align*}
Note that $w \in {}_Q W_P$ implies $w \in W(P_w,Q_w)$. Set
\begin{equation*}
    W(P)=\bigcup_{Q} W(P,Q).
\end{equation*}
Write $w_P$ for the longest element in $W(P)$.

If $R$ is another standard parabolic subgroup of $G$, we write ${}_Q W^R_P$ (resp. $W^R(P;Q)$ and $W^R(P,Q)$) for ${}_{Q \cap M_R} W_{P\cap M_R}$ (resp. $W(P \cap M_R; Q \cap M_R)$ and $W(P \cap M_R, Q \cap M_R)$) relatively to the reductive group $M_R$.

\subsection{Automorphic quotients and Haar measures}

We now assume that $F$ is a number field. Let $G$ be a connected reductive group over $F$.

\subsubsection{Automorphic quotients} 
\label{subsubsec:automorphic_quotients}

Let $\bA$ be the adele ring of $F$, let $\bA_f$ be its ring of finite adeles. Set $F_\infty=F \otimes_\qq \rr$. Let $V_F$ be the set of places of $F$ and let $V_{F,\infty} \subset V_F$ be the subset of Archimedean places. For $v \in V_F$, let $F_v$ be the completion of $F$ at $v$. If $v$ is non-Archimedean, let $q_v$ be the cardinality of the residual field of $F_v$ and $\oo_{v}$ be its ring of integers. Let $\Val{\cdot}$ be the absolute value $\bA^\times \to \rr_+^\times$ given by taking the product of the normalized absolute values $\Val{\cdot}_v$ on each $F_v$. 

Let $P=M_P N_P$ be a semi-standard parabolic subgroup of $G$. Set 
\begin{equation*}
    [G]_{P}=M_P(F) N_P(\bA) \backslash G(\bA).
\end{equation*}
Let $A_{P,\qq}$ be the maximal $\qq$-split subtorus of the Weil restriction $\Res_{F/\qq}A_P$, and let $A_P^\infty$ be the neutral component of $A_{P,\qq}(\rr)$. Set 
\begin{equation*}
    [G]_{P,0}=A_P^\infty M_P(F) N_P(\bA) \backslash G(\bA).
\end{equation*}
If $P=G$, we simply write $[G]$ and $[G]_0$ for $[G]_G$ and $[G]_{G,0}$ respectively.

Let $P$ be a semi-standard parabolic subgroup of $G$. There is a canonical morphism $ H_P : P(\bA) \to \fa_P$ such that $\langle \chi, H_P(g) \rangle=\log \Val{\chi(g)}$ for any $g \in P(\bA)$ and $\chi \in X^*(P)$. The kernel of $H_P$ is denoted by $P(\bA)^1$. We extend it to $ H_P : G(\bA) \to \fa_P$ which satisfies: for any $g \in G(\bA)$ we have $H_P(g)=H_P(p)$ whenever $g \in pK$ with $p \in P(\bA)$. If $P=P_0$, we write $H_0=H_{P_0}$.

We set
\begin{equation*}
    [G]_P^1=M_P(F) N_P(\bA) \backslash P(\bA)^1 K.
\end{equation*}
If $P=G$, we simply write $[G]^1$.

Let $K=\prod_{v \in V_F} K_v \subset G(\bA)$ be a "good" maximal compact subgroup in good position relative to $M_0$. We write $K=K_\infty K^\infty$ where $K_\infty=\prod_{v \in V_{F,\infty}} K_v$ and $K^\infty=\prod_{v \in V_F \setminus V_{F,\infty}} K_v$. By a \emph{level} $J$ of $G$, we mean an open-compact subgroup $J$ of $G(\bA_f)$.

\subsubsection{Modular characters}
If $P \subset Q$ are semi-standard parabolic subgroups of $G$, let $\rho_P^Q$ be the unique element in $\fa_P^{Q,*}$ such that for every $m \in M_P(\bA)$ we have
\begin{equation*}
    \Val{\det(\mathrm{Ad}_P^Q(m))}=\exp(\langle 2 \rho_P^Q,H_P(m) \rangle),
\end{equation*}
where $\mathrm{Ad}_P^Q$ is the adjoint action of $M_P$ on the Lie algebra of $M_Q \cap N_P$. For every $g \in G(\bA)$, we then set 
\begin{equation*}
    \delta_P^Q(g):=\exp(\langle 2 \rho_P^Q,H_P(g) \rangle).
\end{equation*}
In particular, when restricted to $P(\bA)\cap M_Q(\bA)$ it coincides with the restriction of the modular character of the latter. If $Q=G$, we omit the superscript.

\subsubsection{Haar measures} \label{subsubsec:first_measure} 

We take a Haar measure $dg$ on $G(\bA)$, with factorization $dg=\prod_v d g_v$ where for all place $v$, $dg_v$ is a Haar measure on $G(F_v)$. This implicitly implies that for almost all place $v$ the volume of $K_v$ is $1$. 

Let $P$ be a semi-standard parabolic subgroup of $G$. We equip $\fa_P$ with the Haar measure that gives covolume $1$ to the lattice $\Hom(X^*(P),\zz)$. We equip $A_P^\infty$ with the Haar measure compatible with the isomorphism $A_P^\infty \simeq \fa_P$ induced by $H_P$. If $P \subset Q$, we equip $\fa_P^Q=\fa_P/\fa_Q$ with the quotient measure. 

For each $v \in V_F$, we give $K_v$ the invariant probability measure. This yields a product measure on $K$. If $N$ is an unipotent group, we give $N(\bA)$ the Haar measure whose quotient by the counting measure on $N(F)$ gives $[N]$ volume $1$. We equip $M_P(\bA)$ with the unique Haar measure such that
\begin{equation}
\label{eq:Levi_measure}
    \int_{G(\bA)} f(g) dg= \int_{N_P(\bA)} \int_{M_P(\bA)} \int_K f(nmk) \exp(-\langle 2 \rho_P,H_P(m) \rangle) dkdmdn
\end{equation}
for every continuous and compactly supported function $f$ on $G(\bA)$. We equip $M_P(\bA)^1$ with the Haar measure compatible with the isomorphism $M_P(\bA)^1 \times A_P^\infty \to M_P(\bA)$.

We give $[G]_P$ the quotient of our measure on $G(\bA)$ by the product of the counting measure on $M_P(F)$ with our measure on $N_P(\bA)$. Moreover, note that the action of $a \in A_P^\infty$ by left translation on $[G]_P$ multiplies the measure by $\delta_P^{-1}(a)$. By taking the quotient of the measure on $[G]_P$ by that of $A_P^\infty$, we obtain a "semi-invariant" measure on $[G]_{P,0}$.

\subsection{Functions on automorphic quotients}

We keep the assumption that $F$ is a number field and that $G$ is connected reductive over $F$.

Let $X$ be a set, let $f$ and $g$ be two positive functions on $X$. We write
\begin{equation*}
    f(x) \ll g(x), \quad x \in X,
\end{equation*}
if there exists $C>0$ such that $f(x) \leq C g(x)$ for all $x \in X$.

\subsubsection{Smooth functions} Let $\fg_\infty$ be the Lie algebra of $G(F_\infty)$, let $\cU(\fg_\infty)$ be the enveloping algebra of its complexification and let $\cZ(\fg_\infty)$ be the center of $\cU(\fg_\infty)$. If we only care about $G(F_v)$ for a single Archimedean place $v$ of $F$, we will write $\cU(\fg_{v,\infty})$ instead.

By a level $J$ we mean a normal open compact subgroup of $K^\infty$. If $V$ is a representation of $G(\bA)$, we denote by $V^J$ its subspace of vectors fixed by $J$.

Let $V$ be a Fréchet space. We say that a function $\varphi : G(\bA) \to V$ is smooth if it is right-invariant by some level $J$ and if for every $g_f \in G(\bA_f)$, the function $g_\infty \in G(F_\infty) \mapsto \varphi(g_f g_\infty)$ is smooth in the usual sense (i.e. belongs to $C^\infty(G(F_\infty))$). We write $R$ (resp. $L$) for the actions by right-translation (resp. left-translation) of $G(\bA)$ and $\cU(\fg_\infty)$ on such smooth functions.

\subsubsection{Heights} \label{subsec:heights} We take an embedding $\iota : G \hookrightarrow \GL_n$ for some integer $n>0$. We define a height $\norm{\cdot}$ on $G(\bA)$ by 
\begin{equation*}
    \norm{g}=\prod_v \max_{1 \leq i,j \leq n} (\Val{\iota(g)_{i,j}}_v,\Val{\iota(g^{-1})_{i,j}}_v), \quad g \in G(\bA).
\end{equation*} 
If we choose another embedding $\iota'$ yielding $\norm{\cdot}'$, then there exists $r>0$ such that $\norm{g}^{1/{r}} \ll \norm{g}' \ll \norm{g}^{r}$ for $g \in G(\bA)$. By \cite[Equation~(2.4.1.1)]{BPCZ}, we have the formula
\begin{equation}
    \label{eq:mult_norm}
    \norm{gh} \ll \norm{g}\norm{h}, \quad g,h \in G(\bA).
\end{equation}
If $P$ is a semi-standard parabolic subgroup of $G$, for any $g \in G(\bA)$ we define
\begin{equation*}
    \norm{g}_P= \inf_{\delta \in M_P(F) N_P(\bA)} \norm{\delta g}.
\end{equation*}
The heights satisfy the following properties.

\begin{lem}
    \label{lem:height_properties}
    The following assertions hold.
    \begin{itemize}
        \item There exists $N>0$ such that $g \mapsto \norm{g}_G^{-N}$ is absolutely integrable on $[G]$.
        \item For every $M>0$ there exists a compact set $\cU \subset [G]$ such that $g \notin \cU$ implies $\norm{g}_G \geq M$.
    \end{itemize}
\end{lem}

\begin{proof}
    The first assertion is \cite[Proposition~A.1.1.(vi)]{Beu}. For the second, by \cite[Section~2.4.3]{BPCZ} we are reduced to proving the fact on a Siegel domain of $G$ (see \cite[Section~2.2.13]{BPCZ}), and hence for $g \in A_0^\infty$. But there we can express $\norm{g}_{G}$ using characters by \cite[Section~2.4.3]{BPCZ}, and hence easily conclude.
\end{proof}

\subsubsection{Schwartz functions} For every compact subset $C$ of $G(\bA_f)$ and every level $J$, let $\cS(G(\bA),C,J)$ be the space of smooth functions $f : G(\bA) \to \cc$ such that
\begin{itemize}
    \item $f$ is biinvariant by $J$ and is supported on $G(F_\infty) \times C$;
    \item for every integer $r \geq 1$ and $X,Y \in \cU(\fg_\infty)$ we have
    \begin{equation*}
        \norm{f}_{r,X,Y}:= \sup_{g \in G(\bA)} \norm{g}^r \Val{(R(X)L(Y)f)(g)} < \infty.
    \end{equation*}
\end{itemize}
We equip $\cS(G(\bA),C,J)$ with the family of semi-norms $\norm{ \cdot }_{r,X,Y}$, and let $\cS(G(\bA))$ be the locally convex topological direct limit of the spaces $\cS(G(\bA),C,J)$ over the pairs $(C,J)$. It is the space of Schwartz functions on $G(\bA)$, and it is an algebra for the convolution product $*$. For any level $J$, we denote by $\cS(G(\bA))^J$ its subalgebra of $J$-biinvariant functions. 

\subsubsection{Petersson innner--product} We fix a semi-standard parabolic subgroup $P$ of $G$ for the reminder of this section. We have the Hilbert space $L^2([G]_P)$ of square-integrable functions on $[G]_P$. We will also consider $L^2([G]_{P,0})$ the space of functions on $[G]_P$ that transform by $\delta_P^{1/2}$ under left-translation by $A_P^\infty$ and such that the Petersson-norm
\begin{equation*}
    \norm{\varphi}_{P,\Pet}^2=\langle \varphi,\varphi \rangle_{P,\Pet}:=\int_{[G]_{P,0}} \Val{\varphi(g)}^2 dg,
\end{equation*}
is finite. If $J$ is a level $G$, we write $L^2([G]_{P,0})^{\infty,J}$ for the space of $J$-invariant functions $\varphi$ in $L^2([G]_{P,0})$ such that the orbit map $g \mapsto g. \varphi$ is smooth. The space $L^2([G]_{P,0})^{\infty,J}$ is given the topology induced by the family of semi-norms $\norm{X.\varphi}_{P,\Pet}$ for $X \in \cU(\fg_\infty)$. Then $L^2([G]_{P,0})^\infty=\bigcup_J L^2([G]_{P,0})^{\infty,J}$ is given the locally convex direct limit topology.

\subsubsection{Spaces of functions}
\label{subsubsec:spaces_functions}
For all $N \in \rr$, $X \in \cU(\fg_\infty)$ and any smooth function $\varphi : [G]_P \to \cc$ we define
\begin{equation*}
    \norm{\varphi}_{N,X}=\sup_{x \in [G]_P} \norm{x}_P^N \Val{(R(X)\varphi)(x)}.
\end{equation*}
If $X=1$, we simply write $\norm{\varphi}_{N}$.

For every $N \in \rr$, let $\cT_N([G]_P)$ be the space of smooth functions $\varphi : [G]_P \to \cc$ such that for every $X \in \cU(\fg_\infty)$ we have $\norm{\varphi}_{-N,X} < \infty$. For every level $J$, we equip $\cT_N([G]_P)^J$ with the topology of Fréchet space induced by the family of semi-norms $(\norm{\cdot}_{-N,X})_X$. Set
\begin{equation*}
    \cT([G]_P)=\bigcup_{N>0} \cT_N([G]_P).
\end{equation*}
This is the space of \emph{functions of uniform moderate growth on $[G]_P$}. It is equipped with a natural topology of locally Fréchet space.

Let $\cS([G]_P)$ be the space of smooth functions $\varphi : [G]_P \to \cc$ such that for every $N \geq 0$ and $X \in \cU(\fg_\infty)$ we have $\norm{\varphi}_{N,X} < \infty$. For every level $J$, we equip $\cS([G]_P)^J$ with the Fréchet topology induced by the family of semi-norms $(\norm{\cdot}_{N,X})_{N,X}$. The space $\cS([G]_P)$ is the \emph{Schwartz space} of $[G]_P$.

By \cite[Section~2.5.10]{BPCZ}, $\cS([G]_P)$ is dense in $\cT([G]_P)$. It is in general not dense in $\cT_N([G]_P)$, but we have the following weaker result.

\begin{lem}
\label{lem:approx_infini}
    Let $N\in \rr$. Then the closure of $\cS([G])$ in $\cT_{N+1}([G])$ contains $\cT_{N}([G])$.
\end{lem}

\begin{proof}
    By the Dixmier--Malliavin theorem of \cite{DM}, it is enough to show that the statement holds for the topology induced by the sole norm $\norm{\cdot}_{-N-1}$. If $[G]$ is compact, this is automatic. In general, we have the following fact: for every $M>0$ there exists a compact set $\cU \subset [G]$ such that $g \notin \cU$ implies $\norm{g}_G \geq M$. By Lemma~\ref{lem:height_properties} it is enough to approximate elements in $\cT_{N}([G])$ by functions in $\cS([G])$ on compact sets. But it follows from an easy adaptation of \cite[Theorem~8.4]{MZ} that this can be done using Poincar\'e series of Schwartz functions in $\cS(G(\bA))$. 
\end{proof}

We will also make use of some non-smooth variants of the above spaces. For every $N \in \rr$, let $\cT_N^0([G]_P)$ be the space of complex Radon measures $\varphi$ on $[G]_P$ such that
\begin{equation*}
    \norm{\varphi}_{1,N}:=\int_{[G]_P} \norm{g}^{-N} \Val{\varphi(g)}  < \infty.
\end{equation*}
We equip $\cT^0_N([G]_P)$ with the topology associated to the norm $\norm{\cdot}_{1,N}$ so that it is Banach, and let $\cT^0([G]_P)$ be the locally convex direct limit of the spaces $\cT^0_N([G]_P)$. 

Let $\cS^{00}([G]_P)$ be the space of continuous measurable complex-valued functions on $[G]_P$ such that for every $N>0$ we have $\norm{\varphi}_N < \infty$. We have a pairing
\begin{equation}
\label{eq:pairing_P}
    \langle \varphi, \psi \rangle_P=\int_{[G]_P} \varphi(g) \overline{\psi}(g) , \quad \varphi \in \cS^{00}([G]_P), \quad \psi \in \cT^0([G]_P).
\end{equation}
It identifies the topological dual of $\cS^{00}([G]_P)$ with $\cT^0([G]_P)$ (see \cite[Section~2.5.9]{BPCZ}).

\subsection{Automorphic representations}

We keep the assumption that $F$ is a number field and that $G$ is reductive over $F$. 

\subsubsection{Automorphic forms}
Let $P$ be a semi-standard parabolic subgroup of $G$. We define the space of automorphic forms $\cA_P(G)$ to be the subspace of $\cZ(\fg_\infty)$-finite functions in $\cT([G]_P)$. 

For any ideal $\cJ \subset \cZ(\fg_\infty)$ of finite codimension, we denote by $\cA_{P,\cJ}(G)$ the subspace of $\varphi \in \cA_P(G)$ such that $R(z)\varphi=0$ for every $z \in \cJ$. By \cite[Section~2.7.1]{BPCZ}, there exists $N \geq 1$ such that $\cA_{P,\cJ}(G) \subset \cT_{N}([G]_P)$. We give $\cA_{P,\cJ}(G)$ the induced topology. It is independent from $N$ by \cite[Lemma~2.5.4.1]{BPCZ} and by the open mapping theorem. Then $\cA_P(G)=\bigcup_{\cJ} \cA_{P,\cJ}(G)$ is given the locally convex direct limit topology.

Let $\cA_P^0(G)$ be the subspace of $\varphi \in \cA_P(G)$ such that
\begin{equation*}
    \varphi(ag)=\exp(\langle \rho_P,H_P(a) \rangle) \varphi(g)
\end{equation*}
for every $a \in A_P^\infty$ and $g \in [G]_P$. If $P=G$ we simply write $\cA(G)$ and $\cA^0(G)$.

Let $\cA_{P,\disc}(G) \subset \cA_P^0(G)$ be the subspace of $\varphi$ such that the Petersson norm $\norm{\varphi}_{P,\Pet}$ is finite. The spaces $\cA_{P}^0(G)$ and $\cA_{P,\disc}(G)$ are given the subspace topology.

\begin{rem}
\label{rem:K_finite}
    In contrast with most references, we follow \cite{BPCZ} and do not ask that our automorphic forms are $K_\infty$-finite. By \cite{Lap}, the main results on the analytic extensions of Eisenstein series and intertwining operators in the $K_\infty$-finite case propagate to the smooth case. 
\end{rem}

\subsubsection{Discrete automorphic representations}
\label{subsubsec:discrete_automorphic_rep}
We define a discrete automorphic representation of $G(\bA)$ to be a topologically irreducible subrepresentation of $\cA_{\disc}(G)$. Let $\Pi_{\disc}(G)$ be the set of such representations. For $\pi \in \Pi_{\disc}(G)$, let $\cA_\pi(G)$ be the $\pi$-isotypic component of $\cA_{\disc}(G)$. Note that $\pi$ always has trivial central character on $A_G^\infty$.

For $\pi \in \Pi_{\disc}(M_P)$, let $\cA_{P,\pi}(G)$ be the subspace of $\varphi \in \cA_{P,\disc}(G)$ such that for all $g \in G(\bA)$ the map $ m \in [M_P] \mapsto \delta_P(m)^{-1/2} \varphi(mg)$ belongs to $\cA_\pi(M_P)$. It is a closed subspace of $\cA_{P,\cJ}(G)$ for some ideal of finite codimension $\cJ$ and we give it the induced topology. For any $\lambda \in \fa_{P,\cc}^*$, set $\pi_\lambda=\pi \otimes \exp( \langle \lambda, H_{M_P}(\cdot) \rangle )$ and for $\varphi \in \cA_{P,\pi}(G)$ define
\begin{equation*}
    \varphi_\lambda(g)=\exp( \langle \lambda, H_{P}(g) \rangle ) \varphi(g).
\end{equation*}
The map $\varphi \mapsto \varphi_\lambda$ identifies $\cA_{P,\pi}(G)$ with a subspace of $\cA_P(G)$ denoted by $\cA_{P,\pi,\lambda}(G)$. We denote by $I_P(\lambda)$ the actions of $G(\bA)$ and $\cS(G(\bA))$ we obtain on $\cA_{P,\pi,\lambda}(G)$ by transporting those on $\cA_P(G)$.

Let $\pi \in \Pi_{\disc}(M_P)$. By \cite{Flath}, it decomposes as $\pi=\otimes'_v \pi_v$. For every place $v$, we write $I_P^G \pi_v$ for the smooth parabolic induction of $\pi_v$ for $G(F_v)$.

\subsubsection{Topologies on spaces of automorphic forms}

Let $\pi \in \Pi_\disc(M_P)$ for $M_P$ some standard Levi of $G$. Because $\pi$ is discrete, we have another choice of topology on $\cA_{P,\pi}(G)$ by realizing it as a subspace of $L^2([G]_{P,0})^\infty$. The following lemma explains how to compare these two topologies. We only state it for $G=\GL_n$, although it should hold for any reductive groups. More precisely, the potential issue lies within the first assertion where information on the exponents of discrete automorphic forms is used.

\begin{lem}
\label{lem:sobolev_and_co}
    Assume that $G=\GL_n$ and let $P$ be a standard parabolic subgroup of $G$. Let $J$ be a level. Then for any $N>0$ sufficiently large there exist $X_1, \hdots, X_r \in \cU(\fg_\infty)$ such that for all $\pi \in \Pi_\disc(M_P)$ and all $\varphi \in \cA_{P,\pi}(G)^J$ we have 
    \begin{equation*}
        \norm{\varphi}_{P,\Pet} \leq \sum_{i=1}^r \norm{\varphi}_{-N,X_i}.
    \end{equation*}
    In particular, $\cA_{P,\pi}(G)^J$ is included in the space of smooth vectors $L^2([G]_{P,0})^{J,\infty}$, and if we endow it with the induced topology we have a closed embedding $\cA_{P,\pi}(G)^J \subset \cT_{N}([G]_P)^J$ (where $N$ can be chosen independently of $\pi$). 

    Conversely, without assuming that $G=\GL_n$, for any $N>0$ large enough and any $X \in \cU(\fg_\infty)$, there exist $Y_1, \hdots, Y_r \in \cU(\fg_\infty)$ such that for any $\varphi \in L^2([G]_{P,0})^{J,\infty}$ we have 
    \begin{equation*}
        \norm{\varphi}_{-N,X} \leq \sum_{i=1}^r \norm{R(Y_i) \varphi}_{P,\Pet}.
    \end{equation*}
    In fact, up to constant we can take $Y_i=\Delta^i$ where $\Delta$ is the Laplace--Beltrami operator defined in \eqref{eq:Laplace} below.
\end{lem}

\begin{proof}
    The first assertion is \cite[Lemma~3.1.2.1]{Ch}, the second is the Sobolev inequality (\cite[\S3.4, Key Lemma]{Be2}, see also \cite[Lemma~3.8.1.1]{Ch}). The last part is a consequence of \cite[Proposition~3.5]{BK}.
\end{proof}

\subsubsection{Constant terms and cuspidal representations}

For $Q$ a standard parabolic subgroup and $\varphi \in \cA_P(G)$, we have a constant term $\varphi_Q$ defined by
\begin{equation*}
    \varphi_Q(g)=\int_{[N_Q]} \varphi(ng) dn, \quad g \in [G]_Q.
\end{equation*}
Let $\cA_{P,\cusp}(G) \subset \cA^0_P(G)$ be the subspace of $\varphi$ such that $\varphi_Q=0$ for all $Q \subsetneq P$. Let $\Pi_{\mathrm{cusp}}(G)$ be the set of topologically irreducible subrepresentations of $\cA_{\mathrm{cusp}}(G)$, where we equip this space with the subspace topology from $\cA(G)$. It is a subset of $\Pi_{\disc}(G)$. 

\subsubsection{Intertwining operators} Let $P$ and $Q$ be standard parabolic subgroups of $G$. Let $\pi \in \Pi_\disc(M_P)$. Let $w \in W(P,Q)$ and $\lambda \in \fa_{P,\cc}^*$ such that $\langle \Re(\lambda), \alpha^\vee \rangle$ is large enough for any $\alpha \in \Delta_P$ such that $w \alpha <0$. For $\varphi \in \cA_{P,\pi}(G)$, consider the absolutely convergent integral
\begin{equation*}
    (M(w,\lambda) \varphi)_{w \lambda}(g)=\int_{(N_Q \cap w N_P w^{-1})(\bA) \backslash N_Q(\bA)} \varphi_\lambda(w^{-1} ng)dn, \quad g \in [G]_Q.
\end{equation*}
By \cite{Langlands} and \cite{BL}, it admits a meromorphic continuation to $\fa_{P,\cc}^*$ if $\varphi$ is $K_\infty$-finite. By \cite{Lap}, this holds for any $\varphi \in \cA_{P,\pi}(G)$ and defines a continuous intertwining operator for any regular $\lambda$
\begin{equation}
\label{eq:global_operator}
    M(w,\lambda) : \cA_{P,\pi}(G) \to \cA_{Q,w\pi}(G).
\end{equation}
By \cite[Theorem~2.3]{BL}, the singularities of $M(w,\lambda)$ are located along affine root hyperplanes.

Let $Q' \subset Q$ and $P' \subset P$ such that $w \in W(P',Q')$. Then we have for $\varphi \in \cA_{P,\pi}(G)$
\begin{equation}
\label{eq:constant_inter}
    \left(M(w,\lambda)\varphi \right)_{Q'}=M(w,\lambda)\varphi_{P'}.
\end{equation}
Moreover, if $R$ is another standard parabolic and if $w_1 \in W(P,Q)$ and $w_2 \in W(Q,R)$, by \cite[Theorem~2.3.5]{BL} we have the functional equation
\begin{equation}
\label{eq:M_equation}
    M(w_2,w_1\lambda)M(w_1,\lambda)\varphi=M(w_2 w_1,\lambda)\varphi.
\end{equation}

\subsubsection{Eisenstein series}
\label{subsubsec:Eisenstein}
Let $P \subset Q$ be standard parabolic subgroups of $G$. For any $\varphi \in \cA_{P,\disc}(G)$ and $\lambda \in \fa_{P,\cc}^*$ we define
\begin{equation}
\label{eq:partial_Eisenstein}
    E^Q(g,\varphi,\lambda)=\sum_{\gamma \in P(F) \backslash Q(F)} \varphi_\lambda(\gamma g)=\sum_{\gamma \in M_Q \cap P(F) \backslash M_Q(F)} \varphi_\lambda(\gamma g), \quad g \in G(\bA).
\end{equation}
This sum is absolutely convergent for $\Re(\lambda)$ in a suitable cone. If $\varphi$ is $K_\infty$-finite, it admits once again a meromorphic continuation to $\fa_{P,\cc}^*$ by \cite{Langlands} and \cite{BL}, and this holds for any $\varphi \in \cA_{P,\disc}(G)$ by \cite{Lap}. If $Q=G$, we simply write $E(g,\varphi,\lambda)$. By \cite[Theorem~2.3]{BL}, the singularities of $E^Q(\varphi,\lambda)$ are located along affine root hyperplanes.

For regular $\lambda$, let $E_Q^G(\varphi,\lambda)$ be the constant term of $E(\varphi,\lambda)$ along $Q$. By \cite[Lemma~6.10]{BL}, we have
\begin{equation}
\label{eq:constant_term}
    E_Q^G(\varphi,\lambda)=\sum_{w \in {}_Q W_P} E^Q(M(w,\lambda)\varphi_{P_w},w \lambda).
\end{equation}

We have the following easy relation between intertwining operators and Eisenstein series.

\begin{lem}
\label{lem:ME=EM}
    Let $\varphi \in \cA_{P,\disc}(G)$. Let $Q, Q'$ be two standard parabolic subgroups of $G$ such that $P \subset Q$. Let $w \in W(Q,Q')$. Then for regular $\lambda \in \fa_{P,\cc}^*$ we have
    \begin{equation}
    \label{eq:ME=EM}
        M(w,\lambda)E^Q(\varphi,\lambda)=E^{Q'}(M(w,\lambda) \varphi,w\lambda).
    \end{equation}
\end{lem}

\begin{proof}
   This holds in the region of absolute convergence, and for $\lambda$ in general position by analytic continuation.
\end{proof}

\subsubsection{Cuspidal components and residual automorphic forms} 
Let $\varphi \in \cA_{P}^0(G)$. As cuspidal automorphic forms are of rapid decay (\cite[Section~I.2.18.]{MW95}), for every $\varphi_0 \in \cA_{P,\mathrm{cusp}}(G)$ the pairing $\langle \varphi, \varphi_0 \rangle_{P,\Pet}$ makes sense. By \cite[Section~I.2.18]{MW95}, there exists a unique $\varphi^{\mathrm{cusp}} \in \cA_{P,\mathrm{cusp}}(G)$ such that for all $\varphi_0 \in \cA_{P,\mathrm{cusp}}(G)$ we have $    \langle \varphi, \varphi_0 \rangle_{P,\Pet}=\langle \varphi^{\mathrm{cusp}}, \varphi_0 \rangle_{P,\Pet}$. This definition is then generalized to any $\varphi \in \cA_P(G)$ that is finite under the action by left translation of $A_P^\infty$ (see \cite[Section~I.3.4]{MW95}). The tuple $(\varphi_Q^{\mathrm{cusp}})_{Q \subset P}$ (where $\varphi_Q^{\mathrm{cusp}}$ is the cuspidal component of the constant term $\varphi_Q$) is called the family of cuspidal components of $\varphi$. 

If $\varphi^\cusp=0$, then we say that $\varphi$ is \emph{residual}. For any regular $\lambda$ we have $E(\varphi,\lambda) \in \cA(G)$. If this Eisenstein series is proper (that is if $P\neq G$), then $E(\varphi,\lambda)$ is residual

\subsubsection{Pairs, triples and bases}
\label{subsec:bases}
Let $P$ be a standard parabolic subgroup of $G$. For any level $J$, let $\Pi_{\disc}(M_P)^J$ (resp. $\Pi_{\mathrm{cusp}}(M_P)^J$) be the subset of $\pi \in \Pi_{\disc}(M_P)$ (resp. $\pi \in \Pi_{\mathrm{cusp}}(M_P)$) such that $\cA_{P,\pi}(G)^J \neq \{0\}$. Let $e_J$ the measure supported on $J$ of volume $1$. Then the projection $R(e_J)$ sends $\cA_{P,\pi}(G)$ to $\cA_{P,\pi}(G)^J$.

Let $\widehat{K}_\infty$ be the set of isomorphism classes of irreducible unitary representations of $K_\infty$. For any $\tau \in \widehat{K}_\infty$, let $\cA_{P,\pi}(G)^\tau$ be $\tau$-isotypic component of $\cA_{P,\pi}(G)$. For any level $J$, set
\begin{equation*}
    \cA_{P,\pi}(G)^{\tau,J}=\cA_{P,\pi}(G)^{\tau} \cap \cA_{P,\pi}(G)^{J}.
\end{equation*}

Following \cite[Section~3.2.3]{Ch}, we call $J$-pair any $(P,\pi)$ where $P$ is a standard parabolic subgroup of $G$ and $\pi \in \Pi_{\disc}(M_P)^J$. If $\tau \in \widehat{K}_\infty$, we call $\tau$-pair any $(P,\pi)$ with $\cA_{P,\pi}(G)^\tau \neq \{0\}$. We call a $J$-triple any $(P,\pi,\tau)$ where $(P,\pi)$ is a $J$-pair and $\tau \in \widehat{K}_\infty$ with the additional requirement $\cA_{P,\pi}(G)^{\tau,J} \neq \{0\}$. Note that this subspace is always of finite dimension. Let $\cB_{P,\pi}(\tau,J)$ be an orthonormal basis of $\cA_{P,\pi}(G)^{\tau,J}$ with respect to $\langle \cdot,\cdot \rangle_{P,\Pet}$. We then define $\cB_{P,\pi}(J)$ to be the union over $\tau \in \widehat{K}_\infty$ of the $\cB_{P,\pi}(\tau,J)$. 
If $\tau \in \widehat{K}_\infty$, let $e_\tau$ be the measure supported on $K_\infty$ given by $e_\tau(k)=\deg(\tau)\trace(\tau(k))dk$, where $dk$ is the probability Haar measure on $K_\infty$. Then the projection $e_\tau$ sends $\cA_{P,\pi}(G)$ to $\cA_{P,\pi}(G)^\tau$. For $f \in \cS(G(\bA))$ and $\tau \in \widehat{K}_\infty$, set
\begin{equation}
    \label{eq:f_tau}
    f_{\tau}:=f * e_{\tau}.
\end{equation}

\subsubsection{Numerical invariants} 
\label{subsubsec:numerical_invariants}
We borrow some notation from \cite[Section~3.2.2]{Ch}. Set
    \begin{equation}
    \label{eq:Laplace}
        \Delta=\mathrm{Id}-\Omega_G+2\Omega_K,
    \end{equation}
    where $\Omega_G$ and $\Omega_{K_\infty}$ are the Casimir operator of $G$ and $K_\infty$ respectively associated to the standard Killing form on $\fg_\infty$ corresponding to the trace. 
    
    For any $\tau \in \widehat{K}_\infty$, let $\lambda_\tau$ be the Casimir eigenvalue of $\tau$. Let $P$ be a standard parabolic subgroup of $G$. Let $\pi_\infty$ be an irreducible unitary representation of $M_P(F_\infty)$ and let $\lambda_{\pi_\infty}$ be the Casimir eigenvalue of $\pi_\infty$. Set
\begin{equation*}
    \Lambda_{\pi_\infty}^{M_P}=\sqrt{\lambda_{\pi_\infty}^2+\lambda_\tau^2}, 
\end{equation*}
where $\tau$ is a minimal $K_\infty \cap M_P(F_\infty)$-type of $\pi_\infty$, i.e. whose infinitesimal character has minimal norm, and 
\begin{equation*}
    \Lambda_{\pi_\infty}^G=\min_\tau \sqrt{\lambda_{\pi_\infty}^2+\lambda_\tau^2}, 
\end{equation*}
where the minimum is taken over minimal $K_\infty$-types of $\Ind_{P(F_\infty)}^{G(F_\infty)} \pi_\infty$. 

If $\pi \in \Pi_{\mathrm{disc}}(M_P)$ with Archimedean component $\pi_\infty$, set
\begin{equation*}
    \lambda_\pi=\lambda_{\pi_\infty}, \quad \Lambda_{\pi}^{M_P} =\Lambda_{\pi_\infty}^{M_P}, \quad \Lambda_{\pi}=\Lambda_{\pi_\infty}^G.
\end{equation*}
A key property on the Casimir eigenvalues is that, if $(P,\pi,\tau)$ is a $J$-triple, then by \cite[Lemma~6.1]{Mu2} we have $\lambda_\tau \geq \lambda_\pi$.
    
\subsubsection{$\cR$-regions}
\label{subsec:R_region}
Let $P$ be a standard parabolic subgroup of $G$. Let $\pi \in \Pi_{\disc}(M_P)$. We define for $k>0$, $c>0$ and $C>0$
\begin{equation}
\label{eq:R_defi}
    \cR_{\pi,k,c}=\left\{ \lambda \in \fa_{P,\cc}^* \; \middle| \; \forall \alpha \in \Sigma_P, \; \langle \Re(\lambda),\alpha^\vee \rangle > -c(1+\Lambda_\pi^{M_P}+\Val{\langle \Im(\lambda),\alpha^\vee \rangle})^{-k} \right\},
\end{equation}
and 
\begin{equation}
\label{eq:R_defi_bounded}
    \cR_{\pi,k,c}^C=\left\{ \lambda \in \fa_{P,\cc}^* \; \middle| \; \forall \alpha \in \Sigma_P, \; \langle \Re(\lambda),\alpha^\vee \rangle > -c(1+\Lambda_\pi^{M_P}+\Val{\langle \Im(\lambda),\alpha^\vee \rangle})^{-k}, \quad \norm{\Re(\lambda)}<C  \right\}.
\end{equation}
We also define 
\begin{equation}
\label{eq:S_defi}
    \cS_{\pi,k,c}=\left\{ \lambda \in \fa_{P,\cc}^* \; \middle| \;  \norm{\Re(\lambda)} < c(1+\Lambda_\pi^{M_P}+\norm{\Im(\lambda)})^{-k} \right\},
\end{equation}
These definitions are inspired by \cite[Section~3]{Lap2}. The two differences are that our sets are subsets of $\fa_{P,\cc}^*$ rather than $\fa_{P,\cc}^{G,*}$, and that in the two $\cR$ sets we allow $\Re(\lambda)$ to grow large in the positive direction.

Let $Q$ be another standard parabolic subgroup of $G$. Let $w \in {}_Q W_P$. Set
\begin{equation}
    \label{eq:R(w)_defi}
    \cR_{\pi,k,c}(w)=\bigcap_{\substack{\alpha \in \Sigma_{P_w} \\ w \alpha<0 }}  \left\{ \lambda \in \fa_{P_w}^* \; \middle| \; \langle \Re(\lambda),\alpha^\vee \rangle > -c(1+\Lambda_{\pi}^{M_{P}}+\Val{\langle \Im(\lambda),\alpha^\vee \rangle})^{-k} \right\},
\end{equation}
and 
\begin{equation}
    \label{eq:R(w)_defiC}
    \cR_{\pi,k,c}^C(w)=\bigcap_{\substack{\alpha \in \Sigma_{P_w} \\ w \alpha<0 }}  \left\{ \lambda \in \fa_{P_w}^* \; \middle| \; \langle \Re(\lambda),\alpha^\vee \rangle > -c(1+\Lambda_{\pi}^{M_{P}}+\Val{\langle \Im(\lambda),\alpha^\vee \rangle})^{-k}, \quad \norm{\Re(\lambda)}<C \right\}.
\end{equation}
These regions contain $\cR_{\pi,k,c}$ and $\cR_{\pi,k,c}^C$ respectively.

\subsection[Spectral decompositions]{Pseudo-Eisenstein series and spectral decompositions}

We keep the notation from the previous section. We now present some generalities on pseudo-Eisenstein series and state Langlands spectral decomposition theorem for the scalar product.

\subsubsection{Pseudo-Eisenstein series} \label{subsubsec:pseudo_eisenstein}

Let $\cP\cW(\fa_{P,\cc}^*)$ be the Paley--Wiener space of functions on $\fa_{P,\cc}^*$ obtained as Fourier transforms of compactly supported smooth functions on $\fa_P$. If $\cV$ is a finite-dimensional subspace of $K_\infty$-finite functions in $\cA_{P,\cusp}(G)$, we define $\cP\cW_{P,\cV}$ to be the space of $\cV$-valued entire functions on $\fa_{P,\cc}^*$ of Paley--Wiener type. We write $\cP \cW_P$ for the direct sum of all the $\cP \cW_{P,\cV}$. For $\Phi \in \cP\cW_{P}$ and any $\kappa \in \fa_P^*$, consider 
\begin{equation}
\label{eq:F_Phi_defi}
    F_\Phi(g)=\int_{\substack{\lambda \in \fa_{P,\cc}^* \\ \Re(\lambda)=\kappa}} \Phi(\lambda)(g)\exp(\langle \lambda,H_P(g) \rangle) d\lambda, \quad g \in G(\bA).
\end{equation}
It is independent of the choice of $\kappa$. We define the pseudo-Eisenstein series associated to $\Phi$ by
\begin{equation*}
    E(g,F_\Phi)=\sum_{\gamma \in P(F) \backslash G(F)} F_\Phi(\gamma g), \quad g \in [G].
\end{equation*}
where this sum is actually over a finite set which depends on $g$ by \cite[Lemma~5.1]{Art78}. This pseudo-Eisenstein series is rapidly decreasing. Moreover, by \cite[Section~II.1.11]{MW95} we have
\begin{equation}
\label{eq:pseudo_Eisenstein_unfold}
    E(g,F_\Phi)=\int_{\Re(\lambda)=\kappa} E(g,\Phi(\lambda),\lambda) d\lambda, \quad g \in [G],
\end{equation}
for any $\kappa$ in the region of the absolute convergence of Eisenstein series.

\begin{lem}
    \label{lem:pseudo_dense}
        The vector space 
        \begin{equation}
        \label{eq:pseudo_Eisenstein_space}
            \bigoplus_{P_0\subset P} \bigoplus_{\pi \in \Pi_\cusp(M_P)} E \left( \cP \cW_{P,\pi} \right)
        \end{equation}
        is dense in $\cS([G])$.
    \end{lem}
    
    \begin{proof}
        The space in \eqref{eq:pseudo_Eisenstein_space} is stable by the action of the left $K_\infty$-finite functions in $\cS(G(\bA))$. The latter form a dense subspace $\cS(G(\bA))$ (see \cite[931]{Art78}). By the Dixmier--Malliavin theorem of \cite{DM} (see also \cite[Section~2.5.3]{BPCZ}), it is therefore enough to show that \eqref{eq:pseudo_Eisenstein_space} is dense in $\cS^{00}([G])$. Let $l \in \cS^{00}([G])^*$ the topological dual of this space and assume that it is zero when restricted to \eqref{eq:pseudo_Eisenstein_space}. Using the pairing \eqref{eq:pairing_P}, we can identify it with an element $\varphi$ in some $\cT_N^0([G])$. Let $\delta_n$ be a Dirac sequence in $\cS(G(\bA))$, so that $R(\delta_n) l$ converges weakly to $l$. Then all the $R(\delta_n)l$ correspond to smooth elements $\varphi_n \in \cT_N^0([G]) \cap \cT([G])$. Moreover, by the same procedure as in \cite[Lemma~4.4.3]{GH}, we can choose the $\delta_n$ to be $K_\infty$-finite. It follows that all the $R(\delta_n)l$ are zero on \eqref{eq:pseudo_Eisenstein_space}. By the adjunction between constant terms and Eisenstein series from \cite[Equation~(2.5.13.12)]{BPCZ}, this implies that the $\varphi_n$ are orthogonal to all the spaces $\cP \cW_{P,\pi}$. By \cite[Theorem~II.1.12]{MW95} they must be zero, so that $l$ is as well. This concludes the proof. Note that \cite[Theorem~II.1.12]{MW95} is written for the space $L^2([G])$ but also holds for $\cT([G])$ (see \cite[Proposition~I.3.4]{MW95}).
    \end{proof}

\subsubsection{Langlands' spectral decomposition theorem}
\label{subsubsec:Langlands_spectral}

We now state the version of Langlands' spectral theorem from \cite{Langlands} for pseudo--Eisenstein series.

\begin{theorem}
\label{thm:Langlands_spectral}
   Let $J$ be a level. For every $\Phi \in \cP\cW^J$ and $\Phi' \in \cP\cW^J$ we have
    \begin{equation}
        \langle E(F_{\Phi}),E(F_{\Phi'}) \rangle_G \\
        =\sum_{P_0 \subset P} \frac{1}{\Val{\cP(M_P)}} \sum_{\pi \in \Pi_\disc(M_P)} \int_{i \fa_P^*}  \sum_{\varphi \in \cB_{P,\pi}(J)} \langle E(F_{\Phi'}),E(\varphi,\lambda) \rangle_G \langle E(\varphi,\lambda),E(F_{\Phi'}) \rangle_G d\lambda. \label{eq:Langlands_spectral}
    \end{equation}
\end{theorem}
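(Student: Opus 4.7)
The plan is to deduce \eqref{eq:Langlands_spectral} from the Langlands spectral decomposition of $L^2([G])$ of \cite{Langlands}, combined with the cuspidal Plancherel formula and an unfolding of one of the pseudo-Eisenstein series. By linearity and the definition of $\cP\cW_P$ as a direct sum of spaces $\cP\cW_{P,\cV}$ for finite-dimensional $K_\infty$-finite subspaces of $\cA_{P,\cusp}(G)$, it suffices to treat the case $\Phi\in\cP\cW_{P,\sigma}$ and $\Phi'\in\cP\cW_{P',\sigma'}$ for $\sigma\in\Pi_{\cusp}(M_P)$ and $\sigma'\in\Pi_{\cusp}(M_{P'})$ on fixed standard parabolics $P,P'$. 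Since $E(F_\Phi)$ is rapidly decreasing, it lies in $L^2([G])$ and both sides of \eqref{eq:Langlands_spectral} are absolutely convergent.

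First I would unfold one side. Using the rapid decay of $E(F_{\Phi'})$ together with the left $P(F)N_P(\bA)$-invariance of $F_\Phi$, the standard unfolding argument yields
\begin{equation*}
\langle E(F_\Phi),E(F_{\Phi'})\rangle_G=\int_{[G]_P}F_\Phi(g)\,\overline{E(g,F_{\Phi'})_P}\,dg.
\end{equation*}
I would then substitute the constant term formula \eqref{eq:constant_term} for $E(g,F_{\Phi'})_P$ and the integral representation \eqref{eq:pseudo_Eisenstein_unfold} for $E(F_{\Phi'})$, so that the right-hand side becomes a finite sum over $w\in{}_PW_{P'}$ of integrals over $\lambda\in\fa_{P,\cc}^*$ (with $\Re(\lambda)$ in a cone of absolute convergence) of $[M_P]$-inner products between cuspidal pseudo-Eisenstein series built from $\Phi$ and from $M(w,\lambda)\Phi'(\lambda)$ respectively.

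Next I would apply the spectral decomposition on the Levi $M_P$, used inductively, to expand each of these $[M_P]$-inner products as a sum over $\pi\in\Pi_\disc(M_P)$, over an orthonormal basis $\varphi\in\cB_{P,\pi}(J)$, and over an integral on $i\fa_P^*$. The cuspidal part of this decomposition appears directly, while the residual discrete contributions emerge upon shifting the contour back from the cone of absolute convergence to $i\fa_P^*$: the residues picked up along the poles of intertwining operators and Eisenstein series parametrize precisely the non-cuspidal elements of $\Pi_\disc(M_P)$ via the classification of \cite{MW89}. Using the adjunction between constant terms and Eisenstein series together with \eqref{eq:ME=EM}, each resulting term can be repackaged as a product $\langle E(F_\Phi),E(\varphi,\lambda)\rangle_G\langle E(\varphi,\lambda),E(F_{\Phi'})\rangle_G$.

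The main obstacle will be the combinatorial symmetrization producing the factor $\frac{1}{|\cP(M_P)|}$. The unfolding naturally gives a sum over ordered pairs $(P',w)$ with $w\in{}_PW_{P'}$, whereas \eqref{eq:Langlands_spectral} is a single sum over standard $P$ weighted by $|\cP(M_P)|^{-1}$. I would handle this by applying the functional equations \eqref{eq:M_equation} and the change of variable $\lambda\mapsto w\lambda$ on $i\fa_P^*$ to identify all contributions coming from parabolics associate to $P$ through the Weyl action, so that the sum over $(P',w)$ reorganizes into $|\cP(M_P)|$ copies of the same integral indexed by $P$. The orthogonality of Eisenstein series attached to non-associate parabolics, which is itself a consequence of the cuspidal Plancherel formula on each Levi, ensures that the cross-terms between non-associate parabolics vanish, yielding the diagonal sum of \eqref{eq:Langlands_spectral}.
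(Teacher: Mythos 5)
The paper does not supply its own proof of this theorem: it is stated explicitly as ``the version of Langlands' spectral theorem from \cite{Langlands} for pseudo-Eisenstein series,'' i.e.\ it is imported as a black box. Your proposal is an attempt to sketch that Langlands argument from scratch, and it correctly identifies the broad outline (unfold one side, invoke the constant-term formula, shift contours, account for combinatorics), but it seriously underestimates the central difficulty.

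The gap is concentrated in your sentence ``the residues picked up along the poles of intertwining operators and Eisenstein series parametrize precisely the non-cuspidal elements of $\Pi_\disc(M_P)$ via the classification of \cite{MW89}.'' This is not what happens. When one shifts contours from the cone of absolute convergence back to $i\fa_P^*$, the residue terms that appear do \emph{not} directly match the residual contributions in \eqref{eq:Langlands_spectral}: they have to be collected, organized, and subjected to a long chain of cancellations before they can be reinterpreted as integrals over $i\fa_{P'}^*$ for larger parabolics $P'$ against Eisenstein series built from residual representations. This requires Langlands' whole inductive machinery of ``residue data'' and is precisely the phenomenon the paper's introduction flags: ``This is in sharp contrast with \cite{Langlands} where intricate compensations occur (see e.g.\ \cite[Section~4]{Labesse}).'' Invoking the classification of \cite{MW89} does not shortcut this; for general reductive $G$ (and Langlands' theorem holds for general $G$) no such classification is available, and even for $\GL_n$ the residue bookkeeping remains nontrivial. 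Your proposal also silently passes from cuspidal inducing data for the pseudo-Eisenstein series to $\pi\in\Pi_\disc(M_P)$ in the final formula, but this transition is exactly where all the work lies. As written, the proposal is a plausible roadmap for re-deriving Langlands' theorem but is not a proof; the paper correctly treats this result as an external citation rather than something to be established.
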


Note that the sums in \eqref{eq:Langlands_spectral} are finite. In Theorem~\ref{thm:bound_Eisenstein}, we will show that the Eisenstein series involved in \eqref{eq:Langlands_spectral} form an integrable family in some space $\cT_N([G])$. This will let us extend Theorem~\ref{thm:Langlands_spectral} to functions of rapid enough decay in Proposition~\ref{prop:extended_Langlands}.

\section{Discrete Eisenstein series on \texorpdfstring{$\GL_n$}{GLn}}
\label{chap:poles}

In this chapter, the group $G$ is $\GL_n$ for some $n \geq 1$. We will use the following conventions. We choose $P_0$ to be the standard Borel subgroup of upper triangular matrices, and $M_0=T_0$ to be the diagonal maximal torus. The group $K$ is the standard maximal compact subgroup of $\GL_n(\bA)$. 

If $P$ is a standard parabolic subgroup of $\GL_n$, its standard Levi factor is of the form $M_P=\GL_{n_1} \times \hdots \times \GL_{n_m}$ for some integers $n_1, \hdots, n_m$. With this notation, we associate to $P$ the tuple $\underline{n}(P):=(n_1,\hdots,n_m)$. This completely characterizes $P$ among the standard parabolic subgroups of $\GL_n$. We will often write $M_P=\prod \GL_{n_i}$, where we implicitly assume that the product is taken in the order $i=1, \hdots, m$. We identify $\fa_P^*$ with $\rr^m$ by sending the canonical basis $(e_i^*)$ of $X^*(P)$ to the canonical basis of $\rr^m$. We will write $\lambda=(\lambda_1,\hdots,\lambda_m)$ with respect to this basis.

If $w \in W$, we take the representative of $w$ in $\GL_n(F)$ prescribed by \cite[Section~2]{KS}. If $P=M_P N_P$ is a standard parabolic subgroup of $\GL_n$, we have an embedding of $W(M_P)$ the Weyl group of $M_P$ inside $W$. Write $M_P=\GL_{n_1} \times \hdots \times \GL_{n_m}$. We have an identification (of sets) $W(P) \simeq \fS_m$ such that, if $\sigma \in \fS_m$, we have
\begin{equation*}
    \sigma M_P \sigma^{-1}=M_{n_{\sigma^{-1}(1)}} \times \hdots \times \hdots M_{n_{\sigma^{-1}(m)}}.
\end{equation*}
We will often identify a $w \in W(P)$ with an element in $\fS_m$. We will write $w.P$ for the standard parabolic subgroup of $\GL_n$ with standard Levi factor $wM_Pw^{-1}$. We say that $w \in W$ acts by permutation on the blocks of $M_P$ (or simply acts by blocks on $P$) if it belongs to $W(P)$. 

\subsection{Discrete automorphic forms for \texorpdfstring{$\GL_n$}{GLn}}
\label{subsec:residual}

\subsubsection{The classification of~\cite{MW89}}
\label{subsubsec:disc_gln}
Let $\pi \in \Pi_{\mathrm{disc}}(\GL_n)$. There exist integers $r, d \geq 1$ with $n=rd$ and $\sigma \in \Pi_{\mathrm{cusp}}(\GL_r)$ such that any $\varphi \in \cA_\pi(\GL_n)$ is obtained as the residue of an Eisenstein series built from a $\phi \in \cA_{P_{\pi},\sigma^{\boxtimes d}}(\GL_n)$ where $P_{\pi} \subset \GL_n$ is the standard parabolic subgroup of Levi factor $\GL_{r}^{d}$. More precisely, define
\begin{equation}
\label{eq:nu_pi_defi}
    \nu_{\pi}=-\rho_{P_\pi}/r, \quad \sigma_\pi=\sigma^{\boxtimes d} \in \Pi_{\cusp}(M_{P_\pi}),
\end{equation}
and set for $\lambda \in \fa_{P_\pi,\cc}^*$
\begin{equation}
\label{eq:L_pi_res_defi}
    L_{\pi,\mathrm{res}}(\lambda)=\prod_{\alpha \in \Delta_{P_\pi}} \left( \langle \lambda, \alpha^\vee \rangle -1 \right).
\end{equation}
Note that $ L_{\pi,\mathrm{res}}(-\nu_{\pi})=0$. We introduce a minus sign in \eqref{eq:nu_pi_defi} to follow the convention of \cite{Ch}. For every $g \in \GL_n(\bA)$, denote by $E^*(g,\phi,\cdot)$ the map $\lambda \mapsto  L_{\pi,\mathrm{res}}(\lambda) E(g,\phi,\lambda)$. It is holomorphic in a neighborhood of $-\nu_\pi$. By \cite{MW89} we have
\begin{equation}
\label{eq:residual}
    \varphi(g)=E^*(g,\phi,-\nu_{\pi}).
\end{equation}
As $\pi$ is the unique irreducible quotient of $\cA_{P_\pi,\sigma_\pi,-\nu_\pi}(\GL_n)$, it deserves to be called a Speh representation and we write $ \pi=\Speh(\sigma,d)$.

Let $\pi, \pi' \in \Pi_{\mathrm{disc}}(\GL_n)$. Define $r'$, $d'$, $\sigma'$, $P_{\pi'}$ as in \S~\ref{subsubsec:disc_gln} for $\pi'$. By \cite{MW95}, the completed Rankin--Selberg $L$ function $L(s,\pi \times \pi')$ exists and satisfies
\begin{equation}
\label{eq:discrete_L}
    L(s,\pi \times \pi')=\prod_{i=1}^d \prod_{j=1}^{d'} L\left(s+\frac{d-2i+1}{2}+\frac{d'-2j+1}{2},\sigma \times \sigma'\right).
\end{equation}

\subsubsection{The induced case} 
\label{subsubsec:residual_blocks}
We now deal with representations induced from the discrete spectrum of a Levi subgroup, i.e. with representations of Arthur type of $\GL_n$. Let $P=M_P N_P$ be a standard parabolic subgroup. Let $\pi \in \Pi_{\mathrm{disc}}(M_P)$. Write $M_P=\GL_{n_1} \times \hdots \times \GL_{n_m}$ and $\pi = \pi_1 \boxtimes \hdots \boxtimes \pi_m$ accordingly. By \S\ref{subsubsec:disc_gln}, there exist integers $r_i, d_i \geq 1$ with $n_i=r_i d_i$ and some representations $\sigma_i \in \Pi_{\mathrm{cusp}}(\GL_{r_i})$ such that any $\varphi_i \in \cA_{\pi_i}(\GL_{n_i})$ is obtained as the residue of an Eisenstein series built from a $\phi_i \in \cA_{P_{\pi_i},\sigma_i^{\boxtimes d_i}}(\GL_{n_i})$ where $P_{\pi_i} \subset \GL_{n_i}$ is the standard parabolic subgroup of Levi factor $\GL_{r_i}^{d_i}$. Set $P_{\pi}=(P_{\pi_1} \times \hdots \times P_{\pi_m})N_P$ and $\sigma_\pi=\sigma_1^{\boxtimes d_1} \boxtimes \hdots \boxtimes \sigma_m^{\boxtimes d_m}$ which is a cuspidal automorphic representation of $M_{P_\pi}$. 

Note that any $\varphi \in \cA_{P,\pi}(\GL_n)$ is residual unless $\pi$ is cuspidal (i.e. $P=P_\pi$). Indeed, this follows from the fact that Eisenstein series are orthogonal to cusp forms (in the sense of \S\ref{subsubsec:Eisenstein}) and that we may compute the residue under the Petersson inner-product by \cite[Theorem~2.2]{Lap} (see also \cite[Lemma~9.4.2.1]{BoiPhD} for a closely related argument).

Let $Q$ and $R$ be a parabolic of $\GL_n$ such that $P_{\pi} \subset Q \subset P$. Write $Q \cap M_P=Q_1 \times \hdots \times Q_m$. We have a decomposition $\fa_Q^{P,*}=\oplus_{i=1}^m \fa_{Q_i}^{\GL_{n_i},*}$. Set
\begin{equation}
\label{eq:nu_exp_defi}
    \nu_{Q,\pi}=\left(-\rho_{Q_i}^{\GL_{n_i}}/r_i \right)_{1 \leq i \leq r}
\end{equation}
written accordingly. If the context is clear, we omit the subscript $\pi$. If $Q=P_\pi$, we will write $\nu_\pi$. 

By construction, if $P_\pi \subset Q \subset P$, for any $\varphi \in \cA_{P,\pi}(\GL_n)$ we have $\varphi_{Q,-\nu_Q} \in \cA_Q^0(\GL_n)$ (see Lemma~\ref{lem:contant_term}). Moreover, note that $\varphi_Q=0$ unless $P_\pi \subset Q$. If $w \in {}_Q W_P$ such that $P_\pi \subset P_w$, then we set $\varphi_w:=\varphi_{P_w}$ and $\nu_w:=\nu_{P_w}$.

Set
\begin{equation}
\label{eq:residual_L_pi}
     L_{\pi,\mathrm{res}}(\lambda)=\prod_{\alpha \in \Delta_{P_\pi}^P} \left( \langle \lambda, \alpha^\vee \rangle -1 \right).
\end{equation}
For every $g \in \GL_n(\bA)$ and $\phi \in \cA_{P_\pi,\sigma_\pi}(\GL_n)$ denote by $E^{P,*}(g,\phi,\cdot)$ the partial residues of Eisenstein series $\lambda \mapsto  L_{\pi,\mathrm{res}}(\lambda)E^P(g,\phi,\lambda)$. It is holomorphic in a neighborhood of $-\nu_\pi$. By exactness of induction, for every $\varphi \in \cA_{P,\pi}(\GL_n)$ there exists $\phi \in \cA_{P_\pi,\sigma_\pi}(\GL_n)$ such that $\varphi=E^{P,*}(\phi,-\nu_\pi)$. We write $\pi=\boxtimes_{i=1}^m \Speh(\sigma_i,d_i)$.

Finally, let $w^*_\pi$ be the longest element in $W^P(P_\pi,P_\pi)$, i.e. the unique element $w$ in this set such that $w(P_\pi \cap M_P)w^{-1}$ is opposed to $P_\pi \cap M_P$. Note that it acts by identity on $\fa_{P}^*$ and that $w_\pi^* \sigma_\pi=\sigma_\pi$ and $w_\pi^* \nu_{\pi}=-\nu_{\pi}$.

\subsection{Normalization of intertwining operators}
\label{sec:norm_operator}

In this section, let $P$ be a standard parabolic subgroup of $G$ and let $\pi \in \Pi_{\disc}(M_P)$. Let $Q$ be a standard parabolic subgroup of $G$. Let $w \in W(P,Q)$ and take a $\lambda \in \fa_P^*$ in general position. Denote by $M_\pi(w,\lambda)$ the restriction of $M(w,\lambda)$ (defined in \eqref{eq:global_operator}) to the subspace $\cA_{P,\pi}(G) \subset \cA_P(G)$. Following \cite{Art82}, we normalize $M_\pi(w,\lambda)$ as
\begin{equation}
    \label{eq:normalization_non_twisted} M_\pi(w,\lambda)=n_\pi(w,\lambda)N_\pi(w,\lambda).
\end{equation}
Here $n_\pi(w,\lambda)$ is a meromorphic function in $\lambda$ referred to as "the scalar factor", and $N_\pi(w,\lambda)$ is the so-called "normalized operator". If the context is clear, we will remove the subscript $\pi$. We describe these objects below. The goal of this section is to recall the main properties of $M_\pi(w,\lambda)$. We then explain how this operator appears in the constant term of discrete automorphic forms.

\subsubsection{The scalar factor} Write $M_P=\GL_{n_1} \times \hdots \times \GL_{n_m}$ and $\pi=\pi_1 \boxtimes \hdots \boxtimes \pi_m$. Let $\beta \in \Sigma_{P}$ be the positive root of $P$ associated to the two blocks $\GL_{n_i}$ and $\GL_{n_j}$, with $1 \leq i < j \leq m$. Set
\begin{equation}
    \label{eq:n_pi_formula}
    n_\pi(\beta,s)=\frac{L(1-s,\pi_i^\vee \times \pi_j)}{L(1+s,\pi_i \times \pi_j^\vee)}, \quad \text{and} \quad  n_\pi(w,\lambda)=\prod_{\substack{\beta \in \Sigma_{P} \\ w \beta <0}} n_\pi(\beta,\langle \lambda, \beta^\vee \rangle).
\end{equation}
By \eqref{eq:discrete_L}, the scalar factor can be expressed in terms of cuspidal Rankin--Selberg $L$-functions. We write $L_\infty$ (resp. $L^\infty$) for the Archimedean parts of the $L$-functions (resp. the finite part).

\begin{theorem}
    \label{thm:L}
    Let $\sigma_1$ and $\sigma_2$ be cuspidal automorphic representations of $\GL_{r_1}$ and $\GL_{r_2}$ respectively. Set $\delta_{\sigma_1,\sigma_2}=1$ if $\sigma_1 \simeq \sigma_2$ and $0$ otherwise.
    \begin{enumerate}
        \item Poles: the function
        \begin{equation*}
            (s(1-s))^{\delta_{\sigma_1,\sigma_2}}L(s,\sigma_1 \times \sigma_2^\vee)
        \end{equation*}
        is entire of order one.
        \item Functional equation: we have
        \begin{equation*}
            L(s,\sigma_1 \times \sigma_2^\vee)=\epsilon(s,\sigma_1 \times \sigma_2^\vee) L(1-s,\sigma_1^\vee \times \sigma_2),
        \end{equation*}
        with $\epsilon(s,\sigma_1 \times \sigma_2^\vee)=\epsilon_0 q_{\sigma_1 \times \sigma_2^\vee}^{\frac{1}{2}-s}$ where $\epsilon_0$ is a complex number of modulus $1$ and $q_{\sigma_1 \times \sigma_2^\vee} \in \nn$ is the arithmetic conductor of $\sigma_1 \times \sigma_2^\vee$.
         \item Archimedean factors: we have
         \begin{equation*}
             L_\infty(s,\sigma_1 \times \sigma_2^\vee)=\prod_{j=1}^m \Gamma_{\rr}(s-\alpha_j),
         \end{equation*}
         where $m=r_1 r_2 [F:\qq]$, $\Gamma_{\rr}(s)=\pi^{-s/2} \Gamma(s)$ (the usual $\Gamma$ function) and $\alpha_1, \hdots, \alpha_m$ are complex numbers such that for every $j$
         \begin{equation}
         \label{eq:archi_ramanujan}
             1-\Re(\alpha_j)>\frac{1}{r_1^2+1}+\frac{1}{r_2^2+1}.
         \end{equation}
         \item Zero free region: There exists $k$ such that for every level $J$ there exist $c_J>0$ such that for every $J$-pair $(P,\pi)$ and every $i,j$, the meromorphic $s \mapsto L^\infty(s,\sigma_i \times \sigma_j^\vee)$ doesn't have any zero in the region $\Re(s) \geq 1-c_J\left(1+\Lambda_\pi^{M_P}+\Val{s}\right)^{-k}$, where $\sigma_\pi=\sigma_1^{\boxtimes d_1} \boxtimes \hdots \boxtimes \sigma_m^{\boxtimes d_m}$ is the cuspidal representation of $M_{P_\pi}$ defined in \S\ref{subsubsec:residual_blocks}.
        \end{enumerate}
\end{theorem}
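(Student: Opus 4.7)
The statement is a compilation of classical analytic facts about Rankin--Selberg $L$-functions that will be used elsewhere in the paper, so the plan is to prove each of the four assertions by invoking the appropriate results from the literature rather than re-deriving them.

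For parts (1) and (2), the plan is to appeal directly to \cite{JPSS83}, where Jacquet, Piatetski-Shapiro and Shalika construct the completed Rankin--Selberg $L$-function via a global integral representation, prove its meromorphic continuation, establish the functional equation in precisely the form stated here, and locate its poles: simple poles exactly at $s=0,1$ in the case $\sigma_1 \simeq \sigma_2$ and holomorphic otherwise. That the trimmed function $(s(1-s))^{\delta_{\sigma_1,\sigma_2}}L(s,\sigma_1 \times \sigma_2^\vee)$ is entire of order one then follows by Phragmén--Lindelöf, using the polynomial growth on vertical strips coming from the integral representation combined with the Stirling estimate for the Archimedean factors, as interpolated across the functional equation.

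For (3), I extract the explicit form of $L_\infty(s,\sigma_1 \times \sigma_2^\vee)$ from the Langlands parametrization at Archimedean places: each $\sigma_{i,v}$ for $v \in V_{F,\infty}$ corresponds to a semisimple Weil group representation, and the local $L$-factor of the tensor product decomposes as a product of Hecke gamma factors $\Gamma_{\mathbb{R}}(s-\alpha_j)$. The inequality \eqref{eq:archi_ramanujan} on the real parts of the $\alpha_j$ is obtained from the Luo--Rudnick--Sarnak bound of \cite{LRS}: for a cuspidal representation $\sigma$ of $\GL_r$, the Langlands parameters at Archimedean places have real parts bounded by $\tfrac{1}{2}-\tfrac{1}{r^2+1}$ in absolute value, and writing the parameters of the tensor product as differences $\alpha_j^{(1)}-\alpha_j^{(2)}$ gives the claimed bound.

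For (4), the zero-free region for the finite part $L^\infty(s,\sigma_1 \times \sigma_2^\vee)$ with the required polynomial dependence on the analytic conductor is due to \cite{Bru06}; the uniformity across all $J$-pairs $(P,\pi)$ at a fixed level $J$, controlled by the single quantity $\Lambda_\pi^{M_P}$, follows by combining this with the refinements in \cite{Lap2}. The translation step consists in bounding the analytic conductor of each pair $\sigma_i \times \sigma_j^\vee$ appearing in $\sigma_\pi = \sigma_1^{\boxtimes d_1} \boxtimes \cdots \boxtimes \sigma_m^{\boxtimes d_m}$ in terms of $\Lambda_\pi^{M_P}$ and data bounded in terms of $J$: at a fixed level only finitely many finite arithmetic conductors arise, while the Archimedean contribution to the analytic conductor is controlled by $\Lambda_\pi^{M_P}$ via the Langlands classification. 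The main obstacle, and the only point that is not purely bookkeeping, is this uniformity: Brumley's bound for a single fixed pair extends de la Vallée-Poussin's method, but the polynomial dependence on the conductor (rather than on $\Val{s}$ alone) is the key ingredient that allows the uniformization and it is precisely what we need to control Eisenstein series away from the unitary axis in Theorem~\ref{thm:bound_Eisenstein}.
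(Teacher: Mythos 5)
Your proposal matches the paper's proof: the paper likewise attributes parts (1) and (2) to \cite{JPSS83}, derives \eqref{eq:archi_ramanujan} from \cite{LRS} (via \cite[Proposition~3.3]{MS04}), and obtains (4) from \cite{Bru06} via \cite[Proposition~3.5]{Lap2}, with the same observation that at fixed level $J$ the analytic conductor of $\sigma_\pi$ is bounded in terms of $\Lambda_\pi^{M_P}$ (the paper citing \cite[Section~3.4.7]{Ch} for this point). The extra Phragm\'en--Lindel\"of detail you supply for the ``order one'' claim is consistent with, and just fills in, what the paper leaves implicit.
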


\begin{proof}
    1. and 2. are proved in \cite{JPSS83}. The bound \eqref{eq:archi_ramanujan} in 3. is \cite[Proposition~3.3]{MS04} which is based on \cite{LRS}. 4. is \cite[Proposition~3.5]{Lap2} which quotes \cite{Bru06}. More precisely, \cite[Proposition~3.5]{Lap2} writes the zero-free region in terms of the analytic conductor of $\sigma_\pi$. But as noted in \cite[Section~3.4.7]{Ch}, the latter is bounded in terms of $\Lambda_\pi^{M_P}$ as the level $J$ is fixed. Therefore, we conclude that this zero-free region holds.
\end{proof}

\subsubsection{Local normalized intertwining operators} 
\label{subsubsec:local_normalized_operator}

Let $\phi \in \cA_{P,\pi}(G)$. Assume that $\phi=\otimes'_v \phi_v$ is factorizable, so that for all place $v$ we have $\phi_v \in I_P^G \pi_v$. Let $S \subset V_F$ be a finite set of places such that $\phi_v$ is unramified if $v \notin \tS$. By \cite[Theorem~2.1]{Art89}, we have a factorization
\begin{equation}
\label{eq:unram_facto_local_intertwin}
    N_\pi(w,\lambda)\phi=\prod_{v \in \tS} N_{\pi_v}(w,\lambda)\phi_v
\end{equation}
where the $N_{\pi_v}(w,\lambda)$ are meromorphic local intertwining operators $I_{P}^{G} \pi_{v,\lambda} \to I_{Q}^{G} w\pi_{v,\lambda}$. The product notation of \eqref{eq:unram_facto_local_intertwin} means that $N_\pi(w,\lambda)\phi$ is factorizable and that for $v \notin S$ the local component $N_{\pi_v}(w,\lambda) \phi_v$ is the unique unramified vector in $I_Q^G w \pi_\lambda$ such that $\phi_v(1)=N_{\pi_v}(w,\lambda) \phi_v(1)$. 

Let $v \in V_F$. The normalized operator $N_{\pi_v}$ is defined in term of a local $L$-function, so that it makes sense for any smooth irreducible unitary representation $\pi_v$ of $M_P(F_v)$ (which is not necessarily a local constituent of some $\pi \in \Pi_{\mathrm{disc}}(M_P)$). We recall the classification of such representations of $\GL_N(F_v)$ for $N \geq 1$. For any discrete series $\delta$ of some $\GL_r(F_v)$ and any $d \geq 1$, let $\Speh(\delta,d)$ be the unique irreducible quotient of the parabolic induction $\delta_{\frac{d-1}{2}} \times \hdots \times \delta_{\frac{1-d}{2}}$. By \cite[Theorem~A(ii)]{Ta} in the non-Archimedean case, and \cite{Vo} in the Archimedean case, for any smooth irreducible unitary representation $\tau$ of $\GL_N(F_v)$ there exist discrete series $\delta_{1} \hdots \delta_{k}$ of some $\GL_{N_i}(F_v)$, integers $d_{1}, \hdots, d_{k}$ and real numbers $-1/2 < \nu_{1}, \hdots ,\nu_{k} < 1/2$ such that 
\begin{equation*}
    \tau \simeq \Speh(\delta_{1},d_{1})_{\nu_{1}} \times \hdots \times \Speh(\delta_k,d_k)_{\nu_k}.
\end{equation*}
Set
\begin{equation*}
    e(\tau)=2 \inf \left\{ \frac{1}{2}-\Val{\nu_i} \; \big| \; 1 \leq i \leq k \right\}.
\end{equation*}
If now $\pi_v=\pi_{v,1} \boxtimes \hdots \boxtimes \pi_{v,m}$ is a smooth irreducible unitary representation of $M_P(F_v)$, set $e(\pi_v)=\min e(\pi_{v,i})$.

\begin{theorem}
    \label{thm:N}
    Let $\pi_v$ be a smooth irreducible and unitary representation of $M_P(F_v)$.
    \begin{enumerate}
        \item For each $\phi_v \in I_{P}^{G} \pi_{v}$ the vector $N_{\pi_v}(w,\lambda)\phi_v$ is a rational function in $\lambda$ if $v$ is Archimedean, and in $q_v^{-\lambda}$ if $v$ is non-Archimedean. More precisely, it is a rational function in the variables $\langle \lambda, \alpha \rangle$ (resp. $q_v^{-\langle \lambda, \alpha \rangle}$) for the $\alpha \in \Delta_P$ such that $w \alpha <0$ in the Archimedean case (resp. the non-Archimedean case).
        \item If $w_1 \in W(P,Q)$ and $w_2 \in W(Q,R)$ we have
        \begin{equation*}
            N_{w_1 \pi_v}(w_2,w_1\lambda)N_{\pi_v}(w_1,\lambda)=N_{\pi_v}(w_2w_1,\lambda).
        \end{equation*}
         \item The operator $N_{\pi_v}(w,\lambda)$ is holomorphic in the region
         \begin{equation*}
           \bigcap_{\substack{\alpha \in \Sigma_{P} \\ w \alpha <0}} \left\{ \lambda \in \fa_{P,\cc}^* \; | \; \langle \Re(\lambda),\alpha^\vee \rangle > -e(\pi_v) \right\}.
        \end{equation*}
        \item If $\pi_v$ is the local component of some $\pi \in \Pi_{\mathrm{disc}}(M_P)$, then
        \begin{equation*}
            e(\pi_v)>\frac{2}{1+n^2}.
        \end{equation*}
    \end{enumerate}
\end{theorem}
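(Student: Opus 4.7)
The plan is to establish the assertions in the order (2), (1), (3), (4), since the cocycle relation lets us reduce most of the analytic content to rank-one situations. For (2), I combine the analogous cocycle for the unnormalized operators $M_{\pi_v}(w,\lambda)$ (which holds locally by Fubini in the domain of absolute convergence and meromorphic continuation, exactly as for \eqref{eq:M_equation}) with the cocycle satisfied by the scalar factors; the latter is immediate from \eqref{eq:n_pi_formula} and the fact that the set of positive roots sent to negatives by $w_2 w_1$ partitions compatibly under composition.

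For (1) and (3), I use (2) to decompose any $w \in W(P,Q)$ into simple reflections $s_\beta$, each swapping two adjacent blocks in some intermediate Levi. This reduces both assertions to a rank-one intertwiner between inducing data $\pi_{v,i}$ and $\pi_{v,j}$ inside $\GL_{n_i+n_j}(F_v)$. Rationality (part~1) then follows from the rationality of the classical Knapp--Stein operators combined with rationality of local $L$-factor ratios in $\lambda$ (respectively in $q_v^{-\lambda}$). For (3), I further use the Tadi\'c/Vogan classification of \cite{Ta,Vo} to write each $\pi_{v,\ell}$ as a product of twisted Speh representations $\Speh(\delta_{\ell,k},d_{\ell,k})_{\nu_{\ell,k}}$, expand the rank-one normalized operator into pairwise intertwiners between such pieces, and invoke holomorphy of each such normalized intertwiner in the half-plane where the corresponding pairwise induction is irreducible. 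By the linkage criterion for segments, this irreducibility region translates exactly into the bound $\langle \Re(\lambda),\beta^\vee\rangle > -e(\pi_v)$.

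For (4), the classification recalled in \S\ref{subsubsec:disc_gln} gives a factorization $\pi = \boxtimes_i \Speh(\sigma_i,d_i)$ with $\sigma_i \in \Pi_{\mathrm{cusp}}(\GL_{r_i})$ and $r_i d_i = n_i \leq n$. By the Luo--Rudnick--Sarnak bound of \cite{LRS} (the same input behind \eqref{eq:archi_ramanujan}), the exponents $\nu$ appearing in the Langlands decomposition of each local component $\sigma_{i,v}$ satisfy $|\nu| < 1/2 - 1/(r_i^2+1)$. Passage from $\sigma_{i,v}$ to the local component of $\Speh(\sigma_i,d_i)$ preserves the magnitudes of these exponents, so $e(\pi_{v,i}) > 2/(r_i^2+1) \geq 2/(1+n^2)$, and taking the minimum over $i$ yields the claim.

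The main obstacle will be (3): matching the precise normalizing factor prescribed by \eqref{eq:n_pi_formula} to the exact pole structure of the unnormalized rank-one operator between (possibly non-generic, non-tempered) products of Speh representations requires careful comparison with Arthur's normalization conventions from \cite{Art82,Art89} and with the description in \cite{MW89} of the reducibility points of inductions of Speh representations. The remaining parts (1), (2) and (4) are essentially formal consequences of this analysis together with the LRS bound.
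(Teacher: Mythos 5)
Your overall strategy --- reduce (1) and (3) to rank one via the cocycle (2), and use the LRS bound plus the classification of local components of Speh representations for (4) --- is essentially the same route as the paper, which simply cites \cite[Theorem~2.1]{Art89} for (1)--(2), \cite[Proposition~I.10]{MW89} for (3), and \cite[Propositions~3.3 and 3.5]{MS04} for (4).

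There is, however, a genuine gap in your argument for (3). You propose to ``invoke holomorphy of each normalized rank-one intertwiner in the half-plane where the corresponding pairwise induction is irreducible'' and then assert that ``this irreducibility region translates exactly into the bound $\langle\Re(\lambda),\beta^\vee\rangle>-e(\pi_v)$.'' This conflates two different things. Irreducibility of the induction and holomorphy of the \emph{normalized} intertwining operator are not equivalent: the whole point of Arthur's normalization is that $N_{\pi_v}(w,\lambda)$ remains holomorphic (and for $\GL_n$, even non-vanishing) at parameters where the induction \emph{is} reducible. A concrete instance appears later in the paper, Lemma~\ref{lem:local_behavior}: at $\lambda=0$ with $\pi_1\simeq\pi_2$ the induced representation reduces, yet $N_{\pi,v}(w,0)=\mathrm{Id}$. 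Moreover, the irreducibility set for a rank-one induction of Speh factors is cut out by a discrete union of linkage hyperplanes, not by an open half-plane of the form $\langle\Re(\lambda),\beta^\vee\rangle>-e(\pi_v)$, so the two regions do not match set-theoretically either. The correct justification is the direct pole analysis of \cite[Proposition~I.10]{MW89}, which tracks the poles of the unnormalized operator against the zeros and poles of the local $L$-quotient; it is not a soft consequence of the irreducibility criterion for segments.

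A smaller point on (4): your assertion that passing from $\sigma_{i,v}$ to the local component of $\Speh(\sigma_i,d_i)$ preserves the magnitudes of the exponents is correct, but the justification is not entirely trivial. One should note that $\sigma_{i,v}$ is \emph{generic} (as a local component of a cuspidal representation), so its Tadi\'c decomposition has all Speh depths equal to one, i.e.\ $\sigma_{i,v}=\prod_\ell\delta_\ell\,|\cdot|^{\nu_\ell}$; then the Langlands quotient defining $\Speh(\sigma_i,d_i)_v$ regroups as $\prod_\ell\Speh(\delta_\ell,d_i)_{\nu_\ell}$, which is irreducible precisely because products of unitary twisted Speh representations with $|\nu|<1/2$ are always irreducible by Tadi\'c's theorem. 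Without this observation --- which is essentially the content of \cite[Proposition~3.5]{MS04} --- the phrase ``preserves the magnitudes'' is merely a claim. The bound $2/(r_i^2+1)\geq2/(n^2+1)$ and the LRS input are then applied correctly.
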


\begin{proof}
    1., 2. are contained in \cite[Theorem~2.1]{Art89}. By decomposing $N_{\pi_v}(w,\lambda)$ as a product of rank one intertwining operators, 3. is \cite[Proposition~I.10]{MW89}. 4. is proved for local components of cuspidal representations in \cite[Proposition~3.3]{MS04}, and for residual representations in \cite[Proposition~3.5]{MS04}.
\end{proof}

\subsubsection{Constant terms of discrete automorphic forms}

\label{subsubsection:constant_terms_discrete}

We now fix a standard parabolic subgroup $P$ of $G$ and $\pi \in \Pi_\disc(M_P)$. We have $P_\pi$ and $\sigma_\pi \in \Pi_{\cusp}(M_{P_\pi})$ as in \S\ref{subsubsec:residual_blocks}. For any $\phi \in \cA_{P_\pi,\sigma_\pi}(G)$ consider the regularized operator 
\begin{equation}
\label{eq:reg_operator}
    M^*(w_\pi^*,\lambda) \phi:= L_{\pi,\mathrm{res}}(\lambda)M(w_\pi^*,\lambda) \phi.
\end{equation}
It follows from the factorization of the global intertwining operators \eqref{eq:normalization_non_twisted}, from the localization of the poles of the global $L$-factors in Theorem~\ref{thm:L}, and from the regularity of the local intertwining operators in Theorem~\ref{thm:N} that there exists a constant $c_\pi$ such that for any $\lambda \in \fa_{P,\cc}^*$ we have
\begin{equation}
\label{eq:regular_is_local}
    M^*(w_\pi^*,-\nu_\pi+\lambda)\phi=c_\pi N_{\sigma_\pi}(w_\pi^*,-\nu_\pi+\lambda)\phi.
\end{equation}
By Theorem~\ref{thm:N}, we see that $M^*(w_\pi^*,-\nu_\pi+\lambda)\phi$ is regular on $\fa_{P,\cc}^*$. Morever, by \cite[Corollary~3.3]{BoiZ} $M^*(w_\pi,-\nu_\pi)$ also realizes the quotient map $\cA_{P_\pi,\sigma_\pi,-\nu_\pi}(G) \to \cA_{P,\pi}(G)$.

We now use the notation of \S\ref{subsubsec:residual_blocks}. Let $P_\pi \subset Q \subset P$. For each $1 \leq i \leq m$ there exist integers $d_{i,1}, \hdots, d_{i,m_i}$ such that $\sum_{j=1}^{m_i} d_{i,j}=d_i$ and $M_Q=\prod_{i=1}^m \prod_{j=1}^{m_i} \GL_{r_i d_{i,j}}$. For each pair $(i,j)$, set $n_{i,j}=r_i d_{i,j}$ and $P_{i,j}=P_{\pi_i} \cap \GL_{n_{i,j}}$. Let $\pi_{i,j}$ be the discrete representation of $\GL_{n_{i,j}}$ spanned by the residues of Eisenstein series built from $\phi_{i,j} \in \cA_{P_{i,j},\sigma_i^{\boxtimes d_{i,j}}}(\GL_{n_{i,j}})$. Set $\pi_Q=\boxtimes_i \boxtimes_j \pi_{i,j}$ which is a discrete representation of $M_Q$. The following is \cite[Lemma~3.2]{BoiZ}.

\begin{lem}
\label{lem:contant_term}
    For every $\varphi \in \cA_{P,\pi}(\GL_n)$ we have $\varphi_{Q,-\nu_Q} \in \cA_{Q,\pi_Q}(\GL_n)$. In particular, $\varphi_{Q,-\nu_Q}$ is residual unless $Q=P_\pi$. Moreover, in that case, assume that $\varphi=E^{P,*}(\phi,-\nu_\pi)$ for $\phi \in \cA_{P_\pi,\sigma_\pi}(\GL_n)$. Then we have
    \begin{equation}
    \label{eq:constant_P_pi}
        \varphi_{P_\pi}=M^*(w_\pi^*,-\nu_{\pi})\phi.
    \end{equation}
\end{lem}

We note that our choices of measures yield an adjunction between residues of Eisenstein series and constant terms.

\begin{prop}
\label{prop:adjunt}
    For every $\phi \in \cA_{P_\pi,\sigma_\pi,-\nu_\pi}(G)$ and $\varphi \in \cA_{P,\pi}(G)$ we have
    \begin{equation*}
        \langle E^{P,*}(\phi,-\nu_\pi),\varphi \rangle_{P,\Pet}=\langle \phi,\varphi_{P_\pi} \rangle_{P_\pi,\Pet}.
    \end{equation*}
\end{prop}

\begin{proof}
    By the Iwasawa decomposition, it is enough to prove that the proposition holds if $P=G$. Write $M_{P_\pi}=\GL_r^{d}$ and $\sigma_\pi=\sigma^{\boxtimes d}$ where $\sigma \in \Pi_{\cusp}(\GL_r)$. Choose $\phi' \in \cA_{P_\pi,\sigma_\pi,-\nu_\pi}$ such that $E^{*}(\phi')=\varphi$. Let $\Lambda^{T,\mathrm{Art}}$ be Arthur's truncation operator from \cite{Art80}. Here $T \in \fa_{0}^{G}$ is a truncation parameter, and we say that it is sufficiently positive if we have $\langle \alpha ,T \rangle >M$ for all $\alpha \in \Delta_0$. This also naturally defines a notion of $\lim \limits_{T \to \infty}$. By the Maa\ss--Selberg relation of \cite[Theorem~3.1.3.1]{Ch}, for $\lambda, \lambda' \in \fa_{P_\pi,\cc}^{G,*}$ in general position and $T \in \fa_{0}^G$ sufficiently positive we have
    \begin{align}
    \label{eq:truncated_product}
        &\langle \Lambda^{T,\mathrm{Art}} E(\phi,\lambda),E(\phi',\overline{\lambda'}) \rangle_{G,\Pet} \nonumber \\
        = &\sum_{w,w' \in W(P_\pi,P_\pi)} \langle M(w,\lambda) \phi,M(w',\overline{\lambda'}) \phi' \rangle_{P_\pi,\Pet} \frac{\exp(\langle w\lambda + w' \lambda',T_{P_\pi} \rangle )}{\theta_{P_\pi}^{\mathrm{Art}}( w\lambda + w' \lambda' )},
    \end{align}
    where for any standard parabolic subgroup $Q$ of $\GL_n$ we write $\vol(\fa_Q^G/ \zz(\Delta_Q^\vee))$ be the covolume of the lattice generated by $\Delta_Q^\vee$ in $\fa_Q^G$ and we set
    \begin{equation*}
        \theta_{Q}^{\mathrm{Art}}(\lambda)=\vol(\fa_{Q}^G/ \zz(\Delta_{Q}^\vee))^{-1} \prod_{\alpha \in \Delta_{Q}} \langle \lambda, \alpha^\vee \rangle.
    \end{equation*}

    We compute residues of this expression in two ways. Set
    \begin{equation*}
        f_{\phi,\phi'}^T(\lambda,\lambda'):= L_{\pi,\mathrm{res}}(\lambda)  L_{\pi,\mathrm{res}}(\lambda')\langle \Lambda^{T,\mathrm{Art}} E(\phi,\lambda),E(\phi',\overline{\lambda'}) \rangle_{G,\Pet}.
    \end{equation*}
    First, by the continuity of the truncation operator from \cite[Theorem~3.9]{Zydor} and of Eisenstein series from \cite[Theorem~2.2]{Lap}, we have
    \begin{equation*}
        f_{\phi,\phi'}^T(-\nu_\pi,-\nu_\pi)=\langle \Lambda^{T,\mathrm{Art}} E^*(\phi,-\nu_\pi),E^*(\phi',-\nu_\pi) \rangle_{G,\Pet}=\langle \Lambda^{T,\mathrm{Art}} E^*(\phi,-\nu_\pi),\varphi \rangle_{G,\Pet}.
    \end{equation*}
    By another application of the Maa\ss--Selberg relation, this is
    \begin{equation*}
        f_{\phi,\phi'}^T(-\nu_\pi,-\nu_\pi)=\sum_{Q \supset P_\pi} \langle E^*(\phi,-\nu_\pi)_Q,\varphi_Q \rangle_{Q,\mathrm{Art}} \frac{\exp(\langle 2\nu_Q,T_Q \rangle )}{\theta_{Q}^{\mathrm{Art}}( 2\nu_Q )},
    \end{equation*}
    which is well defined as for all $P_\pi \subset Q$ and $\alpha \in \Delta_Q$ we have $\langle 2 \nu_Q, \alpha^\vee \rangle <0$. In particular, by taking the limit as $T \to \infty$ we obtain
    \begin{equation}
    \label{eq:Pet_limit_1}
        \lim_{T \to \infty } f_{\phi,\phi'}^T(-\nu_\pi,-\nu_\pi)=\langle E^*(\phi,-\nu_\pi),\varphi \rangle_{G,\Pet}.
    \end{equation}

    We now start from the RHS of \eqref{eq:truncated_product}. For regular $\lambda$ we have
    \begin{equation*}
        f_{\phi,\phi'}^T(\lambda,-\nu_\pi)= L_{\pi,\mathrm{res}}(\lambda)\sum_{w \in W(P_\pi,P_\pi)} \langle M(w,\lambda) \phi,M^*(w_\pi^*,-\nu_\pi) \phi' \rangle_{P_\pi,\Pet} \frac{\exp(\langle w\lambda +\nu_\pi,T_{P_\pi} \rangle )}{\theta_{P_\pi}^{\mathrm{Art}}( w\lambda + \nu_\pi )}.
    \end{equation*}
    Let $w \in W(P_\pi,P_\pi)$. In a neighborhood of $-\nu_\pi$, the poles of $\theta_{P_\pi}^{\mathrm{Art}}( w\lambda + \nu_\pi )^{-1}$ are simple and along the affine hyperplanes $\langle \lambda,w^{-1} \alpha^\vee \rangle =\langle -\nu_\pi,\alpha^\vee \rangle=1$ for $\alpha \in \Delta_{P_\pi}$. In particular, they are distinct from the poles of $M(w,\lambda)$ at $-\nu_\pi$ which are of the form $\langle \lambda, \beta^\vee \rangle =1$ with $\beta^\vee \in \Delta_{P_\pi}$ and $w \beta <0$. We may therefore compute the residues under the sum to obtain
    \begin{equation*}
        f_{\phi,\phi'}^T(-\nu_\pi,-\nu_\pi)=\sum_{w \in W(P_\pi,P_\pi)} \langle M^*(w,-\nu_\pi) \phi,M^*(w_\pi^*,-\nu_\pi) \phi' \rangle_{P_\pi,\Pet} \frac{\exp(\langle -w\nu_\pi +\nu_\pi,T_{P_\pi} \rangle )}{\theta_{P_\pi}^{\mathrm{Art},*}( -w \nu_\pi + \nu_\pi )},
    \end{equation*}
    where $M^*(w,-\nu_\pi)$ and $\theta_{P_\pi}^{\mathrm{Art},*}( -w \nu_\pi + \nu_\pi )$ are the appropriate regularizations. But we see that $\lim_{T \to \infty} \langle -w\nu_\pi +\nu_\pi,T_{P_\pi} \rangle = -\infty$ unless $w=1$, so that by Lemma~\ref{lem:contant_term}
    \begin{equation}
    \label{eq:Pet_limit_2}
        \lim_{T \to \infty } f_{\phi,\phi'}^T(-\nu_\pi,-\nu_\pi)=\frac{\langle \phi,\varphi_{P_\pi} \rangle_{P_\pi,\Pet} }{\theta_{P_\pi}^{\mathrm{Art},*}(0)}=\langle \phi,\varphi_{P_\pi} \rangle_{P_\pi,\Pet} \times \vol(\fa_{P_\pi}^G/ \zz(\Delta_{P_\pi}^\vee)).
    \end{equation}
    By going back to our choices of measures in \S\ref{subsubsec:first_measure} and of coordinates, we see that $\vol(\fa_{P_\pi}^G/ \zz(\Delta_{P_\pi}^\vee))$ is equal to $1$. The proposition now follows from \eqref{eq:Pet_limit_1} and \eqref{eq:Pet_limit_2}.
\end{proof}

\subsection{Regularity of intertwining operators}
\label{subsec:reg_intertwining}
Let $P$ be a standard parabolic subgroup of $G$, let $\pi \in \Pi_\disc(M_P)$. Let $Q$ be another standard parabolic subgroup of $G$ and take $w \in {}_Q W_P$ such that $P_\pi \subset P_w$. Write $\pi_w$ for the representation $\pi_{P_w} \in \Pi_{\disc}(M_{P_w})$ defined in Lemma~\ref{lem:contant_term}. Let $\varphi \in \cA_{P,\pi}(G)$. As before, set $\nu_w=\nu_{P_w}$ and $\varphi_w=\varphi_{P_w}$. By Lemma~\ref{lem:contant_term}, we have the normalization

\begin{equation}
\label{eq:normalization_twisted}
       M(w,\lambda)\varphi_{w}=n_{\pi_w}(w,\lambda+\nu_{w})N_{\pi_w}(w,\lambda+\nu_{w})\varphi_{w,-\nu_{w}}
\end{equation}
The goal of this section is to determine the poles of $M(w,\lambda)$ in a neighborhood of $\fa_{P}^{*,+}$ of the form $\cR_{\pi,k,c_J}$ (see \eqref{eq:R_defi}).

\subsubsection{Regularity of scalar factors}
\label{subsubsec:reg_scalar}

\begin{lem}
\label{lem:n_regular}
    There exist $k>0$ such that for every level $J$ there exists $c_J>0$ such that for every $J$-pair $(P,\pi)$ there exists a product of affine root linear forms $L_{\pi,w} \in \cc[\fa_{P,\cc}^*]$ such that the product
    \begin{equation*}
        \lambda \mapsto L_{\pi,w}(\lambda)n_{\pi_w}(w,\lambda+\nu_{P_w}).
    \end{equation*}
    is regular in the region $\cR_{\pi,k,c_J}$.
\end{lem}

\begin{rem}
\label{rem:f_pi_w}
    As the proof shows, the set $\{ L_{\pi,w} \}$, where $w$ and $\pi$ range as in \S\ref{subsec:reg_intertwining}, is finite. Moreover, if $\pi$ is cuspidal we may take
    \begin{equation*}
        L_{\pi,w}(\lambda)=\prod_{\substack{i<j \\ w(i)>w(j) \\ \pi_i \simeq \pi_j}} (\lambda_i-\lambda_j-1).
    \end{equation*}
    We do not give a precise description of $L_{\pi,w}$ for general $\pi$, as it will be easily computable in the cases we are interested in.
\end{rem}

\begin{proof}
    Using the description of $L$-factors for discrete automorphic representations given in \eqref{eq:discrete_L}, we see that we have the equality 
    \begin{equation}
    \label{eq:global_equality}
        n_{\pi_w}(w,\lambda+\nu_{w})=n_{\sigma_\pi}(w,\lambda+\nu_{\pi}).
    \end{equation}
    Therefore we may assume that $P_w=P_\pi$. We use the notation of \S\ref{subsubsec:residual_blocks}. For each $i$, set $D_i=\sum_{j=1}^{i} d_j$. Then $w$ acts by permuting the blocks of $M_{P_\pi}$ so that we may identify it with an element of $\fS_{D_m}$. Let $1 \leq i<j \leq m$. For each $1 \leq a \leq d_i$, let $b_a$ be the greatest integer such that $1\leq b_a \leq d_j$ and $w(b_a+D_{j-1})<w(a+D_{i-1})$ (if no such integer exists, set $b_a=0$). Set
    \begin{align}
    \label{eq:n_i,j}
            n_{i,j}(\lambda)&=\prod_{a=1}^{d_i} \prod_{b=1}^{b_a} \frac{L\left(\lambda_i-\lambda_j+a-b+\frac{d_j-d_i}{2},\sigma_i \times \sigma_j^\vee \right)}{L\left(\lambda_i-\lambda_j+a-(b-1)+\frac{d_j-d_i}{2},\sigma_i \times \sigma_j^\vee \right)} \nonumber \\
            &=\prod_{a=1}^{d_i}\frac{L\left(\lambda_i-\lambda_j+a-b_a+\frac{d_j-d_i}{2},\sigma_i \times \sigma_j^\vee \right)}{L\left(\lambda_i-\lambda_j+a+\frac{d_j-d_i}{2},\sigma_i \times \sigma_j^\vee \right)}.
    \end{align}
    There is another description of $n_{i,j}$: if $1 \leq b \leq d_j$, let $a_b$ be the smallest integer such that $1 \leq a_b \leq d_i$ and $w(b+D_{j-1})<w(a_b+D_{i-1})$ (if no such integer exists, set $a_b=d_i+1$). Then we have 
    \begin{align}
        \label{eq:n_i,j_2}
            n_{i,j}(\lambda)&=\prod_{b=1}^{d_j} \prod_{a=a_b}^{d_i} \frac{L\left(\lambda_i-\lambda_j+a-b+\frac{d_j-d_i}{2},\sigma_i \times \sigma_j^\vee \right)}{L\left(\lambda_i-\lambda_j+(a+1)-b+\frac{d_j-d_i}{2},\sigma_i \times \sigma_j^\vee \right)} \nonumber \\
            &=\prod_{b=1}^{d_j}\frac{L\left(\lambda_i-\lambda_j+a_b-b+\frac{d_j-d_i}{2},\sigma_i \times \sigma_j^\vee \right)}{L\left(\lambda_i-\lambda_j+1-b+\frac{d_j+d_i}{2},\sigma_i \times \sigma_j^\vee \right)}.
    \end{align}
    Then by \eqref{eq:n_pi_formula} and the functional equation of $L$-functions (Theorem~\ref{thm:L}), we have
    \begin{equation}
        \label{eq:n_sigma_formula}
        n_\sigma(w,\lambda+\nu_{\pi})=\epsilon(\lambda) \prod_{i<j} n_{i,j}(\lambda),
    \end{equation}
    where $\epsilon$ is some product of $\epsilon$-factors which are entire.

     Take $k$ and $c_J$ given by Theorem~\ref{thm:L}. If $d_j \geq d_i$, for every $a \geq 1$ we see that the function $L\left(\lambda_i-\lambda_j+a+\frac{d_j-d_i}{2},\sigma_i \times \sigma_j^\vee \right)$ doesn't have any zero in the region $\cR_{\pi,k,c_J}$ by Theorem~\ref{thm:L}. If $d_j \leq d_i$, we see that for every $b \leq d_j$ it is $L\left(\lambda_i-\lambda_j+1-b+\frac{d_j+d_i}{2},\sigma_i \times \sigma_j^\vee \right)$ which doesn't have any zero in $\cR_{\pi,k,c_J}$. By \eqref{eq:n_sigma_formula}, the only poles of $n_{\sigma_\pi}(w,\lambda+\nu_{\pi})$ come from poles of $L(\cdot,\sigma_i \times \sigma_j^\vee)$. By Theorem~\ref{thm:L}, they are all simple and located along a finite collection of affine root hyperplanes (even as $\pi$ varies), so that there exists $L_{\pi,w} \in \cc[\fa_{P,\cc}^*]$ such that $\lambda \mapsto L_{\pi,w}(\lambda)n_{\sigma_\pi}(w,\lambda+\nu_{\pi})$ is regular on $\cR_{\pi,k,c_J}$ as claimed.
\end{proof}

We also note that the polynomial $L_{\pi,w}$ can be ignored in a neighborhood of $i \fa_P^*$ if $w \in W(P)$.

\begin{lem}
    \label{lem:blocks_n_regular}
    Assume that $w \in W(P,Q)$. There exist $k>0$ such that for every level $J$ there exists $c_J>0$ such that for every $J$-pair $(P,\pi)$ the map
    \begin{equation*}
        \lambda \mapsto n_{\pi}(w,\lambda).
    \end{equation*}
    is regular in the region $\cS_{\pi,k,c_J}$ (see \eqref{eq:S_defi}).
\end{lem}

\begin{proof}
    By Lemma~\ref{lem:n_regular}, we only have to show that $n_{\sigma_\pi}(w,\lambda+\nu_\pi)$ doesn't have any poles in the region $i \fa_P^*$. We keep the notation from the previous proof. Assume that $w$ switches the blocks $i$ and $j$ of $M_P$ with $i <j$, and that $d_j \geq d_i$. By \eqref{eq:n_i,j_2}, the associated factor is
    \begin{equation*}
        n_{i,j}(\lambda)=\prod_{a=1}^{d_i} \frac{L\left(\lambda_i-\lambda_j +\frac{d_i-d_j}{2}-a+1,\sigma_i \times \sigma_j^\vee \right)}{L\left(\lambda_i-\lambda_j +\frac{d_i+d_j}{2}-a+1 ,\sigma_i \times \sigma_j^\vee\right)}.    \end{equation*}
    By Theorem~\ref{thm:L}, the only possible pole of $n_{i,j}$ on $i \fa_P^*$ is located along the hyperplane $\lambda_i=\lambda_j$ and only occurs if the terms associated to $\frac{d_i-d_j}{2}-a+1 \in \{0,1\}$ appear in the product. For the $0$ case, we obtain $a=1+\frac{d_i-d_j}{2}$ which is only possible if $d_i=d_j$. This term is compensated in the denominator by the term corresponding to $\frac{d_i+d_j}{2}-a+1=1$, i.e. $a=d_i$. For the $1$ case, we get $a=\frac{d_i-d_j}{2}$ which is excluded. If $d_i \geq d_j$, we use the same procedure with \eqref{eq:n_i,j} instead, which concludes the proof.
\end{proof}

\subsubsection{Regularity of local normalized operators}
We keep the data defined at the top of \S\ref{subsec:reg_intertwining}. 

\begin{lem}
\label{lem:holo_compo}
    For every place $v$ the composition
    \begin{equation}
    \label{eq:composition_operator}
        N_{\sigma_{\pi,v}}(w,\lambda+\nu_{\pi}) N_{\sigma_{\pi,v}}(w^*_\pi,\lambda-\nu_{\pi})
    \end{equation}
    is holomorphic in the region
     \begin{equation}
     \label{eq:conv_region}
            \left\{ \lambda \in \fa_{P,\cc}^* \; | \; \forall \alpha \in \Sigma_{P}, \; \langle \Re(\lambda),\alpha^\vee \rangle > -e(\sigma_{\pi,v}) \right\}.
        \end{equation}

    More precisely, let $\lambda$ be an element in the region $\eqref{eq:conv_region}$. Let $w' \in W(P_\pi)$ be an element such that, if we set $P_\pi'=(w')^{-1} .P_\pi$, for all $\alpha \in \Delta_{P_\pi'}$ we have $\langle (w')^{-1}(\lambda-\nu_\pi),\alpha^\vee \rangle >-e(\sigma_{\pi,v})$. Set $\sigma_\pi'=(w')^{-1} \sigma_\pi'$, $\lambda'=(w')^{-1} \lambda$ and $\nu_\pi'=(w')^{-1} \nu_\pi$. Then we have the commutative diagram 
    \begin{equation*}
\begin{tikzcd}
{I_{P_\pi'}^G \sigma_{\pi,\lambda'-\nu_\pi',v}'} \arrow[rr, "{N(w',\lambda'-\nu_\pi')}"] \arrow[rd, "{N(w_\pi^*w',\lambda'-\nu_\pi')}" description, two heads] \arrow[rdd, "{N(ww_\pi^*w',\lambda'-\nu_\pi')}" description, bend right=49] &                                                                  & {I_{P_\pi}^G \sigma_{\pi,\lambda-\nu_\pi,v}} \arrow[ld, "{N(w_\pi^*,\lambda-\nu_\pi)}" description] \arrow[ldd, "{N(ww_\pi^*,\lambda-\nu_\pi)}" description, bend left=49] \\
                                                                                                                                                                                                                                           & {I_{P}^G\pi_{v,\lambda}} \arrow[d, "{N(w,\lambda)}" description] &                                                                                                                                                                            \\
                                                                                                                                                                                                                                           & {I_{w.P}^Gw\pi_{v,\lambda}}                                          &                                                                                                                                                                           
\end{tikzcd}
    \end{equation*}
    where all the arrows are holomorphic in a neighborhood of $\lambda$, $N_{\sigma_\pi',v}(w_\pi^*w',\lambda'-\nu_\pi')$ is surjective and we identify $I_{P}^G \pi_{v}$ and $I_{wP}^G(w.\pi_v)$ as subrepresentations of $I_{P_\pi}^G \sigma_{\pi,\nu_\pi,v}$ and $I_{w.P_\pi}^G w\sigma_{\pi,\nu_\pi,v}$ respectively.
\end{lem}

\begin{proof}
    This is \cite[Proposition~I.11]{MW89}. As the terminology there differs from ours, some comments are in order. Our element $w_\pi^*$ is denoted $w$, and our $w$ is $\sigma$. The condition $w \in {}_Q W_P$ implies condition (b) of \cite[Proposition~I.11]{MW89}. The parameter $\lambda$ is $\underline{s}$. That $\lambda$ belongs to the region \eqref{eq:conv_region} is condition (c). The representation $\sigma_v$ is denoted $\pi$ and is assumed to be generic (which is the case here as $\sigma_v$ is the local component of a cuspidal representation). Finally, the element $w'$ from \cite[Proposition~I.11]{MW89} is the one that we use.
\end{proof}

\begin{cor}
    \label{cor:holo_compo_must}
    We keep the notation from Lemma~\ref{lem:holo_compo}. For every place $v$, the composition $ N_{\sigma_{\pi,v}}(w,\lambda+\nu_{\pi}) N_{\sigma_{\pi,v}}(w^*_\pi,\lambda-\nu_{\pi})$ is holomorphic in the region 
    \begin{equation*}
        \lambda+\nu_w \in \bigcap_{\substack{\alpha \in \Sigma_{P_w} \\ w \alpha<0 }}  \left\{ \mu \in \fa_{P_w}^* \; \middle| \; \langle \Re(\mu),\alpha^\vee \rangle > -e(\sigma_{\pi,v})\right\}.
    \end{equation*}
\end{cor}

\begin{proof}
    If $w \in W(P)$ (so that $\nu_w=0$ and $P_w=P$), this follows from Lemma~\ref{lem:holo_compo} by decomposing $w$ as a product of adjacent transpositions. In general, decompose $w_\pi^*=w_{\pi_w}^* w^{\pi_w}$. Then we have and $w^{\pi_w}\nu_\pi=-\nu_w+\nu_{\pi_w}$. Note that $\sigma_{\pi_w}=\sigma_\pi$. We have for $\lambda \in \fa_{P,\cc}^*$ in general position
    \begin{equation}
        N(w,\lambda+\nu_{\pi}) N(w^*_\pi,\lambda-\nu_{\pi})=N(w,(\lambda+\nu_w)+\nu_{\pi_w})N(w_{\pi_w}^*,(\lambda+\nu_w)-\nu_{\pi_w})N(w^{\pi_w},\lambda-\nu_\pi).
    \end{equation}
    By Theorem~\ref{thm:N}, $N(w^{\pi_w},\lambda-\nu_\pi)$ is regular for $\lambda \in \fa_{P,\cc}^*$. Because $w \in W(P_w)$, we conclude by the first case.
\end{proof}

From Lemma~\ref{lem:holo_compo} we easily get the regularity of $ N_{\pi_w,v}$ for general $w \in {}_Q W_P$ such that $P_\pi \subset P_w$. 

\begin{lem}
\label{lem:N_regular}
    Let $\varphi \in \cA_{P,\pi}(G)$ and assume that $\varphi_{w}$ is factorizable. Then for every place $v$ the local intertwining operator
    \begin{equation}
    \label{eq:local_reg}
        N_{\pi_w,v}(w,\lambda+\nu_{w})\varphi_{w,-\nu_{w},v}
    \end{equation}
    is holomorphic in the region
     \begin{equation}
     \label{eq:conv_region_exp}
            \left\{ \lambda \in \fa_{P,\cc}^* \; | \; \forall \alpha \in \Sigma_{P}, \; \langle \Re(\lambda),\alpha^\vee \rangle > -2/(n^2+1) \right\}.
        \end{equation}
\end{lem}

\begin{proof}
    We know by \cite[Corollary~3.3]{BoiZ} that we can identify globally $\cA_{P,\pi}(G)$ as a subrepresentation of $\cA_{P_\pi,\sigma_\pi,\nu_\pi}(G)$ by applying the constant term $\varphi \mapsto \varphi_{P_\pi}$, which amounts locally to identifying $I_P^G \pi_v$ with the image of the intertwining operator $N_{\sigma_{\pi},v}(\cdot,-\nu_\pi) : I_{P_\pi}^G \sigma_{\nu_{\pi},-\nu_\pi,v} \to I_{P_\pi}^G \sigma_{\nu_{\pi},\nu_\pi,v}$. With the same notation as in Corollary~\ref{cor:holo_compo_must}, we have
    \begin{equation*}
        I_{P_\pi}^G \sigma_{\pi,-\nu_\pi,v} \xrightarrow{N_{\sigma_\pi,v}(w^{\pi_w},-\nu_\pi)} I_{P_\pi}^G \sigma_{\pi,-\nu_{\pi_w}+\nu_w,v} \xrightarrow{N_{\sigma_\pi,v}(w_{\pi_w}^*,-\nu_{\pi_w}+\nu_w)}  I_{P_\pi}^G \sigma_{\pi,\nu_\pi,v},
    \end{equation*}
    so that $I_{P}^G \pi_v$ is identified with the image of this composition in $I_{P_\pi}^G \sigma_{\nu_{\pi},\nu_\pi,v}$. This is a subrepresentation of the image of the last arrow which is $I_{P_w}^G \pi_{w,\nu_w,v}$. Therefore the regularity of \eqref{eq:local_reg} is equivalent to the regularity of $N_{\sigma_{\pi,v}}(w,\lambda+\nu_{\pi}) N_{\sigma_{\pi,v}}(w^*_\pi,\lambda-\nu_{\pi})$, which is known by Lemma~\ref{lem:holo_compo}.

\end{proof}

\subsubsection{Regularity of global intertwining operators} We can now describe the singularities of $M(w,\lambda)$.

\begin{prop}
\label{prop:M_regular}
    There exists $k>0$ such that for every level $J$ there exists $c_J>0$ such that for every $J$-pair $(P,\pi)$ with $P_\pi \subset P_w$ the map
    \begin{equation}
    \label{eq:regularized_operator}
        \lambda \mapsto L_{\pi,w}(\lambda)M(w,\lambda)\varphi_{w}
    \end{equation}
    is regular in $\cR_{\pi,k,c_J}$ for any $\varphi \in \cA_{P,\pi}(G)$, where $L_{\pi,w} \in \cc[\fa_{P,\cc}^*]$ is the product of affine root linear forms defined in Lemma~\ref{lem:n_regular}.
\end{prop}

\begin{rem}
    By Lemma~\ref{lem:N_regular}, all the singularities of $M(w,\lambda)$ in $\cR_{\pi,k,c_J}$ come from the scalar factor $n_{\pi_w}(\lambda+\nu_{w})$. But $L_{\pi,w}$ can have zeros in $i \fa_P^*$ and the operator $M(w,\lambda)$ can indeed have poles in this region. This can already be seen for $G=\GL_4$ in the unramified case (see \cite{Hegde}).
\end{rem}

\begin{proof}
    This follows from the factorization \eqref{eq:normalization_twisted} and Lemmas~\ref{lem:n_regular} and~\ref{lem:N_regular}.
\end{proof}

\begin{cor}
\label{cor:wider_holo}
      The map $\lambda \in \fa_{P}^* \mapsto L_{\pi,w}(\lambda)M(w,\lambda)\varphi_{w}$ is regular in the region $ \lambda+\nu_w \in \cR_{\pi,k,c_J}(w)$, where we recall that $\cR_{\pi,k,c_J}(w)$ was defined in \eqref{eq:R(w)_defi}.
\end{cor}

\begin{proof}
    We have
    \begin{equation*}
        M(w,\lambda)\varphi_w=M(w,\lambda+\nu_w) \varphi_{P_w,-\nu_w},
    \end{equation*}
    and we know by Lemma~\ref{lem:contant_term} that $\varphi_{P_w,-\nu_w} \in \cA_{P_w,\pi_w}(G)$. The result now follows from decomposing the intertwining operator and applying successively Proposition~\ref{prop:M_regular}. Note that here we use the same argument as in \cite[Section~3.4.7]{Ch} to make $\Lambda_\pi^{M_P}$ appear and not $\Lambda_{\pi_w}^{M_{P_w}}$ (which is a consequence of the fact that $\pi$ and $\pi_w$ are obtained as residues of Eisenstein series built from the same cuspidal representations).
\end{proof}

We also obtain a stronger version if $w \in W(P,Q)$. In this case, we already knew that the operator $M(w,\lambda)\varphi$ was regular on $i \fa_P^*$ by \cite[Theorem~7.2]{Art05}. The novelty is to extend this regularity property to a neighborhood of the unitary axis.

\begin{prop}
\label{prop:M_regular_blocks}
     Assume that $w \in W(P,Q)$. There exists $k>0$ such that for every level $J$ there exists $c_J>0$ such that for every $J$-pair $(P,\pi)$ the map
    \begin{equation*}
        \lambda \mapsto M(w,\lambda)\varphi
    \end{equation*}
    is regular in $\cS_{\pi,k,c_J}$ for any $\varphi \in \cA_{P,\pi}(G)$
\end{prop}

\begin{proof}
     This follows from the factorization \eqref{eq:normalization_twisted} and Lemmas~\ref{lem:blocks_n_regular} and~\ref{lem:N_regular}. 
\end{proof}

We now record a generalization of a well-known lemma on the behavior of intertwining operators along root-hyperplanes. We use the notation of \S\ref{subsubsec:residual_blocks}.

\begin{lem}
\label{lem:local_behavior}
    Assume that $n=2m$ and let $P$ be the standard parabolic subgroup of $G$ with standard Levi factor $\GL_m \times \GL_m$. Let $\pi \in \Pi_{\disc}(M_P)$ be of the form $\pi=\pi_1 \boxtimes \pi_1$. Let $w$ be the only non-trivial element in $W(P,P)$. Then for every place $v$ of $F$ we have $N_{\pi,v}(w,0)=\mathrm{Id}$.
\end{lem}

\begin{proof}
    Assume that $\pi_1=\Speh(\sigma,d)$ with $\sigma$ cuspidal. There exists $w' \in W(P_\pi)$ such that $-w' \nu_\pi$ is positive. Set $P_\pi'=(w')^{-1}.P_\pi$ and set $\sigma_\pi'=(w')^{-1} \sigma_\pi$ and $\nu_\pi'=(w')^{-1} \nu_\pi$. Set $w_0=(w')^{-1}w_\pi^* ww_\pi^* w'$. By Theorem~\ref{thm:N} and Lemma~\ref{lem:holo_compo}, for $\lambda \in \fa_{P,\cc}^{G,*}$ in a neighborhood of $0$ we have the commutative diagram with surjective vertical maps
    \begin{equation*}
\begin{tikzcd}
{I_{P_\pi'}^G \sigma_{\pi,\lambda'-\nu_\pi',v}'} \arrow[d, "{N(w_\pi^*w',\lambda'-\nu_\pi')}"', two heads] \arrow[rr, "{N(w_0,\lambda'-\nu_\pi')}"] &  & {I_{P_\pi'}^G \sigma_{\pi,-\lambda'-\nu_\pi',v}'} \arrow[d, "{N(w_\pi^*w',\lambda'-\nu_\pi')}", two heads] \\
{I_P^G \pi_{\lambda,v}} \arrow[rr, "{N(w,\lambda)}"]                                                                                                &  & {I_P^G \pi_{-\lambda,v}}                                                                       
\end{tikzcd}
    \end{equation*}
    It therefore suffices to prove that $N_{\sigma_\pi'}(w_0,\nu_\pi')=\mathrm{Id}$. But we can decompose $\sigma_\pi'=(\sigma \boxtimes \sigma)^{\boxtimes d}$, and by Theorem~\ref{thm:N} we see that $N_{\sigma_\pi'}(w_0,\nu_\pi')$ is the a product of operators $N_{\sigma \boxtimes \sigma,v}(\iota,0)$, with $\iota$ the involution switching the two blocks of $\sigma \boxtimes \sigma$. By \cite[Proposition~6.3]{KS}, they are equal to $\mathrm{Id}$. This concludes the proof.
\end{proof}

\begin{lem}
    \label{lem:M_zeros} 
    Let $P$ be standard parabolic subgroup of $G$, let $\pi \in \Pi_{\disc}(M_P)$. Let $w$ be the element that acts by blocks on $P$ corresponding to the transposition $(i \; j)$ with $i<j$. If $\pi_i \simeq \pi_j$, then for $\lambda \in \fa_{P,\cc}^*$ in general position in the hyperplane $\lambda_i=\lambda_j$ we have for every $\varphi \in \cA_{P,\pi}(G)$
    \begin{equation}
    \label{eq:M_zeros}
        M(w,\lambda) \varphi =- \varphi_\lambda.
    \end{equation}
\end{lem}

\begin{proof}
    Let us keep the notation of the proof of Lemma~\ref{lem:n_regular}. If $l \neq i$, we easily see that $n_{i,l}(\lambda)$ (or $n_{l,i}(\lambda)$) is regular along the hyperplane $\lambda_i=\lambda_j$, and the same goes for $n_{l,j}(\lambda)$ (or $n_{j,l}(\lambda)$). As $d_i=d_j$ and $b_a=d_j$ for every $1 \leq a \leq d_i$, we compute using \eqref{eq:n_i,j}
    \begin{equation*}
        n_{i,j}(\lambda)=\frac{L(\lambda_i-\lambda_j,\sigma_i \times \sigma_j^\vee)}{L(\lambda_i-\lambda_j+1,\sigma_i \times \sigma_j^\vee)} \prod_{a=1}^{d_i-1}L(\lambda_i-\lambda_j+a-d_i,\sigma_i \times \sigma_j^\vee)\prod_{a=2}^{d_i}\frac{1}{L(\lambda_i-\lambda_j+a,\sigma_i \times \sigma_j^\vee)}.
    \end{equation*}
    By Theorem~\ref{thm:L}, $n_{i,j}(\lambda)$ is regular if $\lambda_i-\lambda_j=0$. As $N_{\pi}(w,\lambda)$ is regular for $\lambda$ in general position in this hyperplane by Lemma~\ref{lem:N_regular}, so is $M(w,\lambda) \varphi$. 
    
    Let $\tau \in W(P,P)$ be the permutation $(i+1\;\hdots\;j-1\; j)$. Then if $\lambda_i=\lambda_j$ we have $w \lambda=\lambda$ and therefore by \eqref{eq:M_equation}
    \begin{equation*}
        M(w,\lambda)=M(\tau^{-1},\tau \lambda)M(\tau w \tau^{-1},\tau \lambda) M(\tau,\lambda).
    \end{equation*}
    By the previous discussion, $M(\tau^{-1},\tau \lambda)$, $M(\tau w \tau^{-1},\tau \lambda)$ and $M(\tau,\lambda)$ are regular for $\lambda$ in general position along the hyperplane $\lambda_i=\lambda_j$. 
    
    It follows that it is enough to prove that \eqref{eq:M_zeros} holds in the case $m=2$, i.e. $P=\GL_{n_1} \times \GL_{n_1}$, $\pi=\pi_1 \boxtimes \pi_1$, $P_\pi=\GL_r^d \times \GL_r^d$, $\sigma_\pi=\sigma^{\boxtimes d} \boxtimes \sigma^{\boxtimes d}$ and $w=(1 \; 2)$. By \eqref{eq:normalization_twisted} and Lemma~\ref{lem:local_behavior}, we have to prove that $n_{\pi}(w,\lambda)=-1$ along our hyperplane. Set $s=\lambda_1-\lambda_2$. By \eqref{eq:global_equality} and Theorem~\ref{thm:L}, it is easy to see that $n_\pi(w,s)$ is regular at $s=0$ and that $n_{\pi}(w,0)=(-1)^d (-1)^{d-1}=-1$. This concludes.
\end{proof}

\subsection{Analytic properties of discrete Eisenstein series}

The main result of this section is Theorem~\ref{thm:analytic_Eisenstein} below which describes the analytic behavior of discrete Eisenstein series in a neighborhood $\cR_{\pi,k,c_J}$ of the positive Weyl chambers. 

\subsubsection{Bernstein--Zelevinsky segments} \label{subsubsec:BZ_segments} To write our result, we use the notion of segments from \cite{BZ}. We define a segment $S$ to be a $r$-tuple of complex numbers $(s_1,\hdots,s_r)$ with
\begin{equation*}
    s_1-s_2=-1, \; s_2-s_3=-1, \hdots, s_{r-1}-s_r=-1.
\end{equation*}
The set $\{s_1,\hdots,s_r\}$ completely determines the tuple $(s_1,\hdots,s_r)$, so that we may think of $S$ as a set of complex numbers. If $S$ and $T$ are two segments, we say that they are linked if $S$ and $T$ are not included in one another and if $S \cup T$ is still a segment. 

We use the notation of \S\ref{subsubsec:residual_blocks}. To any $\pi \in \Pi_{\disc}(M_P)$ we associate the $m$ Bernstein--Zelevinsky segments $S_\pi^1, \hdots, S_\pi^m$ such that each $i$, $S_\pi^i=\nu_{P_{\pi_i}}$. In coordinates, $S_\pi^i$ is the $d_i$-tuple  
\begin{equation*}
    \left(S_\pi^i \right)_j=\frac{2j-1-d_i}{2}, \quad 1 \leq j \leq d_i.
\end{equation*}
If $\lambda \in \fa_{P,\cc}^*$ with coordinates $(\lambda_1,\hdots,\lambda_m)$, let $S_\pi^i(\lambda)$ be the segment $S_\pi^i+\lambda_i$. For each $i<j$, the set of $\lambda \in \fa_{P,\cc}^*$ such that $S_\pi^i(\lambda)$ and $S_\pi^j(\lambda)$ are linked is a finite union of affine root hyperplanes of $\fa_{P,\cc}^*$. Let $L_{\pi,i,j} \in \cc[\fa_{P,\cc}^*]$ be the corresponding product of affine root linear forms. Define  
\begin{equation}
\label{eq:f_pi_defi}
    L_{\pi,E}(\lambda)=\prod_{\substack{i<j \\ \sigma_i \simeq \sigma_j}}L_{\pi,i,j}(\lambda),
\end{equation}
where $\sigma_\pi=\sigma_1^{\boxtimes d_1} \boxtimes \hdots \boxtimes \sigma_m^{\boxtimes d_m}$. Note that the set $\{L_{\pi,E}\}$ is finite as $\pi$ varies.

\subsubsection{Analytic properties of discrete Eisenstein series}
We can now state the theorem.

\begin{theorem}
\label{thm:analytic_Eisenstein}
The following properties hold.
\begin{enumerate}
        \item Regularity: there exists $k>0$ such that for every level $J$ there exists $c_J>0$ such that for every $J$-pair $(P,\pi)$ the meromorphic function
        \begin{equation*}
            L_{\pi,E}(\lambda) E(\varphi,\lambda)
        \end{equation*}
        is holomorphic in the region $\lambda \in \cR_{\pi,k,c_J}$ for every $\varphi \in \cA_{P,\pi}(G)$. 
        \item Zeros: let $P$ be a standard parabolic subgroup of $G$, $\pi \in \Pi_{\disc}(M_P)$ and $\varphi \in \cA_{P,\pi}(G)$. For every $\alpha_{i,j} \in \Sigma_P$ which is associated to two blocks $\GL_{n_i}$ and $\GL_{n_j}$ of $M_P$ with $\pi_i \simeq \pi_j$, the Eisenstein series $E(\varphi,\lambda)$ has a zero along the root hyperplane $\langle \lambda, \alpha_{i,j}^\vee \rangle=0$.
    \end{enumerate}
\end{theorem}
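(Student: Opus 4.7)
The argument splits naturally into the two assertions.

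\emph{Part 2 (zeros).} This follows from the Langlands functional equation together with Lemma~\ref{lem:M_zeros}. Let $w \in W(P,P)$ denote the transposition swapping the $i$-th and $j$-th blocks of $M_P$; this is well-defined because $\pi_i \simeq \pi_j$ forces $n_i = n_j$, and moreover $w\pi = \pi$. For $\lambda$ in the hyperplane $\langle \lambda, \alpha_{i,j}^\vee \rangle = 0$ we have $w\lambda = \lambda$, so the Langlands functional equation reads $E(M(w,\lambda)\varphi, w\lambda) = E(\varphi,\lambda)$. Substituting $M(w,\lambda)\varphi = -\varphi_\lambda$ from Lemma~\ref{lem:M_zeros} (which makes both sides equal, as functions on $G(\bA)$, to $-\varphi_\lambda$) into the functional equation yields $E(\varphi,\lambda) = -E(\varphi,\lambda)$, whence $E(\varphi,\lambda) = 0$ on the hyperplane.

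\emph{Part 1 (regularity).} The plan is to reduce to the analogous statement for cuspidal data and then invoke the combinatorial analysis of \cite{MW89}. Using \S\ref{subsubsec:residual_blocks}, write $\varphi = E^{P,*}(\phi, -\nu_\pi)$ with $\phi \in \cA_{P_\pi, \sigma_\pi}(G)$ cuspidal; by transitivity of parabolic induction $E(\varphi, \lambda)$ is identified, up to the regularizing factor $L_{\pi,\mathrm{res}}$, with the iterated residue at $\mu = \lambda - \nu_\pi$ of the cuspidal Eisenstein series $E(\phi, \mu)$. The singularities of $E(\phi, \mu)$ can be read from the constant-term formula~\eqref{eq:constant_term}: each intertwining operator $M(w, \mu)$ on $\cA_{P_\pi, \sigma_\pi}(G)$ has, by Proposition~\ref{prop:M_regular}, only simple poles along affine root hyperplanes $\langle \mu, \beta^\vee \rangle = 1$ for roots $\beta$ of $P_\pi$ connecting two copies of the same cuspidal representation. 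The zero-free region of Theorem~\ref{thm:L}(4) ensures that this localization is valid throughout $\cR_{\sigma_\pi, k, c_J}$, not merely in a polynomial-free neighborhood of $i\fa_{P_\pi}^*$.

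It remains to translate these cuspidal singularities under the shift $\mu \mapsto \lambda = \mu + \nu_\pi$. A pole hyperplane $\langle \mu, \beta^\vee \rangle = 1$ for $\beta$ joining the $a$-th cuspidal copy in the block $\GL_{n_i}$ to the $b$-th copy in the block $\GL_{n_j}$ translates, after residue, into a linear relation between $\lambda_i - \lambda_j$ and the Speh indices $d_i, d_j$. By construction, these relations encode precisely the linkage of the Bernstein--Zelevinsky segments $S_\pi^i(\lambda)$ and $S_\pi^j(\lambda)$, and they produce exactly the affine factors in $L_{\pi, E}(\lambda)$ from~\eqref{eq:f_pi_defi}. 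Those translated singularities that do \emph{not} correspond to linked segments must be shown to cancel: they are either absorbed by zeros of $L_{\pi, \mathrm{res}}$ produced by the residue, or paired with opposite-sign contributions coming from a neighboring summand $M(w', \mu)\phi_{w'}$ in the sum over ${}_Q W_{P_\pi}$ (attached to a permutation differing from $w$ by a simple transposition). This pairing is the combinatorial content of \cite[Proposition I.11]{MW89}.

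The principal obstacle is the matching of unlinked pole contributions in this last step. Once the pairing between unlinked poles is isolated, the cancellation reduces to a direct manipulation of the ratios~\eqref{eq:n_i,j} expressing $n_{\sigma_\pi}(w, \mu)$ in terms of Rankin--Selberg $L$-factors, governed by the functional equation and pole structure of Theorem~\ref{thm:L}. Extending the regularity statement from a neighborhood of $i\fa_P^*$ to the larger cone $\cR_{\pi, k, c_J}$ (which permits $\Re(\lambda)$ deep inside the positive Weyl chamber) relies crucially on Theorem~\ref{thm:L}(4): it guarantees that no spurious zeros of denominator $L$-factors arise in this region, so that the only residual singularities of the composite residue are indeed the linked-segment hyperplanes cut out by $L_{\pi, E}$.
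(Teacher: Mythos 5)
Your Part 2 is essentially identical to the paper's argument: in both cases the key inputs are the Langlands functional equation for Eisenstein series together with Lemma~\ref{lem:M_zeros}, yielding $E(\varphi,\lambda)=-E(\varphi,\lambda)$ on the hyperplane.

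Your Part 1 has the right skeleton — reduce to cuspidal data via $\varphi = E^{P,*}(\phi,-\nu_\pi)$, analyze the constant term, and use the zero-free region of Theorem~\ref{thm:L}(4) and Theorem~\ref{thm:N}(3) to extend from a neighborhood of $i\fa_P^*$ to the cone $\cR_{\pi,k,c_J}$ — but the two steps on which everything hinges are either miscited or omitted.

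First, you never justify why it suffices to look only at cuspidal data to read off the poles of $E(\varphi,\lambda)$. Your "identification with the iterated residue of $E(\phi,\mu)$" is not a mechanism for doing this, because residues can both create and destroy singularities, and you would have to track this precisely through the Eisenstein functor. The paper instead invokes \cite[Lemma~I.4.10]{MW95}: the singularities of $L_{\pi,E}(\lambda)E(\varphi,\lambda)$ coincide with those of its \emph{cuspidal exponents}, which is what turns the problem into a finite and explicit computation of the cuspidal constant term $E_{P_\pi}(\varphi,\lambda)^{\cusp}$. This reduction is not optional; without it the argument doesn't get started.

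Second, you attribute the cancellation of "unlinked" poles in the constant term to \cite[Proposition~I.11]{MW89}. That is the wrong reference: Proposition~I.11 is the statement that the composition of local normalized intertwining operators $N_{\sigma_{\pi,v}}(w,\lambda+\nu_\pi)N_{\sigma_{\pi,v}}(w^*_\pi,\lambda-\nu_\pi)$ is holomorphic (this is what Lemma~\ref{lem:holo_compo} records), and it says nothing about cancellations between the $L$-function ratios appearing in the scalar factors $n_{\sigma_\pi}(w,\cdot)$, nor about compensations across the different Weyl summands. The result that actually performs the cancellation is \cite[Lemme~III.2]{MW89}, which is what the paper cites. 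Your paragraph sketching a "direct manipulation of the ratios" is a plausible heuristic but not a proof, and conflating it with Proposition~I.11 leaves a real gap. Finally, your argument is implicitly carried out for $K_\infty$-finite $\varphi$ (as it must be for the MW89/MW95 references to apply); the passage to general $\varphi \in \cA_{P,\pi}(G)$ requires a separate argument, handled in the paper via \cite[Remark~11.8]{BK}.
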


\begin{rem}
    Recall that we have defined a polynomial $L_{\pi,w}$ in Lemma~\ref{lem:n_regular}. Then $L_{\pi,E}$ divides the product $\prod_Q \prod_{w \in {}_Q W_P} L_{\pi,w}$, but in general is not equal to it except if $\pi$ is cuspidal. Theorem~\ref{thm:analytic_Eisenstein} therefore shows that there are some subtle compensations of residues in the constant term $E_{P_\pi}(\varphi,\lambda)$. Moreover, Theorem~\ref{thm:analytic_Eisenstein} is sharp in the sense that all the zeros of $L_{\pi,E}$ compensate poles of $E(\varphi,\lambda)$ as can already be seen in the unramified case \cite{Hegde}.
\end{rem}

\begin{proof}
    Let us prove 1. We first assume that $\varphi$ is $K_\infty$-finite. By \cite[Lemma~I.4.10]{MW95}, the singularities of $L_{\pi,E}(\lambda)E(\varphi,\lambda)$ are the same as those of its cuspidal exponents. Let $\phi \in \cA_{P_\pi,\sigma_\pi}(G)$ be such that $\varphi=E^{P,*}(\phi)$. By the computation of the constant term of Eisenstein series \eqref{eq:constant_term} and by Lemma~\ref{lem:contant_term}, for every standard parabolic subgroup $Q$ we have
    \begin{equation}
        \label{eq:regular_discrete}
        L_{\pi,E}(\lambda) E_Q(\varphi,\lambda)^\cusp=c_\pi \sum_{\substack{w \in {}_Q W_P \\ P_\pi = P_w, \; Q_w=Q}}  L_{\pi,E}(\lambda) M(w,\lambda+\nu_{\pi})N_\sigma(w_\pi^*,\lambda-\nu_{\pi})\phi,
    \end{equation}
    where $c_\pi$ is the constant from \eqref{eq:regular_is_local}. By \cite[Lemme~III.2]{MW89}, \eqref{eq:regular_discrete} is regular in a neighborhood of $\lambda \in \fa_{P,\cc}^*$ as long as for all $\alpha \in \Sigma_P$ and all $i$ and $j$ we have $L(1+\langle \lambda,\alpha^\vee \rangle,\sigma_i \times \sigma_j^\vee) \neq 0$ and moreover $\langle \lambda,\alpha^\vee \rangle >-e(\sigma_{\pi,v})$ for any place $v \in V_F$. More precisely, note that \eqref{eq:regular_discrete} is the first equation of \cite[652]{MW89} (see the proof of Lemma~\ref{lem:holo_compo} for some comments on the notation of \cite{MW89}). By Theorem~\ref{thm:L} and Theorem~\ref{thm:N} we may choose $k$ and $c_J$ such that these conditions are satisfied in the region $\cR_{\pi,k,c_J}$. The case of general $\varphi \in \cA_{P,\pi}(G)$ follows from the $K_\infty$-finite case by the same argument as \cite[Remark~11.8]{BK}.
    
    For 2, note that by 1. the hyperplane $\langle \lambda, \alpha_{i,j}^\vee \rangle=0$ is not singular for $E(\varphi,\lambda)$. Let $w \in W(P) \simeq \fS_m$ be the transposition $(i \; j)$. By the functional equation of Eisenstein series from \cite[Theorem~2.3.4]{BL} and Lemma~\ref{lem:M_zeros}, for any $\lambda$ such that $\langle \lambda, \alpha_{i,j}^\vee \rangle=0$ we have
    \begin{equation*}
        E(\varphi,\lambda)=E(M(w,\lambda)\varphi,w\lambda)=-E(\varphi,\lambda).
    \end{equation*}
    This proves 3.
\end{proof}

For every standard parabolic subgroup $P$ of $G$ and every $\pi \in \Pi_\disc(M_P)$, set
\begin{equation}
\label{eq:f_pi_0_defi}
    L_{\pi,0}(\lambda)=\prod_{\pi_i \simeq \pi_j} \langle \lambda, \alpha_{i,j}^\vee \rangle.
\end{equation}
This polynomial controls the zeros of $E(\varphi,\lambda)$. Using Theorem~\ref{thm:analytic_Eisenstein}, we obtain the following corollary.
\begin{cor}
    \label{cor:zeros_of_E}
    There exists $k>0$ such that for every level $J$ there exists $c_J>0$ such that for every $J$-pair $(P,\pi)$ the meromorphic function
    \begin{equation*}
        \frac{L_{\pi,E}(\lambda)}{L_{\pi,0}(\lambda)} E(\varphi,\lambda)
    \end{equation*}
    is holomorphic in the region $\cR_{\pi,k,c_J}$ for every $\varphi \in \cA_{P,\pi}(G)$.
\end{cor}

\subsubsection{The case of generalized Eisenstein series}
\label{subsubsec:partial_Eisenstein}
Let $P, Q$ be standard parabolic subgroups of $G$. Let $\pi \in \Pi_{\disc}(M_P)$. Let $w \in {}_Q W_P$ such that $P_\pi \subset P_w$. Let $\pi_w \in \Pi_{\disc}(M_{P_w})$ be the representation defined in \S\ref{subsubsection:constant_terms_discrete}. Write $M_Q=\prod_{i=1}^{m'} \GL_{n'_i}$ and set $Q_{w,i}=\GL_{n'_i} \cap Q_w$. Write $w\pi_w=(w\pi_w)_1 \boxtimes \hdots \boxtimes (w \pi_w)_{m'}$ accordingly. We have $\fa_{Q_w,\cc}^*=\oplus_i \fa_{Q_{w,i},\cc}^*$, and for $\lambda \in \fa_{Q_w,\cc}^*$ we denote by $p_i(\lambda)$ its projection on $\fa_{Q_{w,i},\cc}^*$. Set
\begin{equation}
\label{eq:partial_divisor}
    L_{w \pi_w,E}^Q(\lambda)=\prod_{i=1}^{m'} L_{(w\pi_w)_i,E}\left(p_i(\lambda)\right),
\end{equation}
where $L_{(w\pi_w)_i,E}$ is defined in \eqref{eq:f_pi_defi}. Moreover, for any $c$ and $k$ let $\cR_{(w\pi_w)_i,k,c} \subset \fa_{Q_{w,i},\cc}^*$ be the subset defined in \eqref{eq:R_defi} for $(w \pi_w)_i$.

\begin{prop}
\label{prop:partial_regular}
    There exists $k>0$ such that for every level $J$ there exists $c_J>0$ such that for any $J$-pair $(P,\pi)$ and $\varphi \in \cA_{P,\pi}(G)$ the Eisenstein series
    \begin{equation*}
        L_{\pi,w}(\lambda)L_{w\pi_w,E} ^Q(w(\lambda+\nu_{w})) E^Q(M(w,\lambda)\varphi_{w},w\lambda)
    \end{equation*}
    is regular if $\lambda+\nu_w \in \cR_{\pi,k,c_J}(w)$ and if $p_i(w(\lambda+\nu_{w})) \in \cR_{(w\pi_w)_i,k,c_J}$ for each $i$.
\end{prop}

\begin{rem}
\label{rem:cuspi_case}
    If $\pi \in \Pi_{\cusp}(M_P)$ (in which case $P_w=P_\pi=P$ and $\nu_{w}=0$), it is enough to ask that $\lambda \in \cR_{\pi,k,c_J}$. Indeed, this already implies that $\lambda+\nu_w \in \cR_{\pi,k,c_J}(w)$ and that $p_i(w\lambda) \in \cR_{(w\pi_w)_i,k,c_J}$ for each $i$. Moreover for every $w$ the product $L_{\pi,w}(\lambda) L_{w \pi_w,E}^Q(w\lambda)$ divides $L_{\pi,E}$.
\end{rem}

\begin{proof}
    By Corollary~\ref{cor:wider_holo}, we may choose the constants $k$ and $c_J>0$ such that the map $L_{\pi,w}(\lambda) M(w,\lambda)\varphi_{w}$ is regular in $\cR_{\pi,k,c_J}$. By the Iwasawa decomposition, a generalized Eisenstein series $E^Q(\psi,\lambda)$ is regular at some $\lambda \in \fa_{P,\cc}^*$ if and only if its restriction to $M_Q(\bA)$ is. By Lemma~\ref{lem:contant_term} we have
    \begin{equation*}
        \left(\left(M(w,\lambda)\varphi_{w}\right)_{|M_Q}\right)_{-w\nu_{w}-\rho_Q} \in \cA_{Q_w,w \pi_{w},w\lambda}(M_Q).
    \end{equation*}
    The result therefore follows from Theorem~\ref{thm:analytic_Eisenstein}.
\end{proof}

\subsection{Bounds for residual Eisenstein series}
\label{subsec:bounds_Eisenstein}

We state some results on growth of Eisenstein series along orthonormal sums. For our purposes, we need these bounds on the right of the unitary axis, or rather on a neighborhood of this region. In \cite[Section~3]{Ch}, such estimates are proved in a neighborhood of the unitary axis and it turns out that the method used there extends to our region. Indeed, the key input needed is the regularity of the intertwining operators $M(w,\lambda)$, which is available to us by Proposition~\ref{prop:M_regular}. Therefore, in what follows we will only sketch the main modifications to adapt \cite[Section~3]{Ch} to our setting. The reader can also refer to \cite[Section~10]{BoiPhD} where we produced an independent proof for our bounds at a time where \cite{Ch} was not yet released, but we emphasize that the arguments we used there are essentially the same.

\subsubsection{Bound for the global operator}

We first bound the global intertwining operators.

\begin{prop}
    \label{prop:bound_global_operator}
    Let $D$ be a holomorphic differential operator on $\fa_{P,\cc}^*$. There exists $k>0$ such that for every level $J$ and every $C>0$, there exist $c_J$ and $C_J>0$ such that for every $J$-triple $(P,\pi,\tau)$ with $P_\pi \subset P_w$ we have
    \begin{equation}
    \label{eq:bound_global_operator}
        \norm{D(L_{\pi,w}(\lambda)M(w,\lambda)\varphi_{w})}_{Q_w,\Pet} \leq C_J \left( 1+\norm{\lambda}^2+\lambda_\tau^2+(\Lambda_\pi^{M_P})^2 \right)^{k} \norm{\varphi_{w}}_{P_w,\Pet},
    \end{equation}
    for every $\varphi \in \cA_{P,\pi}(G)^{\tau,J}$ and $\lambda \in \cR_{\pi,k,c_J}^{C}$, where $L_{\pi,w}$ is the polynomial from Lemma~\ref{lem:n_regular}.
\end{prop}

\begin{proof}
    By Proposition~\ref{prop:M_regular}, we know that $\lambda \mapsto L_{\pi,w}(\lambda)M(w,\lambda) \varphi_w$ is regular. If $\lambda$ belongs to some region $\cS_{\pi,k,c_J}$, the bound \eqref{eq:bound_global_operator} follows from the factorization of $M(w,\lambda)$ in \eqref{eq:normalization_twisted} and from \cite[Proposition~3.4.1.1]{Ch} and \cite[Proposition~3.6.1.1]{Ch}. In general, one readily checks that the proof goes through in the larger region $\cR_{\pi,k,c_J}^{C}$.
\end{proof}

\subsubsection{Bounds for Eisenstein series}
\label{subsubsec:bounds_Eisenstein}

The key estimate we need is the following.

\begin{prop}
\label{prop:bound_Eisenstein}
    There exists $k>0$ such that for all $C>0$ there exists $N>0$ such that for all levels $J$, all $q>0$ and all $X \in \cU(\fg_\infty)$ there exist $c_J>0$ and a continuous semi-norm $\norm{\cdot}_{J,q,X}$ on $\cS(G(\bA))^J$ such that for all $J$-triples $(P,\pi,\tau)$, all $f \in \cS(G(\bA))^J$ and $\varphi \in \cA_{P,\pi}(G)^{\tau,J}$ we have
    \begin{equation}
    \label{eq:good_bound}
         \norm{\frac{L_{\pi,E}(\lambda)}{L_{\pi,0}(\lambda)} E(I_{P}(\lambda,f)\varphi,\lambda)}_{-N,X} \leq \frac{\norm{f_\tau}_{J,q,X} \norm{\varphi}_{P,\Pet}}{(1+\norm{\lambda}^2)^q(1+\lambda_\pi^2+\lambda_\tau^2)^q},
    \end{equation}
    in the region $\lambda \in \cR_{\pi,k,c_J}^{C}$.
\end{prop}

\begin{rem}
\label{rem:m_bound}
    As the proof shows, of Proposition~\ref{prop:bound_Eisenstein} can be strengthened by asking that for all $q>0$ there exists $m>0$ such that for all $f \in C_c^m(G(\bA))^J$ the bound \eqref{eq:good_bound} remains valid, the other quantifiers remaining the same.
\end{rem}

\begin{proof}
    This is a modified version of \cite[Lemma~3.9.1.1]{Ch}. By replacing $f$ by $L(X)f$, we can assume that $X=0$. Let $m>0$ and $g \in C_c^m(G(\bA))$. We first show that \eqref{eq:good_bound} holds if we replace $f$ by $g*f$ in the LHS. The proof of \cite[Lemma~3.9.1.1]{Ch} relies on an estimate of the scalar product of truncated Eisenstein series (\cite[Proposition~3.7.1.1]{Ch}) which itself uses bounds for the intertwining operators $M(w,\lambda)$. By Proposition~\ref{prop:bound_global_operator}, these bounds are available to us in some region $\cR_{\pi,k,c_J}^{C}$. More precisely, by reproducing \cite[Proposition~3.7.1.1]{Ch} we obtain a bound for the truncated scalar product
    \begin{equation*}
    \frac{L_{\pi,E}(\lambda)L_{\pi,E}(\lambda')}{L_{\pi,0}(\lambda)L_{\pi,0}(\lambda')}\langle \Lambda^{T,\mathrm{Art}} E(\varphi,\lambda), \overline{E(\varphi',\lambda')} \rangle_{G,\Pet},
\end{equation*}
which we know is regular by the properties of the truncation operator from \cite[Theorem~3.9]{Zydor}, the continuity of Eisenstein series from \cite[Theorem~2.2]{Lap}, and Theorem~\ref{thm:analytic_Eisenstein}.
    
The proof of Proposition~\ref{prop:bound_Eisenstein} is now the same as \cite[Lemma~3.9.1.1]{Ch} up to two minor differences. The first is that \cite[Lemma~3.9.1.1]{Ch} is written for $x \in G(\bA)^1$. However, for any $a \in A_G^\infty$ one has 
    \begin{align*}
        \Val{E(ax,I_{P}(\lambda,g*f)\varphi,\lambda)}=&\Val{\exp(\langle \lambda ,H_G(a) \rangle)E(x,I_{P}(\lambda,g*f)\varphi,\lambda)} \\
        \leq &\norm{a}^N \Val{E(x,I_{P}(\lambda,g*f)\varphi,\lambda)}
    \end{align*}
    for some $N$ as long as $\norm{\Re(\lambda)}<C$. Moreover, for $x \in G(\bA)^1$ and $a \in A_G^\infty$ we have $\norm{x}_G^N \ll \norm{a}^{N} \norm{ax}_G^{N}$ by \eqref{eq:mult_norm}, and $\norm{a} \ll \norm{ax}_G$ by \cite[(2.4.1.6)]{BPCZ}. Therefore, for our purposes we may replace $\norm{\cdot}_N$ by the sup-norm of our regularized Eisenstein series times $\norm{x}_G^{-N}$.

    The second difference is that \cite[Lemma~3.9.1.1]{Ch} assumes that $\lambda \in \fa_P^{G,*}$ while we work with $\lambda \in \fa_P^{*}$. This comes into play in the computation of the kernel function $k_g$ which has to be replaced by
    \begin{equation*}
        k_g(x,y)=\sum_{\gamma \in G(F)} \int_{A_G^\infty} g(x^{-1}a \gamma y) \exp(\langle H_G(a),\lambda \rangle) da.
    \end{equation*}
    By \cite[Lemma~I.2.4]{MW95}, the integral over $A_G^\infty$ takes place inside a compact subset of $A_G^\infty$ which only depends on the support of $g$. As $\norm{\Re(\lambda)}<C$, we see that \cite[(3.9.1.2)]{Ch} holds, namely that there exist $c_2$ and $N>0$ such that
    \begin{equation*}
        \sup_{y \in G(\bA)^1} \Val{k_g(x,y)} \leq c_2 \norm{x}_G^N, \quad x \in G(\bA)^1.
    \end{equation*}

    With these modifications in mind, \cite[Lemma~3.9.1.1]{Ch} yields for every $q>0$ a bound
      \begin{equation}
      \label{eq:with_g_bound}
         \norm{\frac{L_{\pi,E}(\lambda)}{L_{\pi,0}(\lambda)} E(I_{P}(\lambda,g*f)\varphi,\lambda)}_{-N} \leq \frac{\norm{f_\tau}_{J,q} \norm{\varphi}_{P,\Pet}}{(1+\norm{\lambda}^2)^q(1+\lambda_\pi^2+\lambda_\tau^2)^q}.
    \end{equation}
    But according to \cite[Corollary~4.2]{Art78}, for any level $J$ and any $m \geq 1$ large enough there exist $Z \in \cU(\fg_\infty)$, $g_1 \in C_c^\infty(G(\bA))$ and $g_2 \in C_c^m(G(\bA))$ such that $Z$ is invariant under $K_\infty$-conjugation, $g_1$ and $g_2$ are invariant under $K$-conjugation and are $J$-biinvariant, and 
    for any $f \in \cS(G(\bA))^J$ we have:
    \begin{equation}
    \label{eq:the_trick_with_schwartz}
        f=g_1 * f + g_2 * (Z*f).
    \end{equation}
Proposition~\ref{prop:bound_Eisenstein} is now a direct consequence of \eqref{eq:with_g_bound}.
\end{proof}

From there, one can obtain bounds for Eisenstein series in orthonormal sums.

\begin{theorem}
\label{thm:bound_Eisenstein}
    There exists $k>0$ such that for all $C>0$ there exists $N>0$ such that for all $q>0$ all levels $J$ and all $X \in \cU(\fg_\infty)$ there exist $c_J>0$ and a continuous semi-norm $\norm{\cdot}_{J,q,X}$ on $\cS(G(\bA))^J$ such that for all $J$-pairs $(P,\pi)$ and all $f \in \cS(G(\bA))^J$ we have
    \begin{equation}
    \label{eq:big_bound1}
        \sum_{\varphi \in \cB_{P,\pi}(J)}\norm{\frac{L_{\pi,E}(\lambda)}{L_{\pi,0}(\lambda)}E(I_{P}(\lambda,f)\varphi,\lambda)}_{-N,X} \leq \frac{ \norm{f}_{J,q,X}}{(1+\norm{\lambda}^2)^q(1+\Lambda_\pi^2)^q},
    \end{equation}
    in the region $\lambda \in \cR_{\pi,k,c_J}^{C}$. Moreover, we also have for $\Re(\lambda) \in \overline{\fa_{P}^{*,+}}$ and $\norm{\Re(\lambda)} \leq C$, up to changing the semi-norm,
     \begin{equation}
    \label{eq:big_bound2}
        \sum_{\pi \in \Pi_\disc(M_P)}\sum_{\varphi \in \cB_{P,\pi}(J)}\norm{\frac{L_{\pi,E}(\lambda)}{L_{\pi,0}(\lambda)}E(I_{P}(\lambda,f)\varphi,\lambda)}_{-N,X} \leq \frac{ \norm{f}_{J,q,X}}{(1+\norm{\lambda}^2)^q},
    \end{equation}
\end{theorem}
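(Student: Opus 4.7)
The plan is to deduce both bounds from Proposition~\ref{prop:bound_Eisenstein} by standard Casimir-eigenvalue techniques, in the spirit of \cite[\S3.9]{Ch}. The argument proceeds in two stages: first sum the individual bound over the basis $\cB_{P,\pi}(J)$ by decomposing it into $K_\infty$-isotypic pieces, then sum over $\pi$ using polynomial counting estimates for discrete automorphic representations of bounded Laplace eigenvalue at fixed level.

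First, decompose $\cB_{P,\pi}(J)=\bigsqcup_{\tau \in \widehat{K}_\infty}\cB_{P,\pi}(\tau,J)$. For $\varphi \in \cB_{P,\pi}(\tau,J)$, the $\tau$-isotypy implies $I_P(\lambda,f)\varphi=I_P(\lambda,f_\tau)\varphi$ with $f_\tau=f*e_\tau$ as in \eqref{eq:f_tau}, so Proposition~\ref{prop:bound_Eisenstein} (in fact its strengthening from Remark~\ref{rem:m_bound}) applies with $f_\tau$ in place of $f$. Summing over the finite set $\cB_{P,\pi}(\tau,J)$ produces a factor $\dim \cA_{P,\pi}(G)^{\tau,J}$, which for fixed level grows at most polynomially in $\lambda_\tau$ by a standard multiplicity bound. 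The decay of $\norm{f_\tau}_{J,q,X}$ in $\lambda_\tau$ is obtained from the identity $\lambda_\tau^a f_\tau=R(\Omega_{K_\infty}^a)f * e_\tau$, which follows from $e_\tau$ being a Casimir-eigenvector on $K_\infty$; this gives $\norm{f_\tau}_{J,q,X} \leq (1+\lambda_\tau^2)^{-a}\norm{f}_{J,q,X'(a)}$ for any $a \geq 0$, at the cost of enlarging the differential operator. Finally, the inequality $\lambda_\tau \geq \lambda_\pi$ from \cite[Lemma~6.1]{Mu2} recalled in \S\ref{subsubsec:numerical_invariants} yields $1+\lambda_\pi^2+\lambda_\tau^2 \gtrsim 1+\Lambda_\pi^2$ on all $J$-triples, so after splitting $(1+\lambda_\pi^2+\lambda_\tau^2)^{2q}\geq (1+\Lambda_\pi^2)^{q}(1+\lambda_\tau^2)^{q}$ and choosing $q$ large enough relative to the polynomial growth of $\dim\cA_{P,\pi}(G)^{\tau,J}$, the sum over $\tau$ converges and we obtain \eqref{eq:big_bound1}.

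For \eqref{eq:big_bound2}, once \eqref{eq:big_bound1} is established we sum over $\pi \in \Pi_\disc(M_P)^J$. Here the input is the polynomial counting estimate
\begin{equation*}
\# \{\pi \in \Pi_\disc(M_P)^J \mid \Lambda_\pi \leq T\} \ll_J (1+T)^D
\end{equation*}
for some $D>0$, which is a form of Weyl's law at fixed level (following from Müller's bounds used in \cite[\S3.9]{Ch}). Combined with the rapid decay $(1+\Lambda_\pi^2)^{-q}$ coming from \eqref{eq:big_bound1}, the sum over $\pi$ converges absolutely for $q$ sufficiently large, giving \eqref{eq:big_bound2} after a further Casimir trick to absorb the extra polynomial factor into the semi-norm on $f$.

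The main obstacle is purely technical: ensuring that the exponents $k$ and $c_J$ provided by Proposition~\ref{prop:bound_Eisenstein} are uniform enough over all $J$-triples $(P,\pi,\tau)$ (since the region $\cR_{\pi,k,c_J}^C$ depends on $\pi$ through $\Lambda_\pi^{M_P}$), and that the polynomial loss coming from $\dim \cA_{P,\pi}(G)^{\tau,J}$ and from the counting of $\pi$ can always be absorbed by enlarging $q$ in the semi-norm on $f$. Both points are handled exactly as in \cite[Proof of Theorem~3.9.2.1]{Ch}; the only substantive change compared to that reference is that we work inside $\cR_{\pi,k,c_J}^C$ rather than $\cS_{\pi,k,c_J}$, which is already taken care of by our Proposition~\ref{prop:bound_Eisenstein} and Proposition~\ref{prop:bound_global_operator}.
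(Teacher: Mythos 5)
Your proposal is correct and follows essentially the same route as the paper, which deduces both bounds by combining Proposition~\ref{prop:bound_Eisenstein} with a single summation estimate (Lemma~\ref{lem:spectral_sum_bound}) over $\tau$, $\varphi$, and $\pi$. The only minor divergence is in how the $\tau$-sum is controlled: the paper uses Cauchy--Schwarz together with the $\ell^2$-summability of $\tau\mapsto\norm{f_\tau}_{J,q}$ from \cite[p.~931]{Art78} and then extracts the $(1+\Lambda_\pi^2)^{-q}$ factor from $(1+\lambda_\pi^2+\lambda_\tau^2)^{-q}$ as in \cite[Prop.~3.8.3.1]{Ch}, whereas you instead obtain pointwise decay of $\norm{f_\tau}$ in $\lambda_\tau$ via the $K_\infty$-Casimir; both are standard and interchangeable here, and your reliance on $\lambda_\tau\geq\lambda_\pi$ is not actually needed for the inequality $1+\Lambda_\pi^2\lesssim 1+\lambda_\pi^2+\lambda_\tau^2$, which follows directly from the minimality in the definition of $\Lambda_\pi$.
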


\begin{proof}
This is a direct consequence of Proposition~\ref{prop:bound_Eisenstein} and of Lemma~\ref{lem:spectral_sum_bound} below.
\end{proof}

\begin{lem}
\label{lem:spectral_sum_bound}
    Let $J$ be a level. For $q>0$ large enough, for any continuous semi-norm $\norm{\cdot}$ on $\cS(G(\bA))^J$ the sum
    \begin{equation*}
        \sum_{P_0 \subset P} \sum_{\pi \in \Pi_{\disc}(M_P)^J}
    \sum_{\tau \in \widehat{K}_\infty}\frac{ \norm{f_\tau} \dim(\cA_{P,\pi}(G)^{\tau,J})} {(1+\lambda_\pi^2+\lambda_\tau^2)^q}
    \end{equation*}
    is absolutely convergent and defines a continuous semi-norm on $\cS(G(\bA))^J$.
\end{lem}

\begin{proof}
We first bound
\begin{equation*}
    \sum_{\tau \in \widehat{K}_\infty}\frac{ \norm{f_\tau}_{J,q} \dim(\cA_{P,\pi}(G)^{\tau,J})} {(1+\lambda_\pi^2+\lambda_\tau^2)^q}.
\end{equation*}
According to \cite[Equation (6.4)]{Mu2}, there exist $A>0$ and $k' \in \nn$ such that
\begin{equation*}
    \dim(\cA_{P,\pi}(G)^{\tau,J}) \leq A(1+\lambda_\pi^2+\lambda_\tau^2)^{k'}, \quad \pi \in \Pi_\disc(M_P).
\end{equation*}
By \cite[931]{Art78}, we know that $(\sum_{\tau} \norm{f_\tau}_{J,q}^2)^{1/2}$ is a continuous semi-norm on $\cS(G(\bA))^J$. By the Cauchy-Schwarz inequality, we are reduced to bounding $\sum_{\tau} (1+\lambda_\pi^2+\lambda_\tau^2)^{-q}$ for $q$ large enough. But by the same argument as in \cite[Proof of Proposition~3.8.3.1]{Ch} there exists $B>0$ such that for all $P$ and all $\pi \in \Pi_\disc(M_P)$ we have
\begin{equation*}
    \sum_{\tau \in \widehat{K}_\infty}(1+\lambda_\pi^2+\lambda_\tau^2)^{-q} \leq B(1+\Lambda_\pi^2)^{-q/2} \sum_{\tau \in \widehat{K}_\infty}(1+\lambda_\tau^2)^{-q/2},
\end{equation*}
and $ \sum_{\tau \in \widehat{K}_\infty}(1+\lambda_\tau^2)^{-q/2}<\infty$ for $q$ large enough.

    We now need to show that for $q>0$ large enough we have
     \begin{equation}
        \label{eq:Muller_spectral}
        \sum_{\pi \in \Pi_\disc(M_P)^J} \frac{1}{(1+\Lambda_\pi^2)^q} < \infty.
    \end{equation}
    This is \cite[line (6.17) p. 711 and below]{Mu2}.
\end{proof}

\subsubsection{Application to relative characters}

We use Theorem~\ref{thm:bound_Eisenstein} to prove convergence properties on relative characters defined by summing along the bases $\cB_{P,\pi}(J)$. 

\begin{prop}
\label{prop:convergence_strong}
    There exists $k>0$ such that for all $C>0$ and all levels $J$, there exists $c_J>0$ such that for all $J$-pairs $(P,\pi)$ and all $F \in \cS([G])^J$ the series
    \begin{equation*}
        \sum_{\varphi \in \cB_{P,\pi}(J)} \left\langle F,\frac{L_{\pi,E}(\lambda)}{L_{\pi,0}(\lambda)} E(\varphi,\lambda) \right\rangle_G  \varphi
    \end{equation*}
    is absolutely convergent in $\cA_{P,\pi}(G)^J$ (embedded with the topology of some $\cT_{-N}([G]_P)^J$) and $L^2([G]_{P,0})^{J,\infty}$, for any $\lambda \in \cR_{\pi,k,c_J}^C$. 
    
    More precisely, for any $N>0$ large enough, any $X \in \cU(\fg_\infty)$ and any $q>0$, there exist $N'$ and a continuous semi-norm $\norm{\cdot}_{J,q,X}$ on $\cT_{-N'}([G])^J$ such that for any $J$-pair $(P,\pi)$ we have
    \begin{equation}
    \label{eq:abs_conv}
        \sum_{\varphi \in \cB_{P,\pi}(J)} \Val{\left\langle F,\frac{L_{\pi,E}(\lambda)}{L_{\pi,0}(\lambda)} E(\varphi,\lambda) \right\rangle_G}  \norm{\varphi}_{-N,X} \leq \frac{\norm{F}_{J,q,X}}{(1+\norm{\lambda}^2)^q(1+\Lambda_\pi^2)^q}, 
    \end{equation}
    for $\lambda \in \cR_{\pi,k,c_J}^C$ in our region and $F \in \cT_{-N'}([G])^J$. 
\end{prop}

\begin{proof}
    By Lemma~\ref{lem:sobolev_and_co}, for any $N$ large enough there exist $c>0$ and $r>0$ such that
    \begin{equation*}
        \norm{\varphi}_{-N,X} \leq c\sum_{j=1}^r \norm{R(\Delta^j)\varphi}_{P,\Pet}.
    \end{equation*}
    As in the proof of \cite[Proposition~3.8.2.3]{Ch}, we obtain some constants $c'>0$ and $d$ such that for any $\varphi \in \cA_{P,\pi}(G)^{\tau,J}$ we have for all $1 \leq j \leq r$
    \begin{equation*}
        \norm{R(\Delta^j)\varphi}_{P,\Pet} \leq c'(1+\lambda_\pi^2+\lambda_\tau^2)^{d} \norm{\varphi}_{P,\Pet}.
    \end{equation*}
    As in \eqref{eq:the_trick_with_schwartz}, there exist for any $m \geq 1$ large enough some elements $g_1,g_2 \in C_c^m(G(\bA))$ and $Z \in \cU(\fg_\infty)$ such that for any $F \in \cT_{-N'}([G])^J$ we have $F=g_1*F+g_2*(Z*F)$. We now conclude by Lemma~\ref{lem:height_properties}, Proposition~\ref{prop:bound_Eisenstein} (more precisely Remark~\ref{rem:m_bound}) and by repeating the proof of Theorem~\ref{thm:bound_Eisenstein}. The $L^2([G]_{P,0})^{J,\infty}$ version follows from Lemma~\ref{lem:sobolev_and_co}.
    
\end{proof}

\subsubsection{Bounds for individual Eisenstein series}

We now generalize Proposition~\ref{prop:bound_Eisenstein} for non necessarily $K_\infty$ finite Eisenstein series.

\begin{prop}
\label{prop:bound_Eisenstein_non_finite}
    There exists $k>0$ such that for all $C>0$ there exists $N>0$ such that for all levels $J$, all $q>0$ and all $X \in \cU(\fg_\infty)$ there exist $c_J>0$ and a continuous semi-norm $\norm{\cdot}_{J,q,X}$ on $\cS(G(\bA))^J$ such that for all $J$-pairs $(P,\pi)$, all $f \in \cS(G(\bA))^J$ and $\varphi \in \cA_{P,\pi}(G)^{J}$ we have
    \begin{equation}
    \label{eq:good_bound_non_finite}
         \norm{\frac{L_{\pi,E}(\lambda)}{L_{\pi,0}(\lambda)} E(I_{P}(\lambda,f)\varphi,\lambda)}_{-N,X} \leq \frac{\norm{f}_{J,q,X} \norm{\varphi}_{P,\Pet}}{(1+\norm{\lambda}^2)^q},
    \end{equation}
    in the region $\lambda \in \cR_{\pi,k,c_J}^{C}$. 

    Moreover, for all $N'$ large enough, there exist $d>0$ and $Y_1, \hdots, Y_r \in \cU(\fg_\infty)$ such that for all $J$-pairs $(P,\pi)$ and all $\varphi \in \cA_{P,\pi}(G)^J$ we have
    \begin{equation}
    \label{eq:individual_Eisenstein}
         \norm{\frac{L_{\pi,E}(\lambda)}{L_{\pi,0}(\lambda)} E(\varphi,\lambda)}_{-N,X} \leq (1+\norm{\lambda}^2)^d \sum_{i=1}^r \norm{\varphi}_{-N',Y_i}.
    \end{equation}
        in the region $\lambda \in \cR_{\pi,k,c_J}^{C}$, where $k$, $C$, $N$, $X$ and $c_J$ are as before.
\end{prop}

\begin{proof}
     Let $k$, $N$ and $c_J$ be given by Proposition~\ref{prop:bound_Eisenstein}. By \cite[931]{Art78}, there exists $m_0 \geq 1$ such that for all $m \geq m_0$ and all $f \in C_c^m(G(\bA))^J$ we have $f = \sum_{\tau \in \widehat{K}_\infty} f_\tau$ in $C_c^m(G(\bA))^J$. By Proposition~\ref{prop:bound_Eisenstein}, or more precisely by Remark~\ref{rem:m_bound}, for any $q >0$ and any $m$ large enough, there exists a continuous semi-norm $\norm{\cdot}_{J,q,X}$ on $C_c^m(G(\bA))$ such that for any $f \in C_c^m(G(\bA))^J$, we have
     \begin{equation*}
         \norm{\frac{L_{\pi,E}(\lambda)}{L_{\pi,0}(\lambda)} E(I_{P}(\lambda,f) \varphi,\lambda)}_{-N,X} \leq \sum_{\tau \in \widehat{K}_\infty} \frac{\norm{f_\tau}_{J,q,X} \norm{e_\tau*\varphi}_{P,\Pet}}{(1+\norm{\lambda}^2)^q(1+\lambda_\pi^2+\lambda_\tau^2)^q},
     \end{equation*}
     for any $\lambda \in \cR_{\pi,k,c_J}^C$. By the Cauchy--Schwarz inequality and because $(\sum_\tau \norm{e_\tau*\varphi}_{P,\Pet}^2)^{1/2} = \norm{\varphi}_{P,\Pet}$, we obtain \eqref{eq:good_bound_non_finite}.

     We now take $g_1,g_2 \in C_c^m(G(\bA))^J$ and $Z \in \cU(\fg_\infty)$ such that $g_1+g_2*Z$ is the Dirac distribution at identity as in \eqref{eq:the_trick_with_schwartz}. Because $\norm{I_{P}(Z,\lambda) \varphi}_{P,\Pet} \leq (1+\norm{\lambda}^2)^{d'} \sum_i \norm{R(Z_i)\varphi}_{P,\Pet}$ for some $d'>0$ and $Z_1, \hdots, Z_{r'} \in \cU(\fg_\infty)$, by plugging $g_1$ and $g_2$ in \eqref{eq:good_bound_non_finite} we obtain constants $c$ and $d$ such that
     \begin{equation*}
         \norm{\frac{L_{\pi,E}(\lambda)}{L_{\pi,0}(\lambda)} E(\varphi,\lambda)}_{-N,X} \leq c(1+\norm{\lambda}^2)^d \sum_{i=1}^{r'} \norm{R(Z_i)\varphi}_{P,\Pet}.
    \end{equation*}
    We conclude by Lemma~\ref{lem:sobolev_and_co}.
     
\end{proof}

As noted in \cite[Section~3.9.3]{Ch}, we may apply the methods used in \S\ref{subsubsec:bounds_Eisenstein} to bound generalized Eisenstein series. We only state the analogue of Proposition~\ref{prop:bound_Eisenstein_non_finite} in this setting, although it is clear that the other results can be adapted as well. We keep the notation of \S\ref{subsubsec:partial_Eisenstein}.

\begin{prop}
    \label{prop:partial_Eisenstein}
      Let $P$ and $Q$ be standard parabolic subgroups of $G$. Let $w \in {}_Q W_P$. There exists $k>0$ such that for all $C>0$ there exist $N>0$ such that for all levels $J>0$, all $X \in \cU(\fg_\infty)$ and all $N'>0$ large enough there exists $c_J>0$, $d>0$ and $Y_1, \hdots, Y_r \in \cU(\fg_\infty)$ such that for all $J$-pairs $(P,\pi)$ and all $\varphi \in \cA_{P,\pi}(G)^J$ we have
    \begin{equation*}
         \norm{L_{\pi,w}(\lambda) L_{w \pi_w,E}^Q(w(\lambda+\nu_{w}))E^Q(M(w,\lambda)\varphi_{P_w},w\lambda)}_{-N,X}   \leq (1+\norm{\lambda}^2)^d \sum_{i=1}^r \norm{\varphi}_{-N',Y_i}.
    \end{equation*}
    in the region $\lambda+\nu_w \in \cR^C_{\pi,k,c_J}(w)$ and $p_i(w(\lambda+\nu_{w})) \in \cR_{(w\pi_w)_i,k,c_J}$ for all $i$. Here $L_{w \pi_w,E}^Q$ is defined in \eqref{eq:partial_divisor} and $\norm{\cdot}_{-N,X}$ is a semi-norm on $\cT_{-N}([G]_Q)$.
\end{prop}

\begin{proof}
    This is an adaptation of the proof of Proposition~\ref{prop:bound_Eisenstein_non_finite} (see also \cite[Theorem~3.9.3.1]{Ch}). It follows from the bounds for intertwining operators (Corollary~\ref{cor:wider_holo}) and the regularity of generalized Eisenstein series (Proposition~\ref{prop:partial_Eisenstein}).
\end{proof}

\subsection{An extension of Langlands spectral decomposition theorem}

We now use Theorem~\ref{thm:bound_Eisenstein} to extend the Langlands spectral decomposition Theorem~\ref{thm:Langlands_spectral} to functions on $[G]$ of rapid enough decay. 

\begin{prop}
\label{prop:extended_Langlands}
    There exists $N >0$ such that for all level $J$ of $G$ and all $F_1,F_2 \in \cT_{-N}([G])^J$ we have
     \begin{equation}
     \label{eq:Langlands_extend}
        \langle F_1,F_2 \rangle_G 
        = \sum_{P_0 \subset P} \Val{\cP(M_P)}^{-1} \sum_{\pi \in \Pi_\disc(M_P)} \int_{i \fa_P^*}  \sum_{\varphi \in \cB_{P,\pi}(J)} \langle F_1,E(\varphi,\lambda) \rangle_G \langle E(\varphi,\lambda),F_2 \rangle_G d\lambda,
    \end{equation}
    where this expression is absolutely convergent.
\end{prop}

\begin{proof}
    For all pairs $(P,\pi)$, the polynomial $L_{\pi,E}$ of Theorem~\ref{thm:bound_Eisenstein} is non-zero and bounded above on $i \fa_P^*$. By Theorem~\ref{thm:bound_Eisenstein}, we see that there exists $N'>0$ such that both sides of \eqref{eq:Langlands_extend} define separately continuous linear forms on $\cT_{-N'}([G])^J$. Moreover, by Theorem~\ref{thm:Langlands_spectral}, it holds if $F_1$ and $F_2$ are pseudo Eisenstein series. By Lemma~\ref{lem:approx_infini} and Lemma~\ref{lem:pseudo_dense}, the closure in $\cT_{-N'}([G])$ of the vector space spanned by these functions contains $\cT_{-N'-1}([G])$. We conclude that $N=N'+1$ works.
\end{proof}

\begin{cor}
    \label{cor:Langlands_spectral_extended}
    There exists $N>0$ such that for all level $J$ of $G$ and all $F \in \cT_{-N}([G])^J$ we have
    \begin{equation*}
        F=\sum_{P_0 \subset P} \Val{\cP(M_P)}^{-1} \sum_{\pi \in \Pi_\disc(M_P)} \int_{i \fa_P^*} \sum_{\varphi \in \cB_{P,\pi}(J)} \langle F,E(\varphi,\lambda) \rangle_G E(\varphi,\lambda) d\lambda.
    \end{equation*}
\end{cor}

\section{Ichino--Yamana--Zydor regularized periods}
\label{chap:IYZ_periods}
For the rest of the paper, $G$ is $\GL_n \times \GL_{n+1}$. We embed $\GL_n$ in $\GL_{n+1}$ by
\begin{equation}
\label{eq:n_embedding}
    g \mapsto  \begin{pmatrix} g & \\ & 1 \end{pmatrix}.
\end{equation}
Let $H \simeq \GL_n$ be the diagonal subgroup in $G$. The goal is this section is to introduce the regularized period $\cP$ from \cite{Zydor} of the period integral along $[H]$, and to study its analytic properties. 

For the rest of this text, we will use the following conventions. Let $P_0 \subset G$ be the product of the subgroups of upper triangular matrices in $\GL_n$ and $\GL_{n+1}$, and $T_0 \subset G$ to be the product of the standard diagonal tori. If $J$ is a subgroup of $G$, we write $J=J_n \times J_{n+1}$. Similarly we have $\fa_0=\fa_{0,n} \oplus \fa_{0,n+1}$. 

All the constructions done relatively to $H$ will be decorated by a subscript ${}_H$. In particular, a subgroup of $H$ will typically be denoted $J_H$, and if $J$ is a subgroup of $G$ we set $J_H=J \cap H$. The pair $(T_{0,H},P_{0,H})$ is standard in $H$. Set $\fa_{0,H}=\fa_{T_{0,H}}$. It embeds into $\fa_0$. We will often identify the group $\GL_n$ either with $H$, either with its two copies in $G$ from the left and right coordinates, using the embedding \eqref{eq:n_embedding} for the latter.

\subsection{Rankin--Selberg parabolic subgroups}
\label{sec:RS_subgroup}

We start by defining a subset $\cF_{\RS}$ of the set of semi-standard parabolic subgroups of $G$ which appears in the definition of the regularized period $\cP$.

\subsubsection{Definition of the set of Rankin--Selberg parabolic subgroups}
\label{subsubsec:RS_defi}
We define $\cF_{\RS}$ the set of \emph{Rankin--Selberg parabolic subgroups} of $G$. This is the set of semi-standard parabolic subgroups $P=P_n \times P_{n+1}$ of $G$ such that $P_n$ is standard and, with respect to the embedding \eqref{eq:n_embedding}, $P_n=P_{n+1} \cap \GL_n$. In particular, if $P \in \cF_{\RS}$ then $P_H$ is a standard parabolic subgroup of $H$ isomorphic to $P_n$. Conversely, if $P_H$ is a standard parabolic subgroup of $H$, set
\begin{equation*}
    \cP_{\RS}(P_H)=\{ Q \in \cF_{\RS} \; | \; Q_H=P_H \}.
\end{equation*}
Then we have
\begin{equation}
\label{eq:F_decompo}
    \cF_{\RS}= \bigsqcup_{P_H \subset H \; \mathrm{standard}} \cP_{\RS}(P_H).
\end{equation}

Recall that in Section~\ref{chap:poles} we have associated to any standard parabolic subgroup $P$ of $\GL_n$ or $\GL_{n+1}$ a tuple $\underline{n}(P)$. Let $P_H$ be a standard parabolic subgroup of $H$. Write $\underline{n}(P_H)=(n_1,\hdots,n_m)$ with $\sum n_i=n$. Let $\cP_n^{n+1}(P_H)$ be the set of couples $(P^{\std}_{n+1},i_0)$ where $P^{\std}_{n+1}$ is a standard parabolic subgroup of $\GL_{n+1}$ and $i_0$ is an integer such that one of the two following alternatives is satisfied:
\begin{enumerate}
    \item $\underline{n}(P^{\std}_{n+1})=(n_1, \hdots, n_{i_0-1}, n_{i_0}+1,n_{i_0+1}, \hdots ,n_m)$ and $1 \leq i_0 \leq m$;
    \item $\underline{n}(P^{\std}_{n+1})=(n_1, \hdots, n_{i_0-1}, 1,n_{i_0}, \hdots ,n_m)$ and $1 \leq i_0 \leq m+1$.
\end{enumerate}
In the first case, set $N=\sum_{i=1}^{i_0} n_i$, and in the second set $N=\sum_{i=1}^{i_0-1} n_i$. Let $w(P^{\std}_{n+1},i_0) \in \fS_{n+1}$ be the cycle 
\begin{equation}
\label{eq:w_defi}
    w(P^{\std}_{n+1},i_0)=( N+2 \quad \hdots \quad n \quad n+1 \quad N+1).
\end{equation}
We identify it with an element in $W_{n+1}$ the Weyl group of $\GL_{n+1}$. The following is \cite[Corollary~4.2]{BoiZ}.

\begin{prop}
\label{prop:param_RS}
    We have a bijection
    \begin{equation*}
        \cF_{\RS}\simeq \left\{ (P_{n+1}^{\std},i_0) \; \middle| \; \begin{array}{l}
            P^{\std}_{n+1} \subset \GL_{n+1} \; \text{standard}, \\ M_{P^{\std}_{n+1}}=\prod_{i=1}^m \GL_{n_i}, \\
             1 \leq i_0 \leq m.
        \end{array} \right\}=\bigsqcup_{P_H\subset H \; \mathrm{standard}} \cP_n^{n+1}(P_H).
    \end{equation*}
\end{prop}

We set $\fz_P=\fa_P \cap \fa_{0,H}$. If $P \subset Q \in \cF_{\RS}$, let $\fz_P^Q$ be the orthogonal of $\fz_Q$ in $\fz_P$. Note that this is consistent as $\fz_G=\{0\}$. If $T \in \fz_{P}$, let $T_Q$ be its projection on $\fz_Q$ and $T_P^Q$ be its projection on $\fz_{P}^Q$, both with respect to $\fz_P=\fz_{P}^Q \oplus \fz_Q$. This applies in particular to $T \in \fa_{0,H}=\fz_{P_0}$ and we simply write $T^Q$ for $T_{P_0}^Q$. We have a notion of "sufficiently positive" for elements in $\fa_{0,H}$ (see \cite[Section~4.3]{BoiZ}).

\subsubsection{Standardization}

Let $P \in \cF_{\RS}$. Let $(P_{n+1}^{\std},i_P)$ be the inverse image of $P$ under the isomorphism of Proposition~\ref{prop:param_RS}. Let $w_P^\std=w(P^{\std}_{n+1},i_P)$ be the element defined in \eqref{eq:w_defi}. We have
\begin{equation}
\label{eq:standardized_P}
    P_{n+1}=w_P^\std.P^{\std}_{n+1}.
\end{equation}

Set $P^{\std}=P_n \times P^{\std}_{n+1}$, which is a standard parabolic subgroup of $G$. This is the \emph{standardization} of $P$. Write $(n_i)=\underline{n}(P^{\std}_{n+1})$. We may decompose the standard Levi factor $M_P^{\std}$ of $P^\std$ as
\begin{equation}
\label{eq:standard_decompo}
    M_{P}^{\std}=M_{P,n}^{\std}\times M_{P,n+1}^{\std}=\left( \bfM_{P,+} \times \cM_{P,n} \times \bfM_{P,-} \right) \times \left( \bfM_{P,+} \times \cM^{\std}_{P,n+1} \times \bfM^{\std}_{P,-} \right),
\end{equation}
where
\begin{equation}
\label{eq:ss_decompo}
   \bfM_{P,+} =   
            \prod_{i < i_P} \GL_{n_i}, \quad \bfM_{P,-}\simeq \bfM^{\std}_{P,-} =   
            \prod_{i > i_P} \GL_{n_i}, \quad \cM_{P,n} =  
            \GL_{n_{i_P}-1}, \quad \cM^{\std}_{P,n+1} =   
            \GL_{n_{i_P}}.
\end{equation}
We add a ${}^{\std}$ on $\bfM^{\std}_{P,-}$ to emphasize that, although they are isomorphic, the groups $\bfM_{P,-}$ and $\bfM^{\std}_{P,-}$ are not identified by the embedding \eqref{eq:n_embedding}. We also set
\begin{equation*}
    \bfM_{P,+}^{\std,2}=\bfM_{P,+}^2, \quad \bfM_{P,-}^{\std,2}=\bfM_{P,-} \times \bfM_{P,-}^\std, \quad 
    \bfM_{P}^{\std,2}=\bfM_{P,+}^{\std,2} \times \bfM_{P,-}^{\std,2}, \quad \cM_P^\std = \cM_{P,n} \times \cM_{P,n+1}^\std,
\end{equation*}
which all naturally embed into $G$. By composing with $w_P^\std$, we get a decomposition
\begin{equation}
\label{eq:M_P_levi}
     M_{P} \simeq \left( \bfM_{P,+} \times \cM_{P,n} \times \bfM_{P,-} \right) \times \left( \bfM_{P,+} \times \cM_{P,n+1} \times \bfM_{P,-} \right).
\end{equation}
The two copies of $\bfM_{P,-}$ in \eqref{eq:M_P_levi} are now identified by the embedding \eqref{eq:n_embedding}. The groups $\cM^{\std}_{P,n+1}$ and $\cM_{P,n+1}$ are isomorphic but in general embedded in two different ways in $\GL_{n+1}$. Set
\begin{equation*}
    \bfM_P=\bfM_{P,+} \times \bfM_{P,-}, \quad \bfM_P^2=\bfM_P \times \bfM_P, \quad \cM_P=\cM_{P,n} \times \cM_{P,n+1}.
\end{equation*}
The restriction of the diagonal embedding $H \subset G$ gives $\cM_{P,H} \subset \cM_P$ and we have $\cM_{P,H} \simeq \cM_{P,n}$. Note that $M_{P_H}=M_{P,H}=\bfM_P \times \cM_{P,H}$. The group $\cM_P$ is isomorphic to $\GL_{n_{i_P}-1} \times \GL_{n_{i_P}}$, and we can define embedding of $\GL_{n_{i_P}-1}$ in $\cM_P$ as in \eqref{eq:n_embedding}. This is compatible with the inclusion $H \subset G$ in the sense that $\cM_{P,H}=\cM_P \cap \GL_{n_{i_P}-1}$. 

Set
\begin{equation*}
    \underline{\rho}_P=(\rho_P-2 \rho_{P_H})_{| \fz_{P}} \in \fz_P^*.
\end{equation*}
In coordinates, we have $(\underline{\rho}_P)_i=\frac{1}{2}$ if $i<i_P$, and $(\underline{\rho}_P)_i=-\frac{1}{2}$ if $i>i_P$.

\subsection{Regularized Rankin--Selberg periods à la Zydor}
\label{sec:reg_RS_periods_Z}

In this section, we fix $Q \in \cF_{\RS}$. We write $M^\std_{Q,n+1}=\prod_{i=1}^m \GL_{n_i}$. We recall the definition of the regularized period $\cP^Q$ from \cite{Zydor}.

\subsubsection{Iwasawa decomposition and measures} \label{subsubsec:global_measure}
Set
\begin{equation*}
    M_{Q_H}(\bA)^{Q,1}=\left\{ m \in M_{Q_H}(\bA) \; \middle| \; \left(H_{Q_H}(m)\right)_Q=0 \right\}, \quad Z_Q^\infty=A_Q^\infty \cap A_{Q_H}^\infty.
\end{equation*}
The restriction of $H_{Q_H}$ to $Z_Q^\infty$ is an isomorphism with image $\fz_Q$. This gives $Z_Q^\infty$ a Haar measure. We have a direct product decomposition of commuting groups
\begin{equation}
\label{eq:M_P_H_decompo}
    M_{Q_H}(\bA)=Z_Q^{\infty} M_{Q_H}(\bA)^{Q,1}.
\end{equation}
By the Iwasawa decomposition, there is a unique Haar measure on $M_{Q_H}(\bA)^{Q,1}$ such that for all $f \in C_c(H(\bA))$ we have
\begin{equation}
\label{eq:facto_measure}
    \int_{H(\bA)} f(h)dh=\int_{K_H} \int_{N_{Q_H}(\bA)} \int_{M_{Q_H}(\bA)^{Q,1}} \int_{Z_Q^\infty} \exp(\langle -2 \rho_{Q_H},H_{Q_H}(am) \rangle) f(nmak) da dm dn dk.
\end{equation}
We set
\begin{equation*}
    [M_{Q_H}]^{Q,1}=M_{Q_H}(F) \backslash M_{Q_H}(\bA)^{Q,1}
\end{equation*}
which is given the quotient by the counting measure. Note that we have
\begin{equation*}
    [M_{Q,H}]^{Q,1} \simeq \left(\prod_{i \neq i_Q} [\GL_{n_i}]^1\right) \times [\GL_{n_{i_Q}-1}]=[\bfM_{Q}]^1 \times [\cM_{Q,H}].
\end{equation*}
Then the measure $dh$ coincides the product of the ones on each $[\GL_{n_i}]^1$ and on $[\GL_{n_{i_Q}-1}]$ described in \S\ref{subsubsec:first_measure}. Moreover, if $Q=G$ we have $M_{G,H}(\bA)^{G,1}=H(\bA)$ and $Z_G^\infty=\{1\}$. 

\subsubsection{Truncated periods} Let $T \in \fa_{0,H}$. In \cite{Zydor}, Zydor introduces a truncated operator $\Lambda^{T,Q}$ defined on the space of locally integrable function on $Q(F) \backslash G(\bA)$. If $Q=G$ we simply write $\Lambda^T$. Its main property is the following.

\begin{theorem}
\label{thm:truncation_operator}
    Let $T \in \fa_{0,H}$ be sufficiently positive. Let $J$ be a level of $G$. For any $N,N' >0 $ there exists a finite family $(X_i)_{i \in I}$ of elements in $\cU(\fg_\infty)$ such that for any smooth and right $J$-invariant function $\phi$ on $[G]_Q$, the function $\Lambda^{T,Q} \phi$ is a function on $[H]_{Q_H}$ and we have
    \begin{equation*}
        \sup_{m \in M_{Q,H}(\bA)^{Q,1}} \norm{m}^N_{M_{Q,H}} \Val{\Lambda^{T,Q} \phi(mk)} \leq \sum_{i \in I} \norm{\phi}_{-N',X_i}.
    \end{equation*}
\end{theorem}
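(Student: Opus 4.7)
The plan is to bootstrap from Zydor's $L^1$-level estimate on the truncation operator $\Lambda^{T,Q}$ to a pointwise bound by means of the Sobolev embedding theorem on $M_{Q,H}(F_\infty)$, combined with the fact that right translations commute with $\Lambda^{T,Q}$.

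First, I would recall the definition from \cite{Zydor}: $\Lambda^{T,Q}\phi$ is given by an alternating sum, indexed by Rankin--Selberg parabolic subgroups $R \in \cF_{\RS}$ with $Q_H \subset R \subset Q$, of constant terms $\phi_R$ of $\phi$ weighted by characteristic functions of cones in $\fa_R^Q$ depending on $T$. Using the standard combinatorial lemma of Langlands, one checks that for $T$ sufficiently positive the sum defining $\Lambda^{T,Q}\phi(h)$ is finite for each fixed $h$, that it is left-invariant under $H(F)$ and $N_{Q_H}(\bA)$, and that it vanishes in the directions of $\fz_{Q_H}^Q$ outside a bounded region. In particular, $\Lambda^{T,Q}\phi$ descends to a function on $[H]_{Q_H}$.

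Second, I would invoke the key $L^1$-estimate which is the content of Zydor's construction: for every $N > 0$ there exist $N' > 0$ and a finite family $(Y_j)_j \subset \cU(\fg_\infty)$ such that
\begin{equation*}
\int_{M_{Q,H}(\bA)^{Q,1}} \norm{m}^N_{M_{Q,H}} \int_{K_H} \Val{\Lambda^{T,Q}\phi(mk)}\, dk\, dm \leq \sum_{j} \norm{\phi}_{-N',Y_j}.
\end{equation*}
This is established by expanding each $\phi_R$ in the alternating sum, bounding it uniformly in terms of the moderate-growth semi-norms of $\phi$ via integration over $[N_R]$, and then extracting the necessary decay from the truncation cones: the alternating structure of the sum combined with Langlands' combinatorial identity produces exponential decay in the directions of $\fz_R^Q$, which more than compensates the polynomial factor $\norm{m}^N_{M_{Q,H}}$.

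Third, I upgrade this integral bound to the desired sup-norm bound by Sobolev. Since $T$ is fixed and all the combinatorial data entering $\Lambda^{T,Q}$ are independent of the $G(\bA)$-action by right translation, we have $R(X)\Lambda^{T,Q}\phi = \Lambda^{T,Q}(R(X)\phi)$ for every $X \in \cU(\fg_\infty)$. The Sobolev inequality on $M_{Q,H}(F_\infty)$ (in the form of \cite[Lemma~3.8.1.1]{Ch}) provides a finite family $(X_i)_{i \in I} \subset \cU(\fg_\infty)$ such that $\sup_{k \in K_H}\Val{\Lambda^{T,Q}\phi(mk)}$ is controlled by a sum of local $L^1$-norms on a fixed neighborhood of $mK_H$ of the functions $R(X_i)\Lambda^{T,Q}\phi = \Lambda^{T,Q}(R(X_i)\phi)$. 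Absorbing the height factor $\norm{m}^N_{M_{Q,H}}$ through a standard change of variables (at the expense of enlarging $N$) and applying the $L^1$-estimate of step two to each $R(X_i)\phi$ yields the bound. The main obstacle is of course step two, i.e. the combinatorial cancellation producing the decay of $\Lambda^{T,Q}\phi$; but since this is proved in \cite{Zydor}, for our purposes it suffices to use that statement as a black box, the remaining argument being a routine Sobolev-type upgrade.
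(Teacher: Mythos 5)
The paper's entire proof of this statement is a one-line citation: it invokes \cite[Theorem~3.9]{Zydor} and notes that the quantitative version stated here can be read off from Zydor's proof. In other words, the paper treats Zydor's argument as already yielding a \emph{pointwise} rapid-decay estimate on $\Lambda^{T,Q}\phi$ in terms of the moderate-growth semi-norms of $\phi$, and there is no intermediate $L^1$ estimate nor any Sobolev step.

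Your proposal takes a genuinely different and longer route, and it contains an internal tension that is worth flagging. You attribute to \cite{Zydor} an \emph{$L^1$-level} bound on $\Lambda^{T,Q}\phi$ and then upgrade to a sup-norm bound via Sobolev. But the mechanism you yourself describe for step two --- Langlands' combinatorial lemma and the alternating sum producing \emph{exponential decay} in the directions of $\fz_R^Q$, which dominates the polynomial weight $\norm{m}^N$ --- is a pointwise estimate, exactly in the spirit of Arthur's original argument. If you actually run that argument you land directly on the sup-norm bound of the statement, which is what Zydor's proof gives; nothing forces you to first integrate and then claw back pointwise control. So the $L^1$ formulation you "invoke" is weaker than what you describe how to prove, the Sobolev step is a detour, and the whole proposal is correct only in the sense that it would also work, not in the sense that it reflects how the bound is obtained.

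Two further technical remarks about your Sobolev step, should one actually want to run it. First, \cite[Lemma~3.8.1.1]{Ch} is an $L^2\to L^\infty$ Sobolev inequality (it controls a sup-norm by Petersson norms of derivatives); you would need its $L^1\to L^\infty$ analogue, which requires more derivatives and is not quite what the cited lemma says. Second, the phrase "absorbing the height factor $\norm{m}^N$ through a standard change of variables" glosses over the non-trivial point that the Sobolev constant and the size of the local neighborhood around $mK_H$ degenerate near the cusps; this is exactly what the height weight in a correctly stated automorphic Sobolev lemma is calibrated to handle, so the passage from local $L^1$ control to a uniform sup-norm bound with the stated polynomial weight is not automatic and would have to be argued. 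None of this is fatal, but it is strictly more work than the paper's approach, which is to observe that the pointwise bound is already what Zydor's argument produces.
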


\begin{proof}
    This is \cite[Theorem~3.9]{Zydor}. Note that the statement in ibid. is weaker, but our version can easily be extracted from the proof.
\end{proof}

Let $\phi \in \cT([G])$. We have $\phi_Q \in \cT([G]_Q)$. For $T$ sufficiently positive, we define the truncated period of $\phi$ relative to $Q$ to be
\begin{equation*}
    \cP^{T,Q} (\phi)=\int_{K_H} \int_{[M_{Q,H}]^{Q,1}} \exp(-\langle 2 \rho_{P_H}, H_{P_H}(m) \rangle) \Lambda^{T,Q} \phi(mk) dmdk.
\end{equation*}
This is absolutely convergent by Lemma~\ref{lem:height_properties} and Theorem~\ref{thm:truncation_operator}. Note that if $Q=G$ it reduces to  
\begin{equation*}
    \cP^{T} (\phi)= \int_{[H]} \Lambda^{T} \phi(h) dh.
\end{equation*}

\subsubsection{Regularized periods}
In \cite[Section~4.5]{Zydor}, a subspace of \emph{regular} automorphic forms $\cA_Q(G)^{\mathrm{reg}} \subset \cA_Q(G)$ is defined. More precisely, a $\varphi \in \cA_Q(G)$ belongs to $\cA_Q(G)^{\mathrm{reg}}$ if for every Rankin--Selberg parabolic subgroup $P \subset Q$ and $h \in H(\bA)$, the exponents of the $Z_P^\infty$ finite function $z \mapsto \varphi(zh)$ belong to a dense open subset of $\fa_{P,\cc}^*$. The space $\cA_Q(G)^{\mathrm{reg}}$ is stable under right-translations by $G(\bA)$. We then have the following theorem.

\begin{theorem}
\label{thm:reg}
    For $\varphi \in \cA_Q(G)^{\mathrm{reg}}$ there exists a unique exponential polynomial on $\fa_{0,H}$ that coincides with $T \mapsto \cP^{T,Q}(\varphi)$ for $T$ sufficiently positive and whose purely polynomial part is constant. We denote it by $\cP^Q(\varphi)$.
\end{theorem}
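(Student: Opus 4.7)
The plan follows the strategy of \cite[Section~4.5]{Zydor} adapted to Rankin--Selberg parabolics. Uniqueness is immediate: an exponential polynomial on $\fa_{0,H}$ is determined by its values on any open cone, in particular on the set of sufficiently positive $T$.

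For existence, I would first unfold the combinatorial definition of $\Lambda^{T,Q}$ as an alternating sum over Rankin--Selberg parabolics $P$ with $P_0 \subset P \subset Q$, weighted by the cone functions $\hat\tau_P^Q$. Integrating and interchanging sums over $\gamma \in P_H(F)\backslash Q_H(F)$ with the integration on $[M_{Q,H}]^{Q,1}$, the truncated period $\cP^{T,Q}(\varphi)$ becomes a signed sum of terms of the shape
\begin{equation*}
\int_{K_H} \int_{[M_{P,H}]^{P,1}} \hat\tau_P^Q\bigl(H_P(mk)-T\bigr)\, \varphi_P(mk)\, e^{-\langle 2\rho_{P_H}, H_{P_H}(m)\rangle}\, dm\, dk.
\end{equation*}
Using the factorization $M_{P,H}(\bA) = Z_P^\infty \cdot M_{P,H}(\bA)^{P,1}$ to isolate the $Z_P^\infty/Z_Q^\infty$-part (which has Lie algebra $\fz_P^Q$), Mellin inversion along this split central subgroup converts each such term into a contour integral
\begin{equation*}
\int_{\Re(\lambda)=\lambda_0} \Phi_P(\lambda)\, e^{\langle \lambda, T_P^Q\rangle} \prod_{\alpha \in \Delta_P^Q}\langle \lambda, \alpha^\vee\rangle^{-1}\, d\lambda,
\end{equation*}
where $\Phi_P$ is the Mellin transform over $Z_P^\infty/Z_Q^\infty$ of the averaged integral of $\varphi_P$ on $[M_{P,H}]^{P,1}$; by standard properties of automorphic forms, $\Phi_P$ is meromorphic on $\fz_{P,\cc}^{Q,*}$ with poles precisely along the hyperplanes of exponents of $\varphi_P$.

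Shifting the contour from $\lambda_0$ (in the positive cone) back to $i\fz_P^{Q,*}$, I would collect residues along these exponent hyperplanes. The regularity hypothesis $\varphi \in \cA_Q(G)^{\mathrm{reg}}$ ensures that these poles are simple and in generic position, and in particular that $\lambda = 0$ is not an exponent; each residue then contributes a term of the form (polynomial in $T$)$\cdot e^{\langle \mu, T\rangle}$ with $\mu \neq 0$. Meanwhile, the $\prod_\alpha \langle \lambda, \alpha^\vee\rangle^{-1}$ factor has a pole of order $|\Delta_P^Q|$ at $\lambda=0$, which is responsible for a pure polynomial-in-$T$ contribution of degree $|\Delta_P^Q|$ on the final contour. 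Together, this already exhibits $T \mapsto \cP^{T,Q}(\varphi)$ as an exponential polynomial.

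The main obstacle will be showing that after summing over all $P \subset Q$ with the alternating signs of the truncation operator, the polynomial parts collapse to a single constant. The family $\{\hat\tau_P^Q\}_{P \subset Q}$ is a Langlands--Arthur $(G,M)$-family on $\fz_Q$, and the associated combinatorial identity (Arthur's partition lemma) is the precise tool ensuring that the higher-degree polynomial terms, which come from the pole of $\prod_\alpha\langle \lambda, \alpha^\vee\rangle^{-1}$ at the origin, telescope among the various $P$'s. Keeping track of this cancellation while correctly pairing the residual exponential contributions coming from genuinely different exponent hyperplanes is where the bulk of the work lies; it is also where the regularity assumption, which forbids accidental coincidences between distinct exponents, plays its essential role.
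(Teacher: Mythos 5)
The paper does not prove this theorem; it is quoted from \cite{Zydor} (the regularity subspace $\cA_Q(G)^{\mathrm{reg}}$ and the exponential-polynomial statement both appear in loc.\ cit., following the template of Jacquet--Lapid--Rogawski and Ichino--Yamana). Your outline is in the same spirit as that argument (unfold the truncation, isolate the split-central direction, identify the $T$-dependence via a contour/residue computation), but there are two points where the reasoning as written does not go through.

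First, the interchange of the alternating sum defining $\Lambda^{T,Q}$ with the integration over $[M_{Q,H}]^{Q,1}$ is not innocent. Once unfolded, the individual $P$-terms involve integrating $\varphi_P$ against the cone indicator $\hat\tau_P^Q(\cdot - T)$ along $\fz_P^Q$, and nothing in the regularity hypothesis forces the real parts of the cuspidal exponents of $\varphi_P$ to lie in the cone dual to $\hat\tau_P^Q$; in particular the $P=Q$ term is just the untruncated period, which is typically divergent. Only the signed sum converges, courtesy of the decay in Theorem~\ref{thm:truncation_operator}. To make your term-by-term Mellin computation legitimate you must first pass to a regime where each term converges absolutely (for instance by twisting by $\Val{\det}^{s}$ for $\Re(s) \gg 0$ and deforming the exponents, then analytically continuing back, which is how \cite{JLR} and \cite{Zydor} handle the same difficulty), or else work with an iterated one-variable computation that never leaves the region of absolute convergence.

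Second, the appeal to $(G,M)$-family combinatorics for the constancy of the polynomial part is off. A $(G,M)$-family is indexed by $\cP(M)$ for a fixed Levi $M$, whereas the cone indicators $\hat\tau_P^Q$ occurring in the truncation run over \emph{all} $P_0 \subset P \subset Q$ (varying Levis), so $\{\hat\tau_P^Q\}$ is not a $(G,M)$-family and Arthur's splitting formula for such families is not the tool in play. The true mechanism is simpler and rests entirely on the regularity hypothesis: since the exponents of $z \mapsto \varphi(zh)$ along $Z_P^\infty$ lie in a dense open (hence wall-avoiding) subset of $\fa_{P,\cc}^*$, each cone integral produces a pure exponential $c_\mu\, e^{\langle \mu, T\rangle}$ with $\mu \neq 0$; there is no polynomial growth in $T$ to cancel. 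After the analytic-continuation argument mentioned above, the only $T$-independent contribution is the one coming from the ``$P = Q$'' stratum, and this is the constant you call $\cP^Q(\varphi)$. The partition identity $\sum_P (-1)^{\dim \fz_P^Q}\tau_P\hat\tau_P^Q = \delta$ is indeed used in establishing the rapid decay of $\Lambda^{T,Q}$ itself, but it is not what controls the polynomial part of the period.
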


The number $\cP^Q(\varphi)$ is the \emph{regularized period à la Zydor} along $Z_Q^\infty M_{Q_H}(F) N_{Q_H}(\bA) \backslash H(\bA)$.

\subsubsection{Parabolic descent} The period $\cP^Q$ can be described by parabolic descent. Note that $M_{Q,H}$ identifies as $\bfM_Q \times \cM_{Q,H} \subset \bfM_Q^2 \times \cM_Q$, where the map $\bfM_Q \subset \bfM_Q^2$ is diagonal and $\cM_{Q,H} \subset \cM_Q$ is the Rankin--Selberg embedding. Using \cite{Zydor}, one can associate to these two pairs of groups and subgroups two regularized periods $\cP^{\bfM_Q^2}$ and $\cP^{\cM_Q}$ on some spaces of automorphic forms $\cA(\bfM_Q^2)^{\mathrm{reg}}$ and $\cA(\cM_Q)^{\mathrm{reg}}$ respectively. The following lemma is then proved in \cite[Lemma~4.12]{BoiZ}.

\begin{lem}
\label{lem:parabolic_descent}
    Let $\varphi \in \cA_Q(G)^{\mathrm{reg}}$. Set
    \begin{equation*}
        \varphi_{M_Q}=\left(R(e_{K_H})\varphi\right)_{|M_Q(\bA),-\rho_Q}.
    \end{equation*}
    Then we have
    \begin{equation}
    \label{eq:both_orders}
        \cP^Q(\varphi)=\cP^{\bfM_Q^2} \left(m \in [\bfM^2_Q] \mapsto \cP^{\cM_Q}(R(m)\varphi_{M_Q}) \right)=\cP^{\cM_Q} \left(m \in [\cM_Q] \mapsto \cP^{\bfM_Q^2}(R(m)\varphi_{M_Q}) \right),
    \end{equation}
    where all the automorphic forms belong to the relevant $\cA^{\mathrm{reg}}$ spaces.
\end{lem}

With the notation of Lemma~\ref{lem:parabolic_descent}, we write
\begin{equation}
\label{eq:tensor_product}
    \cP^Q(\varphi)=(\cP^{\bfM_Q^2} \otimes \cP^{\cM_Q}) (\varphi_{M_Q}),
\end{equation}
where the tensor product notation means that the regularized periods can be taken in any order as in \eqref{eq:both_orders}. If $\bfM_Q^2= \prod_{i=1}^m \GL_{n_i}^2$ and if $\phi \in \cA(\bfM_Q^2)^{\mathrm{reg}}$, we may further decompose
\begin{equation}
\label{eq:gln_product}
    \cP^{\bfM_Q^2}(\phi)=\bigotimes_{i=1}^m \cP^{\GL_{n_i}^2}(\phi).
\end{equation}
We call $\cP^{\GL_{n_i}^2}$ the \emph{regularized diagonal Arthur period on $\GL_{n_i}$}.

\subsubsection{Regularized periods of Eisenstein series} 
Let $P$ be a standard parabolic subgroup of $G$, let $\pi \in \Pi_\disc(M_P)$. Let $\varphi \in \cA_{P,\pi}(G)$. Let $w \in {}_{Q^\std} W_P$. For $\lambda \in \fa_{P_w,\cc}^*$ in general position, set
\begin{equation}
    \label{eq:truncated_period_defi}
    \cP^{T,Q}(\varphi,\lambda,w)=\cP^{T,Q}(E^{Q^\std}(w_Q^{\std,-1} \cdot,M(w,\lambda)\varphi_{P_w},w \lambda)),
\end{equation}

\begin{rem}
    The element $w_Q^\std$ was defined in \eqref{eq:standardized_P}. We prefer to make it appear in \eqref{eq:truncated_period_defi} to deal with the standard parabolic subgroup $Q^\std$ rather than $Q$.
\end{rem}

The truncated period is well defined for $\lambda$ in general position by \cite[Theorem~2.2]{Lap} (which ensures that generalized Eisenstein series are of uniform moderate growth), and if $P_\pi \not\subset P_w$ it is zero. In \cite[Lemma~4.15]{BoiZ}, we show that the generalized Eisenstein series $E^{Q^\std}(w_Q^{\std,-1} \cdot,M(w,\lambda)\varphi_{P_w},w \lambda)$ belongs to $\cA_Q(G)^{\mathrm{reg}}$ for $\lambda \in \fa_{P,\cc}^*$ in general position. We can therefore define 
    \begin{equation}
    \label{eq:PQE}
    \cP^Q(\varphi,\lambda,w)=\cP^Q(E^{Q^\std}(w_Q^{\std,-1} \cdot,M(w,\lambda)\varphi_{P_w},w \lambda)).
\end{equation}
The following property follows directly from the definition of regularized periods in \cite{Zydor}.

\begin{prop}
\label{prop:unfold_periods}
    For $\lambda \in \fa_{P,\cc}^*$ in general position and $T$ sufficiently positive we have
    \begin{equation}
    \label{eq:P_unfolding}
        \cP^Q(\varphi,\lambda,w)=\sum_{\substack{ R \in \cF_{\RS} \\ R \subset Q}} \varepsilon_{R}^Q \sum_{w' \in {}_{R^\std} W_{Q^\std_w}^{Q^\std}w} \cP^{T,R}(\varphi,\lambda,w') \cdot \frac{\exp(\langle w_R^\std w'(\lambda+\nu_{w'})+\underline{\rho}_R,T_R^Q \rangle)}{\hat{\theta}_R^Q(w_R^\std w' (\lambda + \nu_{w'})+\underline{\rho}_R)},
    \end{equation}
    where $\varepsilon_{R}^Q$ is a sign, $\hat{\theta}_R^Q$ is the product of affine linear forms defined in \cite[Section~4.2]{BoiZ}, and we understand that the summands are zero unless $P_\pi \subset P_{w'}$ (whether or not the denominator is identically zero). A similar expression holds for $\cP^{\bfM_Q^2}$ and $\cP^{\cM_Q}$. 
\end{prop}

\subsection{Regularized periods as Zeta integrals and inner-products}

In this section, we recall the results of \cite{BoiZ} which compute the regularized periods $\cP^Q$ in terms of global Zeta integrals and Petersson inner products. They deeply rely on \cite{IY}, and will have important analytic consequences in \S\ref{sec:analytic_properties}.

\subsubsection{Rankin--Selberg periods and Zeta functions}
\label{subsec:RS_and_Zeta}

Let $\psi$ be a generic character of $[N_0]$ which is trivial on $N_{0,H}(\bA)$. For any automorphic form $\Phi \in \cA(G)$, we may consider the global Whittaker function
\begin{equation}
\label{eq:global_whitt}
    W^{\psi}(g,\Phi)=\int_{[N_0]} \Phi(ng) \overline{\psi}(n) dn, \quad g \in [G].
\end{equation}
If $P$ be a standard parabolic subgroup of $G$ and $\pi \in \Pi_\disc(M_P)$, for $\lambda \in \fa_{P,\cc}^*$ in general position we can consider the global Zeta integral
\begin{equation}
    \label{eq:global_zeta}
    Z_\pi(\varphi,\lambda):=\int_{N_{0,H}(\bA) \backslash H(\bA)} W^\psi(h,E(\varphi,\lambda)) dh, \quad \varphi \in \cA_{P,\pi}(G).
\end{equation}
By \cite[Lemma~7.1.1.1]{BPCZ}, this integral is absolutely convergent for $\Re(\lambda)$ in some open subset of $\fa_{P}^*$, and extends to a meromorphic function in $\lambda$ by \cite[Corollary~5.4]{IY} and \cite[Equation~(4.2)]{IY}. Note that because residual representations are not generic, it is zero as soon as $\pi$ is not cuspidal.

We now assume that $\pi$ is cuspidal. Write $M_P=\prod_{i=1}^{m_n} \GL_{n_{n,i}} \times \prod_{i=1}^{m_{n+1}} \GL_{n_{n+1,i}}$ and $\pi=\boxtimes_{i=1}^{m_n} \pi_{n,i} \boxtimes_{i=1}^{m_{n+1}} \pi_{n+1,i}$ accordingly. Consider the products of completed Rankin--Selberg $L$ functions
\begin{equation}
    b(\lambda,\pi)=\prod_{i<j}L(1+\lambda_{n,i}-\lambda_{n,j},\pi_{n,i} \times \pi_{n,j}^\vee)\prod_{i<j}L(1+\lambda_{n+1,i}-\lambda_{n+1,j},\pi_{n+1,i} \times \pi_{n+1,j}^\vee).
\end{equation}
and
\begin{equation}
    L\left(\lambda+\frac{1}{2},\pi_{n} \times \pi_{n+1}\right):=\prod_{i=1}^{m_n} \prod_{j=1}^{m_{n+1}} L(1/2+\lambda_{n,i}+\lambda_{n+1,j},\pi_{n,i} \times \pi_{n+1,j}).
\end{equation}
Then it is shown in \cite[Corollary~5.7]{IY} that if $\varphi=\otimes_v \varphi_v \in \cA_{P,\pi}(G)$ is factorizable, there exists a finite set $\tS$ of places of $F$ such that 
\begin{equation}
    \label{eq:Zeta_facto_global}
     Z_\pi(\varphi,\lambda)=\frac{L(\lambda+\frac{1}{2},\pi_n \times \pi_{n+1})}{b(\lambda,\pi)} \times \prod_{v \in \tS} Z_{\pi_v}^\natural(\varphi_v,\lambda),
\end{equation}
where the $Z_{\pi_v}^\natural$ are local Zeta integrals normalized by the local version of the quotient of $L$-functions. Moreover, we know by \cite[Theorem~1.1]{IY} and \cite[Proposition~4.20]{BoiZ} that $\cP$ computes the global Zeta integral.

\begin{prop}
    \label{prop:P=Z}
    For $\varphi \in \cA_{P,\pi}(G)$ and $\lambda \in \fa_{P,\cc}^*$ in general position, we have $\cP(\varphi,\lambda)=Z_\pi(\varphi,\lambda)$.
\end{prop}

Set
 \begin{equation}
    \label{eq:f_pi_Z_defi}
        L_{\pi,Z}(\lambda)=\prod_{\substack{i,j \\ \pi_{n,i} \simeq \pi_{n+1,j}^\vee}} \left(\lambda_{n,i}+\lambda_{n+1,j}\pm\frac{1}{2}\right)\prod_{\substack{i<j \\ \pi_{n,i} \simeq \pi_{n,j}}}(\lambda_{n,i}-\lambda_{n,j})^{-1}\prod_{\substack{i<j \\ \pi_{n+1,i} \simeq \pi_{n+1,j}}}(\lambda_{n+1,i}-\lambda_{n+1,j})^{-1}.
    \end{equation}

\begin{cor}
\label{cor:regularity_P}
    There exists $k>0$ such that for every level $J$ of $G$ there exists $c_J>0$ such that for every $J$-pair $(P,\pi)$ with $\pi\in \Pi_\cusp(M_P)$ and every $\varphi \in \cA_{P,\pi}(G)$ the map
    \begin{equation*}
        \lambda \in \fa_{P,\cc}^* \mapsto L_{\pi,Z}(\lambda)\cP(\varphi,\lambda)
    \end{equation*}
    is regular on $\cR_{\pi,k,c_J}$.
\end{cor}

\begin{proof}
    By \cite[Lemma~4.18]{BoiZ}, for every place $v$ the local factor $\lambda \in \fa_{P,\cc}^* \mapsto Z_{\pi_v}^\natural(\varphi_v,\lambda)$ is a meromorphic function which is regular for $\Re(\lambda) \in \overline{\fa_{P}^{*,+}}$. 
    By Theorem~\ref{thm:L} the factorization of the Zeta function from \eqref{eq:Zeta_facto_global}, we know that $\lambda \mapsto \cP(\varphi,\lambda)$ has simple zeros along $\lambda_{n,i}-\lambda_{n,j}=0$ and $\lambda_{n+1,i}-\lambda_{n+1,j}=0$ if $\pi_{n,i}\simeq \pi_{n,j}$ and $\pi_{n+1,i}\simeq \pi_{n+1,j}$ respectively. Indeed, note that these zeros can neither be compensated by poles of $L(\lambda+\frac{1}{2},\pi_n \times \pi_{n+1})$ nor of $\prod_{v \in \tS} Z_{\pi_v}^\natural$. Set
    \begin{equation*}
        L_{\pi,Z}^{\mathrm{num}}(\lambda)=\prod_{\substack{i,j \\ \pi_{n,i} \simeq \pi_{n+1,j}^\vee}} \left(\lambda_{n,i}+\lambda_{n+1,j}\pm\frac{1}{2}\right).
    \end{equation*}
    Then it is enough to show that $\lambda \mapsto L_{\pi,Z}^{\mathrm{num}}(\lambda) \cP(\varphi,\lambda)$ is regular on $\cR_{\pi,k,c_J}$. We know that it is regular at least on $\Re(\lambda) \in \overline{\fa_{P}^{*,+}}$.
    
    To obtain regularity on the bigger region $\cR_{\pi,k,c_J}$, we use the localization of the poles of the cuspidal Eisenstein series in Theorem~\ref{thm:analytic_Eisenstein} and of the generalized cuspidal Eisenstein series in Proposition~\ref{prop:partial_regular} (see also Remark~\ref{rem:cuspi_case}). By Proposition~\ref{prop:unfold_periods}, we conclude that there exist $k$ and $c_J$ as in the statement of Corollary~\ref{cor:regularity_P} such that
    \begin{equation*}
        \lambda \mapsto \left(\prod_{Q \in \cF_{\RS}} \prod_{w \in W(P;Q^\std)} \hat{\theta}_Q(w_Q^\std w\lambda + \underline{\rho}_Q)\right) L_{\pi,E}(\lambda) \cP(\varphi,\lambda)
    \end{equation*}
    is regular on $\cR_{\pi,k,c_J}$, where $L_{\pi,E}$ is defined in \eqref{eq:f_pi_defi}. Because the intersection of any hyperplane cut out by $(\prod \hat{\theta}_Q(w_Q^\std w\lambda + \underline{\rho}_Q)) L_{\pi,E}(\lambda)/L_{\pi,Z}^{\mathrm{num}}(\lambda)$ with the region $\Re(\lambda) \in \overline{\fa_{P}^{*,+}}$ is of codimension $1$, we conclude that $L_{\pi,Z}^{\mathrm{num}}(\lambda) \cP(\varphi,\lambda)$ must be regular on $\cR_{\pi,k,c_J}$.
\end{proof}

\subsubsection{The diagonal Arthur period}
We now investigate the diagonal Arthur period $\cP^{\GL_k^2}$ for $k \geq 1$ of \eqref{eq:gln_product}. This is the regularized period of \cite{Zydor} associated to the diagonal subgroup $\GL_k \subset \GL_k^2$. It is well defined for all automorphic forms in some subspace $\cA(\GL_k^2)^\mathrm{reg}$.

\begin{prop}
\label{prop:diagonal_Arthur}
    Let $P$ be a standard parabolic subgroup of $\GL_k^2$. Let $\pi \in \Pi_\disc(M_P^2)$. Let $\varphi=\varphi_1 \otimes \varphi_2 \in \cA_{P,\pi}(\GL_k^2)$. We have the following alternative.
    \begin{enumerate}
        \item If $P=\GL_k^2$, then $\varphi \in \cA(\GL_k^2)^{\mathrm{reg}}$ and
        \begin{equation*}
            \cP^{\GL_k^2}(\varphi)=\langle \varphi_1,\overline{\varphi_2} \rangle_{\GL_k,\Pet}. 
        \end{equation*}
        \item If $P \neq \GL_k^2$, then for $\lambda \in \fa_{P,\cc}^*$ in general position we have $E(\varphi,\lambda) \in \cA(\GL_k^2)^{\mathrm{reg}}$ and
        \begin{equation*}
            \cP^{\GL_k^2}(E(\varphi,\lambda))=0.
        \end{equation*}
    \end{enumerate}
\end{prop}

\begin{proof}
    The first point follows by \cite[Theorem~4.6]{Zydor}. For the second, let $P \neq \GL_k^2$. It is shown in \cite[Lemma~3.1.5.1]{Ch} that for $\lambda \in \fa_{P,\cc}^*$ in general position we have $E(\varphi,\lambda) \in \cA(\GL_k^2)^{\mathrm{reg}}$ and the map $\lambda \mapsto \cP^{\GL_k^2}(E(\varphi,\lambda))$ is meromorphic. Write $P=P_1 \times P_2$, $\pi=\pi_1 \boxtimes \pi_2$ and $\lambda=\lambda_1+\lambda_2$. By \cite[Theorem~4.1]{Zydor}, the regularized period yields for $\lambda$ in general position a $\GL_k(\bA)^1$-invariant pairing 
    \begin{equation*}
         (\varphi_1,\varphi_2) \in \cA_{P_1,\pi_1,\lambda_1}(\GL_k) \times \cA_{P_2,\pi_2,\lambda_2}(\GL_k)  \mapsto \cP^{\GL_k^2}(E(\varphi,\lambda)) \in \cc.
    \end{equation*}
    Such a pairing must be zero for $\lambda$ in general position (globally and locally) by Bernstein's principle (\cite[208]{JLR}). It follows that $\cP^{\GL_k^2}(E(\varphi,\lambda))$ is identically zero.
\end{proof}

\subsection{Analytic properties of regularized periods}
\label{sec:analytic_properties}

In this section, we fix $Q \in \cF_{RS}$. We take $P$ a standard parabolic subgroup of $G$ and $\pi \in \Pi_{\disc}(M_P)$. By combining Lemma~\ref{lem:parabolic_descent}, Proposition~\ref{prop:P=Z} and Proposition~\ref{prop:diagonal_Arthur}, we obtain a complete description of the periods $\cP^Q(\varphi,\lambda,w)$ and of their analytic properties on some open set of $\fa_{P,\cc}^{*}$.  

\subsubsection{Poles of regularized periods}

We first assume that $P \subset Q^\std$. We have a decomposition $\pi_\bfP \boxtimes \pi_\cP$ corresponding to the decomposition of the Levi factor $M_P$. We further decompose $\pi_{\bfP}=\pi_{\bfP,n} \boxtimes \pi_{\bfP,n+1}$. For every $\lambda \in \fa_{P,\cc}^*$, denote by $\lambda_\cP$ the restriction of $\lambda$ to $\fa_{\cP,\cc}$. Moreover, recall that $\bfM_{Q,+}^{\std,2}$ is a standard Levi subgroup of some $\GL_{n_+}^2$ (embedded in the "upper-left corner" of $G$) and that $\bfM_{Q,-}^{\std,2}$ is a standard Levi subgroup of some $\GL_{n_-}^2$ (embedded in the "lower-right" corner of $G$). Set $\bfP=(\GL_{n_+} \times \GL_{n_-})^2 \cap P$. This is a standard parabolic subgroup of $(\GL_{n_+} \times \GL_{n_-})^2$ with Levi factor $\bfM_P$. Moreover, $\bfM_{Q}^{\std,2}$ is a Levi subgroup of $(\GL_{n_+} \times \GL_{n_-})^2$ that contains $\bfM_P$. Let $R$ be the standard parabolic subgroup of $G$ with Levi factor $(\GL_{n_+} \times \cM_{Q,n}^\std \times \GL_{n_-}) \times (\GL_{n_+} \times \cM_{Q,n+1}^\std \times \GL_{n_-})$. 

Let $\varphi \in \cA_{P,\pi}(G)$. Set 
\begin{equation*}
    \varphi_{M_{Q}^\std}=\left(R(w_Q^{\std,-1})R(e_{K_H})\varphi\right)_{|M_{Q}^\std(\bA),-\rho_{Q}^\std}, \quad \varphi_{M_R}=\left(R(w_Q^{\std,-1})R(e_{K_H})\varphi\right)_{|M_{R}(\bA),-\rho_{R}}.
\end{equation*}
Then $ \varphi_{M_{Q}^\std} \in \cA_{\bfP,\pi_\bfP}(\bfM_{Q}^{\std,2}) \otimes \cA_{\cP,\pi_\cP}(\cM_{Q}^\std)$ and $ \varphi_{M_{R}} \in \cA_{\bfP,\pi_\bfP}((\GL_{n_+} \times \GL_{n_-})^2) \otimes \cA_{\cP,\pi_\cP}(\cM_{Q}^\std)$. Because $\cM_Q^\std$ is isomorphic to $\GL_r \times \GL_{r+1}$ for some $r$, we can consider the Zeta function $Z_{\pi_\cP}^{\cM_{Q}^\std}(\cdot,\lambda_\cP)$ which is defined on $\cA_{\cP,\pi_\cP}(\cM_{Q}^\std)$. 

\begin{prop}
\label{prop:parabolic_descent}
    Let $\pi \in \Pi_\disc(M_P)$. We have the following alternative: 
    \begin{itemize}
        \item If $\bfM_P=\bfM_{Q}^{^\std,2}$, $\pi_{\bfP,n} \simeq \pi_{\bfP,n+1}^\vee$ and if $\pi_\cP$ is cuspidal, then for $\lambda \in \fa_{P,\cc}^*$ in general position and every $\varphi \in \cA_{P,\pi}(G)$ we have
        \begin{equation}
        \label{eq:cP^Q_explicit}
            \cP^Q(\varphi,\lambda)=\left( \langle \cdot,\overline{\cdot} \rangle_{\bfM_{P,H},\Pet} \otimes Z_{\pi_\cP}^{\cM_{Q}^\std}(\cdot,\lambda_\cP) \right) (\varphi_{M_{Q}^\std})=\left( \langle \cdot,\overline{\cdot} \rangle_{\bfP_H,\Pet} \otimes Z_{\pi_\cP}^{\cM_{Q}^\std}(\cdot,\lambda_\cP) \right) (\varphi_{M_R}),
        \end{equation}
        where $\bfP \simeq \bfP_H^2$ and the tensor product notation is used as in \eqref{eq:tensor_product}.
        \item Otherwise, for every $\varphi \in \cA_{P,\pi}(G)$ the map $\lambda \mapsto \cP^Q(\varphi,\lambda)$ is zero.
    \end{itemize}
\end{prop}

\begin{proof}
    This is a direct consequence of parabolic descent (Lemma~\ref{lem:parabolic_descent}) and the description of $\cP^{\cM_Q}$ in Proposition~\ref{prop:P=Z} and of $\cP^{\bfM_Q^2}$ in Proposition~\ref{prop:diagonal_Arthur} and \eqref{eq:gln_product}. The only thing that we have to prove is the final equality in \eqref{eq:cP^Q_explicit}. Note that $\bfK=w_Q^{\std,-1} K_H w_Q^\std \cap (\GL_{n_+}(\bA) \times \GL_{n_-}(\bA))$ is a good maximal compact subgroup of this group. We equip it with the probability Haar measure. Write $\varphi=\varphi_n \otimes \varphi_{n+1}$. Then by the Iwasawa decomposition we have
    \begin{equation*}
        \langle \varphi_{M_R,n} ,\overline{\varphi}_{M_R,n+1} \rangle_{\bfP_H,\Pet}=\int_{\bfK}\langle R(k)\varphi_{M_{Q}^\std,n},\overline{R(k)\varphi_{M_{Q}^\std,n+1}} \rangle_{\bfM_{P,H},\Pet}dk.
    \end{equation*}
    The second equality follows.
\end{proof}

We now lift the hypothesis that $P \subset Q^\std$ and take $w \in {}_{Q^\std} W_P$. We study the regularity of the period $\cP^Q(\varphi,\lambda,w)$ for $\varphi \in \cA_{P,\pi}(G)$. By Lemma~\ref{lem:contant_term}, there exist a discrete automorphic representation $\pi_w \in \Pi_{\disc}(M_{P_w})$ and an unramified character $\nu_w \in \fa_{P_w}^*$ such that for any $\varphi \in \cA_{P,\pi}(G)$ we have $\varphi_{P_w} \in \cA_{P_w,\pi_w,\nu_w}(G)$. We can apply the above notation to this representation, for example by writing $\lambda_{\cP}$ for the restriction of any $\lambda \in \fa_{Q^\std_w,\cc}^*$ to $\fa_{\cQ_w,\cc}$ where $\cQ_w=Q_w \cap \cM_{Q}^\std$.

Recall that we have defined in \S\ref{subsubsec:reg_scalar} a polynomial $L_{\pi,w}$ which controls the poles of $M(w,\lambda)$ in the region $\cR_{\pi,k,c_J}(w)$ (see \eqref{eq:R(w)_defi}). We have the following description of the poles of $\cP^Q(\varphi,\lambda,w)$.

\begin{prop}
\label{prop:reg_P}
     There exists $k$ such that for every level $J$ there exists $c_J>0$ such that for every $J$-pair $(P,\pi)$ and every $\varphi \in \cA_{P,\pi}(G)$ the map
     \begin{equation}
     \label{eq:normalized_P}
        \lambda \mapsto L_{(w\pi_w)_\cP,Z}\left((w(\lambda+\nu_w))_\cP\right)  L_{\pi,w}(\lambda)\cP^Q(\varphi,\lambda,w)
    \end{equation}
    is regular in the region 
    \begin{equation}
    \label{eq:region_holo_P}
        \left\{ \lambda \in \fa_{P,\cc}^* \; \middle| \; \lambda+\nu_w \in \cR_{\pi,k,c_J}(w), \; (w(\lambda+\nu_w))_\cP \in \cR_{(w\pi_w)_\cP,k,c_J} \right\}.
    \end{equation}
\end{prop}

\begin{proof}
    By Corollary~\ref{cor:wider_holo}, we know that $\lambda \mapsto L_{\pi,w}(\lambda)M(w,\lambda) \varphi_{P_w}$ is regular in the region \eqref{eq:region_holo_P}. By Theorem~\ref{thm:truncation_operator} and \cite[Theorem~2.2]{Lap}, we are reduced to the case $w=1$ and $P=P_w$.

     We now want to prove that $\lambda \mapsto L_{\pi_\cP,Z}(\lambda_\cP)\cP^Q(\varphi,\lambda)$ is regular in the region $\lambda_{\cP} \in \cR_{\pi_{\cP},k,c_J}$. We can assume that we are in the first case of Proposition~\ref{prop:parabolic_descent}. By parabolic descent (Lemma~\ref{lem:parabolic_descent}), we need to study the regularity of 
    \begin{equation}
    \label{eq:double_variable_map}
        (\lambda_\bfP,\lambda_\cP) \in \fa_{\bfP,\cc}^* \times \fa_{\cP,\cc}^* \mapsto \cP^{\bfM^2_Q}\left(m \in \bfM_Q^2(\bA) \mapsto L_{\pi,Z}(\lambda_\cP)\cP^{\cM_Q}(R(m)\varphi_{M_Q},\lambda_{\bfP}+\lambda_{\cP})\right).
    \end{equation}
    By Proposition~\ref{prop:diagonal_Arthur}, because $\bfM_P=\bfM_{Q}^{\std,2}$ the period $\cP^{\bfM_Q^2}$ is just a Petersson inner-product so that \eqref{eq:double_variable_map} is constant in $\lambda_{\bfP}$. It is therefore enough to show that it is holomorphic in the variable $\lambda_{\cP} \in \cR_{\pi_\cP,k,c_J}$ for $\lambda_{\bfP}$ fixed by Hartogs' theorem. By Corollary~\ref{cor:regularity_P}, for every $m$ the map $\lambda_{\cP} \mapsto L_{\pi,Z}(\lambda_\cP)\cP^{\cM_Q}(R(m)\varphi_{M_Q},\lambda_{\bfP}+\lambda_{\cP})$ is regular in the desired region. Moreover, by Corollary~\ref{cor:regularity_P} below (which for this purpose is independent from this argument), one can easily upgrade this to a regular map $\lambda_{\cP} \mapsto L_{\pi,Z}(\lambda_\cP)\cP^{\cM_Q}(R(\cdot)\varphi_{M_Q},\lambda_{\bfP}+\lambda_{\cP})$ valued in $\cT([\bfM_Q^2])$ (see \cite[Section~A.0.3]{BPCZ} for this notion). Because $\cP^{\bfM_Q^2}$ is a sum of truncated periods by Proposition~\ref{prop:unfold_periods}, we infer that \eqref{eq:double_variable_map} is also regular in the region $\lambda_\cP \in  \cR_{\pi_\cP,k,c_J}$ by Theorem~\ref{thm:truncation_operator}. This concludes. 
\end{proof}

\subsubsection{Bounds for regularized periods}

We now bound $\cP^Q(\varphi,\lambda,w)$. 

\begin{prop}
\label{prop:individual_bound_P}
    There exists $k>0$ such that for any level $J$ and $C>0$ there exist $c_J>0$, $d>0$, $N>0$ and $X_1, \hdots, X_r \in \cU(\fg_\infty)$ such that for all $J$-pair $(P,\pi)$ and $\varphi \in \cA_{P,\pi}(G)^J$ we have
    \begin{equation}
    \label{eq:optimal_bound_P}
       \Val{L_{(w\pi_w)_\cP,Z}\left((w(\lambda+\nu_w))_\cP\right)  L_{\pi,w}(\lambda)\cP^Q(\varphi,\lambda,w)} \leq (1+\norm{\lambda}^2)^d \sum_{i=1}^r \norm{\varphi}_{-N,X_i},
    \end{equation}
    in the region
    \begin{equation}
    \label{eq:big_intersection}
        \left\{ \lambda \in \fa_{P,\cc}^* \; \middle| \; \lambda+\nu_w \in \cR^C_{\pi,k,c_J}(w), \; (w(\lambda+\nu_w))_\cP \in \cR_{(w\pi_w)_\cP,k,c_J} \right\}.
    \end{equation}
   
\end{prop}

\begin{proof}
    Let $(P,\pi)$ be a $J$-pair. We can assume that the discrete automorphic representation $w \pi_w$ of $M_{Q^\std_w}$ satisfies the first condition of Proposition~\ref{prop:parabolic_descent}, as otherwise the period is zero. This implies that $Q^\std_w \cap \bfM_{Q}^{\std,2}=\bfM_{Q}^{\std,2}$.  
    
    Let $R \in \cF_{\RS}$ such that $R \subset Q$, and let $w' \in {}_{R^\std}W_{Q^\std_w}^{Q^\std}w$ with $P_\pi \subset P_{w'}$. The set ${}_{R^\std}W_{Q^\std_w}^{Q^\std}$ decomposes as ${}_{R^\std}W_{Q^\std_w}^{\bfM_{Q^\std}^{\std,2}} \times {}_{R^\std}W_{Q^\std_w}^{\cM_{Q}^\std}$, so that we can write $w'=(w'_\bfM,w'_\cM)w$ under this decomposition. We have $w'_\bfM=1$ and moreover $\bfM_{Q}^{\std,2} \cap R^\std=\bfM_{Q}^{\std,2} \cap R^\std_{w'}$, $P_w=P_{w'}$, $\pi_w=\pi_{w'}$ and $\nu_w=\nu_{w'}$. We now want to use Proposition~\ref{prop:partial_Eisenstein} to bound the generalized Eisenstein series $E^{R^\std}(M(w',\lambda)\varphi_{P_{w}},w'\lambda)$. Recall that in \S\ref{subsubsec:partial_Eisenstein} we have defined some projectors $p_i$ that appear in the statement of Proposition~\ref{prop:partial_Eisenstein}. Then it is easily checked by the previous discussion and because $(w'\pi_{w})_\cP$ is cuspidal that the condition $(w(\lambda+\nu_w))_\cP \in \cR_{(w\pi_w)_\cP,k,c_J}$ implies $p_i(w'(\lambda+\nu_{w'})) \in \cR_{(w'\pi_w)_i,k,c_J}$ for all $i$ (see Remark~\ref{rem:cuspi_case}). By Proposition~\ref{prop:partial_Eisenstein}, there exists a finite product of linear forms $L_{w'}(\lambda)$ (which can be chosen independently of $\pi$) and $N'>0$ such that for all $Y \in \cU(\fg_\infty)$ and all $N>0$ large enough we have $X_1, \hdots, X_r \in \cU(\fg_\infty)$ and $d>0$ such that
    \begin{equation}
    \label{eq:bound_partial_proof}
         \norm{L_{w'}(\lambda)E^{R^\std}(M(w',\lambda)\varphi_{P_w},w'\lambda)}_{-N',Y}   \leq (1+\norm{\lambda}^2)^d \sum_{i=1}^r \norm{\varphi}_{-N,X_i},
    \end{equation}
    for $\lambda$ in the region \eqref{eq:big_intersection}. This estimate is uniform in $\varphi \in \cA_{P,\pi}(G)^J$ and $(P,\pi)$.
    
    Let us denote by $L(\lambda)$ the product of all the $L_{w'}$ with all the $\hat{\theta}_R^Q(w_R^\std w'(\lambda+\nu_{w'})+\underline{\rho}_R)$. By Theorem~\ref{thm:truncation_operator} and Proposition~\ref{prop:unfold_periods}, we see that for any $N'>0$ large enough we have $Y_1,\hdots,Y_{r'} \in \cU(\fg_\infty)$ such that for all $J$-pairs $(P,\pi)$, all $\varphi \in \cA_{P,\pi}(G)^J$ and all $\lambda$ in \eqref{eq:big_intersection} we have
    \begin{equation*}
        \Val{L(\lambda)\cP^Q(\varphi,\lambda,w)} \leq \sum_{\substack{R \in \cF_{\RS} \\ R \subset Q}} \sum_{w' \in {}_{R^\std}W_{Q^\std_w}^{Q^\std}w} \sum_{i=1}^{r'} \norm{ L(\lambda) E^{R^\std}(M(w',\lambda)\varphi_{P_w},w'\lambda)}_{-N',Y_i}.
    \end{equation*}
    By \eqref{eq:bound_partial_proof}, we conclude that \eqref{eq:optimal_bound_P} holds for $L(\lambda) \cP^Q(\varphi,\lambda,w)$. But by Proposition~\ref{prop:reg_P}, we know that $\lambda \mapsto L_{(w\pi_w)_\cP,Z}\left((w(\lambda+\nu_w))_\cP\right)  L_{\pi,w}(\lambda) \cP^Q(\varphi,\lambda,w)$ is holomorphic in our region. It remains to use \cite[Lemma~2.4.2.1]{Ch} to eliminate the superfluous linear forms.
\end{proof}

\subsection{Residues of Rankin--Selberg periods}
\label{sec:residues_RS}

In this section, we recall the results of \cite{BoiZ} on residues of the regularized period $\lambda \mapsto \cP(\varphi,\lambda)$. 

\subsubsection{A naive notion of residues} \label{subsubsec:naive_residue} Let $m \geq 1$, let $f$ be a meromorphic function on $\cc^m$. Let $\Lambda$ be a non-zero affine linear form on $\cc^m$. Write $\cH_{\Lambda}$ for the affine hyperplane $\{ v \in \cc^m \; | \; \Lambda(v)=0\}$. The map $ v \mapsto \Lambda(v) f(v)$ is a meromorphic function on $\cc^m$. Assume that $\cH_{\Lambda}$ is not contained in its polar divisor (i.e. $\cH_{\Lambda}$ is at most a simple polar divisor of $f$). Then its restriction to $\cH_{\Lambda}$ is a meromorphic function on $\cH$, and we set
\begin{equation*}
    \underset{\Lambda}{\Res} f:= \left( \Lambda f\right)_{| \cH_{\Lambda}}.
\end{equation*}
Let $\Lambda_1, \hdots, \Lambda_r$ be a family of affine linear forms such that the underlying family of linear forms is linearly disjoint. We consider the iterated residue
\begin{equation*}
    \underset{\Lambda_{r} \leftarrow \Lambda_{1}}{\Res} f:= \underset{\Lambda_{r}}{\Res} \hdots \underset{\Lambda_{1}}{\Res} f,
\end{equation*}
provided each residue is defined in the above sense. This is a meromorphic function on $\cH:=\bigcap \cH_{\Lambda_i}$. Note that the iterated residue a priori depends on the order of the affine linear forms. 

\subsubsection{Residues as regularized periods}
Let $P$ be a standard parabolic subgroup of $G$ and $\pi \in \Pi_\cusp(M_P)$. Write the Levi factor $M_P=\left( \prod_{i=1}^{m_n} \GL_{n_{n,i}} \right) \times \left( \prod_{j=1}^{m_{n+1}} \GL_{n_{n+1,j}} \right)$ and $\pi=\boxtimes \pi_{n,i} \boxtimes \pi_{n+1,j}$ accordingly. The next proposition summarizes the results of \cite[Proposition~4.23]{BoiZ} and \cite[Lemma~5.1]{BoiZ}.

\begin{prop}
\label{prop:order_residues}
    Let $1 \leq i_{+,1}, \hdots, i_{+,m_+} \leq m_n$, $1 \leq i_{-,1}, \hdots, i_{-,m_-} \leq m_n$, $1 \leq j_{+,1}, \hdots, j_{+,m_+} \leq m_{n+1}$ and $1 \leq j_{-,1}, \hdots, j_{-,m_-} \leq m_{n+1}$. Assume that
    \begin{itemize}
        \item The indices $i_{+,1}, \hdots, i_{+,m_+}, i_{-,1}, \hdots, i_{-,m_-}$ are distinct;
        \item The indices $j_{+,1}, \hdots, j_{+,m_+}, j_{-,1}, \hdots, j_{-,m_-}$ are distinct;
        \item For every $l$ we have $\pi_{n,i_{+,l}}=\pi_{n+1,j_{+,l}}^\vee$ and $\pi_{n,i_{-,l}}=\pi_{n+1,j_{-,l}}^\vee$.
    \end{itemize}
    For $l$ in the suitable range, consider the affine linear forms on $\fa_{P,\cc}^*$ defined by $\Lambda_{+,l}(\lambda)=\lambda_{n,i_{+,l}}+\lambda_{n+1,j_{+,l}}+\frac{1}{2}$, and $\Lambda_{-,l}(\lambda)=\lambda_{n,i_{-,l}}+\lambda_{n+1,j_{-,l}}-\frac{1}{2}$. Let $Q^\std_{n+1}$ be the standard parabolic subgroup of $\GL_{n+1}$ with standard Levi factor
    \begin{equation}
        \label{eq:explicit_levi}
        \prod_{l=1}^{m_+} \GL_{n_{i_{+,l}}} \times \GL_{k+1} \times \prod_{l=m_-}^{1} \GL_{n_{i_{-,l}}},
    \end{equation}
    and let $Q \in \cF_{\RS}$ be the element corresponding to $(Q^\std_{n+1},m_++1)$ under the bijection of Proposition~\ref{prop:param_RS}. Let $w \in W(P;Q^\std)$ be the only element such that $w_n(i_{+,l})=l$, $w_n(i_{-,l})=m_n-l+1$, $w_{n+1}(j_{+,l})=l$ and $w_{n+1}(j_{-,l})=m_{n+1}-l+1$. Then for every $\varphi \in \cA_{P,\pi}(G)$ we have
    \begin{equation}
    \label{eq:residue_hard}
        \underset{\Lambda_{m_-,-} \leftarrow \Lambda_{-,1}}{\Res} \; \underset{\Lambda_{+,m_+} \leftarrow \Lambda_{+,1}}{\Res} \cP(\varphi,\lambda)=(-1)^{m_+}\cP^Q(\varphi,\lambda,w).
    \end{equation}
    Moreover, the iterated residue may be taken in any order in the set $\{\Lambda_{1,\pm}, \hdots ,\Lambda_{m_{\pm},\pm}\}$.
\end{prop}

\section{Extension of the Rankin--Selberg period}
\label{chap:RS_non_tempered}

\subsection{An extension of \cite[Section~5]{BoiZ}}
\label{sec:relevant_discrete}

In this section, we define a set of \emph{relevant inducing data} $\Pi_H$. They parametrize inductions $\cA_{P,\pi}(G)$ on which we can build the regularized period $\cP_\pi$ from \cite{BoiZ}. In fact, for the purpose of the fine spectral expansion of the Rankin--Selberg period, we need to work with slightly more general inductions than in \cite{BoiZ}. 

\subsubsection{Relevant inducing data}
\label{subsubsec:relevant_inducing}

In the rest of the text, we will use the following convention. If $\underline{n}(1), \hdots ,\underline{n}(k)$ are tuples of integers in $\zz_{\geq 0}^{m_1}, \hdots, \zz_{\geq 0}^{m_k}$ respectively, we write $\underline{n}(i,j)$ for their elements with $1 \leq i \leq k$ and $1 \leq j \leq m_i$. We also write $(\underline{n}(1),\hdots,\underline{n}(k))$ for the tuple in $\zz_{\geq 0}^{m_1+\hdots+m_k}$ obtained by concatenating them. Recall that any standard parabolic subgroup $P$ of $\GL_n$ is determined by the tuple of integers $\underline{n}(P)$ defined in Section~\ref{chap:poles}. We naturally extend this notion to standard parabolic subgroups of $G$. We also allow entries of $\underline{n}(P)$ to be zero.

Let $k \geq 1$ and $\pi \in \Pi_\disc(\GL_k)$. Write $\sigma_\pi=\sigma^{\boxtimes d}$ with $\sigma \in \Pi_\cusp(\GL_r)$ so that $\pi=\Speh(\sigma,d)$. By analogy with the local notion of derivatives introduced in \cite{Zel}, we define the automorphic derivative of $\pi$ to be
\begin{equation*}
    \pi^-=\Speh(\sigma,d-1) \in \Pi_{\disc}(\GL_{r(d-1)}).
\end{equation*}
Note that if $\pi$ is cuspidal, i.e. if $d=1$, the representation $\pi^-$ is the trivial representation of the trivial group.

We now define the set $\Pi_H$ of relevant inducing data. It is the set of triples $(I,P,\pi)$ such that the following conditions hold.
\begin{itemize}
    \item $I \in \zz_{\geq 0}^4$ with $I=(n_+,n_1,n_2,n_-)$.
    \item We have tuples of integers $\underline{n}(+)\in \zz_{\geq 1}^{m_+}$, $\underline{n}(1) \in \zz_{\geq 1}^{m_1}$, $\underline{n}'(1) \in \zz_{\geq 0}^{m_1}$, $\underline{n}(2)\in \zz_{\geq 1}^{m_2}$, $\underline{n}'(2) \in \zz_{\geq 0}^{m_2}$ and $\underline{n}(-)\in \zz_{\geq 1}^{m_-}$ such that $\sum \underline{n}(+,i)=n_+$, $\sum \underline{n}(1,i)=n_1$, $\sum \underline{n}(2,i)=n_2$ and $\sum \underline{n}(-,i)=n_-$ and $P$ is the standard parabolic subgroup of $G$ such that
    \begin{equation}
        \label{eq:M_P}
        \underline{n}(P)=\left( (\underline{n}(+),\underline{n}(1),\underline{n}'(2),\underline{n}(-)),(\underline{n}(+),\underline{n}'(1),\underline{n}(2),\underline{n}(-)) \right).
    \end{equation}
    
    \item $\pi \in \Pi_\disc(M_P)$ is a discrete automorphic representation (with trivial central character on $A_P^\infty$) such that, with respect to \eqref{eq:M_P}, $\pi=\pi_n \boxtimes \pi_{n+1}$ decomposes as
     \begin{align}
    \label{eq:pi_n}
        \pi_n&=\boxtimes_{i=1}^{m_+} \pi_{+,i} \boxtimes_{i=1}^{m_1} \pi_{1,i} \boxtimes_{i=1}^{m_2} \pi_{2,i}^{-,\vee} \boxtimes_{i=1}^{m_-} \pi_{-,i}, \\
    \label{eq:pi_n+1}
         \pi_{n+1}&=\boxtimes_{i=1}^{m_+} \pi_{+,i}^\vee \boxtimes_{i=1}^{m_1} \pi_{1,i}^{-,\vee} \boxtimes_{i=1}^{m_2} \pi_{2,i} \boxtimes_{i=1}^{m_-} \pi_{-,i}^\vee.
    \end{align}    
\end{itemize} 
In this situation, for $1 \leq i \leq m_1$ we can write for $\pi_{1,i}=\Speh(\sigma_{1,i},\underline{d}(1,i))$, for some representation $\sigma_{1,i} \in \Pi_\cusp(\GL_{\underline{r}(1,i)})$ and $\underline{d}(1,i)$ and $\underline{r}(1,i)$ some positive integers. Similarly, for $1 \leq i \leq m_2$ we have $\pi_{2,i}=\Speh(\sigma_{2,i},\underline{d}(2,i))$. In particular, we have for $i \in \{1,2\}$ and $1 \leq j \leq m_i$ the formulae $\underline{n}(i,j)=\underline{d}(i,j)\underline{r}(i,j)$ and $\underline{n}'(i,j)=(\underline{d}(i,j)-1)\underline{r}(i,j)$.

Let $(I,P,\pi) \in \Pi_H$. With the choices of coordinates made in Section~\ref{chap:poles}, 
$\fa_{P}^*$ is realized as a subspace
\begin{equation}
\label{eq:a_P_explicit coordinates}
    \fa_{P}^* \subset \left( \rr^{m_+} \times \rr^{m_1} \times \rr^{m_2} \times \rr^{m_-} \right) \times \left( \rr^{m_+} \times \rr^{m_1} \times \rr^{m_2} \times \rr^{m_-} \right).
\end{equation}
A similar decomposition holds for $\fa_{P,\cc}^*$. If $\lambda \in \fa_{P}^*$, we write 
\begin{equation}
\label{eq:notation_P}
    \lambda=\left( (\lambda(+)_n, \lambda(1)_n, \lambda(2)_n, \lambda(-)_n),(\lambda(+)_{n+1}, \lambda(1)_{n+1}, \lambda(2)_{n+1}, \lambda(-)_{n+1}) \right)
\end{equation}
according to this decomposition. Note that we have $\lambda(2)_{n,i}=0$ if $\underline{d}(2,i)=1$, and $\lambda(1)_{n+1,i}=0$ if $\underline{d}(1,i)=1$. Let $\underline{\rho}_\pi$ be the element of $\fa_P^*$ defined as
\begin{equation}
    \label{eq:underline_rho_defi} \underline{\rho}_\pi:=\left(\left(\underbrace{\frac{1}{4},\hdots,\frac{1}{4}}_{m_+},\underbrace{0,\hdots,0}_{m_1+m_2},\underbrace{-\frac{1}{4},\hdots,-\frac{1}{4}}_{m_-}\right),\left(\underbrace{\frac{1}{4},\hdots,\frac{1}{4}}_{m_+},\underbrace{0,\hdots,0}_{m_1+m_2},\underbrace{-\frac{1}{4},\hdots,-\frac{1}{4}}_{m_-}\right)\right) \in \fa_P^*.
\end{equation}

We define the anti-diagonal subspace $\fa_\pi^* \subset \fa_P^*$ to be
\begin{equation}
\label{eq:a_pi_defi}
    \fa_{\pi}^*=\left\{ \lambda \in \fa_{P}^* \; \middle| \; \begin{array}{ll}
        \lambda(+)_n=-\lambda(+)_{n+1}, &  \\
        \lambda(1)_{n,i}=-\lambda(1)_{n+1,i}, & 1 \leq i \leq m_1, \; \text{if } \underline{d}(1,i) \neq 1, \\
        \lambda(2)_{n,i}=-\lambda(2)_{n+1,i}, & 1 \leq i \leq m_2, \; \text{if } \underline{d}(2,i) \neq 1, \\
         \lambda(-)_n=-\lambda(-)_{n+1}, & 
    \end{array} \right\}.
\end{equation}
We have an isomorphism 
\begin{equation}
\label{eq:coord_a_pi}
    \lambda \in \fa_{\pi}^* \mapsto \left( \lambda(+),\lambda(1), \lambda(2),\lambda(-) \right):=\left( \lambda(+)_n,\lambda(1)_n, \lambda(2)_{n+1},\lambda(-)_n \right) \in \rr^{m_+} \times \rr^{m_1} \times \rr^{m_2} \times \rr^{m_-}
\end{equation}
We also have an anti-diagonal subspace $\fa_{\pi,\cc}^* \subset \fa_{P,\cc}^*$ defined by the same equations. Note that $i\fa_{\pi}^{*}$ is exactly the subspace of $\lambda \in \fa_{P,\cc}^*$ such that $(P,\pi_\lambda) \in \Pi_H$ if we lift the requirement that the central character is trivial on $A_P^\infty$, and ask that it is unitary instead.

\begin{rem}
    The set of relevant inducing pairs $\Pi_H$ introduced in \cite[Section~5.1]{BoiZ} corresponds to the special case where $n_+=n_-=0$. We limited ourselves to this setting in \cite{BoiZ} as our goal there was to prove the non-tempered Gan--Gross--Prasad conjecture from \cite{GGP2}. 
\end{rem}

\subsubsection{The regularized Rankin--Selberg period}

Let $(I,P,\pi) \in \Pi_H$. Let $\sigma_\pi$ and $\nu_\pi$ be respectively the cuspidal automorphic representation of $M_{P_\pi}$ and the element of $\fa_{P_\pi}^*$ such that $\cA_{P,\pi}(G)$ is obtained by taking residues of Eisenstein series on the induction $\cA_{P_\pi,\sigma_\pi,-\nu_\pi}(G)$ (see \S\ref{subsubsec:residual_blocks}). For $\lambda \in \fa_{P_\pi,\cc}^*$ in general position, we have the global Zeta function $Z_{\sigma_\pi}(\cdot,\lambda)$ from \S\ref{subsec:RS_and_Zeta}. By Proposition~\ref{prop:P=Z}, we know that it is equal to the regularized period $\cP(\cdot,\lambda)$. Moreover, it has the Euler product expansion from \eqref{eq:Zeta_facto_global} which involves a global $L$-factor $ L\left(\lambda+\frac{1}{2},\sigma_{\pi,n} \times \sigma_{\pi,n+1}\right)$. We can identify $\fa_{\pi,\cc}^*-\nu_\pi-\underline{\rho}_\pi$ as an affine subspace of $\fa_{P_\pi,\cc}^*$. By Theorem~\ref{thm:L}, we know that all the singularities of $Z_{\sigma_\pi}$ that contain this subspace are affine hyperplanes coming from singularities of $ L\left(\lambda+\frac{1}{2},\sigma_{\pi,n} \times \sigma_{\pi,n+1}\right)$. We can consider the residue $\Res \; Z_{\sigma_\pi}(\cdot,\lambda)$ defined by multiplying by all the corresponding affine linear forms (normalized as in Proposition~\ref{prop:order_residues}) and evaluating on $\fa_{\pi,\cc}^*-\nu_\pi-\underline{\rho}_\pi$. This a priori depends on the chosen order. We now state a slight generalization of \cite[Theorem~5.2]{BoiZ}.

\begin{theorem}
    \label{thm:P_pi_quotient}
    For every $\phi \in \cA_{P_\pi,\sigma_\pi}(G)$, the residue $\Res \; Z_{\sigma_\pi}(\phi,\lambda)$ is well-defined and independent on the order. For $\lambda \in \fa_{\pi,\cc}^*-\underline{\rho}_\pi$ in general position, the linear form $\phi \mapsto \Res \; Z_{\sigma_\pi}(\phi,\mu)$ factors through the quotient $\cA_{P_\pi,\sigma_\pi,\mu}(G) \twoheadrightarrow \cA_{P,\pi,\lambda}(G)$, where $\mu=\lambda-\nu_\pi$.
\end{theorem}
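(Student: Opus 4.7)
The plan is to reduce the theorem to a direct application of Proposition~\ref{prop:order_residues} by an explicit identification of the singular hyperplanes.

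First I would expand $\sigma_\pi$ into its cuspidal constituents using the Speh parametrizations $\pi_{\star,i}=\Speh(\sigma_{\star,i},\underline{d}(\star,i))$ coming from \eqref{eq:pi_n}--\eqref{eq:pi_n+1}, and enumerate the singular hyperplanes of $Z_{\sigma_\pi}(\phi,\mu)$ via the Euler product \eqref{eq:Zeta_facto_global} together with Theorem~\ref{thm:L}. The shift $\underline{\rho}_\pi$ of \eqref{eq:underline_rho_defi} is designed precisely so that, after substituting $\mu=\lambda-\nu_\pi$ with $\lambda\in\fa_{\pi,\cc}^*-\underline{\rho}_\pi$, the hyperplanes containing $\fa_{\pi,\cc}^*-\underline{\rho}_\pi-\nu_\pi$ form exactly two families of affine root hyperplanes of the form $\{\Lambda_{+,l}(\mu)=\mu_{n,i_{+,l}}+\mu_{n+1,j_{+,l}}+\frac{1}{2}=0\}$ and $\{\Lambda_{-,l}(\mu)=\mu_{n,i_{-,l}}+\mu_{n+1,j_{-,l}}-\frac{1}{2}=0\}$, with matchings of cuspidal data dictated by \eqref{eq:pi_n}--\eqref{eq:pi_n+1}: each $\sigma_{+,i}$ on the $n$-side pairs with $\sigma_{+,i}^\vee$ on the $(n+1)$-side, $\sigma_{-,i}$ with $\sigma_{-,i}^\vee$, and the matching cuspidal data inside the $1$ and $2$ blocks are split between the two families according to the sign of the Speh-segment shift. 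The denominator $b(\mu,\sigma_\pi)$ of \eqref{eq:Zeta_facto_global} is easily seen to remain holomorphic along each of these hyperplanes, and the combinatorial conditions built into the definition of $\Pi_H$ guarantee that these are the only singularities of $Z_{\sigma_\pi}$ meeting the affine subspace in question.

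Once this identification is in hand, Proposition~\ref{prop:order_residues} immediately gives the first assertion: the iterated residue is independent of the order in which it is taken and equals $(-1)^{m_+}\cP^Q(\phi,\mu,w)$ for an explicit $Q\in\cF_{\RS}$ whose standard Levi has the shape \eqref{eq:explicit_levi} and an explicit $w\in W(P_\pi;Q^{\std})$.

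For the factorization through the quotient $E^{P,*}(\cdot,\mu):\cA_{P_\pi,\sigma_\pi,\mu}(G)\twoheadrightarrow\cA_{P,\pi,\lambda}(G)$, I would check that the Weyl element $w$ admits a factorization $w=w'w_\pi^*$, with $w_\pi^*$ the longest element of $W^P(P_\pi,P_\pi)$ from \S\ref{subsubsection:constant_terms_discrete}. The functional equation \eqref{eq:M_equation} then yields $M(w,\mu)\phi=M(w',w_\pi^*\mu)\,M(w_\pi^*,\mu)\phi$; since by \eqref{eq:PQE} the period $\cP^Q(\phi,\mu,w)$ depends on $\phi$ only through $M(w,\mu)\phi_{P_w}=M(w,\mu)\phi$, renormalizing by $L_{\pi,\mathrm{res}}$ shows it depends only on $M^*(w_\pi^*,\mu)\phi$. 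By Lemma~\ref{lem:contant_term}, the latter is the $P_\pi$-constant term of $E^{P,*}(\phi,\mu)$, whose vanishing characterizes the kernel of the quotient, giving the required factorization. The main obstacle will be the combinatorial bookkeeping matching the Speh segments of the blocks of \eqref{eq:pi_n}--\eqref{eq:pi_n+1} with the index data $(i_{\pm,l},j_{\pm,l})$ of Proposition~\ref{prop:order_residues}, and verifying the factorization $w=w'w_\pi^*$ together with the regularity of $M(w',w_\pi^*\mu)$ at $\mu=\lambda-\nu_\pi$ for $\lambda$ in general position (which will follow from Proposition~\ref{prop:M_regular}); the argument closely follows the blueprint of \cite[Theorem~5.2]{BoiZ}, which settled the sub-case $n_+=n_-=0$, the novelty being that the $\pm$ blocks contribute additional transpositions which group cleanly at the two ends of $w$ and combine with $w_\pi^*$ in a controlled way.
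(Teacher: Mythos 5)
Your approach has a genuine gap in the first step: the set of singular hyperplanes of $Z_{\sigma_\pi}$ containing $\fa_{\pi,\cc}^*-\underline{\rho}_\pi-\nu_\pi$ cannot be organized into a single family $\{\Lambda_{\pm,l}\}$ satisfying the distinctness hypotheses of Proposition~\ref{prop:order_residues}, so that proposition cannot be invoked in one shot. Concretely, for each $\pi_{+,i}=\Speh(\sigma_{+,i},\underline{d}(+,i))$ the Euler product gives $\underline{d}(+,i)^2$ factors $L\bigl(\tfrac12+\mu(+,i)_{n,a}+\mu(+,i)_{n+1,b},\sigma_{+,i}\times\sigma_{+,i}^\vee\bigr)$, and after shifting by $\underline{\rho}_\pi$ and $\nu_\pi$ exactly $2\underline{d}(+,i)-1$ of these have a pole on the ambient affine subspace, not $\underline{d}(+,i)$. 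Since there are only $\underline{d}(+,i)$ available $n$-side indices $a$ (and $\underline{d}(+,i)$ indices $b$), you cannot pick $2\underline{d}(+,i)-1$ of these hyperplanes with all $a$'s and all $b$'s distinct. Already in the minimal case $\Speh(\sigma,2)$ on the $+$-block, the three hyperplanes $\mu_1^n+\mu_2^{n+1}=-\tfrac12$, $\mu_2^n+\mu_1^{n+1}=-\tfrac12$, $\mu_1^n+\mu_1^{n+1}=\tfrac12$ all pass through the subspace, and no choice of two of them plus the third satisfies the index-disjointness that Proposition~\ref{prop:order_residues} requires. So Proposition~\ref{prop:order_residues} does \emph{not} immediately give the iterated residue or its independence of order.

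The paper resolves this by splitting the residues into two waves. A first subset $\cL$ of the singular hyperplanes (roughly $\underline{d}(\pm,i)$ per block) \emph{does} satisfy the hypotheses of Proposition~\ref{prop:order_residues}, and after taking $\Res_\cL$ one lands on an affine subspace $\cH$. The remaining hyperplanes $\cL'$, once restricted to $\cH$, are no longer of Rankin--Selberg type at all: using the constraints from $\cH$ they can be rewritten as one-sided conditions of the form $\mu(+,i)_{n,j}-\mu(+,i)_{n,j+1}=1$, which are precisely the Speh hyperplanes defining $L_{\pi,\mathrm{res}}$. Taking the residues along these one-sided conditions is what produces the regularized intertwining operator $M^*(w_{\pi_+,n}^* w_{\pi_-,n}^*,\lambda)$, and reversing the order of $\cL$ gives $M^*(w_{\pi_+,n+1}^* w_{\pi_-,n+1}^*,\lambda)$ instead; together with parabolic descent to input \cite[Theorem~5.2]{BoiZ} for the $1,2$ blocks, this yields the factorization through the quotient. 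Your proposed factorization $w=w'w_\pi^*$ of the single Weyl element coming from Proposition~\ref{prop:order_residues} does not capture this: the element $w$ that emerges from the $\cL$-residues is only a one-sided permutation (e.g.\ $(\mathrm{swap},1)$, not $w_\pi^*=(\mathrm{swap},\mathrm{swap})$), and the $M^*(w_\pi^*,\cdot)$ contribution must come from the second wave of residues, not from decomposing $w$. You would need to incorporate this two-wave structure (and the $\cL'\mapsto$ Speh-hyperplane identification on $\cH$) to make the argument go through.
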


The linear form thus obtained on $\cA_{P,\pi,\lambda}(G)$ will be denoted by $\cP_\pi(\cdot,\lambda)$. It is $H(\bA)$-invariant.

\begin{proof}
    That the residue is independent of the order of affine linear forms follows from the same argument as \cite[Lemma~5.1]{BoiZ}. If $n_+=n_-=0$, the rest of Theorem~\ref{thm:P_pi_quotient} is \cite[Theorem~5.2]{BoiZ}. For the general case, we proceed as follows. We have a natural identification of $\fa_{P_\pi,\cc}^*$ with 
     \begin{equation*}
       \left(\prod_{i=1}^{m_+} \cc^{\underline{d}(+,i)} \prod_{i=1}^{m_1} \cc^{\underline{d}(1,i)}  \prod_{i=1}^{m_2} \cc^{\underline{d}(2,i)-1}  \prod_{i=1}^{m_-} \cc^{\underline{d}(-,i)} \right) \times \left(\prod_{i=1}^{m_+} \cc^{\underline{d}(+,i)}  \prod_{i=1}^{m_1} \cc^{\underline{d}(1,i)-1}  \prod_{i=1}^{m_2} \cc^{\underline{d}(2,i)}  \prod_{i=1}^{m_-} \cc^{\underline{d}(-,i)}\right).
    \end{equation*}
    We write the coordinates of any $\lambda \in \fa_{P_\pi,\cc}^*$ with respect to these identifications. More precisely, if $\lambda=(\lambda_n,\lambda_{n+1}) \in \fa_{P_\pi,\cc}^*$ we write $\lambda_n$ as
    \begin{equation*}
        ( \lambda(+,1)_n, \hdots, \lambda(+,m_+)_n,\lambda(1,1)_n, \hdots,\lambda(1,m_1)_n, \lambda(2,1)_n, \hdots ,\lambda(2,m_2)_n, \lambda(-,1)_n, \hdots, \lambda(-,m_-)_n )
    \end{equation*}
    where for example, $\lambda(+,1)_n \in \cc^{\underline{d}(+,1)}$ with coordinates $\lambda(+,1)_{n,1}, \hdots ,\lambda(+,1)_{n,\underline{d}(+,1)}$. We now consider the set $\cL$ of affine linear forms on $\fa_{P_\pi,\cc}^*$ defined as 
     \begin{equation*}
        \left\{
            \begin{array}{ll}
                \Lambda(+,i,j)(\lambda)=-(\lambda(+,i)_{n,j}+\lambda(+,i)_{n+1,\underline{d}(+,i)-j+1}+1/2), &  \quad \left\{\begin{array}{ll}
                    1 \leq i \leq m_+, \\
                    1 \leq j \leq \underline{d}(+,i),
                \end{array}\right.  \\
                \Lambda(-,i,j)(\lambda)=\lambda(-,i)_{n,j}+\lambda(-,i)_{n+1,\underline{d}(-,i)-j+1}-1/2, &  \quad \left\{\begin{array}{ll}
                    1 \leq i \leq m_-,  \\
                    1 \leq j \leq \underline{d}(-,i).
                \end{array}\right.  
            \end{array}
        \right.
    \end{equation*}
    We also have the set $\cL'$ of linear forms defined by the following equations.
    \begin{equation*}
        \left\{
            \begin{array}{ll}
                \Lambda'(+,i,j)(\lambda)=\lambda(+,i)_{n,j}+\lambda(+,i)_{n+1,\underline{d}(+,i)-j}-1/2, &  \quad \left\{\begin{array}{ll}
                    1 \leq i \leq m_+,  \\
                    1 \leq j \leq \underline{d}(+,i)-1,
                \end{array}\right.    \\
                \Lambda'(-,i,j)(\lambda)=-(\lambda(-,i)_{n,j}+\lambda(-,i)_{n+1,\underline{d}(-,i)-j+2}+1/2), &  \quad \left\{\begin{array}{ll}
                    1 \leq i \leq m_-,  \\
                    2 \leq j \leq \underline{d}(-,i).
                \end{array}\right.   
            \end{array}
        \right.
    \end{equation*}
    All these linear forms direct singular affine hyperplanes of $Z_{\sigma_\pi}$. If we write $\cH$ and $\cH'$ for their respective intersections, then $\fa_{\pi,\cc}^*-\nu_\pi-\underline{\rho}_\pi \subset \cH \cap \cH'$. We now order the set $\cL$ by  
      \begin{equation}
        \label{eq:order_1}
        \left( \xleftarrow[i=m_-]{1} \xleftarrow[j=1]{\underline{d}(-,i)} \Lambda(-,i,j) \right)  \leftarrow
         \left( \xleftarrow[i=1]{m_+} \xleftarrow[j=\underline{d}(+,i)]{1} \Lambda(+,i,j) \right),
    \end{equation}
     where for example the notation $\xleftarrow[i=1]{m_+} \xleftarrow[j=\underline{d}(+,i)]{1} \Lambda(+,i,j)$ means
    \begin{equation*}
     \Lambda(+,m_+,1) \leftarrow \hdots \leftarrow \Lambda(+,2,\underline{d}(+,2)) \leftarrow \Lambda(+,1,1) \leftarrow \hdots \leftarrow \Lambda(+,1,\underline{d}(+,1)).
    \end{equation*}

    Let $Q^\std$ be the standard parabolic subgroup of $\GL_{n+1}$ with standard Levi of the form
    \begin{equation*}
        \prod_{i=1}^{m_+} \GL_{\underline{r}(+,i)}^{\underline{d}(+,i)} \times \GL_{r+1} \times \prod_{i=1}^{m_-} \GL_{\underline{r}(-,i)}^{\underline{d}(-,i)}.
    \end{equation*}
    Let $Q$ be the Rankin--Selberg parabolic subgroup of $G$ associated to the pair $(Q^\std,\sum_{i=1}^{m_+} \underline{d}(+,i))$ under the bijection of Proposition~\ref{prop:param_RS}. Moreover, set $\pi_+=\boxtimes \pi_{+,i}$ and $\pi_-=\boxtimes \pi_{-,i}$. They are representations of standard Levi subgroups of $\GL_{n_+}$ and $\GL_{n_-}$. For $m \in \{n,n+1\}$, we have the elements $w_{\pi_+,m}^*$ and $w_{\pi_-,m}^*$, which we identify as elements of $\GL_{n_+}$ and $\GL_{n_-}$ embedded in $M_{P,m}$. Then by applying Proposition~\ref{prop:order_residues}, we see that if we take the residues in the order prescribed above we have for $\lambda \in \cH$ in general position
    \begin{equation*}
        \Res_{\cL} \; Z_{\sigma_\pi}(\phi,\lambda)=\cP^{Q}(\phi,\lambda,w_{\pi_+,n}^* w_{\pi_-,n}^*).
    \end{equation*}
    Note that, when restricted to $\cH$, the linear forms in $\cL'$ can be written, for $i$ and $j$ in the suitable range, as
     \begin{equation}
    \label{eq:L'_alternative}
        \left\{
            \begin{array}{l}
                \Lambda'(+,i,j)(\lambda)=\lambda(+,i)_{n,j}-\lambda(+,i)_{n,j+1}-1 =\lambda(+,i)_{n+1,\underline{d}(+,i)-j}-\lambda(+,i)_{n+1,\underline{d}(+,i)-j+1}-1 \\
                \Lambda'(-,i,j)(\lambda)=\lambda(-,i)_{n,j-1}-\lambda(-,i)_{n,j}+1 =\lambda(-,i)_{n+1,\underline{d}(-,i)-j+1}-\lambda(-,i)_{n+1,\underline{d}(-,i)-j+2}+1.
            \end{array}
        \right.
    \end{equation}
    It follows as in \cite[Lemma~5.5]{BoiZ} that we have for $\lambda \in \cH \cap \cH'$ in general position
    \begin{equation*}
        \Res_{\cL'} \Res_{\cL} \; Z_{\sigma_\pi}(\phi,\lambda)=\cP^{Q}(M^*(w_{\pi_+,n}^* w_{\pi_-,n}^*,\lambda)\phi,w_{\pi_+,n}^* w_{\pi_-,n}^* \lambda).
    \end{equation*}
    By reversing the order in \eqref{eq:order_1}, we see that the same relation holds with $M^*(w_{\pi_+,n+1}^* w_{\pi_-,n+1}^*,\lambda)$ instead. By parabolic descent (Proposition~\ref{prop:parabolic_descent}), we can now input the result of \cite[Theorem~5.2]{BoiZ}. This shows that the residue factors through $M^*(w_{\pi,n}^*,\lambda)$ and $M^*(w_{\pi,n+1}^* ,\lambda)$ for $\lambda \in \fa_{\pi,\cc}^*-\nu_\pi-\underline{\rho}_\pi$. But we know by \cite[Lemma~3.2]{BoiZ} that these operators realize the global quotient maps, which concludes the proof.
\end{proof}

    \subsubsection{The residue-free construction}
    \label{subsubsec:residue_free}
    We now explain an alternative construction of $\cP_{\pi}$ without residues. The idea is to realize $\cA_{P,\pi}(G)$ as a subrepresentation of some parabolic induction from the cuspidal spectrum rather than as a quotient. It is more suited to study the analytic properties of $\cP_\pi$ which we will prove in \S\ref{subsec:analytic_period} below.

    For $i \in \{1,2\}$, let $\underline{n}^-(i) \in \zz_{\geq 0}^{2m_i}$ be the tuple such that $\underline{n}^-(i,2j-1)=(\underline{d}(i,j)-1)\underline{r}(i,j)$ and $\underline{n}^-(i,2j)=\underline{r}(i,j)$ for $1 \leq j \leq m_i$. Let $P_{\pi,+}$ be the standard parabolic subgroup of $G$ such that
    \begin{equation*}
        \underline{n}(P_{\pi,+})=\left( (\underline{n}(+),\underline{n}^-(1),\underline{n}'(2),\underline{n}(-)),(\underline{n}(+),\underline{n}'(1),\underline{n}^-(2),\underline{n}(-)) \right).
    \end{equation*}
    Then $P_\pi \subset P_{\pi,+} \subset P$. Let $w_+ \in W(P_{\pi,+})$ be the shortest element such that, if we set $Q_{\pi,+}=w_+ P_{\pi,+}$, then
    \begin{equation*}
         \underline{n}(Q_{\pi,+})=\left( (\underline{n}(+),\underline{n}'(1),\underline{n}'(2),\underline{r}(1),\underline{n}(-)),(\underline{n}(+),\underline{n}'(1),\underline{n}'(2),\underline{r}(2),\underline{n}(-)) \right).
    \end{equation*}
    In words, on the $\GL_n$ component $w_+$ sends the last $\GL_{\underline{r}(1,i)}$ block in each product $\GL_{(\underline{d}(1,i)-1)\underline{r}(1,i)}\times \GL_{\underline{r}(1,i)}$ after the product $\prod_{i=1}^{m_2} \GL_{(\underline{d}(2,i)-1)\underline{r}(2,i)}$, while preserving the order. The description on the $\GL_{n+1}$ component is the same.
    
    Let $P_{+,n+1}^\std$ be the standard parabolic subgroup of $\GL_{n+1}$ such that 
     \begin{equation}
        \label{eq:M_P_+}
         \underline{n}(P_{+,n+1}^\std)=\left( \underline{n}(+),\underline{n}'(1),\underline{n}'(2),k+1,\underline{n}(-) \right).
    \end{equation}
    where we set $k+1=\sum_i \underline{r}(2,i)$. Let $P_+$ be the Rankin--Selberg parabolic subgroup of $G$ corresponding to the pair $(P^\std_{+,n+1},m_++m_1+m_2+1)$ under the bijection of Proposition~\ref{prop:param_RS}. Here we allow for blocks of size zero, so that the $(m_++m_1+m_2+1)^{\text{th}}$ block is $\GL_{k+1}$. Note that $w_{+}\in {}_{P^\std_+} W_P$ and that $P_\pi \subset P_{w_{+}}=P_{\pi,+}$. 
    
    We now consider the regularized period $\lambda \in \fa_{P,\cc}^* \mapsto \cP^{P_+}(\varphi,\lambda,w_{+})$ which is well-defined for $\lambda \in \fa_{P,\cc}^*$ in general position. In fact, it is also meromorphic for $\lambda$ in the smaller subspace $\fa_{\pi,\cc}^*-\underline{\rho}_\pi$ as shown in the following proposition.

    \begin{prop}
    \label{prop:alternative_construction}
        Let $\varphi \in \cA_{P,\pi}(G)$. The map
        \begin{equation}
        \label{eq:constant_term_construction}
            \lambda \in \fa_{\pi,\cc}^*-\underline{\rho}_\pi \mapsto \cP^{P_+}\left(\varphi,\lambda,w_{+}\right)
        \end{equation}
        is a well-defined meromorphic function on $\fa_{\pi,\cc}^*-\underline{\rho}_\pi$. Moreover, for $\lambda$ in general position we have
        \begin{equation}
        \label{eq:two_defi_coincide}
            \cP^{P_+}\left(\varphi,\lambda,w_{+}\right)=\cP_{\pi}(\varphi,\lambda).
        \end{equation}
    \end{prop}

    \begin{proof}
        This is the same proof as \cite[Proposition~5.9]{BoiZ}, taking into account the additional affine linear forms appearing in the proof of Theorem~\ref{thm:P_pi_quotient}.
    \end{proof}

    \subsubsection{Functional equations of \texorpdfstring{$\cP_\pi$}{the period}}
    \label{sec:functional_equation}

    Note that any $w \in W(P)$ can be identified with a couple $(w_n,w_{n+1}) \in \fS(m_++m_1+m_2+m_-)^2$, where we recall that we allow blocks of size zero in the case $\underline{d}(1,i)=1$ or $\underline{d}(2,j)=1$. We define $W(\pi)$ to be the set of $w \in W(P)$ which belong to the subgroup $(\fS(m_+) \times \fS(m_1+m_2) \times \fS(m_-))^2$ and which satisfy
    \begin{equation}
    \label{eq:W_Delta_defi}
        w_n=w_{n+1}=(\sigma_+,\sigma,\sigma_-), \; \sigma_+ \in \fS(m_+), \; \sigma \in \fS(m_1) \times \fS(m_2) \subset \fS(m_1+m_2), \; \sigma_- \in \fS(m_-).
    \end{equation}
    Therefore, with the definitions in \S\ref{subsubsec:relevant_inducing}, $W(\pi)$ is exactly the subset of $w \in W(P)$ such that $(I,w. P,w \pi) \in \Pi_H$. Moreover, we have $w(\fa_{\pi,\cc}^*-\underline{\rho}_\pi)=\fa_{w \pi,\cc}^*-\underline{\rho}_{w \pi}$.  

    To any element $((\sigma_+,\sigma,\sigma_-),(\sigma_+,\sigma,\sigma_-)) \in (\fS(m_+) \times \fS(m_1+m_2) \times \fS(m_-))^2$ we may associate $w_{\bfM} \in W(M_{P^\std_+})$ which acts by blocks on $M_{P_+}^\std$ in a natural way on \eqref{eq:M_P_+} by stabilizing the $\GL_k$ and $\GL_{k+1}$ blocks on the $\GL_n$ and $\GL_{n+1}$ components respectively. We denote by $W_{\bfM}(\pi)$ the subset of $w$ that arise this way, identified with a subset of $W(P_+^\std) \subset W$. By conjugating by $w_{P_+}^\std$, we can identify $w_{\bfM}$ with an element of $\bfM_{P_+}^2$, which we still denote by $w_{\bfM}$. We then have the Rankin--Selberg parabolic subgroup $w_{\bfM}.P_+$. Moreover, the standard parabolic subgroup $(w_+. P_{\pi,+}) \cap \cM_{P_+}^\std$ has standard Levi $\prod_{i=1}^{m_1} \GL_{\underline{r}(1,i)} \times \prod_{i=1}^{m_2} \GL_{\underline{r}(2,i)}$. Write $W_{\cM}(\pi)$ for $W((w_+. P_{\pi,+}) \cap \cM_{P_+}^\std)$ of elements that act by blocks on it. It is isomorphic to $\fS(m_1) \times \fS(m_2)$. If we identify it with a subgroup of $W$, its elements commute with $W_{\bfM}(\pi)$. 

    The functional equations satisfied by $\cP_{\pi}$ are summarized in the following proposition.

    \begin{lem}
        \label{lem:functional_equation_1}
            Let $\varphi \in \cA_{P,\pi}(G)$. Let $w_\bfM \in W_{\bfM}(\pi)$, let $w_\cM \in W_{\cM}(\pi)$. Then we have for $\lambda \in \fa_{\pi,\cc}^*-\underline{\rho}_\pi$ in general position
            \begin{equation}
            \label{eq:functional_equation}
                \cP_{\pi}(\varphi,\lambda)=\cP^{w_{\bfM}.P_+}\left(M(w_\bfM w_\cM w_{+},\lambda)\varphi_{P_{\pi,+}},w_\bfM w_\cM w_{+} \lambda\right).
            \end{equation}
        \end{lem}
    
        \begin{proof}
            It is easy to show using the same arguments as in the proof of \cite[Lemma~5.5]{BoiZ} that the intertwining operator and the regularized period in the RHS of \eqref{eq:functional_equation} are well-defined for $\lambda \in \fa_{\pi,\cc}^*-\underline{\rho}_\pi$, so that it yields a meromorphic function in $\lambda$. By the functional equation of Eisenstein series from \cite[Theorem~2.3.4]{BL}, we have for $\lambda \in \fa_{P,\cc}^*$ in general position the equality
            \begin{equation*}
                E^{P_+^\std}(M(w_{+},\lambda)\varphi_{P_{\pi,+}},w_{+}\lambda)=E^{P_+^\std}(M(w_{\cM} w_{+},\lambda)\varphi_{P_{\pi,+}},w_{\cM}w_{+}\lambda).
            \end{equation*}
            Let us now assume that $\lambda \in i\fa_{\pi}^*$. By Proposition~\ref{prop:parabolic_descent}, we know that $\cP^{P_+}$ decomposes as a product of a (bilinear) inner-product on $\bfM_{P_+}^2$ and a Zeta function on $\cM_{P_+}$. For the former, we have the have the unitarity of global intertwining operators on the unitary axis from \cite[Theorem~7.2]{Art05}, and for the latter we have the functional equations of Eisenstein series. Using the expression of $\cP_{\pi}(\varphi,\lambda)$ from Proposition~\ref{prop:alternative_construction}, we see that
            \begin{equation*}
                 \cP_{\pi}(\varphi,\lambda)=\cP^{w_{\bfM}.P_+}\left(\left(M(w_\bfM,w_\cM w_{+}\lambda)E^{P_+^\std}(M(w_{\cM} w_{+},\lambda)\varphi_{P_{\pi,+}},w_{\cM}w_{+}\lambda)\right)(w_{w_\bfM.P_+}^{\std,-1} \cdot)\right).
            \end{equation*}
            Note that here we use $w_{P_+}^\std=w_{w_\bfM.P_+}^\std$. We now conclude by Lemma~\ref{lem:ME=EM} that we can switch the intertwining operator with the partial Eisenstein series. This shows that \eqref{eq:functional_equation} holds for $\lambda \in i \fa_{\pi}^*-\underline{\rho}_\pi$, and we conclude for $\lambda$ in general position by analytic continuation.
        \end{proof}
    
    \begin{cor}
    \label{cor:independence_choice_couple}
        Let $w \in W(\pi)$. For $\varphi \in \cA_{P,\pi}(G)$ and $\lambda \in \fa_{\pi,\cc}^*-\underline{\rho}_\pi$ in general position we have
         \begin{equation}
    \label{eq:independence_choice_couple}
        \cP_{\pi}(\varphi,\lambda)=\cP_{w \pi}(M(w,\lambda) \varphi,w\lambda).
    \end{equation}
    \end{cor}

    \begin{proof}
       This follows from Proposition~\ref{prop:alternative_construction} and Lemma~\ref{lem:functional_equation_1}.
    \end{proof}   
    
   \subsubsection{Analytic properties of \texorpdfstring{$\cP_\pi$}{the period}}
   \label{subsec:analytic_period}
    We now investigate the singularities of the regularized period $\cP_{\pi}(\cdot,\lambda)$. Let $(P,\pi) \in \Pi_H$. Set 
    \begin{equation*}
        \pi_{1,2}=\left(\boxtimes_{i=1}^{m_1} \pi_{1,i} \boxtimes_{i=1}^{m_2} \pi_{2,i}^{-,\vee} \right)\boxtimes \left( \boxtimes_{i=1}^{m_1} \pi_{1,i}^{-,\vee} \boxtimes_{i=1}^{m_2} \pi_{2,i}  \right).
    \end{equation*}
    This is a discrete representation of a subgroup $M_{1,2}\subset M_P$. Moreover, if $\lambda \in \fa_{\pi,\cc}^*$, let $\lambda_{1,2}$ be its restriction to $\fa_{M_{1,2},\cc}^*$. In the coordinates of \eqref{eq:coord_a_pi}, we have the expression $\lambda_{1,2}=((\lambda(1),-\lambda(2)),(-\lambda(1),\lambda(2)))$. Let $L_{\pi,\cP}(\lambda)$ be the product of linear forms on $\fa_{\pi,\cc}^*-\nu_\pi$ defined by

    \begin{equation}
    \label{eq:f_pi_basic}
        L_{\pi,\cP}(\lambda)=\prod_{\substack{i,j\\  \pi_{1,i} \simeq \pi_{2,j}^{-,\vee}}} \left(\lambda(1)_i+\lambda(2)_j\right) \prod_{\substack{i,j \\  \pi_{1,i}^{-,\vee} \simeq \pi_{2,j}}} \left(\lambda(1)_i+\lambda(2)_j\right).
    \end{equation}
    It only depends on $\lambda_{1,2}$.

    \begin{prop}
    \label{prop:reg_P_pi}
        There exists $k>0$ such that for every level $J$ of $G$ there exists $c_J>0$ such that for every $J$-pair $(P,\pi) \in \Pi_H$ and every $\varphi \in \cA_{P,\pi}(G)$ the meromorphic function
        \begin{equation*}
            \lambda \in \fa_{\pi,\cc}^*-\underline{\rho}_\pi \mapsto L_{\pi,\cP}(\lambda)\cP_{\pi}(\varphi,\lambda)
        \end{equation*}
        is regular in the region $\lambda_{1,2} \in \cS_{\pi_{1,2},k,c_J}$ (see \eqref{eq:S_defi}).
    \end{prop}

    \begin{proof}
        Let $w \in W(\pi)$. By Proposition~\ref{prop:M_regular_blocks}, there exists $k$ and $c_J$ as in the proposition such that the operator $M(w,\lambda)$ is regular in the region $\cS_{\pi,k,c_J}$. Moreover, this region is stable by $w$. It follows from Corollary~\ref{cor:independence_choice_couple} that we can replace $\pi$ by $w \pi$. In particular, we can and will assume that $\underline{d}(1,1) \geq \hdots \geq \underline{d}(1,m_1)$ and $\underline{d}(2,1) \geq \hdots \geq \underline{d}(2,m_2)$. 

        By Proposition~\ref{prop:alternative_construction}, we have to study the poles of $\lambda \mapsto \cP^{P_+}(\varphi,\lambda,w_{+})$. To do this, we want to use Proposition~\ref{prop:reg_P}. By Lemma~\ref{lem:contant_term}, we have $\varphi_{P_{\pi,+}} \in \cA_{\pi_+,\nu_+}(G)$ where $\pi_+ \in \Pi_{\disc}(M_{P_{\pi,+}})$ and $\nu_+ \in \fa_{P_{\pi,+}}^*$. In coordinates, we have
        \begin{align}
            \nu_{+,n}&=\left(\underbrace{0,\hdots,0}_{m_+},\underbrace{-\frac{1}{2},\frac{\underline{d}(1,1)-1}{2},\hdots,-\frac{1}{2},\frac{\underline{d}(1,m_1)-1}{2}}_{m_1},\underbrace{0,\hdots,0}_{m_2}, \underbrace{0,\hdots,0}_{m_-}\right), \label{eq:nu_+_n} \\
            \nu_{+,n+1}&=\left(\underbrace{0,\hdots,0}_{m_+},\underbrace{0,\hdots,0}_{m_1},\underbrace{-\frac{1}{2},\frac{\underline{d}(2,1)-1}{2},\hdots,-\frac{1}{2},\frac{\underline{d}(2,m_2)-1}{2}}_{m_2}, \underbrace{0,\hdots,0}_{m_-}\right), \label{eq:nu_+_n+1}
        \end{align}
        Going back to the description of \S\ref{subsubsec:residue_free}, we see that 
        \begin{equation}
            \label{eq:w_+_positive}
            \left( \alpha \in \Delta_{P_{\pi,+}}, \quad w_+ \alpha<0 \right) \implies \langle \nu_+,\alpha^\vee \rangle \geq 0.
        \end{equation}
       Moreover, $w_+$ only acts on $M_{1,2}$. It follows that the condition $\lambda_{1,2} \in \cS_{\pi_{1,2},k,c_J}$ implies that $\lambda+\nu_+ \in \cR_{\pi,k,c_J}(w_+)$. We now compute $n_{\pi_+}(w_+,\lambda+\nu_+)=n_{\pi_+,n}(w_+,\lambda+\nu_+)n_{\pi_+,n+1}(w_+,\lambda+\nu_+)$. On the $\GL_n$ side, using the fact that $(I,P,\pi) \in \Pi_H$, it follows from \eqref{eq:nu_+_n} and the computations of Lemma~\ref{lem:n_regular} that
        \begin{align*}
            n_{\pi_+,n}(w_+,\lambda+\nu_+)=&\prod_{1 \leq i < j \leq m_1} \frac{L\left(\lambda(1)_i-\lambda(1)_j+\frac{\underline{d}(1,i)-\underline{d}(1,j)}{2}+1,\sigma_{1,i} \times \sigma_{1,j}^\vee \right)}{L\left(\lambda(1)_i-\lambda(1)_j+\frac{\underline{d}(1,i)+\underline{d}(1,j)}{2},\sigma_{1,i} \times \sigma_{1,j}^\vee \right)} \\
            & \times\prod_{\substack{1 \leq i \leq m_1 \\ 1 \leq j \leq m_2}} \frac{L\left(\lambda(1)_i+\lambda(2)_j+\frac{\underline{d}(1,i)-\underline{d}(2,j)+1}{2},\sigma_{1,i} \times \sigma_{2,j} \right)}{L\left(\lambda(1)_i+\lambda(2)_j+\frac{\underline{d}(1,i)+\underline{d}(2,j)-1}{2},\sigma_{1,i} \times \sigma_{2,j} \right)},
        \end{align*}
    and on $\GL_{n+1}$ we have
    \begin{equation*}
        n_{\pi_+,n}(w_+,\lambda+\nu_+)=\prod_{1 \leq i < j \leq m_2} \frac{L\left(\lambda(2)_i-\lambda(2)_j+\frac{\underline{d}(2,i)-\underline{d}(2,j)}{2}+1,\sigma_{2,i} \times \sigma_{2,j}^\vee \right)}{L\left(\lambda(2)_i-\lambda(2)_j+\frac{\underline{d}(2,i)+\underline{d}(2,j)}{2},\sigma_{2,i} \times \sigma_{2,j}^\vee \right)}.
    \end{equation*}
   By Theorem~\ref{thm:L}, we can localize the poles of these expressions. In some region $\lambda_{1,2} \in \cS_{\pi_{1,2},k,c_J}$, all the denominators are non-zero except $L\left(\lambda(1)_i+\lambda(2)_j+\frac{\underline{d}(1,i)+\underline{d}(2,j)-1}{2},\sigma_{1,i} \times \sigma_{2,j} \right)$ with $\underline{d}(1,i)=\underline{d}(2,j)=1$. However, this term is compensated by the corresponding numerator so that all the poles come from the numerators. Moreover, the possible pole of  $L\left(\lambda(1)_i+\lambda(2)_j+\frac{\underline{d}(1,i)-\underline{d}(2,j)+1}{2},\sigma_{1,i} \times \sigma_{2,j} \right)$ for $\underline{d}(1,i)=2$ and $\underline{d}(2,j)=1$, or for $\underline{d}(1,i)=1$ and $\underline{d}(2,j)=2$, is always compensated by a pole of the corresponding denominator. Finally we see that we can take
    \begin{align}
    \label{eq:pole_n_+}
        L_{\pi,w_+}(\lambda)=\prod_{\substack{i<j \\ \pi_{1,i} \simeq \pi_{1,j} }} (\lambda(1)_i-\lambda(1)_j )  &\prod_{\substack{i<j \\ \pi_{2,i} \simeq \pi_{2,j} }} (\lambda(2)_i-\lambda(2)_j ) \nonumber \\
         &\times \prod_{\substack{\pi_{1,i} \simeq \pi_{2,j}^{-,\vee} \\ \underline{d}(2,j) \neq 2}} (\lambda(1)_i+\lambda(2)_j )  \prod_{\substack{\pi_{1,i}^{-,\vee} \simeq \pi_{2,j} \\ \underline{d}(1,i) \neq 2}} (\lambda(1)_i+\lambda(2)_j ).
    \end{align}
    This product of affine linear forms controls the poles of $M(w_+,\lambda) \varphi_{P_{\pi,+}}$ in our region.

    On the other hand, with the notation of Proposition~\ref{prop:parabolic_descent}, we have
    \begin{equation*}
        (w_+ \nu_+)_{\cP}=\left( \left( \frac{\underline{d}(1,1)-1}{2},\hdots,\frac{\underline{d}(1,m_1)-1}{2}\right),\left(\frac{\underline{d}(2,1)-1}{2},\hdots,\frac{\underline{d}(2,m_2)-1}{2}\right) \right).
    \end{equation*}
    Because of our hypothesis on the $\underline{d}(1,i)$ and $\underline{d}(2,j)$, this elements belongs to $\overline{\fa_{\cP}^{*,+}}$. In particular, $\lambda_{1,2} \in \cS_{\pi_{1,2},k,c_J}$ implies that $(w_+(\lambda+\nu_+))_\cP \in \cR_{(w_+\pi_+)_\cP,k,c_J}$. With the notation of \eqref{eq:f_pi_Z_defi}, we see that $L_{(w_+ \pi_+)_{\cP},Z}(w_+(\lambda+\nu_+)_{\cP_+})$ is
    \begin{align}
        &\prod_{\sigma_{2,j}\simeq \sigma_{1,i}^\vee} \left(\lambda(1)_i+\lambda(2)_j+\frac{\underline{d}(1,i)+\underline{d}(2,j)- 1}{2}\right)\prod_{\sigma_{2,j}\simeq \sigma_{1,i}^\vee} \left(\lambda(1)_i+\lambda(2)_j+\frac{\underline{d}(1,i)+\underline{d}(2,j)- 3}{2}\right) \nonumber \\
        & \prod_{\substack{1 \leq i<j \leq m_1 \\ \sigma_{1,i} \simeq \sigma_{1,j}}}\left(\lambda(1)_i-\lambda(1)_j+\frac{\underline{d}(1,i)-\underline{d}(1,j)}{2}\right)^{-1}  
        \prod_{\substack{1 \leq i<j \leq m_2 \\ \sigma_{2,i} \simeq \sigma_{2,j}}}\left(\lambda(2)_i-\lambda(2)_j+\frac{\underline{d}(2,i)-\underline{d}(2,j)}{2}\right)^{-1}. \label{eq:L_pi_P}
    \end{align}
    The first term is always non-zero for $\lambda_{1,2} \in \cS_{\pi_{1,2},k,c_J}$. The second has the same zeros as 
    \begin{equation*}
        \prod_{\substack{\pi_{1,i} \simeq \pi_{2,j}^{-,\vee} \\ \underline{d}(2,j) = 2}} (\lambda(1)_i+\lambda(2)_j )  \prod_{\substack{\pi_{1,i}^{-,\vee} \simeq \pi_{2,j} \\ \underline{d}(1,i) = 2}} (\lambda(1)_i+\lambda(2)_j ).
    \end{equation*}
    Finally, the last two products compensate the first two terms in \eqref{eq:pole_n_+}. Putting everything together, we conclude that the result follows from Proposition~\ref{prop:reg_P}.
    \end{proof}

    We finally bound the regularized period.

    \begin{prop}
\label{prop:individual_bound_P_pi}
    There exists $k>0$ such that for any level $J$ and $C>0$ there exist $c_J>0$, $d>0$, $N>0$ and $X_1, \hdots, X_r \in \cU(\fg_\infty)$ such that for any $J$-pair $(P,\pi) \in \Pi_H$ and any $\varphi \in \cA_{P,\pi}(G)^J$ we have
    \begin{equation}
    \label{eq:optimal_bound_P_pi}
       \Val{L_{\pi,\cP}(\lambda)\cP_\pi(\varphi,\lambda)} \leq (1+\norm{\lambda}^2)^d \sum_{i=1}^r \norm{\varphi}_{-N,X_i},
    \end{equation}
    in the region
    \begin{equation}
    \label{eq:big_intersection_P_pi}
        \left\{ \lambda \in \fa_{\pi,\cc}^*-\underline{\rho}_\pi \; \middle| \; \lambda_{1,2} \in \cS_{\pi_{1,2},k,c_J}, \; \norm{\Re(\lambda)}<C \right\}.
    \end{equation}
\end{prop}

\begin{proof}
    Given all the explicit computations of Proposition~\ref{prop:reg_P_pi}, this is a direct consequence of Proposition~\ref{prop:individual_bound_P}.
\end{proof}

\subsubsection{Proof of Theorem~\ref{thm:GGP_global_intro}}
\label{subsubsec:proof_main_theorem}

We now end the proof of Theorem~\ref{thm:GGP_global_intro} which described the main properties of $\cP_\pi$. Its first point was the fact that the residue of the Rankin--Selberg Zeta integrals $\Res \; Z_{\sigma_\pi}(\phi,\lambda)$ factored through $\cA_{P_\pi,\sigma_\pi,\lambda-\nu_pi}(G) \twoheadrightarrow \cA_{P,\pi,\lambda}(G)$ for $\lambda \in \fa_{\pi,\cc}^*-\underline{\rho}_\pi$ in general position. This was the content of Theorem~\ref{thm:P_pi_quotient}. The second point was that it defined a continuous linear form in $\varphi$, which was shown in Proposition~\ref{prop:individual_bound_P_pi}. Finally, we have to prove that $\cP_\pi$ admits an Euler product expansion. If we set 
\begin{equation*}
    \cL(\lambda,\pi)=\underset{\fa_{\pi,\cc}^*-\underline{\rho}_\pi}{\Res} \frac{L(\lambda-\nu_\pi+\frac{1}{2},\sigma_{\pi,n} \times \sigma_{pi,n+1})}{b(\lambda-\nu_\pi,\sigma_\pi)},
\end{equation*}
then by computing the residues on the Euler product expansion of \eqref{eq:Zeta_facto_global}, we see as in \cite[Theorem~5.2]{BoiZ} that for $\varphi=E^{P,*}(\phi,-\nu_\pi)$ and $\phi=\otimes_v \phi_v \in \cA_{P_\pi,\sigma_\pi}(G)$ we have for $\lambda \in \fa_{\pi,\cc}^*-\underline{\rho}_\pi$ in general position
\begin{equation*}
    \cP_\pi(\varphi,\lambda)=\cL(\lambda,\pi) \prod_{v \in \tS} Z^\natural_{\sigma_\pi,v}(\phi_v,\lambda-\nu_\pi),
\end{equation*}
for some finite set of places $\tS$. This was exactly the content of Theorem~\ref{thm:GGP_global_intro}, which therefore concludes the proof.

    \subsection{Increasing inducing data}
    \label{subsec:increasing}
    For the purpose of our proof of the fine spectral expansion of the Rankin--Selberg period, we define a set $\Pi_H^\uparrow$ of \emph{increasing inducing data}. More precisely, if $(I,P,\pi) \in \Pi_H$, we want to choose an induction $\cA_{Q,\pi'}(G)$ isomorphic to $\cA_{P,\pi}(G)$ such that the singularities of the regularized period $\cP_{\pi'}(\cdot,\lambda)$ are controlled in a larger region than $\lambda_{1,2} \in \cS_{\pi'_{1,2},k,c_J}$. This will prove to be crucial in our shift of contours arguments. The definition of $\Pi_H^\uparrow$ is rather involved, and we invite the reader to come back to it when necessary in the course of Section~\ref{sec:pseudo_spectral}.
    
    \subsubsection{A set of combinatorial gadgets}
    \label{subsubsec:condition_up} 

    We define $\Pi_H^\uparrow$ to be the set of tuples $(I,P,\pi,I_1,I_2)$ satisfying the following conditions. 
    \begin{itemize}
        \item $I \in \zz_{\geq 0}^{6}$ is a tuple of the form $I=\left(n_+,n_1,n_{\mathrm{c},1},n_2,n_{\mathrm{c},2},n_-\right)$.
    \item $I_1$ and $I_2$ are subset of $\{1, \hdots, m_1 \}$ and $\{1, \hdots, m_2 \}$ for some integers $m_1$ and $m_2$ respectively, with $\Val{I_1}=\Val{I_2}$. We set $m=\Val{I_1}$ and write  $I_1=\{i_1(1) < \hdots <i_1(m)\}$ and $I_2=\{i_2(1)<\hdots<i_2(m)\}$.
    \item We have tuples of integers $\underline{n}(+)\in \zz_{\geq 1}^{m_+}$, $\underline{n}(1) \in \zz_{\geq 1}^{m_1}$, $\underline{n}'(1) \in \zz_{\geq 0}^{m_1-m}$, $\underline{n}_{\mathrm{c}}(1) \in \zz_{\geq 1}^{m_{\mathrm{c},1}}$, $\underline{n}(2)\in \zz_{\geq 1}^{m_2}$, $\underline{n}'(2) \in \zz_{\geq 0}^{m_2-m}$, $\underline{n}_{\mathrm{c}}(2) \in \zz_{\geq 1}^{m_{\mathrm{c},2}}$ and $\underline{n}(-)\in \zz_{\geq 1}^{m_-}$ such that $\sum \underline{n}(+,i)=n_+$, $\sum \underline{n}(1,i)=n_1$, $\sum \underline{n}_{\mathrm{c}}(1,i)=n_{\mathrm{c},1}$, $\sum \underline{n}(2,i)=n_2$, $\sum \underline{n}_{\mathrm{c}}(2,i)=n_{\mathrm{c},2}$ and $\sum \underline{n}(-,i)=n_-$ and $P$ is the standard parabolic subgroup of $G$ such that
    \begin{equation}
        \label{eq:M_P_increasing}
        \underline{n}(P)=\left( (\underline{n}(+),\underline{n}'(2),\underline{n}(1),\underline{n}_{\mathrm{c}}(1),\underline{n}(-)),(\underline{n}(+),\underline{n}'(1),\underline{n}(2),\underline{n}_{\mathrm{c}}(2),\underline{n}(-)) \right).
    \end{equation}
    \item $\pi=\pi_n \boxtimes \pi_{n+1} \in \Pi_{\disc}(M_P)$ is of the form 
    \begin{align}              
      \pi_n&=\boxtimes_{i=1}^{m_+} \pi_{+,i}  \boxtimes_{\substack{i=1 \\ i \notin I_2}}^{m_2} \pi_{2,i}^{-,\vee} \boxtimes_{i=1}^{m_1} \pi_{1,i} \boxtimes_{i=1}^{m_{\mathrm{c},1}} \pi_{\mathrm{c},1,i}\boxtimes_{i=1}^{m_-} \pi_{-,i}, \label{eq:pi_up_n} \\
         \pi_{n+1}&=\boxtimes_{i=1}^{m_+} \pi_{+,i}^\vee \boxtimes_{\substack{i=1 \\ i \notin I_1}}^{m_1} \pi_{1,i}^{-,\vee}  \boxtimes_{i=1}^{m_2} \pi_{2,i} \boxtimes_{i=1}^{m_{\mathrm{c},2}} \pi_{\mathrm{c},2,i} \boxtimes_{i=1}^{m_-} \pi_{-,i}^\vee, \label{eq:pi_up_n+1}
    \end{align}
    where
    \begin{itemize}[label=\tiny$\bullet$]
        \item all the $\pi_{\hdots}$ are discrete automorphic representations of the corresponding block of $M_P$,
        \item for the relevant $i$, we have $\pi_{1,i}=\Speh(\sigma_{1,i},\underline{d}(1,i))$, $\pi_{2,i}=\Speh(\sigma_{2,i},\underline{d}(2,i))$ for some representations $\sigma_{1,i} \in \Pi_\cusp(\GL_{\underline{r}(1,i)})$ and $\sigma_{2,i} \in \Pi_\cusp(\GL_{\underline{r}(2,i)})$,
        \item $\underline{d}(1,1) \geq \hdots \geq \underline{d}(1,m_1) \geq 2$ and $\underline{d}(2,1) \geq \hdots \geq \underline{d}(2,m_2) \geq 2$.
        \item for the relevant $i$, we have $\pi_{\mathrm{c},1,i} \in \Pi_\cusp(\GL_{\underline{n}_\mathrm{c}(1,i)})$, $\pi_{\mathrm{c},2,i} \in \Pi_\cusp(\GL_{\underline{n}_\mathrm{c}(2,i)})$,
        \item for $1 \leq j \leq m$, we have $\pi_{1,i_1(j)}\simeq \pi_{2,i_2(j)}^\vee$ (and therefore $\underline{d}(1,i_1(j))=\underline{d}(2,i_2(j))$ and $\underline{r}(1,i_1(j))=\underline{r}(2,i_2(j))$).
    \end{itemize}
    \end{itemize}
    We identify $\underline{n}'(1)$ and $\underline{n}'(2)$ with tuples indexed by $\{1, \hdots, m_1\} \setminus I_1$ and $\{1, \hdots, m_2\} \setminus I_2$ respectively. In particular, we have for $i \in \{1,2\}$ and $1 \leq j \leq m_i$ the formulae $\underline{n}(i,j)=\underline{d}(i,j)\underline{r}(i,j)$, and for $j \notin I_i$, $\underline{n}'(i,j)=(\underline{d}(i,j)-1)\underline{r}(i,j)$. Note that all the $\pi_{1,j}$ and $\pi_{2,j}$ are residual. If $I_1=I_2=\emptyset$, we simply write $(I,P,\pi) \in \Pi_H^\uparrow$. 

    The elements in $\fa_{P,\cc}^*$ decompose as 
    \begin{equation}
    \label{eq:notation_P_up}
    \lambda=\left( (\lambda(+)_n, \lambda(2)_n,\lambda(1)_n, \lambda(1)_{\mathrm{c}},\lambda(-)_n),(\lambda(+)_{n+1}, \lambda(1)_{n+1},\lambda(2)_{n+1},\lambda(2)_{\mathrm{c}},\lambda(-)_{n+1}) \right).
    \end{equation}
    We define
    \begin{equation}
\label{eq:a_pi_up_defi}
    \fa_{\pi}^*=\left\{ \lambda \in \fa_{P}^* \; \middle| \; \begin{array}{ll}
        \lambda(+)_n=-\lambda(+)_{n+1}, &  \\
        \lambda(1)_{n,i}=-\lambda(1)_{n+1,i}, & 1 \leq i \leq m_1, \; \text{if } \underline{d}(1,i) \neq 1 \text{ and } i \notin I_1, \\
        \lambda(2)_{n,i}=-\lambda(2)_{n+1,i}, & 1 \leq i \leq m_2, \; \text{if } \underline{d}(2,i) \neq 1\text{ and } i \notin I_2, \\
        \lambda(1)_{n,i_1(j)}=-\lambda(2)_{n,i_2(j)}, & 1 \leq j \leq m, \\
        
         \lambda(-)_n=-\lambda(-)_{n+1}, & 
    \end{array} \right\}.
\end{equation}
This notation is somewhat abusive as this space really depends on the data of $I$, $I_1$ and $I_2$. The dependence should be clear in context and we use similar simplifications throughout this section.

    We have an element $\underline{\rho}_\pi \in \fa_P^*$ defined in the coordinates of \eqref{eq:M_P_increasing} by
     \begin{equation}
    \label{eq:rho_P_down}
        \underline{\rho}_\pi=\left(\left(\underbrace{1/4,\hdots,1/4}_{m_+},0,0,0,\underbrace{-1/4,\hdots,-1/4}_{m_-}\right),\left(\underbrace{1/4,\hdots,1/4}_{m_+},0,0,0,\underbrace{-1/4,\hdots,-1/4}_{m_-}\right)\right).
    \end{equation}
    We will also need the variation
     \begin{equation}
    \label{eq:rho_P_up}
        \underline{\rho}_\pi^{\uparrow}=\left(\left(0,\underbrace{1/4,\hdots,1/4}_{m_2-m},\underbrace{-1/4,\hdots,-1/4}_{m_1},0,0\right),\left(0,\underbrace{1/4,\hdots,1/4}_{m_1-m},\underbrace{-1/4,\hdots,-1/4}_{m_2},0,0\right)\right) \in \fa_P^*.
    \end{equation}
    We emphasize that it does not belong to $\fa_\pi^*$ except if $I_1=I_2=\emptyset$.

    \subsubsection{Construction of the period}
        \label{subsubsec:regularized_increasing}

    Let $(I,P,\pi,I_1,I_2) \in \Pi_H^{\uparrow}$. We now build a regularized period $\cP_{\pi}^{\uparrow}$ on $\cA_{P,\pi}(G)$. For $i \in \{1,2\}$, let $\underline{n}^-(i) \in \zz_{\geq 0}^{2m_i}$ be the tuple such that $\underline{n}^-(i,2j-1)=(\underline{d}(i,j)-1)\underline{r}(i,j)$ and $\underline{n}^-(i,2j)=\underline{r}(i,j)$ for $1 \leq j \leq m_i$. Let $P_{\pi}^\uparrow$ be the standard parabolic subgroup of $G$ such that
    \begin{equation*}
        \underline{n}(P_{\pi}^\uparrow)=\left( (\underline{n}(+),\underline{n}'(2),\underline{n}^-(1),\underline{n}_{\mathrm{c}}(1),\underline{n}(-)),(\underline{n}(+),\underline{n}'(1),\underline{n}^-(2),\underline{n}_{\mathrm{c}}(2),\underline{n}(-)) \right).
    \end{equation*}
    For $j \in \{1,2\}$, we also define the tuple $\underline{n}_{I_i}^-(i) \in \zz_{\geq 0}^{I_i}$ by $\underline{n}_{I_i}^-(i,j)=(\underline{d}(i,j)-1)\underline{r}(i,j)$ for $j \in I_i$. Note that if we identify $I_1$ and $I_2$ with $\{1, \hdots, m\}$ via the maps $i_1$ and $i_2$, then $\underline{n}_{I_1}^-(1)=\underline{n}^-_{I_2}(2)$. We then let $w^\uparrow$ be the shortest element in $W(P_{\pi}^\uparrow)$ such that, if we set $Q_{\pi}^\uparrow=w^\uparrow. P_\pi^\uparrow$, we have     
  \begin{equation*}
        \underline{n}(Q_{\pi}^\uparrow)=\left( (\underline{n}(+),\underline{n}'(2),\underline{n}'(1),\underline{n}^-_{I_1}(1),\underline{r}(1),\underline{n}_{\mathrm{c}}(1),\underline{n}(-)),
        (\underline{n}(+),\underline{n}'(2),\underline{n}'(1),\underline{n}^-_{I_2}(2),\underline{r}(2),\underline{n}_{\mathrm{c}}(2),\underline{n}(-))\right).
    \end{equation*}
    In words, $w^\uparrow$ only acts on the $\underline{n}^-(1)$ component on the $\GL_n$-side, and on the $(\underline{n}'(1),\underline{n}^-(2))$ component on the $\GL_{n+1}$-side. It sends all the $\GL_{\underline{r}(1,i)}$ and $\GL_{\underline{r}(2,i)}$ blocks at the bottom, and then sorts the $\GL_{(\underline{d}(1,i)-1)\underline{r}(1,i)}$ on $\GL_n$ side, and $\GL_{(\underline{d}(2,i)-1)\underline{r}(2,i)}$ on $\GL_{n+1}$ by bringing those coming from $I_1$ and $I_2$ at the bottom. Finally, it exchanges the resulting $\underline{n}'(1)$ and $\underline{n}'(2)$ blocks on the $\GL_{n+1}$-side.

    Let $P^{\uparrow,\std}_{n+1}$ be the standard parabolic subgroup of $\GL_{n+1}$ such that
     \begin{equation*}
        \underline{n}(P_{n+1}^{\uparrow,\std})=\left( 
        \underline{n}(+),\underline{n}'(2),\underline{n}'(1),\underline{n}^-_{I_2}(2),k+1,\underline{n}(-)\right).
    \end{equation*}
        where
    \begin{equation*}
        k+1=\sum_{i=1}^{m_2} \underline{r}(2,i)+\sum_{i=1}^{m_{\mathrm{c},2}} \underline{n}_{\mathrm{c}}(2,i).
    \end{equation*}
    Let $P^\uparrow$ be the Rankin--Selberg parabolic subgroup of $G$ associated the pair $(P_{n+1}^{\uparrow,\std},m_++m_1+m_2-m+1)$ under the bijection of Proposition~\ref{prop:param_RS}, where once again the $(m_++m_1+m_2-m+1)^\text{th}$ block is $\GL_{k+1}$. We have $w^\uparrow \in {}_{P^{\uparrow,\std}} W_P$ and $P_{w^\uparrow}=P_{\pi}^\uparrow$.

    We set for $\lambda \in \fa_{P,\cc}^*$ in general position and $\varphi \in \cA_{P,\pi}(G)$
    \begin{equation}
        \label{eq:P_up} \cP_\pi^\uparrow(\varphi,\lambda):=\cP^{P^{\uparrow}}(\varphi,\lambda,w^\uparrow).
    \end{equation}
    Note that contrary to the case of $\cP_\pi$, we prefer to view $\cP_\pi^\uparrow$ as a meromorphic function on the whole space $\fa_{P,\cc}^*$ rather than solely on a translate of $\fa_{\pi,\cc}^*$. This is possible by \cite[Lemma~4.15]{BoiZ}. The trade-off is that the linear form $\varphi \mapsto \cP_\pi^\uparrow(\varphi,\lambda)$ is not $H(\bA)$-invariant in general. By reproducing the proof of Proposition~\ref{prop:alternative_construction}, the restriction of $\cP_{\pi}^\uparrow(\varphi,\lambda)$ to $\fa_{\pi,\cc}^*-\underline{\rho}_\pi-\underline{\rho}_\pi^\uparrow$ is well-defined (see also Proposition~\ref{prop:up_is_down} below).

    \subsubsection{Relation with the regularized period} 
    \label{subsubsec:connect_periods}
    
    We connect the regularized period $\cP_\pi^\uparrow$ from \eqref{eq:P_up} to the linear forms $\cP_\pi$ built in Theorem~\ref{thm:P_pi_quotient}. First, for $i \in \{1,2\}$ we write $\underline{n}_{I_i}(i)$ and $\underline{n}_{\setminus I_i}(i)$ for the tuples in $\zz_{\geq 1}^{I_i}$ and $\zz_{\geq 1}^{\{1,\hdots,m_i\} \setminus I_i}$ such that for $1 \leq j \leq m_i$, we have $\underline{n}_{I_i}(i,j)=\underline{n}(i,j)$ if $j \in I_i$, and $\underline{n}_{\setminus I_i}(i,j)=\underline{n}(i,j)$ otherwise. We now let $w^{\downarrow} \in W(P)$ be the shortest element such that, if we set $P^\downarrow=w^\downarrow .P$, then
    \begin{equation*}
        \underline{n}(P^\downarrow)=\left((\underline{n}(+),\underline{n}_{\setminus I_1}(1),\underline{n}_{\mathrm{c}}(1),\underline{n}'(2),\underline{n}_{I_1}(1),\underline{n}(-)),(\underline{n}(+),\underline{n}'(1),\underline{n}_{\setminus I_2}(2),\underline{n}_{\mathrm{c}}(2),\underline{n}_{I_2}(2),\underline{n}(-))\right).
    \end{equation*}
    Set $\pi^\downarrow:=w^\downarrow \pi$. Define
    \begin{equation*}
        N_1=\sum_{j \notin I_1} \underline{n}(1,j) + \sum_{j=1}^{m_{{\mathrm{c},1}}} \underline{n}_{\mathrm{c}}(1,j), \quad N_2=\sum_{j \notin I_2} \underline{n}(2,j) + \sum_{j=1}^{m_{{\mathrm{c},2}}} \underline{n}_{\mathrm{c}}(2,j), \quad N_-=\sum_{j \in I_1} \underline{n}(1,j) +n_-.
    \end{equation*}
    Then we have $((n_+,N_1,N_2,N_-),P^\downarrow,\pi^\downarrow) \in \Pi_H$ and we can therefore consider the regularized period $\cP_{\pi^\downarrow}$ built in Theorem~\ref{thm:P_pi_quotient}. Note that we have
    \begin{equation}
        \label{eq:up_is_down_vector}
        w^\downarrow (\fa_{\pi,\cc}^*-\underline{\rho}_\pi-\underline{\rho}_\pi^\uparrow)=\fa_{\pi^\downarrow,\cc}^*-\underline{\rho}_{\pi^\downarrow}
    \end{equation}

    \begin{prop}
    \label{prop:up_is_down}
        Let $\varphi \in \cA_{P,\pi}(G)$. For $\lambda \in \fa_{\pi,\cc}^*-\underline{\rho}_\pi-\underline{\rho}_\pi^{\uparrow}$ in general position we have
        \begin{equation}
        \label{eq:up_is_down}
            \cP_{\pi}^{\uparrow}(\varphi,\lambda)=\cP_{\pi^\downarrow}(M(w^\downarrow ,\lambda)\varphi,w^\downarrow\lambda).
        \end{equation}
    \end{prop}

    \begin{proof}
        We begin by expressing each side of \eqref{eq:up_is_down} as a residue of a regularized period induced from $\sigma_\pi$. By parabolic descent (Lemma~\ref{lem:parabolic_descent}), it is enough to deal with the case $m_+=m_-=0$. We decompose the standard Levi $M_{P_\pi}$ of $P_\pi$ with respect to $M_{P_\pi} \subset M_P$. This yields an identification 
        \begin{equation*}
            \fa_{P_\pi,n,\cc}^* =\prod_{i \notin I_2} \cc^{\underline{d}(2,i)-1}  \times \prod_{i=1}^{m_1} \cc^{\underline{d}(1,i)} \times  \prod_{i=1}^{m_{\mathrm{c},1}} \cc, \quad   \fa_{P_\pi,n+1,\cc}^* = \prod_{i \notin I_1} \cc^{\underline{d}(1,i)-1} \times \prod_{i=1}^{m_2} \cc^{\underline{d}(2,i)} \times \prod_{i=1}^{m_{\mathrm{c},2}} \cc.
        \end{equation*}
        We now define affine linear forms as in the proof of Theorem~\ref{thm:P_pi_quotient}. We use the same convention for coordinates. We begin with the set $\Lambda^\uparrow_+$ of affine linear forms described by
        \begin{equation*}
            \left\{
                \begin{array}{ll}
                    \Lambda_+(1,i,j)(\lambda)=-(\lambda(1,i)_{n,\underline{d}(1,i)-j+1}+\lambda(1,i)_{n+1,j}+1/2), &  \quad  \left\{ \begin{array}{l} 
                        1 \leq i \leq m_1, \; i \notin I_1, \\
                         1 \leq j \leq \underline{d}(1,i)-1,
                            \end{array} \right. \\
                    \Lambda_+(2,i,j)(\lambda)=-(\lambda(2,i)_{n,j}+\lambda(2,i)_{n+1,\underline{d}(2,i)-j+1}+1/2), &  \quad  \left\{ \begin{array}{l} 
                                1 \leq i \leq m_2, \; i \notin I_2, \\
                                 1 \leq j \leq \underline{d}(2,i)-1,
                                    \end{array} \right. 
                \end{array}
            \right.
        \end{equation*}
        as well as the set $\Lambda^{\uparrow}_-$ of
        \begin{equation*}
            \left\{
                \begin{array}{ll}
                    \Lambda_-(1,i,j)(\lambda)=\lambda(1,i)_{n,\underline{d}(1,i)-j}+\lambda(1,i)_{n+1,j}-1/2, &  \quad  \left\{ \begin{array}{l} 
                        1 \leq i \leq m_1, \; i \notin I_1, \\
                         1 \leq j \leq \underline{d}(1,i)-1,
                            \end{array} \right. \\
                    \Lambda_-(2,i,j)(\lambda)=\lambda(2,i)_{n,j}+\lambda(2,i)_{n+1,\underline{d}(2,i)-j}-1/2, &  \quad  \left\{ \begin{array}{l} 
                                1 \leq i \leq m_2, \; i \notin I_2, \\
                                 1 \leq j \leq \underline{d}(2,i)-1.
                                    \end{array} \right. \\
                \end{array}
            \right.
        \end{equation*}
        We then add the set $\Lambda_+^{\uparrow,'}$ defined by
        \begin{equation*}
            \left\{
                \begin{array}{ll}
                    \Lambda_+'(i,j)(\lambda)=-(\lambda(1,i_1(i))_{n,j}+\lambda(2,i_2(i))_{n+1,\underline{d}(2,i)-j+2}+1/2), &  \quad \left\{ \begin{array}{l}
                        1 \leq i \leq m, \\
                        2 \leq j \leq \underline{d}(1,i_1(i)),
                         \end{array} \right.  
                \end{array}
            \right.
        \end{equation*}
        as well as $\Lambda_-^{\uparrow,'}$ given by
        \begin{equation*}
            \left\{
                \begin{array}{ll}
                    \Lambda_-'(i,j)(\lambda)=\lambda(1,i_1(i))_{n,j}+\lambda(2,i_2(i))_{n+1,\underline{d}(2,i)-j+1}-1/2, &  \quad \left\{ \begin{array}{l}
                        1 \leq i \leq m, \\
                        1 \leq j \leq \underline{d}(1,i_1(i)),
                         \end{array} \right.  
                \end{array}
            \right.
        \end{equation*}
        Let $\cH$ be the intersection of all the zero sets of the affine linear forms in $\Lambda_+^\uparrow$, $\Lambda_-^{\uparrow}$, $\Lambda_+^{\uparrow,'}$ and $\Lambda_-^{\uparrow,'}$. Then we have 
        \begin{equation*}
            \cH=\fa^*_{\pi,\cc}-\underline{\rho}_\pi-\underline{\rho}_\pi^\uparrow-\nu_\pi=(w^{\downarrow})^{-1}\left( \fa_{\pi^\downarrow,\cc}^*-\underline{\rho}_{\pi^\downarrow}-\nu_{\pi^\downarrow}\right).
        \end{equation*}
        Let $\varphi \in \cA_{P,\pi}(G)$. Assume that $\varphi=E^*(\phi,-\nu_\pi)$ with $\phi \in \cA_{P_\pi,\sigma_\pi}(G)$. By reproducing the proof of Theorem~\ref{thm:P_pi_quotient} (see also \cite[Theorem~5.2]{BoiZ}), we see that for $\lambda \in \fa_{\pi,\cc}^*-\underline{\rho}_\pi-\underline{\rho}_{\pi}^\uparrow$ in general position we have
        \begin{equation}
            \label{eq:first_residue_up}
            \left(\underset{\Lambda_-^{\uparrow,'}}{\Res} \; \underset{\Lambda^\uparrow_-}{\Res} \; \underset{\Lambda_+^{\uparrow,'}}{\Res}\; \underset{\Lambda_+^\uparrow}{\Res}\right)Z_{\sigma_\pi}(\phi)(\lambda-\nu_\pi)=\cP_\pi^\uparrow(\varphi,\lambda).
        \end{equation}
        Moreover, $\Lambda_+^\uparrow \cup \Lambda_-^{\uparrow,'}$ and $\Lambda_-^{\uparrow} \cup \Lambda_+^{\uparrow,'}$ are the sets used in Theorem~\ref{thm:P_pi_quotient} and \cite[Theorem~5.2]{BoiZ} to define the period $\cP_{\pi^\downarrow}$ for $(P^\downarrow,\pi^\downarrow)$. By reproducing once again the argument, we end up at
        \begin{equation}
            \label{eq:second_residue_up}
            \left(\underset{\Lambda_+^{\uparrow,'}}{\Res} \; \underset{\Lambda_-^{\uparrow}}{\Res} \;  \underset{\Lambda_-^{\uparrow,'}}{\Res}\; \underset{\Lambda_+^\uparrow}{\Res}\right)Z_{\sigma_\pi}(\phi)(\lambda-\nu_\pi)=\cP_{\pi^\downarrow}(M(w^\downarrow,\lambda)\varphi,w^\downarrow \lambda).
        \end{equation}
        By the same argument as in \cite[Lemma~5.1]{BoiZ}, we can compute these residues in any order. Therefore, \eqref{eq:first_residue_up} and \eqref{eq:second_residue_up} are equal, which concludes the proof.
    \end{proof}

    \begin{rem}
        We spell out the content of Proposition~\ref{prop:up_is_down} in a simple example. We take $n=2$, so that $G=\GL_2 \times \GL_3$. We consider the tuple $((0,2,0,2,1,0),P,\pi,\{1\},\{1\})$ where $P$ is the standard parabolic subgroup of $G$ with standard Levi $(\GL_2) \times (\GL_2 \times \GL_1)$ and $\pi$ is the trivial representation of $M_P$. We have $\fa_{P,\cc}^* \simeq \cc \times \cc^2$, and with respect to these coordinates, $\fa_{\pi,\cc}^*=\{(a,(-a,b)) \; | \; a,b \in \cc \}$ and $\underline{\rho}_\pi+\underline{\rho}_\pi^\uparrow=(-1/4,(-1/4,0))$. Here $P^\uparrow$ is the Borel subgroup, and $w_+=1$. We take $\varphi^\circ \in \cA_{P,\pi}(G)$ the unique $K$-invariant vector such that $\varphi^\circ(1)=1$. We assume that $F=\qq$, and denote by $\zeta$ the completed zeta function of $\qq$. Using \cite{CS} and the Archimedean computations of \cite{Sta}, and taking into account the computation of $\varphi^\circ_{P_0}$ in Lemma~\ref{lem:contant_term}, we see that for $\lambda=(x,(y,z)) \in \fa_{P,\cc}^*$ 
        \begin{equation*}
            \cP^{P^\uparrow}(\varphi^\circ,\lambda)=\vol([\GL_1]_0) \times \frac{\zeta(x+y+3/2)\zeta(x+z+1)}{\zeta(y-x+3/2)},
        \end{equation*}
        where the volume of $[\GL_1]_0$ is taken with respect to the measure of \S\ref{subsubsec:first_measure}. This meromorphic function is well defined for $\lambda=(a+1/4,(-a+1/4,b)) \in \fa_{\pi,\cc}^*-\underline{\rho}_\pi-\underline{\rho}_\pi^\uparrow$ in general position and for such $\lambda$ we get 
        \begin{equation*}
            \cP_{\pi}^\uparrow(\varphi^\circ,\lambda)=\vol([\GL_1]_0) \times \frac{\zeta(2)\zeta(a+b+5/4)}{\zeta(-b-a+7/4)},
        \end{equation*}
        On the other hand, $P^\downarrow$ has standard Levi $(\GL_2) \times (\GL_1 \times \GL_2)$, and $w_+$ exchanges the $\GL_2$ and $\GL_1$ blocks on the $\GL_{3}$-side. Because of the factorization of $M(w^\uparrow,\lambda)$ in \eqref{eq:normalization_non_twisted}, we get 
        \begin{equation*}
            \cP_{\pi^\downarrow}(M(w^\downarrow ,\lambda)\varphi^\circ,w^\downarrow\lambda)=\vol([\GL_2]_0) \times \frac{\zeta(-b-a-1/4)}{\zeta(-b-a+7/4)}.
        \end{equation*}
        Therefore, Proposition~\ref{prop:up_is_down} amounts to the two equalities
        \begin{equation*}
            \vol([\GL_1]_0) \zeta(2)=\vol([\GL_2]_0) \quad \text{and} \quad \zeta(a+b+5/4)=\zeta(-a-b-1/4).
        \end{equation*}
        The first equation is equivalent to the computation of the Tamagawa volume of $\GL_n$ by \cite{Langlands65}, and the second is the functional equation of the $\zeta$ function.
    \end{rem}

    \subsubsection{Functional equations}
    \label{subsubsec:functional_up}

    We now describe some subsets of the Weyl group $W$ of $G$. We first define $W_{\bfM}(\pi)$. This is the subset of the set of $w=(w_n,w_{n+1}) \in W$ that act by blocks on $\bfM_{P^{\uparrow}}^{\std,2}$ and correspond to permutations
    \begin{equation*}
        ((\sigma_+,\sigma_{1,2},\sigma_-),(\sigma_+,\sigma_{1,2},\sigma_-)), \quad \sigma_+ \in \fS(m_+), \quad \sigma_{1,2} \in \fS(m_1+m_2-|I_1|), \quad \sigma_- \in \fS(m_-).
    \end{equation*}
    This set really depends on $I$ and $I_1$, $I_2$ but we drop the reference from the notation. We can also identify any such $w$ with an element of $\bfM_{P^\uparrow}^2$ (by conjugating by $w_{P^\uparrow}^\std$), and we will again denote it by $w$. In particular, we have the Rankin--Selberg parabolic subgroup $w. P^\uparrow$.

    We then define $W_{\cM}(\pi)$ to be the subset of elements acting by blocks on the standard parabolic subgroup $(w^\uparrow.P^{\uparrow}_\pi) \cap \cM_{P^{\uparrow,\std}}$. Its elements are identified with couples $(\sigma_n,\sigma_{n+1}) \in \fS(m_{c,1}+m_1) \times \fS(m_{c,2}+m_2)$. The next result follows from Lemma~\ref{lem:functional_equation_1} and Proposition~\ref{prop:up_is_down}.

     \begin{lem}
    \label{lem:functional_up}
         Let $\varphi \in \cA_{P,\pi}(G)$. Let $w_\bfM \in W_{\bfM}(\pi)$, let $w_\cM \in W_{\cM}(\pi)$. Then we have for $\lambda \in \fa_{\pi,\cc}^*-\underline{\rho}_\pi-\underline{\rho}_\pi^{\uparrow}$ in general position
        \begin{equation*}
            \cP_{\pi}^\uparrow(\varphi,\lambda)=\cP^{w_{\bfM}.P^{\uparrow}}\left(M(w_\bfM w_\cM w^\uparrow,\lambda)\varphi_{P_{\pi}^\uparrow},w_\bfM w_\cM w^\uparrow \lambda\right).
        \end{equation*}
    \end{lem}

    \subsubsection{Analytic properties of $\cP_\pi^\uparrow$}
    \label{subsubsec:poles_of_P_pi_up} 
    Consider the map
    \begin{equation*}
        \lambda \in \fa_{\pi,\cc}^*-\underline{\rho}_\pi-\cc \underline{\rho}_\pi^\uparrow \mapsto \left( \lambda(+)_n,\lambda(1)_n, \lambda(1)_{\mathrm{c}},\lambda(2)_{n+1},\lambda(2)_{\mathrm{c}},\lambda(-)_n \right).
    \end{equation*}
    It follows from the definition that it is injective. Indeed, for $i \notin I_1$ we have $\lambda(1)_{n,i}=-\lambda(1)_{n+1,i}$, and for $i \notin I_2$ we have $\lambda(2)_{n,i}=-\lambda(2)_{n+1,i}$. We set
     \begin{equation}
    \label{eq:lambda_reg_pi}
        \left( \lambda(+),\lambda(1),\lambda(1)_{\mathrm{c}},\lambda(2),\lambda(2)_{\mathrm{c}},\lambda(-) \right)=\left( \lambda(+)_n,\lambda(1)_n, \lambda(1)_{\mathrm{c}},\lambda(2)_{n+1},\lambda(2)_{\mathrm{c}},\lambda(-)_n \right).
    \end{equation}
    In these coordinates, $-\underline{\rho}_\pi^\uparrow$ is $(0,1/4,0,1/4,0,0)$. We now set 
    \begin{equation*}
        \lambda_{1,2}=(\lambda(1),\lambda(2)).
    \end{equation*}
    Let $\pi^\uparrow \in \Pi_\disc(M_{P_\pi^\uparrow})$ and $\nu^\uparrow \in \fa_{P_\pi^\uparrow}^*$ such that for any $\varphi \in \cA_{P,\pi}(G)$ we have $\varphi_{P_\pi^\uparrow} \in \cA_{P_\pi^\uparrow,\pi^\uparrow,\nu^\uparrow}(G)$. If $\mu \in w^\uparrow \fa_{P_\pi^\uparrow}^*$, we write $\mu=\mu_{\cP^\uparrow}+\mu_{\bfP^\uparrow}$ for its decomposition with respect to $M_{P^{\uparrow}}^\std=\cM_{P^{\uparrow}}^\std \times \bfM_{P^{\uparrow}}^{\std,2}$. Then we have
    \begin{align*}
        (w^\uparrow (\lambda +\nu^\uparrow))_{\cP^\uparrow,n}&=\left(\lambda(1)_1+\frac{\underline{d}(1,1)-1}{2},\hdots,\lambda(1)_{m_1}+\frac{\underline{d}(1,m_1)-1}{2} ,\lambda(1)_\mathrm{c}\right), \\
         (w^\uparrow (\lambda +\nu^\uparrow))_{\cP^\uparrow,n+1}&=\left(\lambda(2)_1+\frac{\underline{d}(2,1)-1}{2},\hdots,\lambda(2)_{m_2}+\frac{\underline{d}(2,m_2)-1}{2} ,\lambda(2)_\mathrm{c}\right).
    \end{align*}
    With the notation of \eqref{eq:pi_up_n} and \eqref{eq:pi_up_n+1}, we set
    \begin{align*}
        \pi_{1,2}&=\boxtimes_{i=1}^{m_1} \pi_{1,i} \boxtimes_{i=1}^{m_2} \pi_{2,i}, \\
        \pi_{\cP}&=\left(\boxtimes_{i=1}^{m_1} \sigma_{1,i}  \boxtimes_{i=1}^{m_{\mathrm{c},1,i}} \pi_{\mathrm{c},1,i} \right) \boxtimes \left(  \boxtimes_{i=1}^{m_2} \sigma_{2,i}\boxtimes_{i=1}^{m_{\mathrm{c},2}} \pi_{\mathrm{c},2,i}  \right),
    \end{align*}
    where we recall that $\pi_{1,i}=\Speh(\sigma_{1,i},\underline{d}(1,i))$ and $\pi_{2,i}=\Speh(\sigma_{2,i},\underline{d}(2,i))$. Finally, we set
    \begin{align}
          L_{\pi,\cP}^{\uparrow}&(\lambda)=\prod_{  \pi_{\mathrm{c},1,i} \simeq \pi_{\mathrm{c},2,j}^{\vee}} \left(\lambda(1)_{\mathrm{c},i}+\lambda(2)_{\mathrm{c},j}\pm\frac{1}{2}\right)\nonumber \\          
          & \times \prod_{  \pi_{\mathrm{c},1,i} \simeq \sigma_{2,j}^{\vee}} \left(\lambda(1)_{\mathrm{c},i}+\lambda(2)_j+\frac{\underline{d}(2,j)-1\pm1}{2}\right) 
           \prod_{  \sigma_{1,i}^{\vee} \simeq \pi_{\mathrm{c},2,j}} \left(\lambda(1)_{i}+\lambda(2)_{\mathrm{c},j}+\frac{\underline{d}(1,i)-1\pm1}{2}\right) \nonumber \\
          &\times \prod_{\substack{\pi_{1,i} \simeq \pi_{2,j}^{-,\vee} \\ i \notin I_1, j \notin I_2 }} \left(\lambda(1)_i+\lambda(2)_j\right) \prod_{ \substack{\pi_{1,i}^{-,\vee} \simeq \pi_{2,j} \\  i \notin I_1, j \notin I_2 }} \left(\lambda(1)_i+\lambda(2)_j\right). \label{eq:f_pi_regular}
    \end{align}

    \begin{prop}
        \label{prop:reg_P_pi_up} 
         There exists $k>0$ such that for every level $J$ of $G$ there exists $c_J>0$ such that for every $J$-pair $(P,\pi)$ with $(I,P,\pi,I_1,I_2) \in \Pi_H^\uparrow$, and every $\varphi \in \cA_{P,\pi}(G)$ the meromorphic function
        \begin{equation*}
            \lambda \mapsto L_{\pi,\cP}^\uparrow(\lambda)\cP^\uparrow_{\pi}(\varphi,\lambda)
        \end{equation*}
        is regular in the region
        \begin{equation}
        \label{eq:weird_region}
           \left\{ \lambda \in \fa_{\pi,\cc}^* -\underline{\rho}_\pi-t\underline{\rho}_\pi^\uparrow  \; \middle| \;  (\lambda+t\underline{\rho}_\pi^\uparrow)_{1,2} \in \cS_{\pi_{1,2},k,c_J}, \; (w^\uparrow(\lambda+\nu^\uparrow))_\cP \in \cR_{\pi_\cP,k,c_J}, \; 0 \leq t \leq 1 \right\}.
        \end{equation}
    \end{prop}

    \begin{rem}
    \label{rem:right_order} We make the following remarks on Proposition~\ref{prop:reg_P_pi_up}.
    \begin{itemize}
        \item The region \eqref{eq:weird_region} is non-empty because $\underline{d}(1,1) \geq \hdots \geq \underline{d}(1,m_1)$ and $\underline{d}(2,1) \geq \hdots \geq \underline{d}(2,m_2)$.
        \item The last two factors of \eqref{eq:f_pi_regular} can only be zero if $t$ is close to zero.
        \item If $\lambda=\mu-\underline{\rho}_\pi-t \underline{\rho}_\pi^{\uparrow}$ with $0 \leq t \leq 1$, then $(w^\uparrow(\mu+\nu^\uparrow))_\cP \in \cR_{\pi_\cP,k,c_J}$ implies that $(w^\uparrow(\lambda+\nu^\uparrow))_\cP \in \cR_{\pi_\cP,k,c_J}$.
        \item The gain of Proposition~\ref{prop:reg_P_pi_up} in comparison with Proposition~\ref{prop:reg_P_pi} (poles of $\cP_\pi$) is that we have more freedom on $\lambda(1)_\mathrm{c}$ and $\lambda(2)_\mathrm{c}$.
    \end{itemize}
    \end{rem}

    \begin{proof}
       The proof follows the same pattern as Proposition~\ref{prop:reg_P_pi} so we will be brief. Let $\lambda$ be in the region \eqref{eq:weird_region}. Write $\lambda=\mu-\underline{\rho}_\pi-t \underline{\rho}_\pi^\uparrow$, so that $\mu_{1,2} \in \cS_{\pi_{1,2},k,c_J}$. By the description of \S\ref{subsubsec:regularized_increasing}, we see that for any $\alpha \in \Delta_{P_{\pi}^\uparrow}$, $w^\uparrow \alpha<0$ implies that $\langle \nu^\uparrow-\underline{\rho}_\pi-t\underline{\rho}_{\pi}^\uparrow,\alpha^\vee \rangle \geq 0$ as long as $0 \leq t \leq 1$. Therefore, the condition $ \mu_{1,2} \in \cS_{\pi_{1,2},k,c_J}$ implies that $\mu-\underline{\rho}_\pi-t \underline{\rho}_{\pi}^\uparrow+\nu^\uparrow \in \cR_{\pi,k,c_J}(w^\uparrow)$. By direct computation, we see that we can take
       \begin{equation*}
           L_{\pi,w^\uparrow}(\lambda)= \prod_{\substack{i<j \\ \pi_{1,i} \simeq \pi_{1,j} }} (\lambda(1)_i-\lambda(1)_j )  \prod_{\substack{i<j \\ \pi_{2,i} \simeq \pi_{2,j} }} (\lambda(2)_i-\lambda(2)_j ) \prod_{\substack{\pi_{1,i} \simeq \pi_{2,j}^{-,\vee} \\ \underline{d}(2,j) \neq 2  \\ i \notin I_1, j \notin I_2}} (\lambda(1)_i+\lambda(2)_j )  \prod_{\substack{\pi_{1,i}^{-,\vee} \simeq \pi_{2,j} \\ \underline{d}(1,i) \neq 2  \\ i \notin I_1, j \notin I_2}} (\lambda(1)_i+\lambda(2)_j ).
       \end{equation*}
       Note that the conditions $\underline{d}(2,j) = 2$ and $\underline{d}(1,i) = 2$ are superfluous as they would imply $\pi_{1,i}$ or $\pi_{2,j}$ cuspidal, which is not possible by the definition of $\Pi^\uparrow_H$. It remains to compute the factor $L_{(w^\uparrow \pi^\uparrow)_{\cP},Z}$ of \eqref{eq:f_pi_Z_defi}. But this is the same calculation as \eqref{eq:L_pi_P} in the proof of Proposition~\ref{prop:reg_P_pi}. The only difference is that we a priori see the factor 
       \begin{equation*}
         \prod_{\substack{\pi_{1,i} \simeq \pi_{2,j}^{-,\vee} \\ \underline{d}(2,j) = 2 }} (\lambda(1)_i+\lambda(2)_j )  \prod_{\substack{\pi_{1,i}^{-,\vee} \simeq \pi_{2,j} \\ \underline{d}(1,i) =2  }} (\lambda(1)_i+\lambda(2)_j ).
       \end{equation*}
       But as we just seen this condition is never met. Putting everything together, we obtain Proposition~\ref{prop:reg_P_pi_up}.
    \end{proof}
    
    We can also state a bound for the regularized period $\cP_{\pi}^\uparrow(\varphi,\lambda)$.  

\begin{prop}
\label{prop:bound_P_pi_up}
    There exists $k>0$ such that for any level $J$ and $C>0$ there exist $c_J>0$, $d>0$, $N>0$ and $X_1, \hdots, X_r \in \cU(\fg_\infty)$ such that for any $J$-pair $(P,\pi) \in \Pi$ with $(I,P,\pi,I_1,I_2) \in \Pi_H^\uparrow$ and any $\varphi \in \cA_{P,\pi}(G)^J$ we have
    \begin{equation}
    \label{eq:individual_P_pi_bound_up}
        \Val{L_{\pi,\cP}^\uparrow(\lambda) \cP_\pi^\uparrow(\varphi,\lambda)} \leq (1+\norm{\lambda}^2)^d \sum_{i=1}^r \norm{\varphi}_{-N,X_i}.
    \end{equation}
    in the region
      \begin{equation}
        \label{eq:weird_region2}
            \left\{  \lambda \in \fa_{\pi,\cc}^* -\underline{\rho}_\pi-t\underline{\rho}_\pi^\uparrow  \; \middle| \;  (\lambda+t\underline{\rho}_\pi^\uparrow)_{1,2} \in \cS_{\pi_{1,2},k,c_J}, \; (w^\uparrow(\lambda+\nu^\uparrow))_\cP \in \cR_{\pi_\cP,k,c_J}, \; 0 \leq t \leq 1, \; \norm{\Re(\lambda)}< C \right\}.
        \end{equation}
\end{prop}

\begin{proof}
    This is exactly the same proof as Proposition~\ref{prop:individual_bound_P_pi}.
\end{proof}

    \subsubsection{The $\emptyset$-transformation}
    \label{subsubsec:empty_transformation}
    We finally explain how to associate to $(I,P,\pi,I_1,I_2)$ another element $(I_\emptyset,P_\emptyset,\pi_\emptyset) \in \Pi_H^\uparrow$ that will be relevant in our proof of the fine spectral expansion of the Rankin--Selberg period. We recall that this implies that $I_{\emptyset,1}=I_{\emptyset,2}=\emptyset$, hence the notation. As in \S\ref{subsubsec:connect_periods} we define an element $w_\emptyset \in W(P)$ such that $P_\emptyset:=w_\emptyset .P$ satisfies
     \begin{equation*}
        \underline{n}(P_\emptyset)=\left((\underline{n}(+),\underline{n}'(2),\underline{n}_{\setminus I_1}(1),\underline{n}_{\mathrm{c}}(1),\underline{n}_{I_1}(1),\underline{n}(-)),(\underline{n}(+),\underline{n}'(1),\underline{n}_{\setminus I_2}(2),\underline{n}_{\mathrm{c}}(2),\underline{n}_{I_2}(2),\underline{n}(-))\right).
    \end{equation*}
    The standard Levi factor is very close to $M_{P^\downarrow}$, except that the second and third product of blocks on the $\GL_n$ side are put in the appropriate order to match the definition of \eqref{eq:M_P_increasing}. We then set 
    \begin{equation*}
        I_\emptyset=\left(n_+,\left(\sum_{j \in \{1,\hdots,m_1\} \setminus I_1} \underline{n}(1,j)\right),n_{\mathrm{c},1},\left(\sum_{j \in \{1,\hdots,m_2\} \setminus I_2}\underline{n}(2,j)\right),n_{\mathrm{c},2},\left(\sum_{j \in I_1} \underline{n}(1,j)\right) + n_- \right).
    \end{equation*}
    We have
    \begin{equation}
        \label{eq:empty_relation}
        w_\emptyset\left( \underline{\rho}_\pi+\underline{\rho}_\pi^\uparrow\right)=\underline{\rho}_{\pi_\emptyset}+\underline{\rho}_{\pi_\emptyset}^\uparrow.
    \end{equation}
    Note that because $I_{\emptyset,1}=I_{\emptyset,2}=\emptyset$, the element $\underline{\rho}_{\pi_\emptyset}^\uparrow$ actually belongs to $\fa_{\pi_\emptyset}^*$.

    As in Proposition~\ref{prop:up_is_down} we can relate the two regularized periods.

      \begin{lem}
    \label{lem:up_is_empty}
        Let $\varphi \in \cA_{P,\pi}(G)$. For $\lambda \in \fa_{\pi,\cc}^*-\underline{\rho}_\pi-\underline{\rho}_\pi^{\uparrow}$ in general position we have
        \begin{equation*}
            \cP_{\pi}^{\uparrow}(\varphi,\lambda)=\cP_{\pi_\emptyset}^\uparrow(M(w_\emptyset ,\lambda)\varphi,w_\emptyset\lambda).
        \end{equation*}
    \end{lem}

    \begin{proof}
        This can be proved as Proposition~\ref{prop:up_is_down}.
    \end{proof}

    Finally, let us note that $\pi_\emptyset$ is closely related to $\pi^\downarrow$ defined in \S\ref{subsubsec:connect_periods}. We sum up their relations in the following lemma.

    \begin{lem}
        \label{lem:down_is_empty}
        We have $\pi^\downarrow=(\pi_\emptyset)^\downarrow$. Moreover, let $w_\emptyset^\downarrow$ be the element "$w^\downarrow$" built for $\pi_\emptyset$ in \S\ref{subsubsec:connect_periods}. Then we have $w^\downarrow=w_\emptyset^\downarrow w_\emptyset$ and $ w_\emptyset^\downarrow\underline{\rho}_{\pi_\emptyset}=\underline{\rho}_{\pi^\downarrow}$.
    \end{lem}

\section{Expansion of the Rankin--Selberg period}
\label{sec:pseudo_spectral}
The goal of this section is to compute the fine spectral expansion of the Rankin--Selberg period of Theorem~\ref{thm:spectral_expansion_intro}. 

\subsection{The spectral expansion}

We first precisely write the result that we prove in \S\ref{sec:pseudo_spectral}, and then prove that it is equivalent to that of Theorem~\ref{thm:spectral_expansion_intro}.

\subsubsection{Measures} \label{subsubsec:i_a_pi_measure} Let $(I,P,\pi) \in \Pi_H$. We give the $\rr$-vector space $i \fa_\pi^*$ a Haar measure. Recall that in \S\ref{subsubsec:first_measure} for any algebraic group $G'$ we have equipped $\fa_{G'}$ with the Haar measure giving covolume $1$ to $\Hom(X^*(G'),\zz)$, and $i \fa_{G'}^*$ with the dual Haar measure. With the notation of \S\ref{subsubsec:relevant_inducing}, let $G'$ be the algebraic group
\begin{equation*}
    G'=\prod_{i=1}^{m_+} \GL_{\underline{n}(+,i)} \prod_{i=1}^{m_1} \GL_{\underline{d}(1,i)\underline{r}(1,i)} \prod_{i=1}^{m_2} \GL_{\underline{d}(2,i)\underline{r}(2,i)} \prod_{i=1}^{m_-} \GL_{\underline{n}(-,i)}.
\end{equation*}
By choosing the canonical basis of $X^*(G')$ as a basis for $\fa_{G'}^*$, \eqref{eq:coord_a_pi} becomes an isomorphism $\fa_\pi^* \simeq \fa_{G'}^*$. We equip $i \fa_\pi^*$ with the pushforward of the measure from $i \fa_{G'}^*$. Note that if $m_+=m_-=0$ and if $\pi$ is cuspidal, so that $\fa_\pi^*=\fa_P^*$, we get back the Haar measure that we equipped $i \fa_P^*$ with in \S\ref{subsubsec:first_measure}.

We also take the opportunity to fix measure for the increasing inducing data. Let $(I,P,\pi,I_1,I_2) \in \Pi_H^\uparrow$ be an increasing distinguished datum. Recall that we have defined in \S\ref{subsubsec:connect_periods} an associated triple $(I^\downarrow,P^\downarrow,\pi^\downarrow) \in \Pi_H$, which is obtained by conjugating by the element $w^\downarrow$. We equip $i \fa_{\pi}^*$ with the pushforward of the measure on $i \fa_{\pi^\downarrow}^*$ we juste described.

\subsubsection{The main result}

The goal of this section is to prove the following theorem. Recall that for each $(P,\pi) \in \Pi_H$ we have defined in \S\ref{sec:functional_equation} a subset $W(\pi) \subset W(P)$. 

\begin{theorem}
\label{thm:pseudo_spectral}
    Let $J$ be a level of $G$. For every $f \in \cS([G])^J$ be have
    \begin{equation}
    \label{eq:pseudo_spectral}
        \int_{[H]} f(h)dh=\sum_{(I,P,\pi) \in \Pi_H} \frac{1}{|W(\pi)|}  \int_{\lambda \in i \fa_\pi^*} \sum_{\varphi \in \cB_{P,\pi}(J)} \cP_\pi(\varphi,\lambda-\underline{\rho}_\pi) \langle f,E(\varphi,\lambda+\underline{\rho}_\pi) \rangle_G d\lambda,
    \end{equation}
    where this integral is absolutely convergent.
\end{theorem}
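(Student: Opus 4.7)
The plan is to follow the three-step strategy previewed in~\S\ref{subsubsec:coarse_unfolding}--\S\ref{subsubsec:step3}. First I would apply the coarse unfolding of the Rankin--Selberg integral (Proposition~\ref{prop:zeta_unfold}, previewed in~\eqref{eq:RS_unfolding}) to rewrite $\int_{[H]} f(h)\,dh$ as a finite sum, indexed by $0\le r\le n$, of integrals over $K_H$ of a Petersson-type inner product $\langle\cdot,\cdot\rangle_r$ paired with the Rankin--Selberg zeta integral $Z_{n-r}(\cdot,0)$, applied to the constant term $f_{P_r}$ regarded as a function on the Levi $[M_r]$. This step is purely formal and does not involve any contour manipulation.

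Next, for each fixed $r$, I would feed $f_{P_r}$ into Langlands' spectral decomposition of functions on $[M_r]$, in the extended form of Corollary~\ref{cor:Langlands_spectral_extended} valid on $\cT_{-N}([M_r])^J$. To make the zeta-integral termwise convergent, I first twist by a very positive $-s\underline{z}_r$, expand the twisted function spectrally, and apply the continuity of $Z_{n-r}$ in its region of absolute convergence (Lemma~\ref{lem:zeta_convergence}); discrete, non-cuspidal inductions do not contribute since they fail to be generic. Using the parabolic-descent description of $\cP_\pi$ (Proposition~\ref{prop:parabolic_descent}) and Frobenius reciprocity between constant terms and Eisenstein series, I would repackage the resulting expression as an integral of the shape~\eqref{eq:to_shift} over the shifted contour $i\fa_\pi^*+\underline{\rho}_\pi-s\underline{z}_r$, summed over triples $(I_r,P,\pi)\in\Pi_H$ with $I_r=(r,n-r,n+1-r,0)$ and $\pi$ cuspidal.

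The core of the proof is to shift each such contour, in two successive waves, back to $i\fa_\pi^*+\underline{\rho}_\pi$. The analytic inputs for this are supplied by Sections~\ref{chap:poles}--\ref{chap:RS_non_tempered}: rapid decay in vertical strips from Theorem~\ref{thm:bound_Eisenstein} and Proposition~\ref{prop:individual_bound_P_pi}, and localization of singularities from Theorem~\ref{thm:analytic_Eisenstein} and Proposition~\ref{prop:reg_P_pi}. In the first wave, moving to the intermediate contour $i\fa_\pi^*+\underline{\rho}_\pi-\tfrac12\underline{z}_r$, the only poles crossed are those of the Eisenstein series; by Theorem~\ref{thm:analytic_Eisenstein} they correspond to links between a segment attached to a cuspidal $\sigma$ and a segment attached to some $\Speh(\sigma,d-1)$, and by Lemma~\ref{lem:compute_residues} each residue spans a twisted Speh induction $\Speh(\sigma,d)$, producing contributions indexed by a subset $\Pi_{H,r}\subset\Pi_H$. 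The combinatorics is cleanest if organized through the increasing inducing data $\Pi_H^\uparrow$ of~\S\ref{subsec:increasing}, where the bounds of Proposition~\ref{prop:bound_P_pi_up} and the poles of Proposition~\ref{prop:reg_P_pi_up} are available on the enlarged region one really needs to traverse, and then transported back to $\Pi_H$ via Proposition~\ref{prop:up_is_down} and Lemma~\ref{lem:up_is_empty}. In the second wave, the new poles come from $\cP_\pi$ itself; by Proposition~\ref{prop:order_residues} each such residue is again a regularized period $\cP_{\pi'}$ for a new $(I',P',\pi')\in\Pi_H$, so all residual terms fall into the shape of the right-hand side of~\eqref{eq:pseudo_spectral}.

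The hard part will be the bookkeeping of~\S\ref{subsec:end_proof}: one has to check that, once all residues and iterated residues have been summed, the collection of triples appearing exhausts $\Pi_H$ exactly, and that the combinatorial overcounting produces precisely the factor $|W(\pi)|^{-1}$. The symmetrization will come from the fact that different orders of shifts and different cuspidal parametrizations yield the same $(I,P,\pi)$ up to an element of $W(\pi)$, with matching relative characters by the functional equation of Corollary~\ref{cor:independence_choice_couple}. Absolute convergence of the final double sum-integral is a consequence of Theorem~\ref{thm:bound_Eisenstein} combined with Lemma~\ref{lem:spectral_sum_bound} and Proposition~\ref{prop:individual_bound_P_pi}, together with Corollary~\ref{cor:zeros_of_E} which guarantees that the potential poles of $\cP_\pi$ on $i\fa_\pi^*$ are killed by the zeros of the Eisenstein series along the same hyperplanes. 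I would carry out steps~1, 2 and 3 in the dedicated subsections \S\ref{subsec:unfolding}, \S\ref{subsec:additional} and \S\ref{subsec:this_is_the_end}, and gather the final identification in \S\ref{subsec:end_proof}.
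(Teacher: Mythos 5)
Your proposal is correct and tracks the paper's own proof essentially step by step: coarse unfolding via Proposition~\ref{prop:zeta_unfold}, spectral expansion of $f_{P_r}$ via Corollary~\ref{cor:Langlands_spectral_extended} after a positive twist, then two waves of contour shifts catching residues of Eisenstein series (via Theorem~\ref{thm:analytic_Eisenstein} and Lemma~\ref{lem:compute_residues}) and of the regularized period (via Proposition~\ref{prop:order_residues}), all organized through the increasing inducing data $\Pi_H^\uparrow$ and concluded by the combinatorial identification in \S\ref{subsec:end_proof}. The only part your sketch elides is the graph-theoretic bookkeeping of \S\ref{subsubsec:statement_result} and \S\ref{subsubsec:result_2} that keeps track of which residues are caught and in what multiplicity, together with the truncation-in-$T$ device used in Lemmas~\ref{lem:residue_n}, \ref{lem:residue_n+1}, \ref{lem:almost_finished}, \ref{lem:end_of_T} to make the shifts of contour rigorous in the absence of a Riemann-hypothesis-type input, but the overall route is the paper's.
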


As noted in \cite[Remark~3.8.2.1]{Ch}, this expansion is independent from the choice of the level $J$.

\subsubsection{An alternative version using relative characters}

Before starting the proof of Theorem~\ref{thm:pseudo_spectral}, we reformulate it as in Section~\ref{sec:intro}.

Let $\Pi_H^J$ be the set of $(I,P,\pi) \in \Pi_H$ such that $(P,\pi)$ is a $J$-pair. Let $(I,P,\pi) \in \Pi_H^J$. For every $F \in \cS([G])^J$, and $\lambda \in \cS_{\pi,k,c_J}$ (for $k$ and $c_J$ as in Theorem~\ref{thm:bound_Eisenstein}), set
\begin{equation}
    \label{eq:relative_character}
     \cI_{(I,P,\pi)}(F,\lambda)=\sum_{\varphi \in \cB_{P,\pi}(J)} \langle F, E(\varphi,-\overline{\lambda}+\underline{\rho}_\pi) \rangle_G \cP_{\pi}(\varphi,\lambda-\underline{\rho}_\pi).
\end{equation}
The relative character $\cI_{(I,P,\pi)}(F,\lambda)$ is independent of the choice of $J$. 

\begin{lem}
\label{lem:bound_relative_character}
    The sum in \eqref{eq:relative_character} is absolutely convergent and defines an analytic function on $\lambda$. For fixed level $J$, for all $q>0$ there exists a continuous semi-norm $\norm{\cdot}_{J,q}$ on $\cS([G])^J$ such that for $(I,P,\pi) \in \Pi_H^J$ and every $f \in \cS([G])^J$ we have
    \begin{equation}
    \label{eq:bound_relative_char}
         \cI_{(I,P,\pi)}(F,\lambda) \leq \frac{\norm{F}_{J,q}}{(1+\norm{\lambda}^2)^q(1+\Lambda_\pi^2)^q}, \quad \lambda \in  i \fa_\pi^*.
    \end{equation}
    Moreover, $ \cI_{(I,P,\pi)}(F,\lambda)$ is $H(\bA)$-invariant in the sense that for every $h \in H(\bA)$ we have $ \cI_{(I,P,\pi)}(R(h)F,\lambda)= \cI_{(I,P,\pi)}(F,\lambda)$.
\end{lem}

\begin{proof}
    The bound \eqref{eq:bound_relative_char} follows from Proposition~\ref{prop:convergence_strong} and Proposition~\ref{prop:individual_bound_P_pi} by noting that the zeros of $L_{\pi,\cP}(\lambda)$ in a neighborhood of $i \fa_\pi^*-\underline{\rho}_\pi$ are contained in those of $L_{\pi,0}(-\overline{\lambda})$. 

    For the second assertion, for fixed $\lambda$ away from the zeros of $L_{\pi,\cP}(\lambda-\underline{\rho}_\pi)$ we know by Proposition~\ref{prop:convergence_strong} and Proposition~\ref{prop:individual_bound_P_pi} again that
    \begin{equation*}
         \cI_{(I,P,\pi)}(R(h)F,\lambda)=\cP_\pi \left( \sum_{\varphi \in \cB_{P,\pi}(J)} \langle R(h)F,E(\varphi,-\overline{\lambda}+\underline{\rho}_\pi) \rangle_G \varphi,\lambda -\underline{\rho}_\pi \right),
     \end{equation*}
     where the sum converges in $\cA_{P,\pi}(G)^J$. But this is $ \cI_{(I,P,\pi)}(F,\lambda)$ because $\cP_{\pi}$ is $H(\bA)$-invariant by Theorem~\ref{thm:P_pi_quotient}. We conclude by analytic continuation.
\end{proof}

We can now state the alternative version of Theorem~\ref{thm:pseudo_spectral}.

\begin{theorem}
    \label{thm:fine_spectral_proof}
    For any $F \in \cS([G])^J$ we have
    \begin{equation}
    \label{eq:fine_spectral_proof}
        \int_{[H]}F(h)dh=\sum_{(I,P,\pi) \in \Pi_H^J} \frac{1}{\Val{W(\pi)}} \int_{i \fa_\pi^*}  \cI_{(I,P,\pi)}(F,\lambda) d \lambda.
    \end{equation}
    where the integral is absolutely convergent. 
\end{theorem}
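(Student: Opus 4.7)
First, I would observe that \eqref{eq:fine_spectral_proof} is equivalent to Theorem~\ref{thm:pseudo_spectral}: for $\lambda \in i\fa_\pi^*$ one has $-\overline{\lambda}=\lambda$, and the triples in $\Pi_H \setminus \Pi_H^J$ contribute nothing since $\cB_{P,\pi}(J)=\emptyset$. The absolute convergence of the double integral is guaranteed by Lemma~\ref{lem:bound_relative_character}. Thus the real content is Theorem~\ref{thm:pseudo_spectral}, which I would prove by following the three-step strategy sketched in \S\ref{subsubsec:coarse_unfolding}--\S\ref{subsubsec:step3}.

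Step one is the coarse Rankin--Selberg unfolding
\begin{equation*}
    \int_{[H]} F(h) dh=\sum_{r=0}^{n} \int_{K_H}\left( \langle \cdot,\cdot \rangle_r \otimes Z_{n-r}(\cdot,0) \right)(R(k)F_{P_r}) dk.
\end{equation*}
To spectrally expand each summand, I would first regularize by a central twist $\lvert \det \rvert^{s}$ with $\Re(s)$ large, so that $Z_{n-r}(\cdot,s)$ defines a continuous linear form on functions of fixed moderate growth (as secured for constant terms by Lemma~\ref{lem:constant_growth}). Applying Corollary~\ref{cor:Langlands_spectral_extended} on each Levi $[M_r]$ and using that $Z_{n-r}$ annihilates non-generic contributions while $\langle \cdot,\cdot \rangle_r$ couples only diagonally paired representations, I would induce the result back to $G$ via the adjunction between constant terms and Eisenstein series. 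Proposition~\ref{prop:P=Z} and the parabolic descent description of Proposition~\ref{prop:parabolic_descent} let one rewrite the surviving Zeta integrals as values of $\cP_\pi$, yielding the preliminary expansion \eqref{eq:to_shift} indexed by $(I_r,P,\pi)$ with $I_r=(r,n-r,n+1-r,0)$ and $\pi_{1,i}$, $\pi_{2,j}$ cuspidal, along a contour $i\fa_\pi^*+\underline{\rho}_\pi - s\underline{z}_r$.

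Steps two and three then shift this contour back to $i\fa_\pi^*$ in two waves. The first shift, down to $i\fa_\pi^*+\underline{\rho}_\pi-\tfrac12\underline{z}_r$, crosses only poles of the Eisenstein series; by Theorem~\ref{thm:analytic_Eisenstein} these lie along linked-segment hyperplanes, and the residues are identified by Lemma~\ref{lem:compute_residues} with Speh representations, which enlarges the indexing set by all Arthur-type triples. The second shift, to $i\fa_\pi^*$, crosses singularities of $\cP_\pi(\varphi,-\overline{\lambda}-\underline{\rho}_\pi)$; by Theorem~\ref{thm:GGP_global_intro} these residues are themselves regularized periods $\cP_{\pi'}$ for new triples in $\Pi_H$, now with $\underline{\rho}_{\pi'}\neq 0$, accounting for the non-Arthur terms. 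The rapid decay and meromorphy estimates of Theorem~\ref{thm:bound_Eisenstein}, Proposition~\ref{prop:individual_bound_P_pi}, Proposition~\ref{prop:reg_P_pi_up} and Proposition~\ref{prop:bound_P_pi_up}, together with the zero-free regions of Theorem~\ref{thm:L}, justify the contour manipulations once one truncates the imaginary direction à la \cite{Lap06}; the increasing data $\Pi_H^{\uparrow}$ of \S\ref{subsec:increasing} is precisely the analytic device that produces regularity in the tube needed to execute the shifts.

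The principal obstacle will be the combinatorial bookkeeping of residues: at every stage one must verify that each residue repackages cleanly into an $\cI_{(I',P',\pi')}$-term and that the symmetry factor $\lvert W(\pi')\rvert^{-1}$ arises correctly. The functional equations of Corollary~\ref{cor:independence_choice_couple} supply the needed $W(\pi)$-equivariance, while the $\emptyset$-transformation of \S\ref{subsubsec:empty_transformation} and Lemma~\ref{lem:up_is_empty} reconcile residues arising from different $(I_r,P,\pi)$ that contribute to the same induced representation. A final grouping step in \S\ref{subsec:end_proof} reassembles the three waves of contributions into the single sum over $\Pi_H$ of \eqref{eq:pseudo_spectral}, yielding \eqref{eq:fine_spectral_proof}.
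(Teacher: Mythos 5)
You correctly identify that Theorem~\ref{thm:fine_spectral_proof} is a reformulation of Theorem~\ref{thm:pseudo_spectral}: for $\lambda\in i\fa_\pi^*$ one has $-\overline\lambda=\lambda$, so $\cI_{(I,P,\pi)}(F,\lambda)$ unpacks to the integrand of \eqref{eq:pseudo_spectral}, and triples with $\cB_{P,\pi}(J)=\emptyset$ contribute nothing; this is exactly the paper's proof. One small omission: Lemma~\ref{lem:bound_relative_character} gives the per-triple rapid-decay bound in $\lambda$ and $\Lambda_\pi$, but by itself does not control the infinite sum over $\pi\in\Pi_\disc(M_P)^J$; for that one also needs \eqref{eq:Muller_spectral}, which the paper cites alongside the lemma. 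Your appended sketch of Theorem~\ref{thm:pseudo_spectral} accurately reflects the strategy of \S\ref{subsec:unfolding}, \S\ref{subsec:additional}, \S\ref{subsec:this_is_the_end} and \S\ref{subsec:end_proof}, but that is a separate theorem proved in full after the reformulation in question, and your paragraph is a prose outline of that longer argument rather than a self-contained proof of it.
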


\begin{proof}
    That the integral is absolutely convergent follows from Lemma~\ref{lem:bound_relative_character} and \eqref{eq:Muller_spectral}. Theorem~\ref{thm:fine_spectral_proof} is now simply a reformulation of Theorem~\ref{thm:pseudo_spectral}.
\end{proof}

Now let $f \in \cS(G(\bA))$ and $g \in G(\bA)$. Set $F=K_f(g,\cdot)$, where $K_f$ is the kernel function from \eqref{eq:kernel_function}. By \cite[Lemma~2.10.1.1]{BPCZ}, we have $K_f(g,\cdot) \in \cS([G])$. Theorem~\ref{thm:spectral_expansion_intro} is now exactly Theorem~\ref{thm:fine_spectral_proof} up to a change of variables once we realize that
\begin{equation*}
    \langle F,E(\varphi,-\overline{\lambda}-\underline{\rho}_\pi) \rangle=E(g,I_P(f,\lambda-\underline{\rho}_\pi)\overline{\varphi},\overline{\lambda}-\underline{\rho}_\pi), \quad \varphi \in \cB_{P,\pi}(J), \quad \lambda \in i \fa_\pi^*.
\end{equation*}

\subsection{Unfolding of the Rankin--Selberg period in terms of partial Zeta functions}
\label{subsec:unfolding}

In the rest of this section, we fix a level $J$. We prove a first spectral expansion for the integral $\int_{[H]} f(h) dh$ using Rankin--Selberg unfolding.

\subsubsection{Statement of the result}
\label{subsubsec:statement_schwartz}

To write our first formula, we need some notation. For every integer $0 \leq r \leq n$, let $P_r$ be the standard parabolic subgroup of $G$ with Levi subgroup $M_r:=(\GL_r \times \GL_{n-r}) \times (\GL_r \times \GL_{n+1-r})$. Note that it is a standard Rankin--Selberg parabolic subgroup of $G$, so that $P_r^\std=P_r$. We simply write $\bfM_r^2$ for $\bfM_{P_r}^2$, $\bfM_r$ for $\bfM_r^2 \cap H$ and $\cM_r$ for $\cM_{P_r}$. These groups are respectively isomorphic to $\GL_r^2$, $\GL_r$ and $\GL_{n-r} \times \GL_{n+1-r}$. With respect to the decomposition of $M_r$ given above, set 
\begin{equation}
    \label{eq:underline_z} \underline{z}_{r}=\left((0,1/4),(0,1/4)\right) \in \fa_{P_r}^*.
\end{equation}
Let $I_r$ be the tuple
\begin{equation}
    \label{eq:I_r_defi}
    I_r=(r,0,n-r,0,n+1-r,0).
\end{equation}
We have the set of tuples $(I_r,P,\pi) \in \Pi_H^\uparrow$ (see \S\ref{subsubsec:condition_up}). Recall that this means that $I_1=I_2=\emptyset$ in the notation of \S\ref{subsubsec:condition_up}. By unfolding the definition, $(I_r,P,\pi) \in \Pi_H^\uparrow$ is the datum of a standard parabolic subgroup $P$ of $G$ and of $\pi \in \Pi_\disc(M_P)$ such that the following conditions are satisfied.
\begin{itemize}
    \item We have $P \subset P_r$. We set $\cP=P \cap \cM_r$ and $\bfP^2=P \cap \bfM_r^2$.
    \item Under this decomposition, we have $\pi=(\pi_\bfP \boxtimes \pi_\bfP^\vee) \boxtimes \pi_\cP$, where $\pi_\bfP \in \Pi_\disc(M_\bfP)$ and $\pi_\cP \in \Pi_\cusp(M_\cP)$.
\end{itemize}

Let $(I_r,P,\pi) \in \Pi_H^\uparrow$. We have the space $\fa_\pi^*$ defined in \eqref{eq:a_pi_up_defi}. We can describe it as $\fa_{\pi}^*=\fa_{\bfP}^* \oplus \fa_{\cP}^*$ where $\fa_{\bfP}^*$ is anti-diagonally embedded in $\fa_{\bfP^2}^* \subset \fa_{P}^*$. We have the regularized period $\cP_{\pi}^\uparrow$ from \eqref{eq:P_up}. By Proposition~\ref{prop:up_is_down}, this is simply $\cP_{\pi}$ in this case as $P^\downarrow=P$. We also have the element $\underline{\rho}_\pi \in \fa_P^*$ from \eqref{eq:rho_P_down}. It has coordinates $((1/4,0),(1/4,0))$. We write $\cP(M_\bfP)$ for the set of semi-standard parabolic subgroups of $\bfM_{r}$ with semi-standard Levi factor $M_{\bfP}$, and $\cP(M_\cP)$ for the those of $\cM_r$ with semi-standard Levi $M_\cP$. We have the set $W(\pi)$ defined in \S\ref{sec:functional_equation} and it satisfies
\begin{equation*}
        \Val{W(\pi)}=\Val{\cP(M_\bfP)}\Val{\cP(M_\cP)} .
\end{equation*}
The goal of this section is to prove the following proposition. 

\begin{prop}
\label{prop:enter_Eisenstein}
    There exists $c>0$ such that for all $t>c$ and $f \in \cS([G])^J$ we have
    \begin{equation}
        \label{eq:enter_Eisenstein}
            \int_{[H]}f(h)dh
            =\sum_{r=0}^{n} \sum_{(I_r,P,\pi) \in \Pi_H^\uparrow} \frac{1}{|W(\pi)|} \int_{i \fa_{\pi}^*-\underline{\rho}_\pi+t\underline{z}_{r}} \sum_{\varphi \in \cB_{P,\pi}(J)}\cP_{\pi}^\uparrow(\varphi,\lambda) \langle f,E(\varphi,-\overline{\lambda}) \rangle_{G} d\lambda.
    \end{equation}
\end{prop}

\begin{rem}
\label{rem:rapid_decay}
    Note that the sum only runs through the $(I_r,P,\pi)$ such that $(P,\pi)$ is a $J$-pair. It now follows from Theorem~\ref{thm:analytic_Eisenstein} and Proposition~\ref{prop:reg_P_pi_up} that the integrand in Proposition~\ref{prop:enter_Eisenstein} is holomorphic, and it is of rapid decay by Proposition~\ref{prop:convergence_strong} and Proposition~\ref{prop:bound_P_pi_up}. Therefore, \eqref{eq:enter_Eisenstein} is well-defined. 
\end{rem}

\subsubsection{Unfolding of the Rankin--Selberg period in terms of partial Zeta integrals}

Let $0 \leq r \leq n$. If $f \in \cS([G])$, we can consider the restriction of the constant term $f_{P_r}$ to $[M_r]$. It is a priori an element in $\cT([M_r])$, but its growth can be more finely controlled.

\begin{lem}
    \label{lem:constant_growth}
    The following assertions hold.
    \begin{enumerate}
        \item For every $g_r \in \cM_r (\bA)$ and $f \in \cS([G])$, the map $m_r \in [\bfM_r^2] \mapsto f_{P_r}(m_r g_r)$ belongs to $\cS([\bfM_r^2])$.
        \item Let $N>0$. There exists $c_N>0$ such that for all $t>c_N$ there exists a continuous semi-norm $\norm{\cdot}_t$ on $\cS([G])$ such that
        \begin{equation*}
            \Val{f_{P_r}(m_r g_r)}\Val{\det g_r}^{-t} \leq \norm{f}_{t} \norm{m_r}_{\bfM_r}^{-N} \norm{g_r}_{\cM_r}^{-N}, \quad m_r \in [\bfM_r], \quad g_r \in [\cM_r], \quad f \in \cS([G]).
        \end{equation*}
    \end{enumerate}
    
\end{lem}

\begin{proof}
    The first assertion is a direct consequence of \cite[Lemma~2.5.13.1]{BPCZ}. For the second, note that for every $m_r \in \bfM_r (\bA)$ and $g_r \in \cM_r (\bA)$ we have
    \begin{equation*}
        \delta_{P_r}^{1/2}(m_r g_r)=\Val{m_{r}}^{n-r+1/2} \Val{g_r}^{-r},
    \end{equation*}
    where we write $\Val{\cdot}$ for $\Val{ \det( \cdot)}$ and the determinant is taken in $G$. By \cite[Equation~(2.4.1.4)]{BPCZ} and \cite[Lemma~2.5.13.1]{BPCZ}, there exists $c>0$ such that for every $N'>0$ we have a continuous semi-norm $\norm{\cdot}_{N'}$ on $\cS([G])$ such that
    \begin{equation}
        \label{eq:constant_term_estimate}
        \Val{f_{P_r}(m_r g_r)} \leq \norm{f}_{N'} \norm{m_r}_{\bfM_r}^{-N'} \norm{g_r}_{\cM_r}^{-N'} \Val{m_{r}}^{-c(n-r+1/2)N'} \Val{g_r}^{crN'},
    \end{equation} 
    for every $m_r \in [\bfM_r]$, $g_r \in [\cM_r]$ and $f \in \cS([G])$. Set $c_N=c(n+1/2)N$ and take $t>c_N$. By applying \eqref{eq:constant_term_estimate} to $N'=t/{c_N} \times N$ and to the Schwartz-function $f \Val{\cdot}^{-c(n-r+1/2)N'}$, we get
    \begin{equation*}
        \Val{f_{P_r}(m_r g_r)}\Val{\det g_r}^{-t} \leq \norm{f \Val{\cdot}^{-c(n-r+1/2)N'}}_{N'} \norm{m_r}_{\bfM_r}^{-N} \norm{g_r}_{\cM_r}^{-N}, 
    \end{equation*}
    once again for every $m_r \in [\bfM_r]$, $g_r \in [\cM_r]$ and $f \in \cS([G])$. This concludes.
\end{proof}

For any $f \in \cS([G])$, we can therefore consider the twisted partial diagonal period
\begin{equation}
    \label{eq:twisted_partial}
    g_r \in \cM_r (\bA) \mapsto P_{r}(f)(g_r):=\delta_{P_r}^{-1/2}(g_r)\int_{[\bfM_r]} f_{P_r}(m_rg_r) \delta_{P_{r,H}}^{-1}(m_r)dm_r,
\end{equation}
It is absolutely convergent by Lemma~\ref{lem:constant_growth} and \cite[Proposition~A.1.1.(vi)]{Beu}.

We now recall the construction of Zeta integrals on $\cM_r$. Let $\cN_r$ be the unipotent radical of the standard Borel of upper triangular matrices in $\cM_r$. Let $\psi$ be a generic automorphic character of $N_0(\bA)$ trivial on $N_{0,H}(\bA)$. Denote again by $\psi$ its restriction to $\cN_r (\bA)$. For $s \in \cc$ and $N>0$, consider the map
\begin{equation*}
    W_{r,f_r} : g_r \in [\cM_r] \mapsto \int_{[\cN_r]} f_r (ng_r) \overline{\psi}(n) dn, \quad f_r \in \cT_N([\cM_r]),
\end{equation*}
and 
\begin{equation}
    \label{eq:Z_r}
    Z_r (f_r,s) := \int_{\cN_{r,H}(\bA) \backslash \cM_{r,H}(\bA)} W_{r,f_r}(h_r) \Val{\det h_r}^s dh_r, \quad f_r \in \cT_N([\cM_r]).
\end{equation}
The first is always defined by an absolute convergent integral, while the other may not be. We have the following result from \cite{BPCZ}.

\begin{lem}[{\cite[Lemma~7.1.1.1.]{BPCZ}}]
    \label{lem:zeta_convergence}
    Let $N>0$. There exists $c_N >0$ such that for every $s \in \cc$ with $\Re(s) \geq c_N$ the integral \eqref{eq:Z_r} is absolutely convergent for every $f_r \in \cT_N([\cM_r])$. Moreover, the map $f_r \in \cT_N([\cM_r]) \mapsto Z_r (f_r,s)$ is continuous.
\end{lem}

We now unfold the Rankin--Selberg integral using the partial Zeta functions. Recall that we write $R$ for the action by right-translation on spaces of functions.

\begin{prop} 
    \label{prop:zeta_unfold}
    For every $f \in \cS([G])$ we have
    \begin{equation}
        \label{eq:zeta_unfold}
        \int_{[H]} f(h) dh = \sum_{r=0}^n \int_{K_H} Z_r (P_r (R(k)f),0)dk.
    \end{equation}
\end{prop}
\begin{proof}
    Note that the $Z_r (P_r (R(k)f),0)$ are all well-defined by Lemma~\ref{lem:constant_growth} and Lemma~\ref{lem:zeta_convergence}. We now claim that \eqref{eq:zeta_unfold} is proved in the course of the proof of \cite[Proposition~7.2.0.2]{BPCZ}. Indeed, \cite{BPCZ} shows that there exists $c>0$ such that for every $\Re(s)\geq c$ and $f \in \cS([G])$ we have the equality
    \begin{equation*}
        \int_{[H]} f(h) \Val{h}^s dh = \sum_{r=0}^n  \int_{\cN_{r,H}(\bA) \backslash \cM_{r,H}(\bA)} \int_{K_H} \int_{[\bfM_r]} \delta_{P_{r,H}}^{-1}(h_r m_r)\Val{h_r m_r}^s  W_{r,R(m_r k)f_{P_r}}(h_r)  d m_r d h_r dk, 
    \end{equation*}
    where the integrals are absolutely convergent. For all $m_r \in \cM_r (\bA) \cap H(\bA)$ we have $\delta_{P_r,H}(m_r)=\delta_{P_r}^{1/2}(m_r)$. Therefore, \eqref{eq:zeta_unfold} follows by applying this result to the Schwartz function $g \mapsto f(g) \Val{g}^{-c}$. Note that \cite{BPCZ} assumes that $f$ belongs to $\cS_\chi([G])$ for some regular cuspidal datum $\chi$ (see \cite[Section~2.9.7]{BPCZ}). However, this assumption is only used to show that the terms attached to $r>0$ vanish in \eqref{eq:zeta_unfold}, which we do not claim here.
\end{proof}

\subsubsection{Sectral unfolding for partial periods of constant terms}

We keep $0 \leq r \leq n$ and write the spectral expansion of the twisted partial diagonal period $P_r$ from \eqref{eq:twisted_partial}. We consider $\Pi_r$ the set of couples $(P,\pi)$ such that the following conditions are satisfied.
\begin{itemize}
    \item $P$ is a standard parabolic subgroup of $G$ contained in $P_r$ such that $\bfM_r^2 \cap P$ is of the form $\bfP^2$. We write $\cP=\cM_r \cap P$.
    \item $\pi \in \Pi_\disc(M_P)$ decomposes as
    \begin{equation*}
        \pi=(\pi_\bfP \boxtimes \pi_\bfP^\vee) \boxtimes \pi_\cP.
    \end{equation*}
\end{itemize}
We embed $\fa_{\bfP}^*$ diagonally into $\fa_{\bfP^2}$ and further into $\fa_{P}^*$. We write $\fa_{\bfP}^{*,-}$ for the anti-diagonal copy of $\fa_{\bfP}^*$ in $\fa_{\bfP^2}^*$. As in \S\ref{subsubsec:statement_schwartz}, relatively to $\fa_P^*=\fa_{\bfP^2}^* \oplus \fa_{\cP}^*$ we define
\begin{equation*}
    \underline{\rho}_\pi=((1/4,0),(1/4,0)) , \quad \underline{z}_r = ((0,1/4),(0,1/4)).
\end{equation*}
Then $\underline{\rho}_\pi \in \fa_{\bfP}^*$.

For every $g_r=(g_{r,n},g_{r,n+1}) \in \cM_r (\bA)$, we have the continuous linear form
\begin{equation*}
    \varphi=\varphi_n \otimes \varphi_{n+1} \in \cA_{P \cap M_r,\pi}(M_r) \mapsto \left(\langle \cdot,\overline{\cdot} \rangle_{\bfP,\Pet} \otimes \mathrm{ev}_{g_r} \right)(\varphi) := \langle \varphi_{n}(\cdot \; g_{r,n}),\overline{\varphi}_{n+1}(\cdot \; g_{r,n+1}) \rangle_{\bfP,\Pet},
\end{equation*}
where the notations $\varphi_n(\cdot \; g_{r,n})$ and $\varphi_{n+1}(\cdot \; g_{r,n+1})$ mean that we consider the restrictions of these automorphic forms to the two copies of $M_\bfP$. For $\varphi \in \cA_{P \cap M_r,\pi}(M_r)$, we can consider the automorphic form
\begin{equation*}
    \left(\langle \cdot,\overline{\cdot} \rangle_{\bfP,\Pet}\right)(\varphi) : g_r \mapsto \left(\langle \cdot,\overline{\cdot} \rangle_{\bfP,\Pet} \otimes \mathrm{ev}_{g_r} \right)(\varphi).
\end{equation*}
It belongs to $\cA_{\cP,\pi_\cP}(\cM_r)$. 

If now $\varphi \in \cA_{P,\pi}(G)$, we set 
\begin{equation*}
    \varphi_M : m \in M_r (\bA) \mapsto \delta_{P_r}^{-1/2}(m) \varphi(m).
\end{equation*}
Then $\varphi_M \in \cA_{P \cap M_r,\pi}(M_r)$. By composing with the previous map, we obtain
\begin{equation*}
   \varphi \in \cA_{P,\pi}(G) \mapsto \left(\langle \cdot,\overline{\cdot} \rangle_{\bfP,\Pet}\right)(\varphi_M)  \in \cA_{\cP,\pi_\cP}(\cM_r)
\end{equation*}

\begin{lem}
    \label{lem:partial_diagonal}
    There exists $c>0$ such that for every $t>c$ and $f \in \cS([G])^J$ we have 
    \begin{align}
        \label{eq:partial_diagonal_expansion}
        P_r (f)(g_r)= &\sum_{(P,\pi) \in \Pi_r} \Val{\cP(M_\bfP)}^{-1}\Val{\cP(M_\cP)}^{-1}\nonumber \\
        \times &\int_{i\fa_{\bfP}^{*,-} \oplus i \fa_{\cP}^*-\underline{\rho}_\pi+t \underline{z}_r} \sum_{\varphi \in \cB_{P,\pi}(J)} \langle f, E(\varphi,-\overline{\lambda}) \rangle_G E^{\cM_r}\left(g_r, \left(\langle \cdot,\overline{\cdot} \rangle_{\bfP,\Pet}\right)(\varphi_M)  ,\lambda_\cP\right) d\lambda,
    \end{align}
    for every $g_r \in \cM_r (\bA)$.
\end{lem}

\begin{proof}
    Without loss of generality, we can assume that $f=f_n \otimes f_{n+1}$ as both sides of \eqref{eq:partial_diagonal_expansion} define continuous linear forms on $\cS([G])$ by Lemma~\ref{lem:constant_growth} for the LHS, and Propositions~\ref{prop:convergence_strong} and \ref{prop:bound_Eisenstein_non_finite} as well as \eqref{eq:Muller_spectral} for the RHS. Indeed, because $t$ is large the region of integration does not meet the possible singularities of Eisenstein series by Theorem~\ref{thm:analytic_Eisenstein}. We denote by $\rho_{\bfP_r,H}$ the restriction of $\rho_{P_r,H}$ to $\fa_{\bfP_r}$ which we identify with an element of $\fa_{\bfP_r}^*$. Let $g_r \in \cM_r (\bA)$. By the first assertion of Lemma~\ref{lem:constant_growth} and Langlands' spectral decomposition of the inner product of Schwartz functions (Proposition~\ref{prop:extended_Langlands}), we have 
    \begin{align*}
        P_r (f)(g_r)= \delta_{P_r}^{-1/2}(g_r)\sum_{\bfP \subset \bfP_r} &\sum_{\pi_\bfP \in \Pi_\disc(M_\bfP)}\sum_{\phi \in \cB_{\bfP^2,\pi_\bfP \boxtimes \pi_\bfP^\vee}(J)}\\
        \times & \int_{i \fa_{\bfP}^{*,-}+2 \rho_{\bfP_r,H}} \langle R(g_r)f_{P_r},E^{\bfM_r}(\phi,-\overline{\lambda_\bfP})\rangle_{\bfM_r} \langle \phi_n,\overline{\phi}_{n+1} \rangle_{\bfP,\Pet} d \lambda_\bfP,
    \end{align*}
    where we write again $J$ for the level instead of $J \cap \bfM_r^2$. Note that here we artificially add the Petersson inner product $\langle \phi_n,\overline{\phi}_{n+1} \rangle_{\bfP,\Pet}$ to sum over $\cB_{\bfP^2,\pi_\bfP \boxtimes \pi_\bfP^\vee}(J)$ rather than $\cB_{\bfP,\pi_\bfP}(J)$. 

    Let $N>0$ be such that Langlands' spectral decomposition holds for functions in $\cT_{-N}([\cM_r])$ (Corollary~\ref{cor:Langlands_spectral_extended}). Up to enlarging it, we can also assume that the theorem holds for functions in $\cT_{-N}([\bfM_r^2])$. Take $c_N>0$ as in Lemma~\ref{lem:constant_growth} as well as $t>4c_N$. For every $\bfP$, $\pi_\bfP$ and $\phi$ we can therefore write the spectral expansion of $g_r \mapsto \langle R(g_r)f_{P_r},E^{\bfM_r}(\phi,-\overline{\lambda_\bfP})\rangle_{\bfM_r} \Val{g_r}^{-t/4}$. By absorbing this twist in the integral, we see that 
    \begin{align}
        \label{eq:spectral_partial_proof}
        &P_r (f)(g_r)= \delta_{P_r}^{-1/2}(g_r)\sum_{\bfP \subset \bfP_r} \sum_{\pi_\bfP \in \Pi_\disc(M_\bfP)}\sum_{\phi \in \cB_{\bfP^2,\pi_\bfP \boxtimes \pi_\bfP^\vee}(J)} \sum_{\cP \subset \cP_r} \sum_{\pi_\cP \in \Pi_\disc(M_{\cP})} \sum_{\phi' \in \cB_{\cP,\pi_\cP}(J)} \nonumber\\
        \times & \int_{i \fa_{\bfP}^{*,-}\oplus i \fa_{\cP}^*+2 \rho_{\bfP_r,H} + t \underline{z}_r} \langle f_{P_r},E^{M_r}(\phi \otimes \phi',-\overline{\lambda})\rangle_{\bfM_r} \langle \phi_n,\overline{\phi}_{n+1}  \rangle_{\bfP,\Pet} E^{\cM_r}(g_r,\phi',\lambda_\cP)d \lambda,
    \end{align}
    where we write $\phi \otimes \phi'$ for the product of $\phi$ and $\phi'$ seen as an automorphic form on $M_r$. Note that thanks to our bounds on Eisenstein series from Propositions~\ref{prop:convergence_strong} and \ref{prop:bound_Eisenstein_non_finite} and \eqref{eq:Muller_spectral}, we know that this integral is absolutely convergent and therefore that we can switch the order of the sums. 

    Take $\bfP$, $\pi_\bfP$, $\cP$ and $\pi_\cP$ as in \eqref{eq:spectral_partial_proof}. Set $P=(\bfP^2 \times \cP)N_{P_r}$ and $\pi=(\pi_\bfP \boxtimes \pi_\bfP^\vee) \boxtimes \pi_\cP \in \Pi_\disc(M_P)$. Then $(P,\pi) \in \Pi_r$. Take $\lambda \in i \fa_{\bfP}^{*,-}\oplus i \fa_{\cP}^*+2 \rho_{\bfP_r,H} + t \underline{z}_r$. By Proposition~\ref{prop:convergence_strong} and Lemma~\ref{lem:constant_growth}, we see that the series
    \begin{equation}
        \label{eq:f_P_r_1}
        f_{P_r,\pi_\lambda}:=\sum_{\varphi \in \cB_{P \cap M_r,\pi}(J)}\langle f_{P_r},E^{M_r}(\varphi,-\overline{\lambda})\rangle_{\bfM_r} \varphi_\lambda 
    \end{equation}
    is absolutely convergent in $\cA_{P \cap M_r,\pi_\lambda}(M_r)^J$. Because $g \in [G] \mapsto (R(g)f)_{P_r,\pi_\lambda}$ belongs to $\cA_{P,\pi_\lambda,-\rho_{P_r}}(G)$, we have
    \begin{equation*}
        f_{P_r,\pi_\lambda}=\sum_{\varphi \in \cB_{P,\pi}(J)} \langle F,\varphi \rangle_{P,\Pet} \varphi_{M,\lambda}.
    \end{equation*} 
    For fixed $\varphi \in \cB_{P,\pi}(J)$, we check by undolding the integrals, taking into account the twist by $\rho_{P_r}$ and using \eqref{eq:f_P_r_1}, that 
    \begin{equation*}
        \langle F,\varphi \rangle_{P,\Pet}=\langle f_{P_r},E^{M_r}(\varphi,-\overline{\lambda}+\rho_{P_r})\rangle_{P_r}.
    \end{equation*}
    It follows that
    \begin{equation}
        \label{eq:f_P_r_pi_lambda}
        f_{P_r,\pi_\lambda}=\sum_{\varphi \in \cB_{P,\pi}(J)}\langle f_{P_r},E^{M_r}(\varphi,-\overline{\lambda}+\rho_{P_r})\rangle_{P_r} \varphi_{M,\lambda}
    \end{equation}
    Because $t \gg 0$, the Eisenstein series over $P_r (F) \backslash G(F)$ of $E^{M_r}(\varphi,-\overline{\lambda}+\rho_{P_r})$ is absolutely convergent. Therefore, by adjunction of the constant term and Eisenstein series, we may rewrite \eqref{eq:f_P_r_pi_lambda} as
    \begin{equation}
        \label{eq:f_P_r_2}
        f_{P_r,\pi_\lambda}=\sum_{\varphi \in \cB_{P,\pi}(J)}\langle f,E(\varphi,-\overline{\lambda}+\rho_{P_r})\rangle_{G} \varphi_{M,\lambda}
    \end{equation}
    Once again, by Proposition~\ref{prop:convergence_strong} this sum is absolutely convergent in $\cA_{P \cap M_r,\pi_\lambda}(M_r)^J$. By Proposition~\ref{prop:bound_Eisenstein_non_finite}, we know that the linear form
    \begin{equation*}
        \cA_{P \cap M_r,\pi_\lambda}(M_r)^J \mapsto  E^{\cM_r}\left(g_r, \left(\langle \cdot,\overline{\cdot} \rangle_{\bfP,\Pet}\right)(\varphi)  ,\lambda_\cP\right)
    \end{equation*}
    is continuous. By comparing the two expressions of $f_{P_r,\pi_\lambda}$ obtained in \eqref{eq:f_P_r_1} and \eqref{eq:f_P_r_2}, we conclude that \eqref{eq:partial_diagonal_expansion} holds once we note that
    \begin{equation*}
        \underline{\rho}_\pi=(\rho_{P_r})_{|\fa_\bfP}-2 \rho_{\bfP_r,H},
    \end{equation*}
    which follows from the definition.
\end{proof}

\subsubsection{End of the proof of Proposition~\ref{prop:enter_Eisenstein}}

We can now end the proof of Proposition~\ref{prop:enter_Eisenstein}. Let $f \in \cS([G])^J$. Let $c>0$ be given by Lemma~\ref{lem:partial_diagonal}. By Propositions~\ref{prop:convergence_strong} and \ref{prop:bound_Eisenstein_non_finite} as well as \eqref{eq:Muller_spectral}, there exists $N>0$ such that for every $t>4c$ the integral 
 \begin{align*}
       g_r \in [\cM_r] \mapsto \Val{\det g_r}^{-t/4} &\sum_{(P,\pi) \in \Pi_r} \Val{\cP(M_\bfP)}^{-1}\Val{\cP(M_\cP)}^{-1} \\
       \times &\int_{i\fa_{\bfP}^{*,-} \oplus i \fa_{\cP}^*-\underline{\rho}_\pi+t \underline{z}_r} \sum_{\varphi \in \cB_{P,\pi}(J)} \langle f, E(\varphi,-\overline{\lambda}) \rangle_G E^{\cM_r}\left(g_r, \left(\langle \cdot,\overline{\cdot} \rangle_{\bfP,\Pet}\right)(\varphi_M)  ,\lambda_\cP\right) d\lambda,
\end{align*}
is absolutely convergent in $\cT_{-N}(\cM_r)$. Assume moreover that $t>4c_N$ where $c_N$ is given by Lemma~\ref{lem:zeta_convergence}. By Lemma~\ref{lem:zeta_convergence}, Proposition~\ref{prop:zeta_unfold} and a change of variables, we conclude that 
\begin{align}
    \label{eq:big_spectral}
    &\int_{[H]} f(h)dh=\sum_{r=0}^n \sum_{(P,\pi) \in \Pi_r} \Val{\cP(M_\bfP)}^{-1}\Val{\cP(M_\cP)}^{-1} \nonumber \\
    \times &\int_{i\fa_{\bfP}^{*,-} \oplus i \fa_{\cP}^*-\underline{\rho}_\pi+t \underline{z}_r} \sum_{\varphi \in \cB_{P,\pi}(J)} \langle f, E(\varphi,-\overline{\lambda}) \rangle_G \int_{K_H}Z_r (\left(E^{\cM_r}\left( \left(\langle \cdot,\overline{\cdot} \rangle_{\bfP,\Pet}\right)(R(k)\varphi_M),\lambda_\cP\right),0\right) dk d\lambda.
\end{align}
By Proposition~\ref{prop:P=Z}, we know that the Zeta integral $Z_r$ is zero on automorphic representations induced from the residual spectrum. Therefore, the term attached to $(P,\pi)$ in \eqref{eq:big_spectral} is zero as soon as $\pi_\cP \notin \Pi_\cusp(M_\cP)$ so that the sum takes place over triples $(I_r,P,\pi) \in \Pi_H^\uparrow$. 

For $(I_r,P,\pi) \in \Pi_H^\uparrow$, by parabolic descent (Proposition~\ref{prop:parabolic_descent}) and by the definition of $\cP_\pi^\uparrow$ in terms of regularized period in \eqref{eq:P_up}, we have for every $\varphi \in \cB_{P,\pi}(J)$ the formula
\begin{equation*}
    \cP_{\pi}^\uparrow(\varphi,\lambda)=\int_{K_H}Z_r (\left(E^{\cM_r}\left( \left(\langle \cdot,\overline{\cdot} \rangle_{\bfP,\Pet}\right)(R(k)\varphi_M),\lambda_\cP\right),0\right)dk.
\end{equation*}
Thanks to the description given in \S\ref{subsubsec:statement_schwartz}, we also know that $i\fa_{\bfP}^{*,-} \oplus i \fa_{\cP}^*-\underline{\rho}_\pi+t \underline{z}_r$ is nothing but $i \fa_\pi^* -\underline{\rho}_\pi+t \underline{z}_r$ and that $\Val{W(\pi)}=\Val{\cP(M_\bfP)}\Val{\cP(M_\cP)}$. This concludes the proof of Proposition~\ref{prop:enter_Eisenstein}.

\subsection{Additional residues from discrete Eisenstein series}
\label{subsec:additional}

For the rest of the proof of Theorem~\ref{thm:pseudo_spectral} we fix a Schwartz function $f \in \cS([G])^J$. We now compute a second spectral expansion for $\int_{[H]} f(h) dh$, building on the one from Proposition~\ref{prop:enter_Eisenstein}. It is obtained by shifting the contour in the integrals of \eqref{eq:enter_Eisenstein}. Doing so, we will cross some singularities of the integrand which all come from the Eisenstein series $E(\varphi,-\overline{\lambda})$.

\subsubsection{Statement of the result}
\label{subsubsec:statement_result}

In this section, we take $0 \leq r \leq n$ and $(I_r,P,\pi) \in \Pi_H^\uparrow$. Our goal is to find a new intermediate expression for 
\begin{equation}
\label{eq:I_pi_defi}
    I_\pi:=\int_{i \fa_{\pi}^*-\underline{\rho}_\pi+t\underline{z}_{r}} \sum_{\varphi \in \cB_{P,\pi}(J)}\cP_{\pi}^\uparrow(\varphi,\lambda) \langle f,E(\varphi,-\overline{\lambda}) \rangle_{G} d\lambda.
\end{equation}
We have the following functional equation for $I_\pi$.

\begin{lem}
\label{lem:functional_I_pi}
    For any $w \in W(\pi)$, we have $I_\pi=I_{w \pi}$.
\end{lem}

\begin{proof}
    Note that $w$ sends $i \fa_\pi^*-\underline{\rho}_\pi+t \underline{z}_{r}$ to $i \fa_{w\pi}^*-\underline{\rho}_{w \pi}+t \underline{z}_{r}$. The result now follows from the functional equation of $\cP_\pi^\uparrow$ (Lemma~\ref{lem:functional_up}) and of Eisenstein series (Theorem~\ref{thm:analytic_Eisenstein}) using a change of variables in $\varphi$ and $\lambda$.
\end{proof}

Write $\pi=\pi_n \boxtimes \pi_{n+1}$ as
\begin{equation}
    \label{eq:pi_defi}
    \pi_n=\boxtimes_{i=1}^{m_+} \pi_{+,i}\boxtimes_{i=1}^{m_{\mathrm{c},1}} \pi_{\mathrm{c},1,i} , \quad \pi_{n+1}=\boxtimes_{i=1}^{m_+} \pi_{+,i}^\vee\boxtimes_{i=1}^{m_{\mathrm{c},2}} \pi_{\mathrm{c},2,i} ,
\end{equation}
where the $\pi_{\mathrm{c},1,i}$ and $\pi_{\mathrm{c},2,i}$ are cuspidal and the $\pi_{+,i}$ discrete. We furthermore write $\pi_{+,i}=\Speh(\sigma_{+,i},\underline{d}(+,i))$ for each $i$. By Lemma~\ref{lem:functional_I_pi}, we may assume that $\underline{d}(+,1) \geq \hdots \geq \underline{d}(+,m_+)$. We also define $\pi_{+,i}^+:=\Speh(\sigma_{+,i},\underline{d}(+,i)+1)$.

In order to state our intermediate formula, we will need some combinatorial gadgets. We denote by $\cG(\pi)$ the set of undirected simple graphs $\Gamma$ such that
\begin{itemize}
    \item the vertices of $\Gamma$ are $ \pi_{+,1}, \hdots, \pi_{+,m_+}$, $\pi_{\mathrm{c},1,1}, \hdots, \pi_{\mathrm{c},1,m_{\mathrm{c},1}}, \pi_{\mathrm{c},2,1}, \hdots, \pi_{\mathrm{c},2,m_{\mathrm{c},2}}$ (with multiplicity);
    \item the edges of $\Gamma$ are of the form $\{ \pi_{+,i},\pi_{\mathrm{c},1,j} \}$ with $\sigma_{+,i} \simeq \pi_{\mathrm{c},1,j} $, or $\{ \pi_{+,i} ,\pi_{\mathrm{c},2,j}\}$ with $\sigma_{+,i}^\vee \simeq \pi_{\mathrm{c},2,j}$;
    \item each $\pi_{\mathrm{c},1,i}$ and $\pi_{\mathrm{c},2,i}$ has degree at most one;
    \item each $\pi_{+,1}, \hdots, \pi_{+,m_+}$ has degree at most two, and if it is two then the neighbors are some $\pi_{\mathrm{c},1,i}$ and $\pi_{\mathrm{c},2,j}$ (they can not be $\pi_{\mathrm{c},1,i}$ and $\pi_{\mathrm{c},1,j}$ or $\pi_{\mathrm{c},2,i}$ and $\pi_{\mathrm{c},2,j}$).
\end{itemize}
For each $i$, we denote by $\deg(\mathrm{c},1,i)$ (resp. $\deg(\mathrm{c},2,i)$) the degree of $\pi_{\mathrm{c},1,i}$ (resp. $\pi_{\mathrm{c},2,i}$), and by $\deg(+,1,i)$ (resp.$\deg(+,2,i)$) the number of neighbors of the form $\pi_{\mathrm{c},1,j}$ (resp. $\pi_{\mathrm{c},2,j}$) of $\pi_{+,i}$. These integers are all either zero or one.

If $\Gamma \in \cG(\pi)$, we can define a discrete representation $\pi_\Gamma$ of some standard Levi $M_{Q_{\Gamma}}$ by
\begin{align}
    \pi_{\Gamma,n}&=\underset{\substack{\deg(+,1,i)=0 \\ \deg(+,2,i)=0}}{\boxtimes} \pi_{+,i} \underset{\substack{\deg(+,2,i)=1 \\ \deg(+,1,i)=0}}{\boxtimes} \pi_{+,i}\underset{\deg(+,1,i)=1}{\boxtimes} \pi_{+,i}^+  \underset{\deg(\mathrm{c},1,i)=0}{\boxtimes} \pi_{\mathrm{c},1,i} , \label{eq:pi_gamma_n} \\
    \pi_{\Gamma,n+1}&= \underset{\substack{\deg(+,1,i)=0 \\ \deg(+,2,i)=0}}{\boxtimes} \pi_{+,i}^\vee  \underset{\substack{\deg(+,1,i)=1 \\ \deg(+,2,i)=0}}{\boxtimes} \pi_{+,i}^\vee \underset{\deg(+,2,i)=1}{\boxtimes} \pi_{+,i}^{+,\vee}. \underset{\deg(\mathrm{c},2,i)=0}{\boxtimes} \pi_{\mathrm{c},2,i} \label{eq:pi_gamma_n+1}
\end{align}
Here we impose that the representations appear in the order $i=1,2,\hdots$ for $i$ in each set. 
Set
\begin{align*}
    I_{\Gamma}&=\left( \sum_{\substack{\deg(+,1,i)=0 \\ \deg(+,2,i)=0}} \underline{n}(+,i),\sum_{\deg(+,1,i)=1} (\underline{n}(+,i)+\underline{r}(+,i)),\sum_{\deg(\mathrm{c},1,i)} \underline{n}(\mathrm{c},1,i), \right. \\
    & \left. \sum_{\deg(+,2,i)=1} (\underline{n}(+,i)+\underline{r}(+,i)),\sum_{\deg(\mathrm{c},2,i)} \underline{n}(\mathrm{c},2,i),0 \right).
\end{align*}
Finally, set 
\begin{equation*}
    I_{\Gamma,1}=I_{\Gamma,2}=\{ i \; | \; \deg(+,1,i)=\deg(+,2,i)=1 \}
\end{equation*}
Then up to some evident identifications, we have $(I_\Gamma,Q_\Gamma,\pi_\Gamma,I_{\Gamma,1},I_{\Gamma,2}) \in \Pi_H^\uparrow$. This holds because we have assumed that $\underline{d}(+,1) \geq \hdots \geq \underline{d}(+,m_+)$, and because the tensor products in \eqref{eq:pi_gamma_n} and \eqref{eq:pi_gamma_n+1} are taken with respect to the natural order on the index $i$. 

We define $\Pi_H^\uparrow(\pi)$ to be the image of the map $\Gamma \in \cG(\pi) \mapsto (I_\Gamma,R_\Gamma,\pi_\Gamma,I_{\Gamma,1},I_{\Gamma,2}) \in \Pi_H^\uparrow$. Note that its fibers can be of cardinal strictly bigger than $1$. Moreover, $(I_r,P,\pi) \in \Pi_H^\uparrow(\pi)$ with the null graph being its sole preimage. If $(I,Q,\tau,I_1,I_2) \in \Pi_H^\uparrow(\pi)$, we can decompose $\tau$ as in \eqref{eq:pi_up_n} and \eqref{eq:pi_up_n+1}. To differentiate from $\pi$, we write $\tau_{1,i}=\Speh(\sigma_{1,i}(\tau),\underline{d}(\tau,1,i))$, $\tau_{2,i}=\Speh(\sigma_{2,i}(\tau),\underline{d}(\tau,2,i))$ and $\tau_{+,i}=\Speh(\sigma_{+,i}(\tau),\underline{d}(\tau,+,i))$. The definition of $\Pi_H^\uparrow(\pi)$ imposes that the $\underline{d}(\tau,1,i)$ and $\underline{d}(\tau,2,i)$ are strictly greater than $1$.

Let $(I,Q,\tau,I_1,I_2) \in \Pi_H^\uparrow(\pi)$. We have defined in \S\ref{subsubsec:connect_periods} a triple $(I^\downarrow,Q^\downarrow,\tau^\downarrow) \in \Pi_H$. We can therefore also consider $W(\tau^\downarrow)$ and denote by $\Stab(\tau)$ the stabilizer of $\tau^\downarrow$ in this set. Because we have fixed the order of the representations in \eqref{eq:pi_gamma_n} and \eqref{eq:pi_gamma_n+1}, the fiber of the map 
\begin{equation}
\label{eq:down_map}
    (I',Q',\tau',I_1',I_2') \in \Pi^\uparrow_H(\pi) \mapsto (I^{',\downarrow},Q^{',\downarrow},\tau^{',\downarrow}) \in \Pi_H
\end{equation}
above the set $W(\tau^\downarrow).\tau^\downarrow$ is reduced to $(I,Q,\tau,I_1,I_2)$.

We have $M_Q \subset (\GL_{n-n_{\mathrm{c},1}'}\times\GL_{n_{\mathrm{c},1}'} ) \times ( \GL_{n+1-n_{\mathrm{c},2}'}\times \GL_{n_{\mathrm{c},2}'})$. Under this decomposition, let $\underline{z}_\tau$ be the element
    \begin{equation}
    \label{eq:underline_z_R}
        \underline{z}_\tau=((0,1/4),(0,1/4)) \in \fa_{\tau}^* \subset \fa_Q^*.
    \end{equation}
    Therefore, $\underline{z}_\tau$ only lives above the $\boxtimes \tau_{\mathrm{c},1,i}$ and $\boxtimes \tau_{\mathrm{c},2,i}$. If $Q=P$, this is $\underline{z}_{r}$. Finally, recall that we have defined some elements $\underline{\rho}_\tau$ and $\underline{\rho}_\tau^\uparrow$ in \eqref{eq:rho_P_down} and \eqref{eq:rho_P_up} respectively. We can now state our intermediate result.

    \begin{prop}
    \label{prop:I_pi_exp}
        We have
        \begin{equation}
        \label{eq:I_pi_exp}
            I_\pi=\sum_{(I,Q,\tau,I_1,I_2) \in \Pi_H^\uparrow(\pi)}\frac{|\Stab(\pi)|}{|\Stab(\tau)|} \int_{i \fa_{\tau}^*-\underline{\rho}_\tau-\underline{\rho}_\tau^\uparrow+2 \underline{z}_\tau} \sum_{\varphi \in \cB_{Q,\tau}(J)}\cP_{\tau}^\uparrow(\varphi,\lambda) \langle f,E(\varphi,-\overline{\lambda}) \rangle_{G} d\lambda.
        \end{equation}
    \end{prop}
    We will end the proof of Proposition~\ref{prop:I_pi_exp} in \S\ref{subsubsec:end_proof_prop}.

    \begin{rem}
        The appearance of $2$ is somewhat artificial and we may replace it by any real number $t$ such that $1<t<3$. It will be proved in the course of the proof that \eqref{eq:I_pi_exp} is well-defined, that is that $\sum_{\varphi}\cP_{\tau}^\uparrow(\varphi,\lambda) \langle f,E(\varphi,-\overline{\lambda}) \rangle_{G}$ is regular and of rapid decay in the region of integration.
    \end{rem}

    \subsubsection{A short description of the argument}
    \label{subsubsec:short_description1}

    Before starting the proof of Proposition~\ref{prop:I_pi_exp}, let us give a short survey of the argument. The idea is to use contour shifting in the integral \eqref{eq:I_pi_defi} defining $I_\pi$ to bring the domain of integration from $i \fa_\pi^*-\underline{\rho}_\pi+t \underline{z}_r$ to $i \fa_{\pi}^*-\underline{\rho}_\pi+2 \underline{z}_r$. This will be done by a step by step process to gradually decrease $t$, which amounts to crossing vertical strips. It takes the form of an induction argument which is the content of \S\ref{subsubsec:successive_changes}. Inside each strip we will encounter poles of the integrand. These singularities are described in \S\ref{subsubsec:poles_shift}. It turns out that they all come from the Eisenstein series $E(\varphi,-\overline{\lambda})$. More precisely, they occur when, with the notation of \eqref{eq:pi_defi}, a segment corresponding to a $\pi_{+,i}$ (resp. $\pi_{+,i}^\vee$) can be linked with a $\pi_{\mathrm{c},1,j}$ (resp. $\pi_{\mathrm{c},2,j}$). Because the latter is cuspidal, this can only happen if they are juxtaposed. We therefore obtain a new residual representation $\tau$. The residue of our integrand along such singularity is computed in \S\ref{subsubsec:computation_residues} : it is a relative character $I_\tau$ of $f$ along the induction of $\tau$ to $G$. It then remains to shift the contour in \eqref{eq:I_pi_defi} in a way such that the singularities that our contours of integration cross are always simple. This allows us to carry out the computation only by using the one-dimensional residue theorem. This technical step is done first on the variables coming from the $\GL_n$-side in \S\ref{subsubsec:contour_gln} and then from the $\GL_{n+1}$-side in \S\ref{subsubsec:contour_gl_n+1}. 

    To keep track of all the residues we catch along the way, we use the graph-theoretic formalism described above. Its meaning is the following: each time we cross a singularity coming from a juxtaposition between $\pi_{+,i}$ and $\pi_{\mathrm{c},1,j}$ (or $\pi_{+,i}^\vee$ and $\pi_{\mathrm{c},2,j}$), we draw an edge between the two corresponding vertices. The restraints we impose on our graphs $\Gamma$ mean that such residue can only occur at most once per $\pi_{\mathrm{c},1,j}$ and $\pi_{\mathrm{c},2,j}$, and at most twice per $\pi_{+,i}$/$\pi_{+,i}^\vee$, in which case one singularity comes from $\GL_n$ and the other from $\GL_{n+1}$. At the level of the associated tuple $(I_\Gamma,Q_\Gamma,\pi_\Gamma,I_{\Gamma,1},I_{\Gamma,2})$, these double residues are remembered in $I_{\Gamma,1}$ and $I_{\Gamma,2}$. Using this formalism, we are able to write $I_\pi$ as a weighted sum of relative characters $I_{\pi_\Gamma}$ indexed by graphs in $\Gamma \in \cG(\pi)$. It then remains to use a combinatorial argument to rather express it as sum over tuples in $\Pi_H^\uparrow(\pi)$. This is the content of \S\ref{subsubsec:counting_contributions}. Once this is done, the proof of Proposition~\ref{prop:I_pi_exp} is complete.

    \subsubsection{The successive changes of contours}
    \label{subsubsec:successive_changes}

     The proof of Proposition~\ref{prop:I_pi_exp} is quite involved and will take the reminder of this section. We will freely use the notation for the coordinate of an element $\lambda \in \fa_{\tau,\cc}^*-\underline{\rho}_\delta-\underline{\rho}_\delta^\uparrow$ from \S\ref{subsubsec:poles_of_P_pi_up}. We start from the definition of $I_\pi$ given in \eqref{eq:I_pi_defi}. The integral takes place in the region $i \fa_{\pi}^*-\underline{\rho}_{\pi}+t\underline{z}_{r}$ for some $t>0$ large enough. In particular, $t/2 > \underline{d}(+,i)$ for all $1 \leq i \leq m_+$.

    Let $d$ be a positive integer. We denote by $\Pi_H^\uparrow(\pi,d)$ the subset of $\Pi_H^\uparrow(\pi)$ consisting of the tuples $(I,Q,\tau,I_1,I_2)$ such that we have $\underline{d}(\tau,1,i) > d$ and $\underline{d}(\tau,2,i) > d$ for all $i$. Proposition~\ref{prop:I_pi_exp} now follows from the next proposition applied to $d=1$.

    \begin{prop}
    \label{prop:recu}
        For all $1 \leq d \leq \lceil{t/2}\rceil$, $I_\pi$ is equal to 
         \begin{equation}
         \label{eq:recu_to_prove}
            \sum_{(I,Q,\tau,I_1,I_2) \in \Pi_H^\uparrow(\pi,d)}\frac{|\Stab(\pi)|}{|\Stab(\tau)|} \int_{i \fa_{\tau}^*-\underline{\rho}_\tau-\underline{\rho}_\tau^\uparrow+2d \underline{z}_\tau} \sum_{\varphi \in \cB_{Q,\tau}(J)}\cP_{\tau}^\uparrow(\varphi,\lambda) \langle f,E(\varphi,-\overline{\lambda}) \rangle_{G} d\lambda.
        \end{equation}
    \end{prop}

    We will prove Proposition~\ref{prop:recu} by decreasing induction starting at $d= \lceil{t/2}\rceil$. The proof will take us until the end of the section and will be broken down in several lemmas. It will end in \S\ref{subsubsec:end_proof_prop}. We begin with the initialization step of the induction.

    \begin{lem}
        Proposition~\ref{prop:recu} holds for $d=\lceil{t/2}\rceil$.
    \end{lem}

    \begin{proof}
    By definition, the only element in $\Pi_H^\uparrow(\pi,\lceil{t/2}\rceil)$ is $(I_r,P,\pi)$. To conclude, we have to move the contour from $i \fa_{\pi}^*-\underline{\rho}_{\pi}+t\underline{z}_{r}$ to $i \fa_{\pi}^* -\underline{\rho}_\pi+2\lceil{t/2}\rceil \underline{z}_{r}$, which is possible by Proposition~\ref{prop:enter_Eisenstein}.
    \end{proof}

    In what follows we explain how to prove the induction step. The relevant integral will be the following. For any $(I,Q,\tau,I_1,I_2) \in \Pi_H^\uparrow(\pi,d+1)$, set
    \begin{equation}
    \label{eq:I_tau_defi}
        I_{\tau,d+1}:=\int_{i \fa_{\tau}^*-\underline{\rho}_\tau-\underline{\rho}_\tau^\uparrow+2(d+1) \underline{z}_\tau} \sum_{\varphi \in \cB_{Q,\tau}(J)} \cP_{\tau}^\uparrow(\varphi,\lambda) \langle f,E(\varphi,-\overline{\lambda}) \rangle_{G} d\lambda.
    \end{equation}
    That this integral is well defined will follow from Lemma~\ref{lem:poles_of_the_product}. 

    \subsubsection{Poles}
    \label{subsubsec:poles_shift}

    We want to move the contour in the integral \eqref{eq:I_tau_defi}. To achieve this, we begin by studying the poles of the integrand. However, it will be useful to do this for a larger class of representations. For now, we let $(I,Q,\tau,I_1,I_2) \in \Pi_H^\uparrow(\pi,d)$. Let $\varphi \in \cB_{Q,\tau}(J)$. We will work with the following region:
    \begin{equation}
    \label{eq:our_region}
       \cR_{\tau,d+1}^\uparrow := \left\{ \lambda -\underline{\rho}_\tau-\underline{\rho}_\tau^\uparrow  + \nu \; \middle| \; \lambda \in \fa_{\tau,\cc}^* \cap \cS_{\tau,k,c_J}, \; \nu \in [2d \underline{z}_\tau,2(d+1) \underline{z}_\tau]
        \right\},
    \end{equation}
    where we take $k$ and $c_J$ given by Theorem~\ref{thm:analytic_Eisenstein} and Proposition~\ref{prop:reg_P_pi_up}. Moreover, we may assume that $\cS_{\tau,k,c_J} \subset \{ \lambda \; | \; \norm{\Re(\lambda)}<\varepsilon \}$ for $\varepsilon$ small.  

    \begin{lem}
    \label{lem:poles_of_the_product}
        For $\varphi \in \cB_{Q,\tau}(J)$, the possible poles of $\cP_{\tau}^\uparrow(\varphi,\lambda) \langle f,E(\varphi,-\overline{\lambda}) \rangle_{G}$ in the region $\cR_{\tau,d+1}^\uparrow$ of \eqref{eq:our_region} are along the zeros of the polynomial  
            \begin{align}
       &\prod_{\substack{ \sigma_{+,i}(\tau) \simeq \tau_{\mathrm{c},1,j}\\ \underline{d}(\tau,+,j)=d}} \left( \lambda(+)_i-\lambda(1)_{\mathrm{c},j}-\frac{d+1}{2}\right) \prod_{\substack{\sigma_{+,i}(\tau)^\vee \simeq \tau_{\mathrm{c},2,j}   \\ \underline{d}(\tau,+,i)=d}} \left( -\lambda(+)_i-\lambda(2)_{\mathrm{c},j}-\frac{d+1}{2}\right) \nonumber \\
        \times&  \prod_{\substack{\sigma_{2,i}(\tau)^\vee \simeq \tau_{\mathrm{c},1,j} \\  \underline{d}(\tau,2,i)=d+1 \\i \notin I_2}} \left( -\lambda(2)_i-\lambda(1)_{\mathrm{c},j}-\frac{d+1}{2}\right) \prod_{\substack{\sigma_{1,i}(\tau)^\vee \simeq \tau_{\mathrm{c},2,j}  \\ \underline{d}(\tau,1,i)=d+1 \\  i \notin I_1}} \left(-\lambda(1)_i -\lambda(2)_{\mathrm{c},j}-\frac{d+1}{2}\right).     \label{eq:thepoles}
    \end{align}
    Moreover, it is of rapid decay in the sense that if we denote by $L$ the polynomial in \eqref{eq:thepoles}, for all $N>0$ there exists $C>0$ such that for all $\lambda$ in the region $\cR_{\tau,d+1}^\uparrow$ we have
    \begin{equation}
        \label{eq:rapid_decay_product} \sum_{\varphi \in \cB_{Q,\tau}(J)}\Val{L(\lambda)\cP_{\tau}^\uparrow(\varphi,\lambda) \langle f,E(\varphi,-\overline{\lambda}) \rangle_{G}} \leq \frac{C}{(1+\norm{\lambda}^2)^N}.
    \end{equation}
    In particular, the map $\lambda \mapsto \sum_{\varphi} L(\lambda)\cP_{\tau}^\uparrow(\varphi,\lambda) \langle f,E(\varphi,-\overline{\lambda}) \rangle_{G}$ is holomorphic in $\cR_{\tau,d+1}^\uparrow$.
    \end{lem}

    \begin{proof}
    
    By Corollary~\ref{cor:zeros_of_E}, we investigate the behavior of $L_{\tau,0}(-\lambda)\cP_{\tau}^\uparrow(\varphi,\lambda) \langle f,L_{\tau,0}^{-1}(-\overline{\lambda})E(\varphi,-\overline{\lambda}) \rangle_{G}$, where we recall that $L_{\tau,0}$ is the polynomial cutting out the zeros of the Eisenstein series.

    We begin with $L_{\tau,0}(-\lambda)\cP_{\tau}^\uparrow(\varphi,\lambda)$. First, we claim that our region is contained in the set \eqref{eq:weird_region} of Proposition~\ref{prop:reg_P_pi_up}. With the notation of \S\ref{subsubsec:poles_of_P_pi_up}, we have to show that for any $\nu$ as in \eqref{eq:our_region}, $(w^\uparrow (-\underline{\rho}_\tau-\underline{\rho}_\tau^\uparrow+\nu) +\nu^\uparrow))_{\cP^\uparrow}$ is positive. To prove this, say on the $\GL_n$-side, it is enough to note that the vector
    \begin{equation*}
        \left(\frac{1}{4}+\frac{\underline{d}(\tau,1,1)-1}{2},\hdots,\frac{1}{4}+\frac{\underline{d}(\tau,1,m_1)-1}{2},\frac{d+1}{2},\hdots,\frac{d+1}{2}\right)
    \end{equation*}
    is positive. This holds because $\underline{d}(\tau,1,1)\geq \hdots \geq \underline{d}(\tau,1,m_1)>d$, and the same argument works equally well on the $\GL_{n+1}$-side.
    
    Therefore, the poles of $\cP_{\tau}^\uparrow(\varphi,\lambda)$ are controlled by $L_{\tau,\cP}^{\uparrow}$. By comparing \eqref{eq:f_pi_0_defi} (definition of $L_{\tau,0}$) and \eqref{eq:f_pi_regular} (definition of $L_{\tau,\cP}$), we see that the poles of the product are contained in the zeros of 
    \begin{align*}
        &\prod_{\tau_{\mathrm{c},1,i} \simeq \tau_{\mathrm{c},2,j}^{\vee}} \left(\lambda(1)_{\mathrm{c},i}+\lambda(2)_{\mathrm{c},j}\pm\frac{1}{2}\right) \times \prod_{  \sigma_{2,i}(\tau)^\vee \simeq \tau_{\mathrm{c},1,j}} \left(\lambda(2)_i+\lambda(1)_{\mathrm{c},j}+\frac{\underline{d}(\tau,2,i)-1\pm1}{2}\right)  \\
        \times &\prod_{\sigma_{1,i}(\tau)^\vee \simeq \tau_{\mathrm{c},2,j}} \left(\lambda(1)_{i}+\lambda(2)_{\mathrm{c},j}+\frac{\underline{d}(\tau,1,i)-1\pm1}{2}\right).
    \end{align*}
    We claim that this product is non zero in our region $\cR_{\tau,d+1}^\uparrow$. Indeed, for the first factor we always have $\Re(\lambda(1)_{\mathrm{c},i}+\lambda(2)_{\mathrm{c},j})\geq d-2\varepsilon \geq 1-2\varepsilon>0$. For the second, we have $\Re(\lambda(2)_i+\lambda(1)_{\mathrm{c},j}) \geq d/2+1/4-2\varepsilon\geq 3/4-2\varepsilon$, while $(\underline{d}(\tau,2,i)-1\pm 1)/2 \geq -1/2$, and the same argument works for the third. Therefore, $L_{\tau,0}(\lambda) \cP_{\tau}^\uparrow(\varphi,\lambda)$ is regular in $\cR_{\tau,d+1}^\uparrow$.

    We now study $L_{\tau,0}^{-1}(-\lambda) E(\varphi,-\lambda)$. It is enough to investigate the meromorphic functions $L_{\tau_n,0}^{-1}(-\lambda_n) E(\varphi_n,-\lambda_n)$ and $L_{\tau_{n+1},0}^{-1}(-\lambda_{n+1}) E(\varphi_{n+1},-\lambda_{n+1})$ separately, so that we only explain the first case. Because we always have $\underline{\rho}_\tau+\underline{\rho}_\tau^\uparrow-\nu \in \overline{\fa_{Q}^{*,+}}$, the region $\cR_{\tau,d+1}^\uparrow$ is contained in $\cR_{\tau,k,c_J}$ and we can use Corollary~\ref{cor:zeros_of_E}. It follows that the poles of $L_{\tau_n,0}^{-1}(-\lambda_n) E(\varphi_n,-\lambda_n)$ are controlled by $L_{\tau_n,E}$. 
    
    By going back to the definition in \eqref{eq:f_pi_defi}, we see that most factors will immediately be non-zero in our region. Indeed, two cases can be easily excluded. The first is the possible poles coming from isomorphisms $\sigma_{+,i}(\tau) \simeq \sigma_{2,j}(\tau)^\vee $. Then the shifted segments associated to the representations $\tau_{+,i}$ and $\tau_{2,j}^{-,\vee}$ can not be linked for $\lambda$ in the region $\cR_{\tau,d+1}^\uparrow$ in the sense of \S\ref{subsubsec:BZ_segments} because 
    $\Re(\lambda(+)_{n,i}) \approx \Re(\lambda(2)_{n,j}) \approx 1/4$ (up to $\varepsilon$). It is indeed straightforward that two segments with almost the same mean can not be linked. The other easy situation is $\sigma_{+,i}(\tau) \simeq \sigma_{1,j}(\tau)$ or $\sigma_{2,k}(\tau)^\vee \simeq \sigma_{1,j}(\tau)$. Indeed, in both cases we have $\Re(\lambda(+)_{n,i}) \approx \Re(\lambda(2)_{n,k}) \approx 1/4$ and $\Re(\lambda(1)_{n,j}) \simeq -1/4$. But it is straightforward that two segments whose means differ by almost $1/2$ can not be linked. 
    
    Therefore, the only possible poles are contained in the zeros of
    \begin{align}
        &\prod_{\sigma_{1,i}(\tau) \simeq \tau_{\mathrm{c},1,j}} \left( \lambda(1)_i-\lambda(1)_{\mathrm{c},j}-\frac{\underline{d}(\tau,1,i)+1}{2}\right)\nonumber \\
        & \times \prod_{\substack{\sigma_{2,i}(\tau)^\vee \simeq \tau_{\mathrm{c},1,j} \\ i \notin I_2}} \left( -\lambda(2)_i-\lambda(1)_{\mathrm{c},j}-\frac{\underline{d}(\tau,2,i)}{2}\right) \prod_{\sigma_{+,i}(\tau) \simeq \tau_{\mathrm{c},1,j}} \left( \lambda(+)_i-\lambda(1)_{\mathrm{c},j}-\frac{\underline{d}(\tau,+,i)+1}{2}\right). \label{eq:last_possible_poles}
    \end{align}
    Note that the change of signs in the second equation is due to the fact that in our coordinates we have $\lambda(2)_{n,i}=-\lambda(2)_i$ (see \S\ref{subsubsec:condition_up}). The first product is non-zero in the region $\cR_{\tau,d+1}^\uparrow$ because $\Re(\lambda(1)_i-\lambda(1)_{\mathrm{c},j}) \leq -1/4+(d+1)/2<(\underline{d}(\tau,1,i)+1)/2$. For the second, the same argument shows that the zeros come from the terms with $\underline{d}(\tau,2,i)=d+1$, and for the third from $\underline{d}(\tau,+,i)=d$.

    By repeating the argument on the $\GL_{n+1}$ side, we finally see that the only possible poles of $\cP_{\tau}^\uparrow(\varphi,\lambda) \langle f,E(\varphi,-\overline{\lambda}) \rangle_G$ in the region $\cR_{\tau,d+1}^\uparrow$ are indeed along the zeros of the polynomial in \eqref{eq:thepoles}.

    To conclude the proof of Lemma~\ref{lem:poles_of_the_product}, it remains to prove that $\cP_{\tau}^\uparrow(\varphi,\lambda) \langle f,E(\varphi,-\overline{\lambda}) \rangle_G$ is of rapid decay in the sense of \eqref{eq:rapid_decay_product}. This is a direct consequence of Propositions~\ref{prop:convergence_strong} and~\ref{prop:bound_P_pi_up}.
    \end{proof}

    \subsubsection{Computation of residues}
    \label{subsubsec:computation_residues}

    Assume now that $(I,Q,\tau,I_1,I_2) \in \Pi_H^{\uparrow}(\pi,d)$ comes from a graph $\Gamma \in \cG(\pi)$. We now explain how to assign to each affine linear form $\Lambda$ appearing in \eqref{eq:thepoles} a graph $\Gamma_\Lambda \in \cG(\pi)$. 
    
    Let us explain the case $\Lambda(\lambda)=\lambda(+)_{i_+}-\lambda(1)_{\mathrm{c},i_1}-(d+1)/2$. By definition of $\Pi_H^\uparrow(\pi)$, we have indices $j_+$ and $j_1$ such that $\tau_{+,i_+}=\pi_{+,j_+}$ and $\tau_{\mathrm{c},1,i_1}=\pi_{\mathrm{c},1,j_1}$. Note that these indices $j_+$ and $j_1$ are uniquely determined if we ask that they correspond exactly to those given by the presentation $\tau$ as $\pi(\Gamma)$ given in \eqref{eq:pi_gamma_n}. Let $\Gamma_\Lambda$ be the graph obtained from $\Gamma$ by adding the edge $\{ \pi_{+,j_+},\pi_{\mathrm{c},1,j_1}\}$. Then we have $\Gamma_\Lambda \in \cG(\pi)$. For simplicity, we will write $Q_\Lambda$ instead of $Q_{\Gamma_\Lambda}$, $\pi_\Lambda$ instead of $\pi_{\Gamma_\Lambda}$ and so on.

    For $\Lambda(\lambda)= \lambda(+)_{i_+}-\lambda(2)_{\mathrm{c},i_2}-(d+1)/2$, the graph $\Gamma_\Lambda$ is built the same way. Let us explain the last case that will be relevant to us which is $\Lambda(\lambda)=-\lambda(1)_{i_1}-\lambda(2)_{\mathrm{c},i_2}-(d+1)/2$ with $\sigma_{1,i_1}(\tau)^\vee \simeq \tau_{\mathrm{c},2,i_2}$, $\underline{d}(\tau,1,i_1)=d+1$ and $i_1 \notin I_1$. As before, define $j_1$ and $j_2$ such that $\tau_{1,i_1}^{-,\vee}=\pi_{+,j_1}^\vee$ and $\tau_{\mathrm{c},2,i_2}=\pi_{\mathrm{c},2,j_2}$. Then we add the edge $\{\pi_{+,j_1},\pi_{\mathrm{c},2,j_2}\}$. Note that $\pi_{+,j_1}$ already had degree $1$ as it was connected to the cuspidal representation $\pi_{\mathrm{c},1,l}$ used to obtain $\tau_{1,i_1}$ on the $\GL_n$-side.

    We now describe the residues obtained along the zeros of \eqref{eq:thepoles}. Note that they are all at most simple and that we may use the naive notion of residue described in \S\ref{subsubsec:naive_residue}. Once again, we begin with the case $\Lambda(\lambda)= \lambda(+)_{i_+}-\lambda(2)_{\mathrm{c},i_2}-(d+1)/2$. There exists a unique integer $1 \leq k \leq m_{1}(\tau)$ such that 
        \begin{align*}
        \pi_{\Lambda,n}&=\boxtimes_{\substack{i=1 \\ i \neq i_+}}^{m_+(\tau)} \tau_{+,i}\boxtimes_{\substack{i=1 \\ i \notin I_2}}^{m_2(\tau)}\tau_{2,i}^{-,\vee} \boxtimes \left(  \boxtimes_{i=1}^{k-1} \tau_{1,i} \boxtimes \pi_{+,j_+}^+ \boxtimes_{i=k}^{m_{1}(\tau)} \tau_{1,i} \right)  \boxtimes_{\substack{i=1 \\ i \neq i_1}}^{m_{\mathrm{c},1}(\tau)} \tau_{\mathrm{c},1,i}  , \\     
        \pi_{\Lambda,n+1}&=\boxtimes_{\substack{i=1 \\ i \neq i_+}}^{m_+(\tau)} \tau_{+,i}^\vee \boxtimes \left(\boxtimes_{\substack{i=1 \\ i \notin I_1}}^{k-1} \tau_{1,i}^{-,\vee} \boxtimes \pi_{+,j_+}^\vee \boxtimes_{\substack{i=k \\ i \notin I_1}}^{m_1(\tau)} \tau_{1,i}^{-,\vee}  \right) \boxtimes_{i=1}^{m_2(\tau)} \tau_{2,i} \boxtimes_{i=1}^{m_{\mathrm{c},2}(\tau)} \tau_{\mathrm{c},2,i}.
    \end{align*}
    Here we need the integer $k$ because we have imposed that the representations in \eqref{eq:pi_gamma_n} and \eqref{eq:pi_gamma_n+1} appear in a certain order. Associated to $\Gamma_\Lambda$ is a regularized period $\cP_{\pi_\Lambda}^\uparrow$.
    
    Let $w \in W(Q)$ such that $w \tau$ is the representation with $(w\tau)_{n+1}=\pi_{\Lambda,n+1}$ and 
    \begin{equation}
    \label{eq:w_tau_explicit}
        (w \tau)_n=\boxtimes_{\substack{i=1 \\ i \neq i_+}}^{m_+(\tau)} \tau_{+,i}\boxtimes_{\substack{i=1 \\ i \notin I_2}}^{m_2(\tau)}\tau_{2,i}^{-,\vee} \boxtimes \left(  \boxtimes_{i=1}^{k-1} \tau_{1,i} \boxtimes (\tau_{+,i_+} \boxtimes \tau_{\mathrm{c},1,i_1}) \boxtimes_{i=k}^{m_{1}(\tau)} \tau_{1,i} \right)  \boxtimes_{\substack{i=1 \\ i \neq i_1}}^{m_{\mathrm{c},1}(\tau)} \tau_{\mathrm{c},1,i}.
    \end{equation}
    Note that
    \begin{equation}
    \label{eq:w_iso}
        w\left(\Lambda^{-1}(\{0\}) \cap (\fa_{\tau,\cc}^*-\underline{\rho}_\tau-\underline{\rho}_\tau^\uparrow)\right)=\fa_{\pi_\Lambda,\cc}^*-\underline{\rho}_{\pi_\Lambda}-\underline{\rho}_{\pi_\Lambda}^\uparrow-\nu_{w. Q,\pi_\Lambda}^{Q_\Lambda},
    \end{equation}
    where $\nu_{w.Q,\pi_\Lambda}^{Q_\Lambda}$ is the twist defined in \eqref{eq:nu_exp_defi}. Moreover, up to shrinking the constants we have
    \begin{equation}
    \label{eq:stability_region}
        \lambda \in \Lambda^{-1}(\{0\}) \cap \cR_{\tau,d+1}^\uparrow \implies w \lambda + \nu_{w.Q,\pi_\Lambda}^{Q_\Lambda} \in \cR_{\pi_\Lambda,d+1}^\uparrow.
    \end{equation}

    If $\Lambda(\lambda)= \lambda(+)_{i_+}-\lambda(2)_{\mathrm{c},i_2}-(d+1)/2$, the construction is exactly the same by on the $\GL_{n+1}$-side. In the case $\Lambda(\lambda)=-\lambda(1)_{i_1}-\lambda(2)_{\mathrm{c},i_2}-(d+1)/2$, we have $\pi_{\Lambda,n}=\tau_n$ and 
      \begin{equation*}
        \pi_{\Lambda,n+1}=\boxtimes_{i=1}^{m_+(\tau)} \tau_{+,i}\boxtimes_{i \notin I_1 \cup \{i_1\}} \tau_{1,i}^{-,\vee}\boxtimes \left( \boxtimes_{i=1}^{k-1} \tau_{2,i} \boxtimes \tau_{1,i_1}^\vee  \boxtimes_{i=k}^{m_2(\tau)} \tau_{2,i} \right)  \boxtimes_{\substack{i=1 \\ i \neq i_2}}^{m_{\mathrm{c},2}(\tau)} \tau_{\mathrm{c},2,i},
      \end{equation*}
    for some index $k$. In this case, let $w \in W(R_{n+1})$ such that 
      \begin{equation*}
        (w\tau)_{n+1}=\boxtimes_{i=1}^{m_+(\tau)} \tau_{+,i}\boxtimes_{i \notin I_1 \cup \{i_1\}} \tau_{1,i}^{-,\vee}\boxtimes \left( \boxtimes_{i=1}^{k-1} \tau_{2,i} \boxtimes \left(\tau_{1,i_1}^{-,\vee} \boxtimes \tau_{\mathrm{c},2,i_2}\right)  \boxtimes_{i=k}^{m_2(\tau)} \tau_{2,i} \right)\boxtimes_{\substack{i=1 \\ i \neq i_2}}^{m_{\mathrm{c},2}(\tau)} \tau_{\mathrm{c},2,i}
    \end{equation*}
    Then the relations \eqref{eq:w_iso} and \eqref{eq:stability_region} still hold. 
    
    \begin{lem}
    \label{lem:compute_residues}
        Let $\Lambda$ be any affine linear form appearing in \eqref{eq:thepoles}. For $\lambda \in \Lambda^{-1}(\{0\}) \cap (\fa_{\tau,\cc}^*-\underline{\rho}_\tau-\underline{\rho}_\tau^\uparrow)$ in general position we have
        \begin{equation}
        \label{eq:the_residue}
            \underset{\Lambda}{\Res}\left(\sum_{\varphi \in \cB_{Q,\tau}(J)} \cP_{\tau}^\uparrow(\varphi,\mu) \langle f,E(\varphi,-\overline{\mu}) \rangle_G \right)(\lambda)=-\sum_{\varphi \in \cB_{Q_\Lambda,\pi_\Lambda}(J)} \cP_{\pi_\Lambda}^\uparrow(\varphi,\lambda') \langle f,E(\varphi,-\overline{\lambda'}) \rangle_G,
        \end{equation}
        where $\lambda'=w\lambda + \nu_{w. Q,\pi_\Lambda}^{Q_\Lambda}$.
    \end{lem}

    \begin{proof}
        We only deal with the case $\Lambda(\lambda)=\lambda(+)_{i_+}-\lambda(1)_{\mathrm{c},i_1}-(d+1)/2$, the others being exactly the same. For simplicity, write $R=Q_\Lambda$ and $\delta=\pi_\Lambda$.
        
        That the residue is well-defined in our naive sense follows from Lemma~\ref{lem:poles_of_the_product}. By the unitarity of the intertwining operators and the functional equation of Eisenstein series (Theorem~\ref{thm:analytic_Eisenstein}), we see using a change of variabless and analytic continuation that for $\lambda \in \fa_{Q,\cc}^*$ in general position we have
        \begin{equation*}
            \sum_{\varphi \in \cB_{Q,\tau}(J)} \cP_{\tau}^\uparrow(\varphi,\lambda) \langle f,E(\varphi,-\overline{\lambda}) \rangle_G =\sum_{\varphi \in \cB_{w.Q,w\tau}(J)} \cP_\tau^{\uparrow}\left(M(w^{-1},w\lambda)\varphi,\lambda\right) \langle f,E(\varphi,-w\overline{\lambda}) \rangle_G.
        \end{equation*}
        Because we know the poles of $M(w^{-1},w\lambda)$ (Proposition~\ref{prop:M_regular}) and of $\cP_{\tau}^\uparrow$ (Proposition~\ref{prop:reg_P_pi_up}), we conclude that $\Lambda^{-1}(\{0\})$ is not contained in any of the singularities of $\lambda \mapsto \cP_\tau^{\uparrow}\left(M(w^{-1},w\lambda)\varphi,\lambda\right)$ in $\fa_{\tau,\cc}^*-\underline{\rho}_\tau-\underline{\rho}_\tau^\uparrow$. Moreover, by Lemma~\ref{lem:poles_of_the_product}, using \cite[Theorem~2.2]{Lap} we can compute the residue inside the sum $\sum_\varphi$ and the inner-product $\langle \cdot,\cdot \rangle_G$. It follows that the residue in \eqref{eq:the_residue} is 
        \begin{equation*}
            \sum_{\varphi \in \cB_{w.Q,w\tau}(J)} \cP_\tau^{\uparrow}\left(M(w^{-1},w\lambda)\varphi,\lambda\right) \langle f,\underset{\overline{\Lambda}}{\Res}\left(E(\varphi,-w\overline{\mu})\right)(\lambda) \rangle_G.
        \end{equation*}
        Let $\varphi \in \cA_{w.Q,w\tau}(G)$. By yet another use of \cite[Theorem~2.2]{Lap}, we see that we have
         \begin{equation*}
            \underset{\overline{\Lambda}}{\Res}\left(E(\varphi,-w\overline{\mu})\right)(\lambda)=-E\left(E^{R,*}(\varphi,-\overline{\lambda'}-\nu_{w. Q,\delta}^{R})\right). 
        \end{equation*}
        where we have by definition of $\lambda'$
        \begin{equation*}
           E^{R,*}(\varphi,-\overline{\lambda'}-\nu_{w. Q,\delta}^{R})=-\underset{\overline{\Lambda}}{\Res}( E^{R}(\varphi,-w\overline{\mu}))(\lambda).
        \end{equation*}
        Note that $(w.Q)_{w\tau}=R_{\delta}$. Let $\sigma_{w\tau}$ be the cuspidal representation of $M_{R_{\delta}}$ defined in \S\ref{subsubsec:residual_blocks}, so that there exists $\phi \in \cA_{\sigma_{w\tau},R_{\delta}}(G)$ with $\varphi=E^{w.Q,*}(\phi,-\nu_{w\tau})$. Then by the definition of $E^{R,*}$ given in \eqref{eq:residual}, and because $\nu_{w\tau}+\nu_{w. Q,\delta}^{R}=\nu_{\delta}$, we have
        \begin{equation*}
             E^{R,*}(\varphi,-\nu_{w.Q,\delta}^{R})=E^{R,*}(\phi,-\nu_{\delta}).
        \end{equation*}
        In particular, for every $\varphi' \in \cA_{Q,\delta}(G)$ by Proposition~\ref{prop:adjunt} we get
        \begin{equation*}
            \langle \varphi',E^{R,*}(\varphi,-\nu_{w. Q,\delta}^{R}) \rangle_{R,\Pet}=\langle \varphi',E^{R,*}(\phi,-\nu_{\delta}) \rangle_{R,\Pet}=\langle \varphi'_{R_{\delta}},\phi \rangle_{R_{\delta},\Pet},
        \end{equation*}
        but also by transitivity of constant term
        \begin{equation*}
             \langle \varphi'_{w. Q},\varphi \rangle_{w.Q,\Pet}=\langle \varphi'_{w.Q},E^{w.Q,*}(\phi,-\nu_{w \tau}) \rangle_{w.Q, \Pet}=\langle \varphi'_{R_{\delta}},\phi \rangle_{R_{\delta},\Pet}.
        \end{equation*}
        It follows that the regularized Eisenstein series $E^{R,*}$ (defined on $\cA_{w. Q,w\tau}$) and the constant term along $w. Q$ (defined on $\cA_{R,\delta}(G)$) are adjoint. This implies that the residue in \eqref{eq:the_residue} is
        \begin{equation*}
           - \sum_{\varphi \in \cB_{R,\delta}(J)} \cP^{\uparrow}_{\tau}\left(M(w^{-1},\lambda')\varphi_{w. Q},w^{-1} \lambda'\right) \langle f,E(\varphi,-\overline{\lambda'}) \rangle_G.
        \end{equation*}
        
        Let $P_{\delta}^\uparrow$, $P_{\tau}^\uparrow$ and $w_{\delta}^\uparrow$, $w_{\tau}^\uparrow$ respectively be the Rankin--Selberg parabolic subgroup and element of the Weyl group defined for $\delta$ and $\tau$ (or rather the associated tuples) in \S\ref{subsubsec:condition_up} and \S\ref{subsubsec:regularized_increasing}. By going back to their definitions, we see that there exist $w_\cM \in W_\cM(\tau)$ and $w_\bfM \in W_\bfM(\tau)$ such that $w_\cM w_\bfM w^{\uparrow}_\tau=w^{\uparrow}_{\delta}w$ (see \S\ref{subsubsec:functional_up}) and that moreover $w_{\bfM}.P_{\tau}^{\uparrow}=P_{\delta}^\uparrow$. By \eqref{eq:constant_inter}, the functional equation of regularized period proved in Lemma~\ref{lem:functional_up} and the definition of $\cP_{\delta}^\uparrow$ in \eqref{eq:P_up}, we obtain 
        \begin{equation*}
            \cP^{\uparrow}_{\tau}\left(M(w^{-1},\lambda')\varphi_{w. Q},w^{-1} \lambda'\right)=\cP^{w_{\bfM}.P_{\tau}^{\uparrow}}\left(M(w_{\delta}^\uparrow,\lambda)\varphi_{P_{\delta}^\uparrow},w_{\delta}^{\uparrow} \lambda' \right)=\cP_{\delta}^\uparrow(\varphi,\lambda').
        \end{equation*}
        This concludes the proof of the lemma.        
    \end{proof}

    \begin{rem}
        To prove Lemma~\ref{lem:compute_residues}, we crucially use the fact that, up to a change of variabless, our singularities comes from a juxtaposition of two segments of $\tau$ (in the sense of \S\ref{subsubsec:BZ_segments}). In this case, the image spanned by the residues of Eisenstein series is the space of Eisenstein series induced from the discrete automorphic representation obtained by juxtaposing the two corresponding components of $\tau$. For the other singularities, computing the residues is a hard problem (see e.g. \cite{GS}).
    \end{rem}

    \subsubsection{Change of contours on the $\GL_n$-side}
    \label{subsubsec:contour_gln}

    We now fix $(I,Q,\tau,I_1,I_2) \in \Pi_H^\uparrow(\pi,d+1)$ and assume that it comes from a graph $\Gamma \in \cG(\pi)$. We begin our change of contours in the integral $I_{\tau,d+1}$ of \eqref{eq:I_tau_defi} with the variables on the $\GL_n$-side. We define $\cG(\Gamma,n,d)$ to be the subset of graphs $\Gamma' \in \cG(\pi)$ such that 
    \begin{itemize}
        \item $\Gamma$ is a subgraph of $\Gamma'$;
        \item all the edges in $\Gamma'$ that do not belong to $\Gamma$ are of the form $\{ \pi_{+,i},\pi_{\mathrm{c},1,j} \}$ with $\underline{d}(+,i)=d$.
    \end{itemize}
    In words, the graphs $\Gamma' \in \cG(\Gamma,n,d)$ are obtained by adding edges to $\Gamma$ exactly as we did to describe the residue of $\sum \cP_{\tau}^\uparrow(\varphi,\lambda) \langle f,E(\varphi,-\overline{\lambda})\rangle_G$ along $\Lambda(\lambda)=\lambda(+)_{i}-\lambda(1)_{\mathrm{c},j}-(d+1)/2$ in Lemma~\ref{lem:compute_residues}. For short, we set $\underline{z}_{\Gamma'}:=\underline{z}_{\tau_{\Gamma'}}$ and so on.

    Recall that our goal is to compute the integral \begin{equation*}
        I_{\tau,d+1}=\int_{i \fa_{\tau}^*-\underline{\rho}_\tau-\underline{\rho}_\tau^\uparrow+2(d+1) \underline{z}_\tau} \sum_{\varphi \in \cB_{Q,\tau}(J)} \cP_{\tau}^\uparrow(\varphi,\lambda) \langle f,E(\varphi,-\overline{\lambda}) \rangle_{G} d\lambda.
    \end{equation*}

    \begin{lem}
    \label{lem:residue_n}
        We have
        \begin{equation}
        \label{eq:residue_n}
           I_{\tau,d+1}=\sum_{\Gamma' \in \cG(\Gamma,n,d)} \int_{i \fa_{\pi_{\Gamma'}}^*+\kappa_{d,d+1}(\Gamma')} \sum_{\varphi \in \cB_{Q_{\Gamma'},\pi_{\Gamma'}}(J)} \cP_{\pi_{\Gamma'}}^\uparrow(\varphi,\lambda) \langle f,E(\varphi,-\overline{\lambda}) \rangle_{G} d\lambda,
        \end{equation}
        where
        \begin{equation*}           
        \kappa_{d,d+1}(\Gamma')=-\underline{\rho}_{\Gamma'}-\underline{\rho}_{\Gamma'}^\uparrow+2d\underline{z}_{\Gamma',n}+2(d+1) \underline{z}_{\Gamma',n+1}.
        \end{equation*}
    \end{lem}

    \begin{proof}
        Note that the set $\cG(\Gamma,n,d)$ is finite. For every $\Gamma' \in \cG(\Gamma,n,d)$, let $L_{\Gamma'}$ be the polynomial defined in \eqref{eq:thepoles} which controls the poles of $\cP_{\pi_{\Gamma'}}^\uparrow(\varphi,\lambda) \langle f,E(\varphi,-\overline{\lambda}) \rangle_{G}$ in the region $\cR_{\pi_{\Gamma'},d+1}^\uparrow$ of \eqref{eq:our_region}. Denote by $(I',Q',\tau',I_1',I_2')$ the image of $\Gamma'$. By the definition of $\cS_{\tau',k,c_J}$ in \eqref{eq:S_defi}, there exist $k'>0$ and $c>0$ such that for all $T>0$ we have
       \begin{equation}
         \label{eq:truncated_integration}
            \left\{ \lambda \in \fa_{\tau',\cc}^* \; | \; \max \Val{\Im(\lambda_i)} \leq T, \; \max \Val{\Re(\lambda_i)} \leq c(1+T)^{-k'} \right\}\subset \cS_{\tau',k,c_J}.
        \end{equation}
        Note that $k$, $c_J$, $k'$ and $c$ can be chosen independently of $\tau'$ by finiteness. 
        
        Let $D$ be the maximum of the degrees of the $L_{\Gamma'}$ and take $N \geq Dk'+1$. Let $\varepsilon >0$. By Lemma~\ref{lem:poles_of_the_product}, there exists $T>0$ such that for every $\tau'$ and every index $i$ we have
        \begin{equation}
        \label{eq:est_sum_1}
            \sup_{\substack{\lambda \in \cR_{\tau',d+1}^\uparrow \\  \Val{\Im(\lambda_{i})} \geq T}} \left((1+\norm{\lambda}^2)^{N}\sum_{\varphi \in \cB_{R',\tau'}(J)} \Val{L_{\Gamma'}(\lambda)\cP_{\tau'}^\uparrow(\varphi,\lambda) \langle f,E(\varphi,-\overline{\lambda}) \rangle_{G} }\right) \leq \varepsilon,
        \end{equation}
        and moreover
        \begin{equation}
        \label{eq:est_sum_2}
            (c/4)^{-D} (2T+1)^{Dk'}\int_{\substack{\lambda \in i\fa_{\tau'}^* \\ \Val{\lambda_i} \geq T} } \frac{1}{(1+\norm{\lambda}^2)^N}d\lambda \leq 1, \quad \frac{(c/4)^{-D} (2T+1)^{Dk'}}{(1+T^2)^{N/2}}\int_{\substack{\lambda \in i\fa_{\tau'}^* \\ \lambda_i=0} } \frac{1}{(1+\norm{\lambda}^2)^{N/2}}d\lambda \leq 1.
        \end{equation}
        
        Let $\underline{\varepsilon}_{\tau'} \in \fa_{\tau'}^*$ such that all the coordinates of $\underline{\varepsilon}_{\tau'}(+)$ are distinct and of absolute value less than $c(2T+1)^{-k'}/4$, and all the other are zero. Set $\eta=3c(2T+1)^{-k'}$. Define
         \begin{equation*}
            \kappa_{\tau'}=-\underline{\rho}_{\tau'}-\underline{\rho}_{\tau'}^\uparrow+(2d+1)\underline{z}_{\tau',n}+2(d+1) \underline{z}_{\tau',n+1}.
        \end{equation*}
        This point is the middle of the segment $[\kappa_{d,d+1}(\Gamma'),-\underline{\rho}_{\tau'}-\underline{\rho}_{\tau'}^\uparrow+2(d+1) \underline{z}_{\tau'}]$.       

        Let $(I',Q',\tau',I_1',I_2')$ be the image of some graph $\Gamma'$. By Lemma~\ref{lem:poles_of_the_product}, we can describe the poles of $\cP_{\tau'}^\uparrow(\varphi,\lambda) \langle f,E(\varphi,-\overline{\lambda}) \rangle_{G}$ in the region
        \begin{equation}
        \label{eq:n_region}
            \left\{ \lambda -\underline{\rho}_{\tau'}-\underline{\rho}_{\tau'}^\uparrow  + \nu \; \middle| \; \lambda \in \fa_{\tau',\cc}^* \cap \cS_{\tau',k,c_J}, 
            \begin{array}{l}
                \nu_n \in [2d \underline{z}_{\tau',n},2(d+1) \underline{z}_{\tau',n}], \\
                 \nu_{n+1} =2(d+1) \underline{z}_{\tau',n+1}.
            \end{array}
        \right\},
        \end{equation}
        which is contained in $\cR_{\tau',d+1}^\uparrow $. Because $\tau$ comes from a tuple belonging to $\Pi_H^\uparrow(\pi,d+1)$, by the definition of $\cG(\Gamma,n,d)$ we see that we always have $\underline{d}(\tau',2,j)>d+1$, so that the only possible poles in the region \eqref{eq:n_region} are those of the product
        \begin{equation}
            \label{eq:thepoles_n}
           \prod_{\substack{ \sigma_{+,i}(\tau') \simeq \tau'_{\mathrm{c},1,j}\\ \underline{d}(\tau',+,j)=d}} \left( \lambda(+)_i-\lambda(1)_{\mathrm{c},j}-\frac{d+1}{2}\right).
        \end{equation}
        We can also replace $L_{\Gamma'}$ by this product.

        We now go back to our initial representation $\tau$. By the previous discussion, we can move the contour in the integral to obtain
        \begin{equation}
        \label{eq:first_CS}
            I_{\tau,d+1}=\int_{i \fa_{\tau}^*+\kappa_\tau+\eta \underline{z}_{\tau,n}} \sum_{\varphi \in \cB_{Q,\tau}(J)} \cP_{\tau}^\uparrow(\varphi,\lambda) \langle f,E(\varphi,-\overline{\lambda}) \rangle_{G} d\lambda.
        \end{equation}
        For any $\lambda \in i \fa_{\tau}^*+\kappa_\tau+\eta \underline{z}_{\tau,n}$ we have 
        \begin{equation}
            \label{eq:complicated_bound}
            \Val{L_{\Gamma'}(\lambda)} \geq (\eta/4)^D \geq (c/4)^{D} (2T+1)^{-k'D}.
        \end{equation}
        This implies that for any $i$ we have by \eqref{eq:est_sum_1} and \eqref{eq:est_sum_2}
        \begin{equation*}
            \int_{\substack{i \fa_{\tau}^*+\kappa_\tau+\eta \underline{z}_{\tau,n} \\  \Val{\Im(\lambda_{i})} \geq T}} \sum_{\varphi \in \cB_{Q,\tau}(J)} \Val{\cP_{\tau}^\uparrow(\varphi,\lambda) \langle f,E(\varphi,-\overline{\lambda}) \rangle_{G}} d\lambda \leq \varepsilon,
        \end{equation*}
        Therefore, there exists a constant $C$ (independent of $\varepsilon$) such that
        \begin{equation*}
            \Val{I_{\tau,d+1}-\int_{\substack{i \fa_{\tau}^*+\kappa_\tau+\eta \underline{z}_{\tau,n} \\ \max \Val{\Im(\lambda_i)} \leq T}} \sum_{\varphi \in \cB_{Q,\tau}(J)} \cP_{\tau}^\uparrow(\varphi,\lambda) \langle f,E(\varphi,-\overline{\lambda}) \rangle_{G} d\lambda} \leq C \varepsilon,
        \end{equation*}

        We focus on the second integral. By Fubini's theorem, we can assume that we are integrating last in the variable $\lambda(+)_1$. We change the contour in this last integral, so that all the other variables are fixed. We want to go from $\Re(\lambda(+)_1)=0$ to $\Re(\lambda(+)_1)=\underline{\varepsilon}_{\tau}(+)_1$, still with the condition $\Val{\Im(\lambda_i)}\leq T$. To achieve this we link these two segments with the two additional segments of real part $\Re(\lambda(+)_1) \in [0,\underline{\varepsilon}_\tau(+)_1]$ and imaginary part $\Im(\lambda(+)_1)=\pm T$. Let us denote this contour by $\gamma$. By the description of \eqref{eq:thepoles_n}, our integrand has no poles inside $\gamma$. Indeed, the key is that with our choice of $\eta$, $\eta \underline{z}_{\tau,n}$ has no coordinates with real part lying in $[-\underline{\varepsilon}_\tau(+)_1,\underline{\varepsilon}_\tau(+)_1]$. Moreover, along the two real segments of $\gamma$, as in \eqref{eq:complicated_bound} we have the estimate $ \Val{L_{\Gamma'}(\lambda)} \geq (c/4)^{D} (2T+1)^{-k'D}$. By \eqref{eq:est_sum_1} and \eqref{eq:est_sum_2}, the integrals along these two real segments are bounded by the length of the segments (which can be assumed to be less than $1$) times $\varepsilon$. By repeating this change of contour for each variable $\lambda(+)_i$, up to changing $C$ we arrive at
        \begin{equation}
        \label{eq:I_tau_T_defi}
            \Val{I_{\tau,d+1}-\int_{\substack{i \fa_{\tau}^*+\kappa_\tau+\eta \underline{z}_{\tau,n}+\underline{\varepsilon}_\tau \\ \max \Val{\Im(\lambda_i)} \leq T}} \sum_{\varphi \in \cB_{Q,\tau}(J)} \cP_{\tau}^\uparrow(\varphi,\lambda) \langle f,E(\varphi,-\overline{\lambda}) \rangle_{G} d\lambda} \leq C \varepsilon.
        \end{equation}
        We denote the integral in \eqref{eq:I_tau_T_defi} by $I_{\tau,d+1}(T)$.

        We now change the contour in the variable $\lambda(1)_{\mathrm{c},1}$. We set
        \begin{equation*}
             \underline{z}_1 = \left( \underbrace{0,\hdots\hdots\hdots\hdots\hdots\hdots,0}_{m_{+}(\tau)+m_2(\tau)-|I_2|+m_1(\tau)}, 1/4,\underbrace{0,\hdots,0}_{m_{\mathrm{c},1}}  \right) \in \fa_{Q,n}^*.
        \end{equation*}
        In words, $\underline{z}_1$ is the vector in $\fa_{Q,n}^*$ whose only non-zero coordinates is $1/4$ above $\tau_{\mathrm{c},1,1}$. We want to move from $i \fa_\tau^* + \kappa_\tau+\eta \underline{z}_{\tau,n}+\underline{\varepsilon}_\tau$ to $i \fa_\tau^* +\kappa_\tau+\eta \underline{z}_{\tau,n}-2 \eta \underline{z}_1+\underline{\varepsilon}_\tau$. By Fubini's theorem, we can integrate in $\lambda(1)_{\mathrm{c},1}$ last, so that we may consider that all the other variables are fixed. This means that we want to go from an integral on $\Val{\Im(\lambda(1)_{\mathrm{c},1})} \leq T$ starting from $\Re(\lambda(1)_{\mathrm{c},1})=(d+1)/2+\eta/4$ to $\Re(\lambda(1)_{\mathrm{c},1})=(d+1)/2-\eta/4$. By the description of \eqref{eq:thepoles_n} there is no pole in these two regions, or if $2T \geq \Val{\Im(\lambda(1)_{\mathrm{c,1}})}>T$. We may therefore consider a contour $\gamma$ linking our two segments from above and below, so that non singularities occur along $\gamma$. Note that $\Val{L_{\Gamma'}(\lambda)}$ is bounded from below by $(c/4)^{D} (2T+1)^{-k'D}$ along $\gamma$: along the two imaginary segments it follows from the same argument as in the previous change of contour, while in the two additional curves this condition can easily be met. Therefore, by \eqref{eq:est_sum_1} and \eqref{eq:est_sum_2} the two additional integrals in the region $\Val{\Im(\lambda(1)_{\mathrm{c,1}})}>T$ will be bounded by an absolute constant of $\varepsilon$. The singularities inside $\gamma$ are those prescribed by \eqref{eq:thepoles_n}, and they are simple as the coordinates of $\underline{\varepsilon}_\tau(+)$ are distinct. By the residue theorem and our choices of measures, the integral along $\gamma$ is equal to the sum of the residues. Note that in Lemma~\ref{lem:compute_residues} we had computed this residues with respect to linear forms $\Lambda(\lambda)=\lambda(+)_i-\lambda(1)_{\mathrm{c},j}-(d+1)/2$. However, here we are shifting the contour in the variable $\lambda(1)_{\mathrm{c},1}$ so that a $-$ sign will occur. By the computation from Lemma~\ref{lem:compute_residues}, we conclude that, up to increasing $C$, we have
        \begin{equation}
        \label{eq:big_CS}
         \Val{I_{\tau,d+1}(T)-\left(\int_{\substack{i \fa_\tau^* +\kappa_\tau+\eta \underline{z}_{\tau,n}-2 \eta \underline{z}_1+\underline{\varepsilon}_\tau \\ \max \Val{\Im(\lambda_i)} \leq T}} \sum_{\varphi \in \cB_{Q,\tau}(J)} \cP_{\tau}^\uparrow(\varphi,\lambda) \langle f,E(\varphi,-\overline{\lambda}) \rangle_{G} d\lambda+ \sum_{\Gamma'} I_{\Gamma'}(T)\right)} \leq C\varepsilon.
        \end{equation}
        Here $\Gamma'$ ranges in the elements of $\cG(\Gamma,n,d)$ such that there is exactly one edge that does not belong to $\Gamma$ and such that it is of the form $\{ \pi_{+,i},\tau_{\mathrm{c},1,1}\}$ with $\underline{d}(+,i)=d$. In that case, we set for $\tau'=\pi_\Gamma'$ and $Q'=Q_{\Gamma'}$
        \begin{equation*}
            I_{\Gamma'}(T)=\int_{\substack{i \fa_{\tau'}^*+\kappa_{\tau'}+\eta \underline{z}_{\tau',n}+\underline{\varepsilon}_{\tau}  \\ \max\Val{\Im(\lambda_i)}\leq T }} \sum_{\varphi \in \cB_{Q',\tau'}(J)} \cP_{\tau'}^\uparrow(\varphi,\lambda) \langle f,E(\varphi,-\overline{\lambda}) \rangle_{G} d\lambda.
        \end{equation*}
        Note that here $\underline{\varepsilon}_{\tau}$ is identified with the element of $\fa_{\tau'}^*$ which has $\underline{\varepsilon}_{\tau}(+)_1$ as its coordinates in $\lambda(1)_1$, and $\underline{\varepsilon}_{\tau}(+)_i$ in $\lambda(+)_{i-1}$ for $i \geq 2$. However, we claim that we can move the contour of this integral to $i \fa_{\tau'}^*+\kappa_{\tau'}+\eta \underline{z}_{\tau',n}+\underline{\varepsilon}_{\tau'}$. Indeed, this change will take place in the variable $\lambda(1)_1$ inside the region \eqref{eq:n_region} and will not cross any poles thanks to \eqref{eq:thepoles_n}, so that in particular $L_{\Gamma'}$ remains constant. By \eqref{eq:est_sum_1} and \eqref{eq:est_sum_2}, we conclude that up to increasing the absolute constant $C$ we may assume that
        \begin{equation}
                \label{eq:small_CS}
            \Val{I_{\Gamma'}(T)-I_{\tau',d+1}(T)} \leq C \varepsilon.
        \end{equation}
        Note that throughout this process the constant $C$ is independent of $\varepsilon$.

        We can now conclude the proof of Lemma~\ref{lem:residue_n}. By \eqref{eq:big_CS} and \eqref{eq:small_CS}, we see that $I_{\tau,d+1}(T)$ is, up to $C\varepsilon$, equal to the same integral shifted to $i \fa_\tau^* +\kappa_\tau+\eta \underline{z}_{\tau,n}-2 \eta \underline{z}_1+\underline{\varepsilon}_\tau$ (the absolute values of the imaginary parts still being bounded by $T$), plus a sum of $I_{\tau',d+1}(T)$. Let us explain how to deal with each contribution. 
        
        For the first integral, we continue to change the contour in each variable $\lambda(1)_{\mathrm{c},i}$ by subtracting $2 \eta \underline{z}_i$. By reproducing the exact same argument, we see that each steps adds a finite sum of $I_{\tau',d+1}(T)$ where $\tau'$ is obtained by adding an edge $\{ \pi_{+,i},\pi_{\mathrm{c},1,j}\}$ with $\underline{d}(+,i)=d$ to $\Gamma$. At the end, our integral takes place along $i \fa_\tau^* +\kappa_\tau-\eta \underline{z}_{\tau,n}+\underline{\varepsilon}_\tau$ (with the bound on the imaginary parts). But as in \eqref{eq:first_CS} and \eqref{eq:I_tau_T_defi}, by changes of contours (now generating no residues) we see that this is the integral along $i \fa_{\tau}^*+\kappa_{d,d+1}(\Gamma)$ up to $C \varepsilon$, this time with no bound on the imaginary parts. This is the contribution coming from $\Gamma$ in \eqref{eq:residue_n}.

        We are now left with the remaining $I_{\tau',d+1}(T)$. Because the $\tau'$s are associated to graphs $\Gamma' \in \cG(\Gamma,n,d)$, their poles are also determined by \eqref{eq:thepoles_n} in the region \eqref{eq:n_region} by Lemma~\ref{lem:poles_of_the_product} and we can reproduce the procedure used for $I_{\tau,d+1}(T)$. We conclude by induction because as $m_{\mathrm{c},1}(\tau')<m_{\mathrm{c},1}(\tau)$, so that we have less variables to apply change of contour to. Note that at each step we will add an edge of the form $\{\pi_{+,i}, \pi_{\mathrm{c},1,j}\}$ to $\Gamma'$, so that at the end we get a sum indexed by the graphs $\cG(\Gamma,n,d)$. Moreover, because we change the contour of the variables in a certain order, each graph can only occur once. 
        
        We now see that \eqref{eq:residue_n} holds up to an absolute constant of $\varepsilon$, and we conclude by taking $\varepsilon \to 0$. This ends the proof of Lemma~\ref{lem:residue_n}.
    \end{proof}

    \subsubsection{Changes of contours on the $\GL_{n+1}$-side}
    \label{subsubsec:contour_gl_n+1}

    We keep our element $(I,Q,\tau,I_1,I_2) \in \Pi_H^\uparrow(\pi,d+1)$ coming from the graph $\Gamma$ and the notation from \S\ref{subsubsec:contour_gln}. We now define $\cG(\Gamma,d)$ to be the subset of $\Gamma' \in \cG(\pi)$ such that
    \begin{itemize}
        \item $\Gamma$ is a subgraph of $\Gamma'$,
        \item all the edges in $\Gamma'$ that do not belong to $\Gamma$ are of the form $\{ \pi_{+,i},\pi_{\mathrm{c},1,j} \}$ or $\{ \pi_{+,i},\pi_{\mathrm{c},2,j} \}$ with $\underline{d}(+,i)=d$.
    \end{itemize}

    By changes of contours on the variables on the $\GL_{n+1}$-side, we obtain the following result.

    \begin{lem}
    \label{lem:residue_n+1}
        We have
        \begin{equation}
        \label{eq:residue_n+1}
           I_{\tau,d+1}=\sum_{\Gamma' \in \cG(\Gamma,d)} \int_{i \fa_{\pi_{\Gamma'}}^*-\underline{\rho}_{\Gamma'}-\underline{\rho}_{\Gamma'}^\uparrow+2d\underline{z}_{\Gamma'}} \sum_{\varphi \in \cB_{Q_{\Gamma'},\pi_{\Gamma'}}(J)} \cP_{\pi_{\Gamma'}}^\uparrow(\varphi,\lambda) \langle f,E(\varphi,-\overline{\lambda}) \rangle_{G} d\lambda.
        \end{equation}
    \end{lem}

    \begin{proof}
        The proof follows the same pattern as Lemma~\ref{lem:residue_n}, up to one major difference. By Lemma~\ref{lem:residue_n}, we can start from $(I',Q',\tau',I_1',I_2')$ that comes from a graph $\Gamma' \in \cG(\Gamma,n,d)$ and change the contour in the integral in \eqref{eq:residue_n}. We now have to study the poles in the region 
         \begin{equation}
        \label{eq:n+1_region}
            \left\{ \lambda -\underline{\rho}_{\tau'}-\underline{\rho}_{\tau'}^\uparrow  + \nu \; \middle| \; \lambda \in \fa_{\tau',\cc}^* \cap \cS_{\tau',k,c_J}, 
            \begin{array}{l}
                \nu_n=2d \underline{z}_{\tau',n}, \\
                 \nu_{n+1} \in [2d \underline{z}_{\tau',n+1},2(d+1) \underline{z}_{\tau',n+1}].
            \end{array}
        \right\},
        \end{equation}
        The difference with Lemma~\ref{lem:residue_n} is that we can have some $i$ such that $\underline{d}(\tau',1,i)=d+1$. By Lemma~\ref{lem:poles_of_the_product}, we see that the poles in \eqref{eq:n+1_region} are now along the zeros of 
        \begin{equation*}
            \prod_{\substack{\sigma_{+,i}(\tau')^\vee \simeq \tau_{\mathrm{c},2,j}   \\ \underline{d}(\tau',+,i)=d}} \left( -\lambda(+)_i-\lambda(2)_{\mathrm{c},j}-\frac{d+1}{2}\right) \prod_{\substack{\sigma_{1,i}(\tau')^\vee \simeq \tau'_{\mathrm{c},2,j}  \\ \underline{d}(\tau',1,i)=d+1 \\  i \notin I_1'}} \left(-\lambda(1)_i -\lambda(2)_{\mathrm{c},j}-\frac{d+1}{2}\right).
        \end{equation*}
        We know the residues arising from these singularities thanks Lemma~\ref{lem:compute_residues}. By reproducing the argument in Lemma~\ref{lem:residue_n} (i.e. truncate the integral to assume that the poles are simple), we see that 
        \begin{equation*}
             I_{\tau',d+1}=\sum_{\Gamma'' \in \cG(\Gamma',n+1,d)} \int_{i \fa_{\pi_{\Gamma''}}^*-\underline{\rho}_{\Gamma''}-\underline{\rho}_{\Gamma''}^\uparrow+2d\underline{z}_{\Gamma''}} \sum_{\varphi \in \cB_{Q_{\Gamma''},\pi_{\Gamma''}}(J)} \cP_{\pi_{\Gamma''}}^\uparrow(\varphi,\lambda) \langle f,E(\varphi,-\overline{\lambda}) \rangle_{G} d\lambda,
        \end{equation*}
        where $\cG(\Gamma',n+1,d)$ is the set of graphs $\Gamma'' \in \cG(\pi)$ such that $\Gamma'$ is a subgraph of $\Gamma''$ and all the edges in $\Gamma''$ that do not belong to $\Gamma'$ are of the form $\{ \pi_{+,i},\pi_{\mathrm{c},2,j} \}$ with $\underline{d}(+,i)=d$. To conclude, it remains to note that 
        \begin{equation*}
            \cG(\Gamma,d)=\bigsqcup_{\Gamma' \in \cG(\Gamma,n,d)} \cG(\Gamma',n+1,d).
        \end{equation*}
    \end{proof}

    \subsubsection{Counting the contributions}
    \label{subsubsec:counting_contributions}

    We now end the proof of Proposition~\ref{prop:recu}. We assume that \eqref{eq:recu_to_prove} holds for $d+1 \geq 2$. Let $\cG(\pi,d)$ be the subset of $\Gamma \in \cG(\pi)$ such that $(I_\Gamma,Q_\Gamma,\pi_\Gamma,I_{1,\Gamma},I_{2,\Gamma}) \in \Pi_H^\uparrow(\pi,d)$. We need to sum over elements in $\Pi_H^\uparrow(\pi,d)$ rather than graphs in $\cG(\pi,d)$. This is the content of the following combinatorial result.
    
    \begin{lem}
    \label{lem:counting1}
       We have
        \begin{equation*}
            \bigsqcup_{\Gamma \in \cG(\pi,d+1)}\cG(\Gamma,d) = \cG(\pi,d).
        \end{equation*}
        Moreover, let $\Gamma \in \cG(\pi,d+1)$ and let $(I,Q,\tau,I_1,I_2)$ be its image. Then the fiber of the map
        \begin{equation}
        \label{eq:ugly_map}
            \Gamma' \in \cG(\Gamma,d) \mapsto (I_{\Gamma'},Q_{\Gamma'},\pi_{\Gamma'},I_{1,\Gamma'},I_{2,\Gamma'}) \in \Pi_H^\uparrow(\pi,d)
        \end{equation}
        above a point $(I',Q',\tau',I_1',I_2')$ in the image is of cardinal $|\Stab(\tau)||\Stab(\tau')|^{-1}$.
    \end{lem}

    \begin{proof}
        For the first point, that the union is disjoint follows from the fact that for each $\Gamma' \in \cG(\pi,d)$ there exists a unique graph $\Gamma \in \cG(\pi,d+1)$ such that $\Gamma'$ is obtained by adding edges to $\Gamma$. We also clearly have the equality as any $\Gamma \in \cG(\pi,d)$ can be obtained by this procedure.

        We move to the second assertion. Because there is a single element in the preimage of $W(\tau^\downarrow). \tau^\downarrow$ under the map \eqref{eq:down_map}, we may assume that we have decompositions
        \begin{equation*}
            \tau_{+}=\boxtimes_{i=1}^{k} \tau_{+,i}^{\boxtimes k_{+,i}} \boxtimes \tau_{+, \neq d}, \quad \tau_{\mathrm{c},1}=\boxtimes_{i=1}^{k} \sigma_{i}^{\boxtimes k_{\mathrm{c},1,i}}, \quad \tau_{\mathrm{c},2}=\boxtimes_{i=1}^{k} (\sigma_{i}^\vee)^{\boxtimes k_{\mathrm{c},2,i}},
        \end{equation*}
        for $k$ some integer, where all the $\sigma_i$ are mutually non-isomorphic cuspidal representations of some $\GL_r$'s, for all $i$ we have $\tau_{+,i}=\Speh(\sigma_i,d)$ and $\tau_{+,\neq d}$ is a product of Speh representations $\Speh(\sigma,d')$ with $\sigma$ cuspidal and $d' \neq d$. Note that we allow the $k_{+,i}$, $k_{\mathrm{c},1,i}$ and $k_{\mathrm{c},2,i}$ to be zero.

        Let $\Gamma' \in \cG(\Gamma,d)$. Let $1 \leq i \leq k$. Denote by $k_{1,i}$ the number of vertices $\pi_{+,j} \in \Gamma'$ with $\pi_{+,j} \simeq \tau_{+,i}$, $\deg(+,1,j)=1$ and $\deg(+,2,j)=0$, $k_{2,i}$ the number of these vertices with $\deg(+,1,j)=0$ and $\deg(+,2,j)=1$, and finally $k_{1,2,i}$ the number of those with $\deg(+,1,j)=\deg(+,2,j)=1$. It is readily checked that the number of $\Gamma'' \in \cG(\Gamma,d)$ with the same image as $\Gamma'$ under \eqref{eq:ugly_map} is
        \begin{equation*}
            \prod_{i=1}^k \left(\frac{k_{\mathrm{c},1,i}! k_{\mathrm{c},2,i}! k_{+,i}!}{(k_{\mathrm{c},1,i}-k_{1,i}-k_{1,2,i})!(k_{\mathrm{c},2,i}-k_{2,i}-k_{1,2,i})!k_{1,i}! k_{2,i}!k_{1,2,i}!(k_{+,i}-k_{1,i}-k_{2,i}-k_{1,2,i})!} \right).
        \end{equation*}
        If we denote by $(I',Q',\tau',I_1',I_2')$ the image of $\Gamma'$, this is exactly $|\Stab(\tau)||\Stab(\tau')|^{-1}$. This concludes the proof.
    \end{proof}

    \subsubsection{End of the proof of Propositions~\ref{prop:recu} and~\ref{prop:I_pi_exp}}
    \label{subsubsec:end_proof_prop}

    Proposition~\ref{prop:recu} is now a consequence of Lemma~\ref{lem:residue_n+1} and Lemma~\ref{lem:counting1}. This also concludes the proof of Proposition~\ref{prop:I_pi_exp} by taking $d=1$.

    \subsubsection{Conclusion}
 
    We can now write our new expression of the Rankin--Selberg period.

    \begin{prop}
    \label{prop:almost_final}
        We have 
         \begin{align}
             \int_{[H]}f(h)dh=&\sum_{r=0}^n \sum_{(I_r,P,\pi) \in \Pi_H^\uparrow} \sum_{(I,Q,\tau,I_1,I_2) \in \Pi_H^\uparrow(\pi)}\nonumber \\
        \times&\frac{|\Stab(\pi)|}{|W(\pi)||\Stab(\tau)|} \int_{i \fa_{\tau}^*-\underline{\rho}_\tau-\underline{\rho}_\tau^\uparrow+2 \underline{z}_\tau} \sum_{\varphi \in \cB_{Q,\tau}(J)}\cP_{\tau}^\uparrow(\varphi,\lambda) \langle f,E(\varphi,-\overline{\lambda}) \rangle_{G} d\lambda, \label{eq:almost_final}
        \end{align}
        where $\Pi_H^\uparrow(\pi)$ is the set of tuples obtained from the null graph built from $w \pi$ with $w$ being any element in $W(\pi)$ such that $\underline{d}(+,1) \geq \hdots \geq \underline{d}(+,m_+)$.
    \end{prop}

    \begin{proof}
        This is a direct consequence of Proposition~\ref{prop:enter_Eisenstein} and Proposition~\ref{prop:I_pi_exp}. The only thing that we have to check is that the expression in the last line of \eqref{eq:almost_final} is independent from the choice of $w \in W(\pi)$. But this follows from Lemma~\ref{lem:functional_I_pi}.
    \end{proof}

\subsection{Additional residues from the regularized period}
\label{subsec:this_is_the_end}

In this section, we continue the computation of the spectral expansion of $\int_{[H]} f(h)dh$ by performing a final shift in the contour of the integrals of \eqref{eq:almost_final}. In contrast with that of \S\ref{subsec:additional}, this wave of shifts will produce additional contributions coming from residues of the regularized period $\cP_\tau^\uparrow$. 

\subsubsection{Statement of the result}
\label{subsubsec:result_2}

Throughout this section, we fix $0 \leq r \leq n$ and $(I_r,P,\pi) \in \Pi_H^{\uparrow}$. We then fix $(I,Q,\tau,I_1,I_2) \in \Pi_H^\uparrow(\pi)$. Our goal is to compute the integral 
\begin{equation}
\label{eq:last_I_tau}
    I_\tau:=\int_{i \fa_{\tau}^*-\underline{\rho}_\tau-\underline{\rho}_\tau^\uparrow+2 \underline{z}_\tau} \sum_{\varphi \in \cB_{Q,\tau}(J)}\cP_{\tau}^\uparrow(\varphi,\lambda) \langle f,E(\varphi,-\overline{\lambda}) \rangle_{G} d\lambda.
\end{equation}
As in \S\ref{subsubsec:statement_result}, the integral $I_\tau$ will give rise to several contributions that we will parametrize by graphs. However, the definition is less involved than in \S\ref{subsubsec:statement_result}.

As usual, we can write
\begin{align}              
      \tau_n&=\boxtimes_{i=1}^{m_+} \tau_{+,i} \boxtimes_{\substack{i=1 \\ i \notin I_2}}^{m_2} \tau_{2,i}^{-,\vee}\boxtimes_{i=1}^{m_1} \tau_{1,i}  \boxtimes_{i=1}^{m_{\mathrm{c},1}} \tau_{\mathrm{c},1,i}, \label{eq:defi_tau_n} \\
         \tau_{n+1}&=  \boxtimes_{i=1}^{m_+} \tau_{+,i}^\vee \boxtimes_{\substack{i=1 \\ i \notin I_1}}^{m_1} \tau_{1,i}^{-,\vee} \boxtimes_{i=1}^{m_2} \tau_{2,i} \boxtimes_{i=1}^{m_{\mathrm{c},2}} \tau_{\mathrm{c},2,i} . \label{eq:defi_tau_n+1}
    \end{align}
    Because the reference to $\pi$ will not be needed in our discussion, we simply write $\tau_{1,i}=\Speh(\sigma_{1,i},\underline{d}(1,i))$ and so on. We denote by $\cG_{\mathrm{c}}(\tau)$ the set of undirected simple graphs $\Gamma$ such that
    \begin{itemize}
        \item the vertices of $\Gamma$ are $\tau_{\mathrm{c},1,1},\hdots,\tau_{\mathrm{c},1,m_1}$, $\tau_{\mathrm{c},2,1},\hdots,\tau_{\mathrm{c},2,m_2}$ (with multiplicity);
        \item the edges of $\Gamma$ are of the form $\{ \tau_{\mathrm{c},1,i},\tau_{\mathrm{c},2,j}\}$ for some $i$ and $j$ with $\tau_{\mathrm{c},1,i} \simeq \tau_{\mathrm{c},2,j}^\vee$;
        \item the degree of each vertex is $0$ or $1$.
    \end{itemize}
    For each $i$, we denote by $\deg(\mathrm{c},1,i)$ (resp. $\deg(\mathrm{c},2,i)$) the degree of $\tau_{\mathrm{c},1,i}$ (resp. $\tau_{\mathrm{c},2,i}$). There is a bijection
    \begin{equation}
        \label{eq:bij_I}
        i \in \{1 \leq i \leq m_{\mathrm{c},1} \; | \; \deg(\mathrm{c},1,i)=1 \} \mapsto j(i) \in \{1 \leq j \leq m_{\mathrm{c},2} \; | \; \deg(\mathrm{c},2,j)=1 \},
    \end{equation}
    such that $\tau_{\mathrm{c},2,j(i)}$ is the only neighbor of $\tau_{\mathrm{c},1,i}$. We can then define a representation $\tau_{\Gamma}$ of a standard Levi $M_{R_{\Gamma}}$ of $G$ by
     \begin{align}              
      \tau_{\Gamma,n}&=\boxtimes_{i=1}^{m_+} \tau_{+,i}\boxtimes_{\substack{i=1 \\ i \notin I_2}}^{m_2} \tau_{2,i}^{-,\vee}\boxtimes_{i=1}^{m_1} \tau_{1,i}\underset{\deg(\mathrm{c},1,i)=0}{\boxtimes}  \tau_{\mathrm{c},1,i}  \underset{\deg(\mathrm{c},1,i)=1}{\boxtimes}  \tau_{\mathrm{c},1,i} , \label{eq:tau_gamma_n} \\
        \tau_{\Gamma,n+1}&= \boxtimes_{i=1}^{m_+} \tau_{+,i}^\vee\boxtimes_{\substack{i=1 \\ i \notin I_1}}^{m_1} \tau_{1,i}^{-,\vee} \boxtimes_{i=1}^{m_2} \tau_{2,i} \underset{\deg(\mathrm{c},2,i)=0}{\boxtimes}  \tau_{\mathrm{c},2,i}  \underset{\deg(\mathrm{c},1,i)=1}{\boxtimes}  \tau_{\mathrm{c},2,j(i)} . \label{eq:tau_gamma_n+1}
    \end{align} 
    Associated to the representation $\tau_\Gamma$ is a natural tuple $J_\Gamma=(n_+,n_1,n_{\mathrm{c},1},n_2,n_{\mathrm{c},2},n_-)$. More precisely, we have $n_-=\sum_{\deg(\mathrm{c},1,i)} n_{\mathrm{c},1,i}$ where as usual $\tau_{\mathrm{c},1,i}$ is a cuspidal representation of $\GL_{n_{\mathrm{c},1,i}}$. We then see that \eqref{eq:bij_I} defines two sets $J_{1,\Gamma}$ and $J_{2,\Gamma}$ such that $(J_\Gamma,R_\Gamma,\tau_\Gamma,J_{1,\Gamma},J_{2,\Gamma}) \in \Pi^\uparrow_H$. We denote by $\Pi_{H,\mathrm{c}}^\uparrow(\tau)$ the set of tuples obtained this way. Note that $(I,Q,\tau,I_1,I_2) \in \Pi_{H,\mathrm{c}}^\uparrow(\tau)$ because it is the image of the null graph. To keep track of this starting tuple, we will write the elements of $\Pi_{H,\mathrm{c}}^\uparrow(\tau)$ with the letter $J$ rather than $I$. We hope that this does not cause confusion with our fixed level (which is also denoted by $J$).
    
    Because $\tau$ is fixed, we will typically write $\delta$ for the representation $\tau_\Gamma$. In this situation, we will have $\delta_{1,i}=\Speh(\sigma_{1,i}(\delta),\underline{d}(\delta,1,i))$ and so on. We will need the element
    \begin{equation}
        \label{eq:underline_z_delta}
        \underline{z}_{\delta}:=((0,0,0,1/4,0,0),(0,0,0,1/4,0,0)) \in \fa_{R}^*,
    \end{equation}
    where the $1/4$ only lives above $\underset{\deg(\mathrm{c},1,i)=0}{\boxtimes}  \tau_{\mathrm{c},1,i} $ and $\underset{\deg(\mathrm{c},2,i)=0}{\boxtimes}  \tau_{\mathrm{c},2,i} $.

    Note that the fiber of the map
    \begin{equation*}
        (J',R',\delta',J_1',J_2') \in \Pi_{H,\mathrm{c}}^\uparrow(\tau) \mapsto (R^{',\downarrow},\delta^{',\downarrow}) \in \Pi_H
    \end{equation*}
    above the set $W(\delta^\downarrow). \delta^\downarrow$ is always reduced to $(J,R,\delta,J_1,J_2)$. As in \S\ref{subsubsec:statement_result}, we write $\Stab(\delta)$ for the stabilizer of $\delta^\downarrow$ in $W(\delta^\downarrow)$.

    The goal of \S\ref{subsec:this_is_the_end} is to prove the following proposition.

    \begin{prop}
    \label{prop:this_is_the_end}
        We have
        \begin{equation*}
            I_{\tau}=\sum_{(J,R,\delta,J_1,J_2) \in \Pi_{H,\mathrm{c}}^\uparrow(\tau)}\frac{|\Stab(\tau)|}{|\Stab(\delta)|}\int_{i \fa_{\delta^\downarrow}^*-\underline{\rho}_{\delta^\downarrow}} \sum_{\varphi \in \cB_{R^\downarrow,\delta^\downarrow}(J)} \cP_{\delta^\downarrow}(\varphi,\lambda) \langle f,E(\varphi,-\overline{\lambda}) \rangle_{G} d\lambda.
        \end{equation*}
    \end{prop}

    The proof of Proposition~\ref{prop:this_is_the_end} will run through \S\ref{subsec:this_is_the_end} and end in \S\ref{subsubsec:end_proof_2}. Once again, it will be proved by changing the contour in several steps. Because most arguments are similar to those used in the course of \S\ref{subsec:additional} to prove Proposition~\ref{prop:I_pi_exp}, we shall give less details when reasonable.

    \subsubsection{A short description of the argument}
    As in \S\ref{subsubsec:short_description1}, we give the main ideas behind the proof of Proposition~\ref{prop:this_is_the_end}. Our goal is to bring the domain of integration in $I_\tau$ from $i \fa_{\tau}^*-\underline{\rho}_\tau+\underline{\rho}_\tau^\uparrow + 2 \underline{z}_\tau$ to $i\fa_{\tau^\downarrow}^*-\underline{\rho}_{\tau^\downarrow}$. We do this in three steps.

    This first one is to bring our integral to $i \fa_{\tau}^*-\underline{\rho}_\tau+\underline{\rho}_\tau^\uparrow +  \underline{z}_\tau$. This is a delicate maneuver as our integrand will have singularities in this region. They are computed in \S\ref{subsubsec:poles_final} and all come from the regularized period $\cP^\uparrow$, and more precisely from isomorphisms $\tau_{\mathrm{c},1,i} \simeq \tau_{\mathrm{c},2,j}^\vee$. We determine the associated residues in \S\ref{subsubsec:residues_final}. They can be expressed as relative characters living the induction of $w \tau$ for $w$ an element in the Weyl group acting by blocks on $\tau$. The idea here is rather simple : the two representations $\tau_{\mathrm{c},1,i}$ and $\tau_{\mathrm{c},2,j}^\vee$ will be moved from the "$(\mathrm{c},1)$" and "$(\mathrm{c},2)$" parts of $\tau$ to the "$-$" one. Once again, to keep track of these residues we use graphs and add an edge between the vertices corresponding to our representations if we cross the relevant singularity. Our constraints mean that this can happen at most once for each $\tau_{\mathrm{c},1,i}$ and $\tau_{\mathrm{c},2,j}$, and the representation $\tau_\Gamma$ described in \eqref{eq:tau_gamma_n} and \eqref{eq:tau_gamma_n+1} is the $w\tau$ alluded to above. 

    As noted earlier, our integrand in fact has poles along $i \fa_{\tau}^*-\underline{\rho}_\tau+\underline{\rho}_\tau^\uparrow +  t\underline{z}_\tau$ exactly when $t=1$. Our goal is to integrate in this region with $t<1$, thus collecting the residues. The issue is that this makes $-\overline{\lambda}$ leave the positive Weyl chamber in which we control $E(\varphi,-\overline{\lambda})$. To solve this problem, we use the fact that the regions $\cR_{\tau,k,c}$ described in \S\ref{subsec:R_region} go a little bit beyond this chamber. This allows us to integrate over $i \fa_{\tau}^*-\underline{\rho}_\tau+\underline{\rho}_\tau^\uparrow +  (1-\eta)\underline{z}_\tau$ and $\max \Val{\Im(\lambda_i)} \leq T$ with $\eta$ small and $T$ large, while controlling the size of the tail. This is done in \S\ref{subsubsec:first_change}. We end up with a sum of relative characters indexed by graphs $\Gamma \in \cG_\mathrm{c}(\tau)$, with corresponding $\delta=\tau_\Gamma$, integrated along $i \fa_{\delta}^*-\underline{\rho}_\delta+\underline{\rho}_\delta^\uparrow +  (1-\eta)\underline{z}_\delta$ and $\max \Val{\Im(\lambda_i)} \leq T$.

    At that point, we use a change of variabless to replace $\delta$ with some $w \delta$ in order to bring our integral back in the positive Weyl chamber. Once again, this is possible as the associated $\cR$-regions overlap. It turns out that $w \delta$ is simply the representation $\delta_\emptyset$ described in \S\ref{subsubsec:empty_transformation}. After taking into account our twists, and after an easy manipulation, our integral now takes place in the region $i \fa_{\delta_\emptyset}^*-(1-\eta)(\underline{\rho}_{\delta_\emptyset}+\underline{\rho}_{\delta_\emptyset}^\uparrow-\underline{z}_{\delta_\emptyset})$ with the additional requirement $\max \Val{\Im(\lambda_i)}  \leq T$. We may now shift the contour to $i \fa_{\delta_\emptyset}^*-\underline{\rho}_{\delta_\emptyset}$ and lift the requirement on the imaginary parts as our integrand is regular in the corresponding strip. This is the content of \S\ref{subsubsec:second_change}. It then remains to express our result in terms of $\delta^\downarrow$ rather than $\delta_\emptyset$ (\S\ref{subsubsec:non-increasing}) and to sum over $\delta^\downarrow$ rather than graphs in $\cG_\mathrm{c}(\tau)$ (\S\ref{subsubsec:counting_2}) to conclude the proof of Proposition~\ref{prop:this_is_the_end}.

    \subsubsection{Poles}
    \label{subsubsec:poles_final}

    Let $(J,R,\delta,J_1,J_2) \in \Pi_{H,\mathrm{c}}^\uparrow(\tau)$. Our region of interest will be
      \begin{equation}
    \label{eq:our_region2}
       \cR_{\delta}^\uparrow := \left\{ \lambda -\underline{\rho}_{\delta}-\underline{\rho}_{\delta}^\uparrow+\underline{z}_\delta\; \middle| \; \lambda \in \fa_{\delta,\cc}^* \cap \cS_{\delta,k,c_J}
        \right\},
    \end{equation}
     where we take $k$ and $c_J$ given by Theorem~\ref{thm:analytic_Eisenstein} and Proposition~\ref{prop:reg_P_pi_up}. In the special case where $\delta=\tau$, we also set 
     \begin{equation}
            \label{eq:our_region_tau}
               \cR_{\tau}^{\uparrow,+} := \left\{ \lambda -\underline{\rho}_{\tau}-\underline{\rho}_{\tau}^\uparrow  + \nu \; \middle| \; \lambda \in \fa_{\tau,\cc}^* \cap \cS_{\tau,k,c_J}, \; \nu \in [ \underline{z}_\tau,2\underline{z}_\tau]
                \right\},
     \end{equation}

    \begin{lem}
    \label{lem:poles_of_the_product2}
        For $\varphi \in \cB_{R,\delta}(J)$, the possible poles of $\cP_{\delta}^\uparrow(\varphi,\lambda) \langle f,E(\varphi,-\overline{\lambda}) \rangle_{G}$ in the region $\cR_{\delta}^\uparrow$ (or $\cR_{\tau}^{\uparrow,+}$) are along the zeros of the polynomial
            \begin{equation}
            \prod_{\delta_{\mathrm{c},1,i} \simeq \delta_{\mathrm{c},2,j}^\vee} \left( \lambda(1)_{\mathrm{c},i}+\lambda(2)_{\mathrm{c},j}-\frac{1}{2} \right)
            \label{eq:thepoles2}
    \end{equation}
    Moreover, it is of rapid decay in the sense that if we denote by $L$ the polynomial in \eqref{eq:thepoles2}, for all $N>0$ there exists $C>0$ such that for all $\lambda$ in the region $\cR_{\delta}^\uparrow$ (or $\cR_{\tau}^{\uparrow,+}$) we have
    \begin{equation}
        \label{eq:rapid_decay_product2} \sum_{\varphi \in \cB_{R,\delta}(J)}\Val{L(\lambda)\cP_{\delta}^\uparrow(\varphi,\lambda) \langle f,E(\varphi,-\overline{\lambda}) \rangle_{G}} \leq \frac{C}{(1+\norm{\lambda}^2)^N}.
    \end{equation}
    In particular, the map $\lambda \mapsto \sum_{\varphi} L(\lambda)\cP_{\delta}^\uparrow(\varphi,\lambda) \langle f,E(\varphi,-\overline{\lambda}) \rangle_{G}$ is holomorphic in these regions.
    \end{lem}

    \begin{proof}
        This is proved by reproducing the argument of Lemma~\ref{lem:poles_of_the_product} with $d=0$. The key point is that we always have $\underline{\rho}_\delta+\underline{\rho}_\delta^\uparrow-\underline{z}_\delta \in \overline{\fa_{R}^{*,+}}$ which lets us use Corollary~\ref{cor:zeros_of_E} to locate the poles of the Eisenstein series. Note that contrary to the situation of Lemma~\ref{lem:poles_of_the_product}, the poles in \eqref{eq:thepoles2} come from the period $\cP_{\delta}^\uparrow(\varphi,\lambda)$ rather than the Eisenstein series. 
    \end{proof}

    \begin{rem}
        \label{rem:border_dispute}
        We emphasize that the region $\cR_{\delta}^\uparrow$ is an a sense the largest in which we can control the poles of $\cP_{\delta}^\uparrow(\varphi,\lambda) \langle f,E(\varphi,-\overline{\lambda}) \rangle_{G}$. Indeed, note that in coordinates (with respect to the decomposition of $\delta$ given in \eqref{eq:tau_gamma_n} and \eqref{eq:tau_gamma_n+1}) 
        \begin{equation*}
            \underline{\rho}_\delta+\underline{\rho}_\delta^{\uparrow}-\underline{z}_\delta=\left( (1/4,1/4,-1/4,-1/4,-1/4),(1/4,1/4,-1/4,-1/4,-1/4) \right).
        \end{equation*}
        In the special case $\delta=\tau$ we get for $t \in \rr$ with the coordinates of \eqref{eq:defi_tau_n} and \eqref{eq:defi_tau_n+1} instead
         \begin{equation*}
            \underline{\rho}_\tau+\underline{\rho}_\tau^{\uparrow}-t\underline{z}_\tau=\left((1/4,1/4,-1/4,-t/4),(1/4,1/4,-1/4,-t/4) \right),
        \end{equation*}
        so that we need $t \geq 1$ (i.e. $\nu \geq \underline{z}_\tau$) in the definition of $\cR_{\tau}^{\uparrow,+}$ given in \eqref{eq:our_region_tau}.
    \end{rem}

    \subsubsection{Computation of the residues}
    \label{subsubsec:residues_final}

    We keep $(J,R,\delta,J_1,J_2) \in \Pi_{H,\mathrm{c}}^\uparrow(\tau)$ and describe the residues obtained along the affine hyperplanes cut out by \eqref{eq:thepoles2}. We assume that this tuple comes from a graph $\Gamma \in \cG_{\mathrm{c}}(\tau)$.

    Let $1 \leq i_1 \leq m_{\mathrm{c},1}(\delta)$ and $1 \leq i_2 \leq m_{\mathrm{c},2}(\delta)$ such that $\delta_{\mathrm{c},1,i_1} \simeq \delta_{\mathrm{c},2,i_2}$. Let $\Lambda$ be the affine linear form $\Lambda(\lambda)=\lambda(1)_{\mathrm{c},i_1}+\lambda(2)_{\mathrm{c},i_2}-1/2$. If we keep track of the order in \eqref{eq:defi_tau_n} and \eqref{eq:defi_tau_n+1}, there exist uniquely determined indices $j_1$ and $j_2$ such that $\delta_{\mathrm{c},1,i_1}=\tau_{\mathrm{c},1,j_1}$ and $\delta_{\mathrm{c},2,i_2}=\tau_{\mathrm{c},2,j_2}$. We denote by $\Gamma_\Lambda$ the graph obtained by adding the edge $\{\tau_{\mathrm{c},1,j_1},\tau_{\mathrm{c},2,j_2}\}$ to $\Gamma$ and by $(J_\Lambda,R_\Lambda,\tau_\Lambda,J_{1,\Gamma},J_{2,\Gamma})$ the tuple in $\Pi_{H,\mathrm{c}}^\uparrow(\tau)$ associated to $\Gamma_\Lambda$. It gives rise to a regularized period $\cP_{\tau_\Lambda}^\uparrow$.

    As in our previous computations of residues from \S\ref{subsubsec:computation_residues}, we have a unique element $w$ acting by blocks on $M_R$ such that $w \delta=\tau_\Lambda$. Once again, we need to ask that it preserves the order to ensure that it is unique. Moreover, up to choosing appropriately our constants, we have 
    \begin{equation*}
        w\left(\Lambda^{-1}(\{0\}) \cap (\fa_{\delta,\cc}^*-\underline{\rho}_\delta-\underline{\rho}_{\delta}^\uparrow)\right)=\fa_{\tau_\Lambda,\cc}^*-\underline{\rho}_{\delta_\Lambda}-\underline{\rho}_{\delta_\Lambda}^\uparrow,
    \end{equation*}
    and 
    \begin{equation*}
        w \left(\Lambda^{-1}(\{0\}) \cap \cR_{\delta}^\uparrow \right) \subset \cR_{\tau_\Lambda}^\uparrow.
    \end{equation*}

    \begin{lem}
    \label{lem:computation_residues_2}
        For $\lambda \in \Lambda^{-1}(\{0\}) \cap (\fa_{\delta,\cc}^*-\underline{\rho}_\delta-\underline{\rho}_\delta^\uparrow)$ in general position we have
        \begin{equation*}
            \underset{\Lambda}{\Res}\left(\sum_{\varphi \in \cB_{R,\delta}(J)} \cP_{\delta}^\uparrow(\varphi,\mu) \langle f,E(\varphi,-\overline{\mu}) \rangle_G \right)(\lambda)=\sum_{\varphi \in \cB_{R_\Lambda,\tau_\Lambda}(J)} \cP_{\tau_\Lambda}^\uparrow(\varphi,w\lambda) \langle f,E(\varphi,-w\overline{\lambda}) \rangle_G.
        \end{equation*}
    \end{lem}

    \begin{proof}
        For simplicity, we set $R'=R_\Lambda$ and $\delta'=\tau_\Lambda$. Let $\varphi \in \cB_{R,\delta}(J)$. By the proof of Lemma~\ref{lem:poles_of_the_product2}, we know that the singularity comes from the period $\cP_{\delta}^\uparrow(\varphi,\mu)$. By the definition of $\cP_{\delta}^\uparrow$ from \eqref{eq:P_up}, we have 
        \begin{equation*}
        \cP_\delta^\uparrow(\varphi,\lambda):=\cP^{R^{\uparrow}}(M(w^\uparrow,\lambda)\varphi_{R_{\delta}^\uparrow},w^\uparrow \lambda),
        \end{equation*}
        where the parabolic subgroups $R^\uparrow$ and $R_\delta^\uparrow$ as well as the element $w_{\delta}^\uparrow$ are described in \S\ref{subsubsec:regularized_increasing}. In particular, the intertwining operator $M(w^\uparrow,\lambda)\varphi_{R_{\delta}^\uparrow}$ is regular for $\lambda$ in general position in the affine hyperplane $\Lambda^{-1}(0)$. We may therefore apply Proposition~\ref{prop:order_residues} to compute the residues. This proposition gives us a Rankin--Selberg parabolic subgroup $\widetilde{R}$ and an element $w'$ acting by blocks on $w^\uparrow.R_{\delta}^\uparrow$ such that for $ \lambda \in \Lambda^{-1}(\{0\}) \cap (\fa_{\delta,\cc}^*-\underline{\rho}_\delta-\underline{\rho}_\delta^\uparrow)$ in general position we have
        \begin{equation*}
             \underset{\Lambda}{\Res}\left(\cP^{R^{\uparrow}}(M(w^\uparrow,\mu)\varphi_{R_{\delta}^\uparrow},w^\uparrow \mu) \right)(\lambda)=\cP^{\widetilde{R}}\left(M(w'w^\uparrow,\lambda)\varphi_{R_{\delta}^\uparrow},w'w^\uparrow \lambda\right).
        \end{equation*}
        Let us denote by $R^{',\uparrow}$ and $w^{',\uparrow}$ the parabolic subgroup and element in the Weyl group built for $\delta'$ in \S\ref{subsubsec:regularized_increasing}. Then it follows from the description of $w'$ and $\widetilde{S}$ in Proposition~\ref{prop:order_residues} that there exists $w_\bfM \in W_\bfM(\delta')$ such that $w'w^\uparrow=w_\bfM w^{',\uparrow} w$ and $\widetilde{R}=w_\bfM. R^{',\uparrow}$. With the notation of \S\ref{subsubsec:functional_up}, $w_\bfM$ corresponds to a permutation in $\fS(m_-(\delta'))$ whose purpose is to put the components of $\delta'_-$ in the order prescribed by \eqref{eq:tau_gamma_n} and \eqref{eq:tau_gamma_n+1}. By the functional equation of Lemma~\ref{lem:functional_up}, we conclude that 
        \begin{equation*}
             \underset{\Lambda}{\Res}\left(\cP^{R^{\uparrow}}(M(w^\uparrow,\mu)\varphi_{R_{\delta}^\uparrow},w^\uparrow \mu) \right)(\lambda)=P_{\delta'}^\uparrow(M(w,\lambda)\varphi,w\lambda).
        \end{equation*}
        It remains to do a change of variables in the sum over $\cB_{R,\delta}(J)$ and to use the functional equation of Eisenstein series from Theorem~\ref{thm:analytic_Eisenstein} to conclude.
    \end{proof}

    \subsubsection{First change of contours}
    \label{subsubsec:first_change}

    We can now write the result of the first change of contours of this section. Recall that we have fixed $(I,Q,\tau,I_1,I_2) \in \Pi_H^\uparrow(\pi)$. As in \S\ref{subsubsec:contour_gln}, we take $c>0$ and $k'>0$ such that for all $T>0$ and $(J,R,\delta,J_1,J_2) \in \Pi_{H,\mathrm{c}}^\uparrow(\tau)$ we have
    \begin{equation}
    \label{eq:S_delta_T}
            \left\{ \lambda \in \fa_{\delta,\cc}^* \; | \; \max \Val{\Im(\lambda_i)} \leq T, \; \max \Val{\Re(\lambda_i)} \leq c(T+1)^{-k'} \right\}\subset \cS_{\delta,k,c_J}.
    \end{equation}
    For any $T>0$, we then set
    \begin{equation}
    \label{eq:eta_defi_spectral}
        \eta(T)=4c(2T+1)^{-k'}.
    \end{equation}
    We begin with the following lemma. Recall that our goal is compute the integral $I_{\tau}$ from \eqref{eq:last_I_tau}.

    \begin{lem}
    \label{lem:almost_finished}
        For every $\varepsilon>0$, there exists $T_\varepsilon>0$ such that for every $T>T_\varepsilon$ we have
        \begin{equation}
            \label{eq:almost_finished}
            \Val{I_\tau - \sum_{\Gamma \in \cG_\mathrm{c}(\tau)} \int_{\substack{i \fa_{\tau_{\Gamma}}^*-\underline{\rho}_{\Gamma}-\underline{\rho}_{\Gamma}^\uparrow+(1-\eta(T)) \underline{z}_{\Gamma} \\ \max \Val{\Im(\lambda_i)} \leq T}} \sum_{\varphi \in \cB_{R_{\Gamma},\tau_{\Gamma}}(J)} \cP_{\tau_{\Gamma}}^\uparrow(\varphi,\lambda) \langle f,E(\varphi,-\overline{\lambda}) \rangle_{G} d\lambda   }  \leq \varepsilon.
        \end{equation}
    \end{lem}

    \begin{rem}
        Note that in \eqref{eq:almost_finished} the bounds "$\max \Val{\Im(\lambda_i)} \leq T$" in the domain of integration are necessary. Indeed, if not for them we would leave the region $\cR_{\tau_\Gamma}^\uparrow$ as $\underline{\rho}_{\Gamma}+\underline{\rho}_{\Gamma}^\uparrow-(1-\eta(T)) \underline{z}_{\Gamma}$ does not belong to $\overline{\fa_{R_\Gamma}^{*,+}}$ (see Remark~\ref{rem:border_dispute}).
    \end{rem}

    \begin{proof}
        Let $\varepsilon>0$. The argument is very similar to the one used in Lemma~\ref{lem:residue_n}, so that we only sketch the main steps. For any $T$, let $\underline{\varepsilon}(T) \in \fa_{\tau}^*$ such that $\norm{\underline{\varepsilon}(T)}< (2T+1)^{-k'}/2$, $\underline{\varepsilon}(T)_{n}=0$ and all the coordinates of $\underline{\varepsilon}(T)_{n+1}$ are zero except those in the variables $\lambda(2)_{\mathrm{c},1}, \hdots, \lambda(2)_{\mathrm{c},m_{\mathrm{c},2}}$ which are all different. For any $T$ set
        \begin{equation}
        \label{eq:kappa_tau}
            \kappa_\tau(T)=-\underline{\rho}_\tau-\underline{\rho}_{\tau}^\uparrow+(1+2\eta(T))\underline{z}_{\tau,n}+(1-\eta(T))\underline{z}_{\tau,n+1}+\underline{\varepsilon}(T).
        \end{equation}
        By the estimates of Lemma~\ref{lem:poles_of_the_product2} and change of contour of integration (in the region $\cR_{\tau}^{\uparrow,+}$), we see that we may choose $T_0$ so that for any $T>T_0$ we have
        \begin{equation*}
            \Val{I_\tau - \int_{\substack{i \fa_{\tau}^*+\kappa_\tau(T) \\ \max \Val{\Im(\lambda_i)} \leq T}} \sum_{\varphi \in \cB_{Q,\tau}(J)} \cP_{\tau}^\uparrow(\varphi,\lambda) \langle f,E(\varphi,-\overline{\lambda}) \rangle_{G} d\lambda   }  \leq \varepsilon.
        \end{equation*}
        Note that we are now in $\cR_{\tau}^{\uparrow}$.

        We focus on the second integral. By Fubini's theorem, we can integrate last along $\lambda(1)_{\mathrm{c},1}$. We consider $\gamma$ the contour composed of the two imaginary segments $\Val{\Im(\lambda(1)_{\mathrm{c},1})} \leq T$ along $\Re(\lambda(1)_{\mathrm{c},1})=1/4+\eta(T)/2$ and $\Re(\lambda(1)_{\mathrm{c},1})=1/4-\eta(T)/4$, and of two curves in the region $2T \geq \Val{\Im(\lambda(1)_{\mathrm{c},1})} \geq T$ linking their ends. By Lemma~\ref{lem:poles_of_the_product2}, there are no singularities along this contour and we can bound the integral along the two additional curves. This holds because we have the upper bound \eqref{eq:rapid_decay_product2} and also a lower bound for the product \eqref{eq:thepoles2}, courtesy of our choices of $\eta(T)$ and $\underline{\varepsilon}(T)$. By the computation of the residues in Lemma~\ref{lem:computation_residues_2} and further changes of contours, we end up, for every $T>T_0$, with 
        \begin{equation*}
             \Val{I_\tau - \int_{\substack{i \fa_{\tau}^*+\kappa_\tau(T)-3\eta(T)\underline{z}_{1} \\ \max \Val{\Im(\lambda_i)} \leq T}} \sum_{\varphi \in \cB_{Q,\tau}(J)} \cP_{\tau}^\uparrow(\varphi,\lambda) \langle f,E(\varphi,-\overline{\lambda}) \rangle_{G} d\lambda   -\sum_{\Gamma} I_\Gamma(T)}  \leq C\varepsilon,
        \end{equation*}
        where $C$ is an absolute constant independent from $T$, $\underline{z}_1$ is the element in $\fa_\tau^*$ whose only non zero coordinate is $\lambda(1)_{\mathrm{c},1}=1/4$, the sum ranges over graphs $\Gamma \in \cG_{\mathrm{c}}(\tau)$ who have only one edge of the form $\{ \tau_{\mathrm{c},1,1},\tau_{\mathrm{c},2,j}\}$ and 
        \begin{equation*}
            I_\Gamma(T)=\int_{\substack{i \fa_{\tau_\Gamma}^*+\kappa_\Gamma(T) \\ \max \Val{\Im(\lambda_i)} \leq T}} \sum_{\varphi \in \cB_{R_\Gamma,\tau_\Gamma}(J)} \cP_{\tau_\Gamma}^\uparrow(\varphi,\lambda) \langle f,E(\varphi,-\overline{\lambda}) \rangle_{G} d\lambda,
        \end{equation*}
        $\kappa_\Gamma(T)$ being defined as in \eqref{eq:kappa_tau} for $\tau_{\Gamma}$. Note that the region of integration is included in $\cR_{\delta}^{\uparrow}$ thanks to \eqref{eq:S_delta_T}. We now conclude the proof of Lemma~\ref{lem:almost_finished} as in Lemma~\ref{lem:residue_n}, by doing an induction on the number of variables.
    \end{proof}

    \subsubsection{Second change of contours}
    \label{subsubsec:second_change}

    In this section, we fix $\Gamma \in \cG_{\mathrm{c}}(\tau)$. Let $(J,R,\delta,J_1,J_2)$ be the corresponding tuple in $\Pi_{H,\mathrm{c}}^\uparrow$. We have the tuple $(J_\emptyset,\delta_\emptyset,R_\emptyset) \in \Pi_H$ from \S\ref{subsubsec:empty_transformation}. As in \eqref{eq:underline_z_delta}, we set 
    \begin{equation*}
        \underline{z}_{\delta_\emptyset}:=((0,0,0,1/4,0,0),(0,0,0,1/4,0,0)) \in \fa_{R}^*,
    \end{equation*}
    where the $1/4$ only lives above $\boxtimes_i \delta_{\emptyset,\mathrm{c},1,i} \boxtimes_j \delta_{\emptyset,\mathrm{c},2,j}$.
   As noted in \eqref{eq:empty_relation}, we have 
    \begin{equation}
        \label{eq:effect_of_w} 
        w_\emptyset(\underline{\rho}_\delta+\underline{\rho}_{\delta}^\uparrow-\underline{z}_{\delta})=\underline{\rho}_{\delta_\emptyset}+\underline{\rho}_{\delta_\emptyset}^\uparrow-\underline{z}_{\delta_\emptyset}.
    \end{equation}
    Note that $\underline{\rho}_{\delta_\emptyset}^\uparrow-\underline{z}_{\delta_\emptyset} \in \fa_{\delta_\emptyset,\cc}^*$ in this case.

    We now consider the region
    \begin{equation}
    \label{eq:our_regiondown}
       \cR_{\delta_\emptyset} := \left\{ \lambda -\underline{\rho}_{\delta_\emptyset}-t(\underline{\rho}_{\delta_\emptyset}^\uparrow-\underline{z}_{\delta_\emptyset})\; \middle| \; \lambda \in \fa_{\delta_\emptyset,\cc}^* \cap \cS_{\delta_\emptyset,k,c_J}, 0 \leq t \leq 1
        \right\},
    \end{equation}
    where once again we ask that $k$ and $c_J$ are those prescribed by Theorem~\ref{thm:analytic_Eisenstein} and Proposition~\ref{prop:reg_P_pi_up}. By \eqref{eq:effect_of_w} we have $w_\emptyset \cR_{\delta}^\uparrow \subset \cR_{\delta_\emptyset}$.
    \begin{lem}
    \label{lem:poles_down}
          For $\varphi \in \cB_{R_\emptyset,\delta_\emptyset}(J)$, the possible poles of $\cP_{\delta_\emptyset}^\uparrow(\varphi,\lambda) \langle f,E(\varphi,-\overline{\lambda}) \rangle_{G}$ in the region $\cR_{\delta_\emptyset}$ of \eqref{eq:our_regiondown} are along the zeros of the polynomial
            \begin{equation}
            \prod_{\substack{\delta_{\emptyset,\mathrm{c},1,i} \simeq \delta^{\vee}_{\emptyset,\mathrm{c},2,j}}} \left( \lambda(1)_{i}+\lambda(2)_{j}+\frac{1}{2} \right)
            \label{eq:thepolesdown}
    \end{equation}
    Moreover, it is of rapid decay in the sense that if we denote by $L$ the polynomial in \eqref{eq:thepoles2}, for all $N>0$ there exists $C>0$ such that for all $\lambda$ in the region $\cR_{\delta_\emptyset}$ we have
    \begin{equation}
        \label{eq:rapid_decay_productdown} \sum_{\varphi \in \cB_{R_\emptyset,\delta_\emptyset}}\Val{L(\lambda)\cP_{\delta_\emptyset}^\uparrow(\varphi,\lambda) \langle f,E(\varphi,-\overline{\lambda}) \rangle_{G}} \leq \frac{C}{(1+\norm{\lambda}^2)^N}.
    \end{equation}
    In particular, $\varphi \mapsto \sum_{\varphi}L(\lambda)\cP_{\delta_\emptyset}^\uparrow(\varphi,\lambda) \langle f,E(\varphi,-\overline{\lambda}) \rangle_{G}$ is holomorphic in $\cR_{\delta_\emptyset}$.
    \end{lem}

    \begin{rem}
        \label{rem:L_non_zero}
        Note that the polynomial $L$ from \eqref{eq:thepolesdown} is non-zero in the region $i \fa_{\delta_\emptyset}^* -\underline{\rho}_{\delta_\emptyset}-t(\underline{\rho}_{\delta_\emptyset}^\uparrow-\underline{z}_{\delta_\emptyset})$ as soon as $t<1$. 
    \end{rem}

    \begin{proof}
        This is the same proof as Lemma~\ref{lem:poles_of_the_product2}, and relies on Corollary~\ref{cor:zeros_of_E} (poles of Eisenstein series) and on Proposition~\ref{prop:reg_P_pi_up}.
    \end{proof}
    
    We now set
    \begin{equation}
        \label{eq:I_empty}
        I_{\delta_\emptyset}:=\int_{i \fa_{\delta_\emptyset}^*-\underline{\rho}_{\delta_\emptyset}} \sum_{\varphi \in \cB_{R_\emptyset,\delta_\emptyset}(J)} \cP_{\delta_\emptyset}^\uparrow(\varphi,\lambda) \langle f,E(\varphi,-\overline{\lambda}) \rangle_{G} d\lambda.
    \end{equation}
    This integral is well-defined and absolutely convergent by Lemma~\ref{lem:poles_down}.

    \begin{lem}
    \label{lem:end_of_T}
        For all $\varepsilon>0$ there exists $T>0$ such that
        \begin{equation}
        \label{eq:end_of_T}
             \Val{I_{\delta_\emptyset}-\int_{\substack{i \fa_{\delta}^*-\underline{\rho}_{\delta}-\underline{\rho}_{\delta}^\uparrow+(1-\eta(T)) \underline{z}_{\delta} \\ \max \Val{\Im(\lambda_i)} \leq T}} \sum_{\varphi \in \cB_{R,\delta}(J)} \cP_{\delta}^\uparrow(\varphi,\lambda) \langle f,E(\varphi,-\overline{\lambda}) \rangle_{G} d\lambda  } \leq \varepsilon.
        \end{equation}
    \end{lem}

    \begin{proof}
        By the bounds and localization of the singularities from Lemma~\ref{lem:computation_residues_2}, we see that by change of contours, up to increasing $T$ we can assume that the integral on the right of \eqref{eq:end_of_T} differs in absolute value by a constant times $\varepsilon$ from 
        \begin{equation}
        \label{eq:everything_shifted}
            \int_{\substack{i \fa_{\delta}^*+(1-\eta(T))(\underline{z}_{\delta}-\underline{\rho}_{\delta}-\underline{\rho}_{\delta}^\uparrow) \\ \max \Val{\Im(\lambda_i)} \leq T}} \sum_{\varphi \in \cB_{R,\delta}(J)} \cP_{\delta}^\uparrow(\varphi,\lambda) \langle f,E(\varphi,-\overline{\lambda}) \rangle_{G} d\lambda
        \end{equation}
        Indeed, the key point is that this region of integration remains in $\cR_\delta^\uparrow$ by \eqref{eq:S_delta_T} and that we have a lower bound for the factor \eqref{eq:thepoles2} from Lemma~\ref{lem:poles_of_the_product2}, both information being available to us thanks to our choice of $\eta(T)$ in \eqref{eq:eta_defi_spectral}. Moreover, this estimate will remain true for any $T$ large enough.

        By Lemma~\ref{lem:up_is_empty}, for any $\varphi \in \cB_{R,\delta}(J)$ we have the relation 
        \begin{equation*}
             \cP_{\delta}^{\uparrow}(\varphi,\lambda)=\cP_{\delta_\emptyset}^\uparrow(M(w_\emptyset ,\lambda)\varphi,w_\emptyset\lambda),
        \end{equation*}
        where $w_\emptyset$ was defined in \S\ref{subsubsec:empty_transformation}. By changes of variables and thanks to the relation \eqref{eq:empty_relation}, we see that \eqref{eq:everything_shifted} is equal to
        \begin{equation*}
            \int_{\substack{i \fa_{\delta_\emptyset}^*-(1-\eta(T))(\underline{\rho}_{\delta_\emptyset}+\underline{\rho}_{\delta_\emptyset}^\uparrow-\underline{z}_{\delta_\emptyset}) \\ \max \Val{\Im(\lambda_i)} \leq T}} \sum_{\varphi \in \cB_{R_\emptyset,\delta_\emptyset}(J)} \cP_{\delta_\emptyset}^\uparrow(\varphi,\lambda) \langle f,E(\varphi,-\overline{\lambda}) \rangle_{G} d\lambda.
        \end{equation*}
        Because we are now in the region $\cR_{\delta_\emptyset}$, we can use Lemma~\ref{lem:poles_down} to conclude by changing yet again the contour that, up to increasing $T$, this integral differs in absolute value by a constant times $\varepsilon$ from
         \begin{equation*}
            \int_{\substack{i \fa_{\delta_\emptyset}^*-\underline{\rho}_{\delta_\emptyset}-(1-\eta(T))(\underline{\rho}_{\delta_\emptyset}^\uparrow-\underline{z}_{\delta_\emptyset}) \\ \max \Val{\Im(\lambda_i)} \leq T}} \sum_{\varphi \in \cB_{R_\emptyset,\delta_\emptyset}(J)} \cP_{\delta_\emptyset}^\uparrow(\varphi,\lambda) \langle f,E(\varphi,-\overline{\lambda}) \rangle_{G} d\lambda.
        \end{equation*}
        It remains to do one last change of contour to bring the region of integration from $i \fa_{\delta_\emptyset}^*-\underline{\rho}_{\delta_\emptyset}-(1-\eta(T))(\underline{\rho}_{\delta_\emptyset}^\uparrow-\underline{z}_{\delta_\emptyset})$ to $i \fa_{\delta_\emptyset}^*-\underline{\rho}_{\delta_\emptyset}$, which is possible by Lemma~\ref{lem:poles_down} because we do not cross any poles as noted in Remark~\ref{rem:L_non_zero}. This yields \eqref{eq:end_of_T} and concludes the proof of Lemma~\ref{lem:end_of_T}.
    \end{proof}

    \subsubsection{Going back to the non-increasing period}
    \label{subsubsec:non-increasing}

    We keep our graph $\Gamma \in \cG_{\mathrm{c}}(\tau)$ and $(J,R,\delta,J_1,J_2) \in \Pi_{H,\mathrm{c}}^\uparrow$ the corresponding tuple. Our goal is to express $I_{\delta_\emptyset}$ (defined in \eqref{eq:I_empty}) in terms of the regularized Rankin--Selberg period $\cP$, defined in Chapter~\ref{chap:RS_non_tempered}, rather than $\cP^\uparrow$. To do this, we use the $\delta \mapsto \delta^\downarrow \in \Pi_H$ construction described in \S\ref{subsubsec:connect_periods}.

    \begin{lem}
        \label{lem:empty_down_integral}
        We have 
        \begin{equation*}
            I_{\delta_\emptyset}= \int_{i \fa_{\delta^\downarrow}^*-\underline{\rho}_{\delta^\downarrow}} \sum_{\varphi \in \cB_{R^\downarrow,\delta^\downarrow}} \cP_{\delta^\downarrow}(\varphi,\lambda) \langle f,E(\varphi,-\overline{\lambda}) \rangle_G d \lambda.
        \end{equation*}
    \end{lem}

    \begin{proof}
        This follows from Lemma~\ref{lem:down_is_empty} and some changes of variables.
    \end{proof}

    \subsubsection{Counting the contributions}
    \label{subsubsec:counting_2}

    The last thing that we have to do is to sum over tuples $(J,R,\delta,J_1,J_2) \in \Pi_{H,\mathrm{c}}^\uparrow(\tau)$ rather than graphs $\cG_{\mathrm{c}}(\tau)$. This is the content of the next lemma.

      \begin{lem}
      \label{lem:counting_2}
       The fiber of the map
        \begin{equation*}
            \Gamma \in \cG_{\mathrm{c}}(\tau) \mapsto (J_{\Gamma},R_{\Gamma},\tau_{\Gamma},J_{1,\Gamma},J_{2,\Gamma}) \in \Pi_{H,\mathrm{c}}^\uparrow(\tau)
        \end{equation*}
        above a point in the image $(J,R,\delta,J_1,J_2)$ is of cardinal $|\Stab(\tau)||\Stab(\delta)|^{-1}$.
    \end{lem}

    \begin{proof}
        The proof is exactly the same as that of Lemma~\ref{lem:counting1}.
    \end{proof}

    \subsubsection{End of the proof of Proposition~\ref{prop:this_is_the_end}}
    \label{subsubsec:end_proof_2}
    The proposition now follows from Lemmas~\ref{lem:almost_finished},~\ref{lem:end_of_T} and \ref{lem:empty_down_integral} by taking $\varepsilon \to 0$, and by using the combinatorial result of Lemma~\ref{lem:counting_2}.

    \subsubsection{Conclusion}

    We write out explicitly the consequences of our work for the computation of $\int_{[H]}f(h)dh$. It follows from the combination of Proposition~\ref{prop:almost_final} and Proposition~\ref{prop:this_is_the_end}.

      \begin{prop}
    \label{prop:final}
        We have 
         \begin{align}
             &\int_{[H]}f(h)dh=\sum_{r=0}^n \sum_{(I_r,P,\pi) \in \Pi_H^\uparrow} \sum_{(I,Q,\tau,I_1,I_2) \in \Pi_H^\uparrow(\pi)} \sum_{(J,R,\delta,J_1,J_2) \in \Pi_{H,\mathrm{c}}^\uparrow(\tau)}  \nonumber \\
        \times&\frac{|\Stab(\pi)|}{|W(\pi)||\Stab(\delta)|} \int_{i \fa_{\delta^\downarrow}^*-\underline{\rho}_{\delta^\downarrow}} \sum_{\varphi \in \cB_{R^\downarrow,\delta^\downarrow}(J)} \cP_{\delta^\downarrow}(\varphi,\lambda) \langle f,E(\varphi,-\overline{\lambda}) \rangle_{G} d\lambda. \label{eq:final_eq}
        \end{align}
        where $\Pi_H^\uparrow(\pi)$ is the set of tuples obtained from the null graph built from $w \pi$ with $w$ being any element in $W(\pi)$ such that $\underline{d}(+,1) \geq \hdots \geq \underline{d}(+,m_+)$.
    \end{prop}

    \subsection{End of the proof of Theorem~\ref{thm:pseudo_spectral}}
    \label{subsec:end_proof}

    We can finally conclude the proof of Theorem~\ref{thm:pseudo_spectral}. It is now only a matter of changing the indices in our sums. For the reader's convenience, we restate the result.

    \begin{theorem}
    \label{thm:pseudo_spectral2}
    We have
    \begin{equation}
        \label{eq:final_spectral}
       \int_{[H]}f(h)dh=\sum_{(P,\pi) \in \Pi_H} \frac{1}{\Val{W(\pi)}}  \int_{\lambda \in i \fa_\pi^*} \sum_{\varphi \in \cB_{P,\pi}(J)} \cP_\pi(\varphi,\lambda-\underline{\rho}_\pi) \langle f,E(\varphi,\lambda+\underline{\rho}_\pi) \rangle_G d\lambda,
    \end{equation}
    where the integral on the RHS is absolutely convergent.
\end{theorem}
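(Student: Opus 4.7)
The plan is to derive Theorem~\ref{thm:pseudo_spectral2} from Proposition~\ref{prop:final} by two operations: a change of variable in the innermost integral that matches the integrand in \eqref{eq:final_spectral}, followed by a combinatorial reorganization of the triple sum as a sum over $\Pi_H$. Absolute convergence will be read off from Lemma~\ref{lem:bound_relative_character} together with the estimate \eqref{eq:Muller_spectral}.

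First, in each innermost integral of \eqref{eq:final_eq} I perform the change of variable $\lambda \mapsto \lambda - \underline{\rho}_{\delta^\downarrow}$. Since $\underline{\rho}_{\delta^\downarrow}$ is real and $\overline{\lambda} = -\lambda$ on $i\fa_{\delta^\downarrow}^*$, the integrand becomes
\[
\cP_{\delta^\downarrow}(\varphi,\lambda - \underline{\rho}_{\delta^\downarrow}) \, \langle f, E(\varphi,\lambda + \underline{\rho}_{\delta^\downarrow}) \rangle_G,
\]
which is exactly the relative character that appears in \eqref{eq:final_spectral} for the pair $(R^\downarrow,\delta^\downarrow) \in \Pi_H$. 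Thus each term in \eqref{eq:final_eq} is attached to a point of $\Pi_H$ via the projection
\[
\Phi : (r, (I_r,P,\pi), (I,Q,\tau,I_1,I_2), (J,R,\delta,J_1,J_2)) \longmapsto (R^\downarrow,\delta^\downarrow).
\]

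Next I would prove that $\Phi$ is surjective and compute, for each $(P',\pi') \in \Pi_H$, the weighted cardinality of $\Phi^{-1}(P',\pi')$ with weight $|\Stab(\pi)|/(|W(\pi)| \cdot |\Stab(\delta)|)$. Surjectivity is obtained by reverse-engineering the three-step construction: given the decomposition of $\pi'$ into its $+,1,2,-$ blocks prescribed by \S\ref{subsubsec:relevant_inducing}, the integer $r$ is recovered as the total rank of the $+$ blocks, the triple $(I_r,P,\pi)$ by taking $\pi$ to be those representations that will supply the Petersson inner product on $\bfM_r^2$, the tuple $(I,Q,\tau,I_1,I_2)$ by recording which blocks arise as residues linking a $\pi_{+,i}$ with cuspidal factors on the $n$ and/or $n+1$ sides (encoded by the edges of a graph $\Gamma\in\cG(\pi)$ as in \S\ref{subsubsec:statement_result}), and finally $(J,R,\delta,J_1,J_2)$ by recording the matching of cuspidal blocks on the two sides through a graph $\Gamma \in \cG_{\mathrm{c}}(\tau)$ as in \S\ref{subsubsec:result_2}.

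The weighted multiplicity is then computed by telescoping the combinatorial identities of Lemma~\ref{lem:counting1} and Lemma~\ref{lem:counting_2}: the fiber $(I,Q,\tau,I_1,I_2) \mapsto \pi$ has size $|\Stab(\pi)|/|\Stab(\tau)|$, and the fiber $(J,R,\delta,J_1,J_2) \mapsto \tau$ has size $|\Stab(\tau)|/|\Stab(\delta)|$, so their composition contributes a factor $|\Stab(\pi)|/|\Stab(\delta)|$, cancelling exactly the corresponding factor in \eqref{eq:final_eq}. What remains is to sum over the $W(\pi')$-orbit of $\pi'$: by Lemma~\ref{lem:functional_I_pi} the relative characters are $W(\pi')$-invariant, and by the definition of $\Pi_H^\uparrow(\pi)$ in Proposition~\ref{prop:almost_final} the preimages $(I,Q,\tau,I_1,I_2)$ of a given $(P',\pi')$ are indexed by the coset representatives $W(\pi)/\Stab(\delta^\downarrow)$, producing a factor $|W(\pi)|/|\Stab(\delta^\downarrow)|$ which combines with $1/|W(\pi)|$ to give $1/|\Stab(\delta^\downarrow)|$. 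An orbit-stabilizer count then shows that the total weighted sum equals $1/|W(\pi')|$, as required.

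The main obstacle will be the accurate bookkeeping of this orbit-stabilizer argument: the map $\Phi$ is far from injective because the functional equations of $\cP_\pi$ (Corollary~\ref{cor:independence_choice_couple}) and the permutation symmetry among the $\pi_{+,i}$ in the graph $\Gamma \in \cG(\pi)$ conspire to produce the same $(P',\pi') \in \Pi_H$ from many different data, and one must verify that all contributions recombine to give precisely the factor $1/|W(\pi')|$. The absolute convergence of \eqref{eq:final_spectral} then follows immediately: by Lemma~\ref{lem:bound_relative_character} each term $\cI_{(I,P,\pi)}(F,\lambda)$ is rapidly decaying in both $\lambda$ and $\Lambda_\pi$, and the global sum converges by \eqref{eq:Muller_spectral}, completing the proof.
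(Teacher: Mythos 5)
The overall plan is the same as the paper's: change variable in the inner integral to match the integrand of \eqref{eq:final_spectral}, then reorganize the quadruple sum in \eqref{eq:final_eq} into a single sum over $\Pi_H$ via a combinatorial argument. However, the combinatorial step as you describe it has concrete errors.

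You quote Lemmas~\ref{lem:counting1} and \ref{lem:counting_2} as asserting that ``the fiber $(I,Q,\tau,I_1,I_2) \mapsto \pi$ has size $|\Stab(\pi)|/|\Stab(\tau)|$'' and ``the fiber $(J,R,\delta,J_1,J_2) \mapsto \tau$ has size $|\Stab(\tau)|/|\Stab(\delta)|$,'' and then ``telescope'' these to cancel the factor in \eqref{eq:final_eq}. But those two lemmas concern the fibers of the maps \emph{from graphs to tuples} ($\cG(\Gamma,d) \to \Pi_H^\uparrow(\pi,d)$ and $\cG_{\mathrm c}(\tau) \to \Pi_{H,\mathrm c}^\uparrow(\tau)$), and they have \emph{already} been applied in the shift-of-contour arguments of \S\ref{subsec:additional} and \S\ref{subsec:this_is_the_end} to produce the weight $|\Stab(\pi)|/(|W(\pi)||\Stab(\delta)|)$ appearing in \eqref{eq:final_eq}. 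There are no such fibers left over in the passage from \eqref{eq:final_eq} to \eqref{eq:final_spectral}: the sums there are over tuples, and each tuple appears exactly once. Invoking those lemmas again is a double count.

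You also assert that ``the preimages $(I,Q,\tau,I_1,I_2)$ of a given $(P',\pi')$ are indexed by the coset representatives $W(\pi)/\Stab(\delta^\downarrow)$.'' This is not correct: given $(P',\pi')$, the second and third indices $(I,Q,\tau,I_1,I_2)$ and $(J,R,\delta,J_1,J_2)$ are \emph{uniquely} determined (this is the content of the bijection \eqref{eq:bijec}, since $\Pi_H^\uparrow(\pi)$ depends only on the $W$-class of $\pi$), and the fiber of the projection $\Phi$ is just the $W$-orbit of $(I_r,P,\pi)$, which has $|W(\pi)|/|\Stab(\pi)|$ elements, not $|W(\pi)|/|\Stab(\delta^\downarrow)|$. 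Multiplying this fiber size by the weight gives $1/|\Stab(\delta)|$, and it is then the functional equation $I_{w\delta^\downarrow}=I_{\delta^\downarrow}$ (Lemma~\ref{lem:functional_I_pi}, or rather its descendant for $\cP_\pi$ via Corollary~\ref{cor:independence_choice_couple}) combined with the orbit-counting theorem that converts the sum over $\Pi_H/W$ weighted by $1/|\Stab(\cdot)|$ into the sum over $\Pi_H$ weighted by $1/|W(\cdot)|$. The cleaner path — and what the paper does — is to first use the $W(\delta^\downarrow)$-invariance of $I_{\delta^\downarrow}$ to replace the sum over $(I_r,P,\pi)$ by a sum over $\Pi_{H,r}^\uparrow/W$ (absorbing the factor $|\Stab(\pi)|/|W(\pi)|$), then exhibit the explicit bijection \eqref{eq:bijec} with $\Pi_H/W$ via reverse-engineering the ordering conventions, and conclude by orbit-counting. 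Your proposal contains all these ingredients, but the middle step, where the index set is put in bijection with $\Pi_H/W$ rather than ``weighted surjectively'' onto $\Pi_H$, needs to be isolated and made precise; otherwise the telescoping yields the wrong factor $|\Stab(\pi)|/|\Stab(\delta)|^2$ rather than $1/|\Stab(\delta)|$.
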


\begin{proof}
    To prove Theorem~\ref{thm:pseudo_spectral2}, we need to rewrite Proposition~\ref{prop:final}. Let $I_{\delta^\downarrow}$ be the last integral in \eqref{eq:final_eq}. By the functional equation of Eisenstein series (Theorem~\ref{thm:analytic_Eisenstein}) and of $\cP_\pi$ (Corollary~\ref{cor:independence_choice_couple}), for any $w \in W(\delta^\downarrow)$ we have
    \begin{equation}
        \label{eq:function_equation_I}
        I_{w \delta^\downarrow}=I_{\delta^\downarrow}.
    \end{equation}
    Let $0 \leq r \leq n$. Let $\Pi_{H,r}^\uparrow$ be the set of elements $(I_r,P,\pi)$ in $\Pi_H^\uparrow$. We denote by $\Pi_{H,r}^\uparrow / W$ the quotient of $\Pi_{H,r}^\uparrow$ by the relation $(I_r,P_1,\pi_1) \sim (I_r,P_2,\pi_2)$ if $\pi_1=w \pi_2$ for some $w \in W(\pi_2)$. Note that because of the shape of $I_r$ (see \eqref{eq:I_r_defi}), if $(I_r,P,\pi) \in \Pi_H^\uparrow$ and $w \in W(\pi)$, then $(I_r,w.P,w \pi) \in \Pi_H^\uparrow$. Moreover, the set $\Pi_H^\uparrow(\pi)$ is independent of the class of $\pi$.
    
    It follows from \eqref{eq:function_equation_I} that the RHS of \eqref{eq:final_eq} is equal to
    \begin{equation}
        \label{eq:graph_formula}
        \sum_{r=0}^n \sum_{(I_r,\overline{P},\overline{\pi}) \in \Pi_{H,r}^\uparrow / W} \sum_{(I,R,\tau,I_1,I_2) \in \Pi_H^\uparrow(\overline{\pi})} \sum_{(J,R,\delta,J_1,J_2) \in \Pi_{H,\mathrm{c}}^\uparrow(\tau)}\frac{1}{|\Stab(\delta)|} I_{\delta^\downarrow},
    \end{equation}
    where we write again $(I_r,\overline{P},\overline{\pi})$ for a representative of its class. 

    We also define an equivalence relation on $\Pi_H$ by declaring that $\pi_1 \sim \pi_2$ if $\pi_1=w \pi_2$ for some $w \in W(\pi_2)$. Let us denote by $\Pi_H / W$ the quotient. We claim that we have a bijection
    \begin{equation}
    \label{eq:bijec}
       (J,R,\delta,J_1,J_2) \in \left(\bigcup_{r=0}^n \bigcup_{(I_r,\overline{P},\overline{\pi}) \in \Pi_{H,r}^\uparrow / W} \bigcup_{(I,Q,\tau,I_1,I_2) \in \Pi_H^\uparrow(\overline{\pi})} \Pi_{H,\mathrm{c}}^\uparrow(\tau)\right) \mapsto \delta^\downarrow \in \Pi_H / W.
    \end{equation}
    We explain how to build its inverse. We start from $\overline{\Pi} \in \Pi_H / W$. We can choose a representative $\Pi$ so that we have the refinements of the decompositions of \eqref{eq:pi_n} and \eqref{eq:pi_n+1}  
    \begin{align*}
        \Pi_n&=\pi_+\boxtimes \left(\pi_{1,\cusp} \boxtimes \pi_{1,\mathrm{res}} \right) \boxtimes \pi_{2,\mathrm{res}}^{-,\vee} \boxtimes\left(\pi_{-,\cusp} \boxtimes \pi_{-,\mathrm{res}} \right)  , \\
        \Pi_{n+1}&=\pi_+^\vee \boxtimes \left(\pi_{2,\cusp} \boxtimes \pi_{2,\mathrm{res}} \right) \boxtimes \pi_{1,\mathrm{res}}^{-,\vee} \boxtimes \left(\pi_{-,\cusp}^\vee \boxtimes \pi_{-,\mathrm{res}}^\vee \right),
    \end{align*}
    where the representations with a "$\cusp$" index are cuspidal, while those with a "$\mathrm{res}$" index are residual. Moreover, we may assume that inside each $\pi_{\hdots}$, the blocks are ordered increasingly. For example, this means that if we write
    \begin{equation*}
        \pi_{+}=\boxtimes_{i=1}^{m_+} \Speh(\sigma_{+,i},\underline{d}(+,i)),
    \end{equation*}
    then we ask that $\underline{d}(+,1) \geq \hdots \geq \underline{d}(+,m_+)$. We now define $\pi_{n,\mathrm{res}}$ and $\pi_{n+1,\mathrm{res}}$ to be the unique reordering of the blocks of $\pi_{1,\mathrm{res}} \boxtimes \pi_{-,\mathrm{res}}$ and $\pi_{2,\mathrm{res}} \boxtimes \pi_{-,\mathrm{res}}^\vee$ respectively, such that, if we write
    \begin{equation}
        \label{eq:speh_decompo}
        \pi_{n,\mathrm{res}}=\boxtimes_{i=1}^{m_n} \Speh(\sigma_{n,i},\underline{d}(n,i)), \quad \pi_{n+1,\mathrm{res}}=\boxtimes_{i=1}^{m_{n+1}} \Speh(\sigma_{n+1,i},\underline{d}(n+1,i)),
    \end{equation}
    then $\underline{d}(n,1) \geq \hdots \geq \underline{d}(n,m_n)$ and $\underline{d}(n,1) \geq \hdots \geq \underline{d}(n,m_{n+1})$, and moreover such that the respective blocks of $\pi_{1,\mathrm{res}}$, $\pi_{-,\mathrm{res}}$, $\pi_{2,\mathrm{res}}$ and $\pi_{-,\mathrm{res}}^\vee$ remain in the same order.
    
    It follows from our construction that $\Pi=\delta^\downarrow$ for $(J,R,\delta,J_1,J_2) \in \Pi_H^\uparrow$ which is defined by
    \begin{align}
        \delta_n& = \pi_+ \boxtimes \pi_{2,\mathrm{res}}^{-,\vee} \boxtimes \pi_{n,\mathrm{res}} \boxtimes \pi_{1,\cusp} \boxtimes \pi_{-,\cusp},\label{eq:delta_n_final} \\
        \delta_{n+1}& = \pi_+^\vee \boxtimes \pi_{1,\mathrm{res}}^{-,\vee} \boxtimes \pi_{n+1,\mathrm{res}} \boxtimes \pi_{2,\cusp} \boxtimes \pi_{-,\cusp}^\vee, \label{eq:delta_n+1_final}.
    \end{align}
    Here $J$ is defined in the obvious way and the sets $J_1$ and $J_2$ correspond to the indices of the discrete automorphic representations $\pi_{-,\mathrm{res}}$ and $\pi_{-,\mathrm{res}}^\vee$ in $\pi_{n,\mathrm{res}}$ and $\pi_{n+1,\mathrm{res}}$ respectively .
    
    We then have $(J,R,\delta,J_1,J_2) \in \Pi_{H,\mathrm{c}}^\uparrow(\tau)$ where $(I,Q,\tau,I_1,I_2)$ is also defined by \eqref{eq:delta_n_final} and \eqref{eq:delta_n+1_final}, up to grouping $\pi_{1,\cusp} \boxtimes  \pi_{-,\cusp}$ and $\pi_{2,\cusp} \boxtimes  \pi_{-,\cusp}^\vee$ together. 
    
    With the notation of \eqref{eq:speh_decompo}, we set $\sigma_n=\boxtimes_i \sigma_{n,i}$ and $\sigma_{n+1}=\boxtimes_i \sigma_{n+1,i}$. Now $(I,Q,\tau,I_1,I_2) \in \Pi_H^\uparrow(\pi)$ with $\pi$ satisfying
      \begin{align*}
        \pi_n&=\left(\pi_+ \boxtimes \pi_{2,\mathrm{res}}^{-,\vee} \boxtimes \pi_{n,\mathrm{res}}^- \right) \boxtimes \left( \sigma_n \boxtimes \pi_{1,\cusp} \boxtimes \pi_{-,\cusp}\right),\\
        \pi_{n+1}&=\left(\pi_+^\vee \boxtimes \pi_{1,\mathrm{res}}^{-,\vee} \boxtimes \pi_{n+1,\mathrm{res}}^- \right) \boxtimes \left( \sigma_{n+1} \boxtimes \pi_{2,\cusp} \boxtimes \pi_{-,\cusp}^\vee\right).
    \end{align*}
    Then $(I_r,P,\pi) \in \Pi_H^\uparrow$ for some $0 \leq r \leq n$ and some standard parabolic subgroup $P$ of $G$, so that the map $\overline{\Pi} \mapsto (J,R,\delta,J_1,J_2)$ lands in the union of \eqref{eq:bijec}. By construction, it is the inverse of $(J,R,\delta,J_1,J_2) \mapsto \delta^\downarrow$, so that \eqref{eq:bijec} is indeed a bijection.

    Finally, we note that the unions in \eqref{eq:bijec} are disjoint. This can be proved by similar arguments as the ones we just used. We then see that \eqref{eq:final_spectral} follows from \eqref{eq:graph_formula} and the orbit-counting theorem.
\end{proof}

\printbibliography

\begin{flushleft}
Paul Boisseau \\
Max Planck Institute for Mathematics, \\
Vivatsgasse 7, \\
53111 Bonn, Germany
\medskip
	
email:\\
boisseau@mpim-bonn.mpg.de \\
\end{flushleft}

\end{document}